\tikzset{every loop/.style={min distance=10 mm, in=60, out=120, looseness=10}}
\DeclareMathOperator{\wts}{wts}
\DeclareMathOperator{\sink}{sink}
\DeclareMathOperator{\Sink}{Sink}
\DeclareMathOperator{\sgn}{sgn}
\newcommand{\lrp}[1]{\left(#1\right)}
\newcommand{\bp}[1]{\big(#1\big)}
\newcommand{\ts}[1]{{\textstyle #1}}
\newcommand{\N}{\mathbb{N}}
\newcommand{\bk}{\backslash}
\newcommand{\lm}{\lambda}
\newcommand{\Lm}{\Lambda}
\newcommand{\tm}{\widetilde{m}}
\renewcommand{\epsilon}{\varepsilon}
\newtheorem{lemma}{Lemma}
\newtheorem{cor}[lemma]{Corollary}
\newtheorem{theorem}[lemma]{Theorem}
\newtheorem{conj}[lemma]{Conjecture}
\newtheorem{definition}[lemma]{Definition}
\title{$e$-basis Coefficients of Chromatic Symmetric Functions}
\author{Logan Crew, Yongxing Zhang\footnote{Department of Combinatorics \& Optimization, University of Waterloo, Waterloo, ON, N2L 3G1.\newline  Emails:  lcrew@uwaterloo.ca, y3499zha@uwaterloo.ca. \newline
We acknowledge the support of the Natural Sciences and Engineering Research Council of Canada (NSERC), [funding
reference number RGPIN-2022-03093]. \newline Cette recherche a \'et\'e financ\'ee par le Conseil de recherches en sciences naturelles et en g\'enie du Canada (CRSNG),
[num\'ero de r\'ef\'erence RGPIN-2022-03093].}}
\date{\today}
\begin{document}

\maketitle

\begin{abstract}

A well-known result of Stanley's shows that given a graph $G$ with chromatic symmetric function expanded into the basis of elementary symmetric functions as $X_G = \sum c_{\lambda}e_{\lambda}$, the sum of the coefficients $c_{\lambda}$ for $\lambda$ with $\lm_1' = k$ (equivalently those $\lm$ with exactly $k$ parts) is equal to the number of acyclic orientations of $G$ with exactly $k$ sinks. 

However, more is known. The \emph{sink sequence} of an acyclic orientation of $G$ is a tuple $(s_1,\dots,s_k)$ such that $s_1$ is the number of sinks of the orientation, and recursively each $s_i$ with $i > 1$ is the number of sinks remaining after deleting the sinks contributing to $s_1,\dots,s_{i-1}$. Equivalently, the sink sequence gives the number of vertices at each level of the poset induced by the acyclic orientation.

A lesser-known follow-up result of Stanley's determines certain cases in which we can find a sum of $e$-basis coefficients that gives the number of acyclic orientations of $G$ with a given partial sink sequence. Of interest in its own right, this result also admits as a corollary a simple proof of the $e$-positivity of $X_G$ when the stability number of $G$ is $2$.

In this paper, we prove a vertex-weighted generalization of this follow-up result, and conjecture a stronger version that admits a similar combinatorial interpretation for a much larger set of $e$-coefficient sums of chromatic symmetric functions. In particular, the conjectured formula would give a combinatorial interpretation for the sum of the coefficients $c_{\lm}$ with prescribed values of $\lm_1'$ and $\lm_2'$ for any unweighted claw-free graph (not necessarily an incomparability graph, as in the setting of the Stanley-Stembridge conjecture).

\end{abstract}

\section{Introduction}

Given a graph $G$, its chromatic symmetric function $X_G$ is defined as
$$
X_G = \sum_{\kappa} \prod_{v \in V(G)} x_{\kappa(v)}
$$
where the sum ranges over all $\kappa: V(G) \rightarrow \mathbb{N}$ that are proper colorings of $G$. Since its introduction in the 1990s by Stanley \cite{stanley}, research has connected the chromatic symmetric function and its generalizations to objects in algebraic and geometric combinatorics, including the geometry of Hessenberg varieties \cite{unit, cho2022, wachs} and LLT polynomials \cite{abreu, per2022, tom}.

Of particular interest is the $e$-basis expansion of the chromatic symmetric functions, in large part due to the Stanley-Stembridge conjecture \cite{stanley}, which after a reduction by Guay-Paquet \cite{guay} claims that the chromatic symmetric function of any unit interval graph is $e$-positive. This conjecture was recently proved by Hikita \cite{hikita} by giving an explicit probabilistic interpretation to the $e$-basis coefficients of such graphs; the more general Shareshian-Wachs conjecture \cite{wachs} about $e$-basis coefficients of chromatic quasisymmetric functions of unit interval graphs currently remains open, as well as the stronger conjecture that (claw,net)-free graphs are $e$-positive \cite{foley}.

Explicit formulas for specific $e$-basis coefficients of the chromatic symmetric function in general graphs are either only known for certain special coefficients \cite{kali} or in a form that does not directly give their sign \cite{chownote}. The $e$-positivity, via direct coefficient computation or otherwise, of some graph classes is known, such as the aforementioned unit interval graphs, some cyclic analogues of unit interval graphs \cite{banaian, tom2025}, and certain classes defined by forbidden induced subgraphs, particularly subsets of (claw,net)-free graphs \cite{foley, hamel2019}.

However, a seminal result of Stanley's original paper shows that certain \emph{sums} of $e$-basis coefficients are positive for any graph. In particular, he demonstrates
\begin{theorem}[Theorem 3.3, \cite{stanley}]\label{thm:stanley3.3}

If $X_G = \sum c_{\lm}e_{\lm}$, then the number of acyclic orientations of $G$ with exactly $k$ sinks (vertices with no incident outgoing edges, including isolated vertices) is equal to 
$$
\sum_{\ell(\lm) = k} c_{\lm}.
$$

\end{theorem}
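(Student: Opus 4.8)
The plan is to recast the desired coefficient sum as the evaluation of a single algebra homomorphism, and then to identify that evaluation combinatorially. Since the ring of symmetric functions $\Lambda$ is a polynomial algebra in the generators $e_1, e_2, \dots$, there is a well-defined algebra homomorphism $\phi_t \colon \Lambda \to \mathbb{Q}[t]$ determined by $\phi_t(e_n) = t$ for every $n \ge 1$. Applying $\phi_t$ to $X_G = \sum_\lambda c_\lambda e_\lambda$ and using $\phi_t(e_\lambda) = t^{l(\lambda)}$ gives
\[
\phi_t(X_G) = \sum_\lambda c_\lambda\, t^{l(\lambda)} = \sum_k \Big(\sum_{l(\lambda) = k} c_\lambda\Big) t^k,
\]
so the coefficient of $t^k$ in $\phi_t(X_G)$ is precisely the sum we wish to understand. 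It therefore suffices to prove that $\phi_t(X_G) = A_G(t)$, where $A_G(t) := \sum_{\sigma} t^{\sink(\sigma)}$ sums over the acyclic orientations $\sigma$ of $G$ and $\sink(\sigma)$ denotes the number of sinks of $\sigma$.

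To make $\phi_t(X_G)$ computable, I would pass to the power-sum basis using the standard expansion $X_G = \sum_{S \subseteq E(G)} (-1)^{|S|} p_{\lambda(S)}$, where $\lambda(S)$ records the sizes of the connected components of the spanning subgraph $(V(G), S)$. The value of $\phi_t$ on power sums can be read off from the generating identity $\sum_{n \ge 0} e_n y^n = \exp\big(\sum_{n \ge 1} (-1)^{n-1} p_n y^n / n\big)$: substituting $\phi_t(e_n) = t$ on the left and comparing logarithms yields $\phi_t(p_n) = (t-1)^n - (-1)^n$. Multiplicativity of $\phi_t$ then gives the explicit formula
\[
\phi_t(X_G) = \sum_{S \subseteq E(G)} (-1)^{|S|} \prod_{C} \big[(t-1)^{|C|} - (-1)^{|C|}\big],
\]
the product running over the connected components $C$ of $(V(G), S)$.

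The remaining and genuinely substantive step is to show that this signed sum over edge subsets collapses to the \emph{sink-generating polynomial} $A_G(t)$; this is where I expect the real difficulty to lie, since the left-hand side carries no manifest positivity while $A_G(t)$ has nonnegative integer coefficients. I would attempt this in one of three ways. First, by induction via deletion–contraction: one checks the claim on edgeless graphs, where both sides equal $t^{|V(G)|}$, and then tries to match the response of each side to deleting and contracting a fixed edge, the delicate point being that contraction can merge sinks. Second, by constructing a sign-reversing involution on the data encoded by the expansion above, whose fixed points are indexed by acyclic orientations weighted by their number of sinks. Third, and perhaps most cleanly, by reducing to the classical reciprocity between the chromatic polynomial and acyclic orientations: interpreting $\phi_t$ at positive integer arguments through the factorization $1 + \sum_{n \ge 1} \phi_t(e_n) y^n = (1 + (t-1)y)/(1-y)$ as a plethystic specialization, so that $\phi_m(X_G)$ counts acyclic orientations decorated by a compatible coloring, and then isolating the contribution of the sinks. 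Any of these routes reduces the theorem to the single identity $\phi_t(X_G) = A_G(t)$, and establishing that identity is the crux.
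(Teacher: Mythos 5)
Your reduction is correct as far as it goes: the specialization $\phi_t$ with $\phi_t(e_n)=t$ is a well-defined algebra homomorphism, $\phi_t(X_G)=\sum_k\bigl(\sum_{l(\lambda)=k}c_\lambda\bigr)t^k$, and your computation $\phi_t(p_n)=(t-1)^n-(-1)^n$ checks out against $\phi_t\bigl(\sum_n e_ny^n\bigr)=(1+(t-1)y)/(1-y)$. But at that point you have only restated the theorem: the identity $\phi_t(X_G)=A_G(t)$ \emph{is} Theorem \ref{thm:stanley3.3}, and you explicitly defer its proof, offering three candidate strategies without carrying any of them out. That deferred step is the entire mathematical content, so as written this is not a proof but a (correct) reformulation plus a research plan.

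Two further cautions. First, your deletion--contraction route cannot work in the form you describe: for unweighted graphs $X_G$, $X_{G\setminus e}$, and $X_{G/e}$ are homogeneous of degrees $|V(G)|$, $|V(G)|$, and $|V(G)|-1$ respectively, so there is no linear deletion--contraction relation for $X_G$ (nor for $\phi_t(X_G)$ computed through your edge-subset expansion, where contraction shifts all component sizes and breaks the factor $(t-1)^{|C|}-(-1)^{|C|}$). Repairing this is exactly why the present paper and \cite{delcon} pass to vertex-weighted (and here set-weighted) graphs, where Lemma \ref{lem:delcon} holds and the weighted analogue of Theorem \ref{thm:stanley3.3} (Theorem \ref{one-level-sink}) is proved by the induction you are gesturing at; the unweighted statement is then the all-weights-one specialization. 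Second, note that this paper does not itself prove Theorem \ref{thm:stanley3.3} --- it is quoted from Stanley, whose proof goes through quasisymmetric generating functions of the posets attached to acyclic orientations (closest in spirit to your third route). If you want to complete your argument along the lines most compatible with this paper, the missing ingredient is precisely the weighted deletion--contraction framework; if you prefer your reciprocity route, you still must isolate the sink contribution rigorously, which is where Stanley's $P$-partition machinery enters.
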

Indeed, since Stanley's original proof relied on passing to quasisymmetric generating functions of posets, recent results proving the same result in a more combinatorial manner have been noteworthy \cite{delcon, hwang}.

However, Stanley provided another theorem even more general than this, though rarely cited in the literature. The \emph{sink sequence} of an acyclic orientation $o$ is the tuple $ss(o) = (s_1, s_2, \dots)$, where $s_1$ is the number of sinks of $o$, $s_2$ is the number of sinks of $o$ after deleting the $s_1$ original sinks, and so on. Furthermore, call a partition $\nu$ of $s \leq |V(G)|$ \emph{allowable} if there exist disjoint stable sets $S_1, \dots, S_{\ell(\nu)}$ of $V(G)$ such that $|S_i| = \nu_i$. Then Stanley showed
\begin{theorem}[Theorem 3.4, \cite{stanley}]\label{thm:stanley3.4}
Let $\mu$ be a partition of $r \leq |V(G)|$ with $k$ parts. Suppose that for every allowable $\nu$, either
\begin{itemize}
    \item $\nu$ does not dominate $\mu$, or
    \item $\nu_i = \mu_i$ for $i \in \{1,2,\dots,k\}$.\footnote{For clarity, we note here that in later sections, when $\mu$ and $\nu$ satisfy one of the above two conditions, we say $\mu$ \textbf{partially dominates} $\nu$, and $\mu$ is a \textbf{maximal} partition of $G$ when it partially dominates all allowable partitions. See Definitions \ref{def:partialdom} and \ref{def:maxallow}.}
\end{itemize}

Let $X_G = \sum c_{\lm}e_{\lm}$. Then for any $j \in \{0, 1, \dots, |V(G)|-r\}$ (with $j = 0$ possible only if $r = |V(G)|$) the number of acyclic orientations of $G$ with sink sequence of the form $(\mu_1, \dots, \mu_k, j, \dots)$ is equal to 
$$
\sum_{\lm} c_{\lm}
$$
where the sum ranges over all $\lm$ such that $\lm_i' = \mu_i$ for $i \in \{1,\dots,k\}$ and $\lm_{k+1}' = j$ (here $\lm'$ is the transpose of $\lm$).\footnote{The originally published version of this theorem is not quite correct as stated because it did not mention the second bullet point above. The version given here was described in an erratum \cite{erratum}.}

\end{theorem}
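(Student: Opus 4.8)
The plan is to read both sides of the identity off two different expansions of $X_G$ and then match them. For the symmetric-function side, I would introduce the ring homomorphism $\psi \colon \Lambda \to \Q[t_1,t_2,\dots]$ determined by $\psi(e_n) = t_1 t_2 \cdots t_n$. Because a part of size $m$ contributes a cell to columns $1,\dots,m$, one gets $\psi(e_\lambda) = \prod_i t_i^{\lambda_i'}$, so applying $\psi$ to $X_G = \sum_\lambda c_\lambda e_\lambda$ yields $\psi(X_G) = \sum_\lambda c_\lambda \prod_i t_i^{\lambda_i'}$. Setting $t_i = 1$ for all $i > k+1$ and then reading off the coefficient of $t_1^{\mu_1} \cdots t_k^{\mu_k} t_{k+1}^{\,j}$ returns exactly the sum over the $\lambda$ with $\lambda_i' = \mu_i$ for $i \le k$ and $\lambda_{k+1}' = j$. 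Thus the left-hand side is a single explicit coefficient of $\psi(X_G)$.

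For the orientation side, I would first replace sink sequences by ordered stable partitions. Orienting every edge of $G$ from its endpoint in the higher-indexed block toward its endpoint in the lower-indexed block sets up a bijection between acyclic orientations of $G$ and \emph{tight} ordered stable partitions $(L_1, L_2, \dots)$ — those in which every vertex of $L_i$ (for $i \ge 2$) has a neighbor in $L_{i-1}$ — under which the sink sequence is exactly the block-size sequence $(|L_1|, |L_2|, \dots)$. Indeed, peeling off sinks forces every edge to point to a strictly lower level (so the orientation is recovered from its level partition), and tightness is precisely the condition that no vertex drops to an earlier level. Hence the right-hand side counts tight ordered stable partitions whose first $k+1$ blocks have sizes $\mu_1, \dots, \mu_k, j$.

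To connect the two, I would expand $X_G = \sum_\pi s_\pi \widetilde{m}_\pi$ in augmented monomials, where $s_\pi$ is the number of (unordered) stable partitions of $V(G)$ of type $\pi$, so that $\psi(X_G) = \sum_\pi s_\pi\, \psi(\widetilde{m}_\pi)$. The structural input is the triangularity of the transition from augmented monomials to the $e$-basis: $\widetilde{m}_\pi = \sum_\rho K_{\pi\rho}\, e_\rho$ where only $\rho$ with $\rho' \trianglelefteq \pi$ occur (leading term $\rho = \pi'$). Extracting the target coefficient from $\psi(\widetilde{m}_\pi)$ therefore only sees $\rho$ whose conjugate $\rho'$ begins with $(\mu_1,\dots,\mu_k,j)$; any such $\rho'$ dominates $\mu$, and since $\rho' \trianglelefteq \pi$ this forces $\pi \trianglerighteq \mu$. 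As every type $\pi$ with $s_\pi \ne 0$ is allowable, the hypothesis of the theorem forces $\pi_i = \mu_i$ for all $i \le k$ on every contributing type.

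The hard part is the final collapse: showing that once all contributing types agree with $\mu$ in their first $k$ parts, the signed sum $\sum_\pi s_\pi \cdot [\text{coeff}]$ equals the honest count of tight ordered stable partitions from the second step. I would prove this with a sign-reversing involution on the ``non-tight'' contributions — pairing an ordered stable partition in which some vertex of $L_i$ lacks a neighbor in $L_{i-1}$ (so it could be promoted, merging two levels) with the partition obtained by performing that merge, the inclusion–exclusion signs carried by the $K_{\pi\rho}$ making the paired contributions cancel. The dominance hypothesis is exactly what keeps this involution away from the first $k$ levels: a merge there would produce an allowable type dominating $\mu$ yet disagreeing with it in the first $k$ parts, which the hypothesis forbids. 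The surviving fixed points are then precisely the tight partitions with prefix $(\mu_1,\dots,\mu_k,j)$. Checking that the signs genuinely match under this involution, and that the forbidden merges are exactly those ruled out by allowability together with the two bullet points, is where essentially all of the work lies.
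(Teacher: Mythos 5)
Your first three steps are sound: the specialization $\psi(e_n)=t_1\cdots t_n$ does isolate the coefficient sum $\sum_{\lambda'_i=\mu_i,\,\lambda'_{k+1}=j}c_\lambda$, the correspondence between acyclic orientations and tight ordered stable partitions (with sink sequence equal to the block-size sequence) is correct, and your use of the hypothesis is exactly right: combining $[e_\rho]\widetilde{m}_{\lambda(\pi)}\neq 0\Rightarrow \rho'\trianglelefteq\lambda(\pi)$ with the prefix condition on $\rho'$ forces every contributing stable-partition type to agree with $\mu$ in its first $k$ parts. This is also a genuinely different route from the paper, which does not prove the statement directly at all: it quotes it from Stanley and re-derives it as the unweighted specialization of Theorem \ref{thm:main}, whose proof is an induction on the number of non-edges via the deletion–contraction relation of Lemma \ref{lem:delconset}, with a complete-graph base case and an eight-case signed bijection between orientation/weight-map pairs of $G$, $G+e$, and $G/e$.

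However, your final step is a genuine gap, and you say so yourself: ``where essentially all of the work lies'' is precisely the content of the theorem. Concretely, after the reduction you must show that $\sum_{\pi}s_\pi\sum_{\rho}K_{\lambda(\pi)\rho}$ (summed over $\rho$ with $\rho'_i=\mu_i$ for $i\le k$ and $\rho'_{k+1}=j$) equals the number of tight ordered stable partitions with block-size prefix $(\mu_1,\dots,\mu_k,j)$. To run a sign-reversing involution here you first need a signed combinatorial model for the coefficients $[e_\rho]\widetilde{m}_{\lambda(\pi)}$ themselves — these are not $\pm 1$ and are not naturally indexed by ordered stable partitions — and you have not supplied one, so the ``objects'' being paired by your merge operation are not actually defined. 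Nor is it checked that the merge is an involution (a vertex promotable from $L_i$ may create new promotable vertices, and merging two levels changes the type $\pi$, hence the coefficient $K_{\lambda(\pi)\rho}$, in a way that must be matched against the multiplicity $s_\pi$), or that the signs cancel, or that the only surviving fixed points are the tight partitions with the prescribed prefix rather than with some longer agreeing prefix. The assertion that the hypothesis ``keeps the involution away from the first $k$ levels'' is plausible but is exactly the delicate point where the erratum's second bullet enters, and it is not verified. As it stands the proposal is a reasonable program, not a proof; the paper avoids all of this by trading the global cancellation for a local deletion–contraction identity checked case by case.
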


This theorem effectively generalizes the previous one to more restricted sums of $e$-basis coefficients (Theorem \ref{thm:stanley3.3} could be viewed as the ``$\mu = \varnothing$ case" of Theorem \ref{thm:stanley3.4}), but only for sums dictated by those $\mu$ which satisfy certain properties depending on $G$.

% Todo
We provide a concrete example for Theorem \ref{thm:stanley3.4} to facilitate understanding.
Consider the graph $G$ given in Figure \ref{fig:thm2-example}.
The allowable partitions of $G$ are $(2)$, $(2, 1)$, $(2, 1, 1)$, $(1)$, $(1, 1)$, $(1, 1, 1)$, and $(1, 1, 1, 1)$. Consider $\mu=(2, 1)$ and $j=1$. It is easy to verify that $\mu$ satisfies the two bullet point conditions in Theorem \ref{thm:stanley3.4}.
In Figure \ref{fig:thm2-example-acyclic-orientations}, we list all acyclic orientations of $G$. We note that all acyclic orientations of $G$ have sink sequence $(2, 1, 1)$. (Note that the isolated vertex of $G$ is always a sink in the first term of the sink sequence, as it never has outgoing edges.)

% Consider the claw graph $G$ and $\mu=(3)$. The allowable partitions of $G$ are $(1)$, $(1, 1)$, $(1, 1, 1)$, $(1, 1, 1, 1)$, $(2)$, $(2, 1)$, $(2, 1, 1)$, and $(3)$. We see that all allowable partitions satisfy the two bullet points in Theorem \ref{thm:stanley3.4}, so we can apply Theorem \ref{thm:stanley3.4} to $\mu=(3)$ and $j=1$. In particular, the chromatic symmetric function of $G$ is $X_G=e_{211}-2e_{22}+5e_{31}+4e_4$. The only term with $\lambda$ such that $\lambda'_1=3$ is $e_{211}$, which has coefficient 1. At the same time, it is easy to check that there is only one acyclic orientation with sink sequence $(3, 1)$.

\begin{figure}[hbt]
\begin{center}

  \tikzset{every picture/.style={line width=0.75pt}} %set default line width to 0.75pt        

  \begin{tikzpicture}[x=0.75pt,y=0.75pt,yscale=-1,xscale=1]
  %uncomment if require: \path (0,380); %set diagram left start at 0, and has height of 380
  
  %Shape: Ellipse [id:dp4333570590139244] 
  \draw  [fill={rgb, 255:red, 0; green, 0; blue, 0 }  ,fill opacity=1 ] (2.19,5.25) .. controls (2.19,3.8) and (3.35,2.63) .. (4.77,2.63) .. controls (6.2,2.63) and (7.35,3.8) .. (7.35,5.25) .. controls (7.35,6.7) and (6.2,7.88) .. (4.77,7.88) .. controls (3.35,7.88) and (2.19,6.7) .. (2.19,5.25) -- cycle ;
  %Shape: Ellipse [id:dp4610082437749907] 
  \draw  [fill={rgb, 255:red, 0; green, 0; blue, 0 }  ,fill opacity=1 ] (52.19,5.25) .. controls (52.19,3.8) and (53.35,2.63) .. (54.77,2.63) .. controls (56.2,2.63) and (57.35,3.8) .. (57.35,5.25) .. controls (57.35,6.7) and (56.2,7.88) .. (54.77,7.88) .. controls (53.35,7.88) and (52.19,6.7) .. (52.19,5.25) -- cycle ;
  %Shape: Ellipse [id:dp009545219373395364] 
  \draw  [fill={rgb, 255:red, 0; green, 0; blue, 0 }  ,fill opacity=1 ] (2.19,55.25) .. controls (2.19,53.8) and (3.35,52.63) .. (4.77,52.63) .. controls (6.2,52.63) and (7.35,53.8) .. (7.35,55.25) .. controls (7.35,56.7) and (6.2,57.88) .. (4.77,57.88) .. controls (3.35,57.88) and (2.19,56.7) .. (2.19,55.25) -- cycle ;
  %Shape: Ellipse [id:dp10949092006659256] 
  \draw  [fill={rgb, 255:red, 0; green, 0; blue, 0 }  ,fill opacity=1 ] (52.19,55.25) .. controls (52.19,53.8) and (53.35,52.63) .. (54.77,52.63) .. controls (56.2,52.63) and (57.35,53.8) .. (57.35,55.25) .. controls (57.35,56.7) and (56.2,57.88) .. (54.77,57.88) .. controls (53.35,57.88) and (52.19,56.7) .. (52.19,55.25) -- cycle ;
  %Straight Lines [id:da47136776468976693] 
  \draw    (7.35,5.25) -- (52.19,5.25) ;
  %Straight Lines [id:da6047760241176381] 
  \draw    (4.77,7.88) -- (4.77,52.63) ;
  %Straight Lines [id:da3702939435369452] 
  \draw    (5.77,54.25) -- (54.77,5.25) ;
  \end{tikzpicture}
  
\end{center}
\captionsetup{skip=2pt}
\caption{$G$, including an isolated vertex}
\label{fig:thm2-example}
\end{figure}

\begin{figure}[hbt]
\begin{center}

  \tikzset{every picture/.style={line width=0.75pt}} %set default line width to 0.75pt        

  \begin{tikzpicture}[x=0.75pt,y=0.75pt,yscale=-1,xscale=1]
  %uncomment if require: \path (0,380); %set diagram left start at 0, and has height of 380
  
  %Shape: Ellipse [id:dp4333570590139244] 
  \draw  [fill={rgb, 255:red, 248; green, 231; blue, 28 }  ,fill opacity=1 ] (2.19,5.25) .. controls (2.19,3.8) and (3.35,2.63) .. (4.77,2.63) .. controls (6.2,2.63) and (7.35,3.8) .. (7.35,5.25) .. controls (7.35,6.7) and (6.2,7.88) .. (4.77,7.88) .. controls (3.35,7.88) and (2.19,6.7) .. (2.19,5.25) -- cycle ;
  %Shape: Ellipse [id:dp4610082437749907] 
  \draw  [fill={rgb, 255:red, 0; green, 0; blue, 0 }  ,fill opacity=1 ] (52.19,5.25) .. controls (52.19,3.8) and (53.35,2.63) .. (54.77,2.63) .. controls (56.2,2.63) and (57.35,3.8) .. (57.35,5.25) .. controls (57.35,6.7) and (56.2,7.88) .. (54.77,7.88) .. controls (53.35,7.88) and (52.19,6.7) .. (52.19,5.25) -- cycle ;
  %Shape: Ellipse [id:dp009545219373395364] 
  \draw  [fill={rgb, 255:red, 0; green, 0; blue, 0 }  ,fill opacity=1 ] (2.19,55.25) .. controls (2.19,53.8) and (3.35,52.63) .. (4.77,52.63) .. controls (6.2,52.63) and (7.35,53.8) .. (7.35,55.25) .. controls (7.35,56.7) and (6.2,57.88) .. (4.77,57.88) .. controls (3.35,57.88) and (2.19,56.7) .. (2.19,55.25) -- cycle ;
  %Shape: Ellipse [id:dp10949092006659256] 
  \draw  [fill={rgb, 255:red, 248; green, 231; blue, 28 }  ,fill opacity=1 ] (52.19,55.25) .. controls (52.19,53.8) and (53.35,52.63) .. (54.77,52.63) .. controls (56.2,52.63) and (57.35,53.8) .. (57.35,55.25) .. controls (57.35,56.7) and (56.2,57.88) .. (54.77,57.88) .. controls (53.35,57.88) and (52.19,56.7) .. (52.19,55.25) -- cycle ;
  %Straight Lines [id:da47136776468976693] 
  \draw    (9.35,5.25) -- (52.19,5.25) ;
  \draw [shift={(7.35,5.25)}, rotate = 0] [color={rgb, 255:red, 0; green, 0; blue, 0 }  ][line width=0.75]    (10.93,-3.29) .. controls (6.95,-1.4) and (3.31,-0.3) .. (0,0) .. controls (3.31,0.3) and (6.95,1.4) .. (10.93,3.29)   ;
  %Straight Lines [id:da6047760241176381] 
  \draw    (4.77,9.88) -- (4.77,52.63) ;
  \draw [shift={(4.77,7.88)}, rotate = 90] [color={rgb, 255:red, 0; green, 0; blue, 0 }  ][line width=0.75]    (10.93,-3.29) .. controls (6.95,-1.4) and (3.31,-0.3) .. (0,0) .. controls (3.31,0.3) and (6.95,1.4) .. (10.93,3.29)   ;
  %Straight Lines [id:da3702939435369452] 
  \draw    (5.77,54.25) -- (53.36,6.67) ;
  \draw [shift={(54.77,5.25)}, rotate = 135] [color={rgb, 255:red, 0; green, 0; blue, 0 }  ][line width=0.75]    (10.93,-3.29) .. controls (6.95,-1.4) and (3.31,-0.3) .. (0,0) .. controls (3.31,0.3) and (6.95,1.4) .. (10.93,3.29)   ;
  %Shape: Ellipse [id:dp14769695580982578] 
  \draw  [fill={rgb, 255:red, 248; green, 231; blue, 28 }  ,fill opacity=1 ] (131.19,4.74) .. controls (131.19,3.29) and (132.35,2.12) .. (133.77,2.12) .. controls (135.2,2.12) and (136.35,3.29) .. (136.35,4.74) .. controls (136.35,6.19) and (135.2,7.37) .. (133.77,7.37) .. controls (132.35,7.37) and (131.19,6.19) .. (131.19,4.74) -- cycle ;
  %Shape: Ellipse [id:dp5370654692863266] 
  \draw  [fill={rgb, 255:red, 0; green, 0; blue, 0 }  ,fill opacity=1 ] (181.19,4.74) .. controls (181.19,3.29) and (182.35,2.12) .. (183.77,2.12) .. controls (185.2,2.12) and (186.35,3.29) .. (186.35,4.74) .. controls (186.35,6.19) and (185.2,7.37) .. (183.77,7.37) .. controls (182.35,7.37) and (181.19,6.19) .. (181.19,4.74) -- cycle ;
  %Shape: Ellipse [id:dp5296679936189785] 
  \draw  [fill={rgb, 255:red, 0; green, 0; blue, 0 }  ,fill opacity=1 ] (131.19,54.74) .. controls (131.19,53.29) and (132.35,52.12) .. (133.77,52.12) .. controls (135.2,52.12) and (136.35,53.29) .. (136.35,54.74) .. controls (136.35,56.19) and (135.2,57.37) .. (133.77,57.37) .. controls (132.35,57.37) and (131.19,56.19) .. (131.19,54.74) -- cycle ;
  %Shape: Ellipse [id:dp10311223031176753] 
  \draw  [fill={rgb, 255:red, 248; green, 231; blue, 28 }  ,fill opacity=1 ] (181.19,54.74) .. controls (181.19,53.29) and (182.35,52.12) .. (183.77,52.12) .. controls (185.2,52.12) and (186.35,53.29) .. (186.35,54.74) .. controls (186.35,56.19) and (185.2,57.37) .. (183.77,57.37) .. controls (182.35,57.37) and (181.19,56.19) .. (181.19,54.74) -- cycle ;
  %Straight Lines [id:da3763223875633164] 
  \draw    (138.35,4.74) -- (181.19,4.74) ;
  \draw [shift={(136.35,4.74)}, rotate = 0] [color={rgb, 255:red, 0; green, 0; blue, 0 }  ][line width=0.75]    (10.93,-3.29) .. controls (6.95,-1.4) and (3.31,-0.3) .. (0,0) .. controls (3.31,0.3) and (6.95,1.4) .. (10.93,3.29)   ;
  %Straight Lines [id:da42758716236379946] 
  \draw    (133.77,9.37) -- (133.77,52.12) ;
  \draw [shift={(133.77,7.37)}, rotate = 90] [color={rgb, 255:red, 0; green, 0; blue, 0 }  ][line width=0.75]    (10.93,-3.29) .. controls (6.95,-1.4) and (3.31,-0.3) .. (0,0) .. controls (3.31,0.3) and (6.95,1.4) .. (10.93,3.29)   ;
  %Straight Lines [id:da12217820547794722] 
  \draw    (135.91,52.58) -- (183.5,4.99) ;
  \draw [shift={(134.5,53.99)}, rotate = 315] [color={rgb, 255:red, 0; green, 0; blue, 0 }  ][line width=0.75]    (10.93,-3.29) .. controls (6.95,-1.4) and (3.31,-0.3) .. (0,0) .. controls (3.31,0.3) and (6.95,1.4) .. (10.93,3.29)   ;
  %Shape: Ellipse [id:dp17148960035299177] 
  \draw  [fill={rgb, 255:red, 0; green, 0; blue, 0 }  ,fill opacity=1 ] (2.19,94.5) .. controls (2.19,93.05) and (3.35,91.87) .. (4.77,91.87) .. controls (6.2,91.87) and (7.35,93.05) .. (7.35,94.5) .. controls (7.35,95.95) and (6.2,97.12) .. (4.77,97.12) .. controls (3.35,97.12) and (2.19,95.95) .. (2.19,94.5) -- cycle ;
  %Shape: Ellipse [id:dp6424496724374826] 
  \draw  [fill={rgb, 255:red, 248; green, 231; blue, 28 }  ,fill opacity=1 ] (52.19,94.5) .. controls (52.19,93.05) and (53.35,91.87) .. (54.77,91.87) .. controls (56.2,91.87) and (57.35,93.05) .. (57.35,94.5) .. controls (57.35,95.95) and (56.2,97.12) .. (54.77,97.12) .. controls (53.35,97.12) and (52.19,95.95) .. (52.19,94.5) -- cycle ;
  %Shape: Ellipse [id:dp06333215518660129] 
  \draw  [fill={rgb, 255:red, 0; green, 0; blue, 0 }  ,fill opacity=1 ] (2.19,144.5) .. controls (2.19,143.05) and (3.35,141.87) .. (4.77,141.87) .. controls (6.2,141.87) and (7.35,143.05) .. (7.35,144.5) .. controls (7.35,145.95) and (6.2,147.12) .. (4.77,147.12) .. controls (3.35,147.12) and (2.19,145.95) .. (2.19,144.5) -- cycle ;
  %Shape: Ellipse [id:dp49565873045181075] 
  \draw  [fill={rgb, 255:red, 248; green, 231; blue, 28 }  ,fill opacity=1 ] (52.19,144.5) .. controls (52.19,143.05) and (53.35,141.87) .. (54.77,141.87) .. controls (56.2,141.87) and (57.35,143.05) .. (57.35,144.5) .. controls (57.35,145.95) and (56.2,147.12) .. (54.77,147.12) .. controls (53.35,147.12) and (52.19,145.95) .. (52.19,144.5) -- cycle ;
  %Straight Lines [id:da573585866025085] 
  \draw    (7.35,94.5) -- (50.19,94.5) ;
  \draw [shift={(52.19,94.5)}, rotate = 180] [color={rgb, 255:red, 0; green, 0; blue, 0 }  ][line width=0.75]    (10.93,-3.29) .. controls (6.95,-1.4) and (3.31,-0.3) .. (0,0) .. controls (3.31,0.3) and (6.95,1.4) .. (10.93,3.29)   ;
  %Straight Lines [id:da8706659315329843] 
  \draw    (4.77,99.12) -- (4.77,141.87) ;
  \draw [shift={(4.77,97.12)}, rotate = 90] [color={rgb, 255:red, 0; green, 0; blue, 0 }  ][line width=0.75]    (10.93,-3.29) .. controls (6.95,-1.4) and (3.31,-0.3) .. (0,0) .. controls (3.31,0.3) and (6.95,1.4) .. (10.93,3.29)   ;
  %Straight Lines [id:da9291530409421749] 
  \draw    (5.5,143.75) -- (53.09,96.16) ;
  \draw [shift={(54.5,94.75)}, rotate = 135] [color={rgb, 255:red, 0; green, 0; blue, 0 }  ][line width=0.75]    (10.93,-3.29) .. controls (6.95,-1.4) and (3.31,-0.3) .. (0,0) .. controls (3.31,0.3) and (6.95,1.4) .. (10.93,3.29)   ;
  %Shape: Ellipse [id:dp10645843641604746] 
  \draw  [fill={rgb, 255:red, 0; green, 0; blue, 0 }  ,fill opacity=1 ] (262.19,4.74) .. controls (262.19,3.29) and (263.35,2.12) .. (264.77,2.12) .. controls (266.2,2.12) and (267.35,3.29) .. (267.35,4.74) .. controls (267.35,6.19) and (266.2,7.37) .. (264.77,7.37) .. controls (263.35,7.37) and (262.19,6.19) .. (262.19,4.74) -- cycle ;
  %Shape: Ellipse [id:dp37639953799569614] 
  \draw  [fill={rgb, 255:red, 0; green, 0; blue, 0 }  ,fill opacity=1 ] (312.19,4.74) .. controls (312.19,3.29) and (313.35,2.12) .. (314.77,2.12) .. controls (316.2,2.12) and (317.35,3.29) .. (317.35,4.74) .. controls (317.35,6.19) and (316.2,7.37) .. (314.77,7.37) .. controls (313.35,7.37) and (312.19,6.19) .. (312.19,4.74) -- cycle ;
  %Shape: Ellipse [id:dp7236990641668966] 
  \draw  [fill={rgb, 255:red, 248; green, 231; blue, 28 }  ,fill opacity=1 ] (262.19,54.74) .. controls (262.19,53.29) and (263.35,52.12) .. (264.77,52.12) .. controls (266.2,52.12) and (267.35,53.29) .. (267.35,54.74) .. controls (267.35,56.19) and (266.2,57.37) .. (264.77,57.37) .. controls (263.35,57.37) and (262.19,56.19) .. (262.19,54.74) -- cycle ;
  %Shape: Ellipse [id:dp6556470984505687] 
  \draw  [fill={rgb, 255:red, 248; green, 231; blue, 28 }  ,fill opacity=1 ] (312.19,54.74) .. controls (312.19,53.29) and (313.35,52.12) .. (314.77,52.12) .. controls (316.2,52.12) and (317.35,53.29) .. (317.35,54.74) .. controls (317.35,56.19) and (316.2,57.37) .. (314.77,57.37) .. controls (313.35,57.37) and (312.19,56.19) .. (312.19,54.74) -- cycle ;
  %Straight Lines [id:da9654520646362938] 
  \draw    (269.35,4.74) -- (312.19,4.74) ;
  \draw [shift={(267.35,4.74)}, rotate = 0] [color={rgb, 255:red, 0; green, 0; blue, 0 }  ][line width=0.75]    (10.93,-3.29) .. controls (6.95,-1.4) and (3.31,-0.3) .. (0,0) .. controls (3.31,0.3) and (6.95,1.4) .. (10.93,3.29)   ;
  %Straight Lines [id:da5134229453108701] 
  \draw    (264.77,7.37) -- (264.77,50.12) ;
  \draw [shift={(264.77,52.12)}, rotate = 270] [color={rgb, 255:red, 0; green, 0; blue, 0 }  ][line width=0.75]    (10.93,-3.29) .. controls (6.95,-1.4) and (3.31,-0.3) .. (0,0) .. controls (3.31,0.3) and (6.95,1.4) .. (10.93,3.29)   ;
  %Straight Lines [id:da12635032276778335] 
  \draw    (266.91,52.58) -- (314.5,4.99) ;
  \draw [shift={(265.5,53.99)}, rotate = 315] [color={rgb, 255:red, 0; green, 0; blue, 0 }  ][line width=0.75]    (10.93,-3.29) .. controls (6.95,-1.4) and (3.31,-0.3) .. (0,0) .. controls (3.31,0.3) and (6.95,1.4) .. (10.93,3.29)   ;
  %Shape: Ellipse [id:dp8853535962823236] 
  \draw  [fill={rgb, 255:red, 0; green, 0; blue, 0 }  ,fill opacity=1 ] (132.19,95) .. controls (132.19,93.55) and (133.35,92.37) .. (134.77,92.37) .. controls (136.2,92.37) and (137.35,93.55) .. (137.35,95) .. controls (137.35,96.45) and (136.2,97.62) .. (134.77,97.62) .. controls (133.35,97.62) and (132.19,96.45) .. (132.19,95) -- cycle ;
  %Shape: Ellipse [id:dp16997558168630267] 
  \draw  [fill={rgb, 255:red, 248; green, 231; blue, 28 }  ,fill opacity=1 ] (182.19,95) .. controls (182.19,93.55) and (183.35,92.37) .. (184.77,92.37) .. controls (186.2,92.37) and (187.35,93.55) .. (187.35,95) .. controls (187.35,96.45) and (186.2,97.62) .. (184.77,97.62) .. controls (183.35,97.62) and (182.19,96.45) .. (182.19,95) -- cycle ;
  %Shape: Ellipse [id:dp5537383877174704] 
  \draw  [fill={rgb, 255:red, 0; green, 0; blue, 0 }  ,fill opacity=1 ] (132.19,145) .. controls (132.19,143.55) and (133.35,142.37) .. (134.77,142.37) .. controls (136.2,142.37) and (137.35,143.55) .. (137.35,145) .. controls (137.35,146.45) and (136.2,147.62) .. (134.77,147.62) .. controls (133.35,147.62) and (132.19,146.45) .. (132.19,145) -- cycle ;
  %Shape: Ellipse [id:dp03613551659913017] 
  \draw  [fill={rgb, 255:red, 248; green, 231; blue, 28 }  ,fill opacity=1 ] (182.19,145) .. controls (182.19,143.55) and (183.35,142.37) .. (184.77,142.37) .. controls (186.2,142.37) and (187.35,143.55) .. (187.35,145) .. controls (187.35,146.45) and (186.2,147.62) .. (184.77,147.62) .. controls (183.35,147.62) and (182.19,146.45) .. (182.19,145) -- cycle ;
  %Straight Lines [id:da07414729449910396] 
  \draw    (137.35,95) -- (180.19,95) ;
  \draw [shift={(182.19,95)}, rotate = 180] [color={rgb, 255:red, 0; green, 0; blue, 0 }  ][line width=0.75]    (10.93,-3.29) .. controls (6.95,-1.4) and (3.31,-0.3) .. (0,0) .. controls (3.31,0.3) and (6.95,1.4) .. (10.93,3.29)   ;
  %Straight Lines [id:da07130742537678514] 
  \draw    (134.77,97.62) -- (134.77,140.37) ;
  \draw [shift={(134.77,142.37)}, rotate = 270] [color={rgb, 255:red, 0; green, 0; blue, 0 }  ][line width=0.75]    (10.93,-3.29) .. controls (6.95,-1.4) and (3.31,-0.3) .. (0,0) .. controls (3.31,0.3) and (6.95,1.4) .. (10.93,3.29)   ;
  %Straight Lines [id:da6503284754026606] 
  \draw    (135.5,144.25) -- (183.09,96.66) ;
  \draw [shift={(184.5,95.25)}, rotate = 135] [color={rgb, 255:red, 0; green, 0; blue, 0 }  ][line width=0.75]    (10.93,-3.29) .. controls (6.95,-1.4) and (3.31,-0.3) .. (0,0) .. controls (3.31,0.3) and (6.95,1.4) .. (10.93,3.29)   ;
  %Shape: Ellipse [id:dp04184229941903994] 
  \draw  [fill={rgb, 255:red, 0; green, 0; blue, 0 }  ,fill opacity=1 ] (262.19,95.24) .. controls (262.19,93.79) and (263.35,92.62) .. (264.77,92.62) .. controls (266.2,92.62) and (267.35,93.79) .. (267.35,95.24) .. controls (267.35,96.69) and (266.2,97.87) .. (264.77,97.87) .. controls (263.35,97.87) and (262.19,96.69) .. (262.19,95.24) -- cycle ;
  %Shape: Ellipse [id:dp23582944974448727] 
  \draw  [fill={rgb, 255:red, 0; green, 0; blue, 0 }  ,fill opacity=1 ] (312.19,95.24) .. controls (312.19,93.79) and (313.35,92.62) .. (314.77,92.62) .. controls (316.2,92.62) and (317.35,93.79) .. (317.35,95.24) .. controls (317.35,96.69) and (316.2,97.87) .. (314.77,97.87) .. controls (313.35,97.87) and (312.19,96.69) .. (312.19,95.24) -- cycle ;
  %Shape: Ellipse [id:dp5052244326780393] 
  \draw  [fill={rgb, 255:red, 248; green, 231; blue, 28 }  ,fill opacity=1 ] (262.19,145.24) .. controls (262.19,143.79) and (263.35,142.62) .. (264.77,142.62) .. controls (266.2,142.62) and (267.35,143.79) .. (267.35,145.24) .. controls (267.35,146.69) and (266.2,147.87) .. (264.77,147.87) .. controls (263.35,147.87) and (262.19,146.69) .. (262.19,145.24) -- cycle ;
  %Shape: Ellipse [id:dp41730473556725634] 
  \draw  [fill={rgb, 255:red, 248; green, 231; blue, 28 }  ,fill opacity=1 ] (312.19,145.24) .. controls (312.19,143.79) and (313.35,142.62) .. (314.77,142.62) .. controls (316.2,142.62) and (317.35,143.79) .. (317.35,145.24) .. controls (317.35,146.69) and (316.2,147.87) .. (314.77,147.87) .. controls (313.35,147.87) and (312.19,146.69) .. (312.19,145.24) -- cycle ;
  %Straight Lines [id:da1625742624093689] 
  \draw    (267.35,95.24) -- (310.19,95.24) ;
  \draw [shift={(312.19,95.24)}, rotate = 180] [color={rgb, 255:red, 0; green, 0; blue, 0 }  ][line width=0.75]    (10.93,-3.29) .. controls (6.95,-1.4) and (3.31,-0.3) .. (0,0) .. controls (3.31,0.3) and (6.95,1.4) .. (10.93,3.29)   ;
  %Straight Lines [id:da8349542128664968] 
  \draw    (264.77,97.87) -- (264.77,140.62) ;
  \draw [shift={(264.77,142.62)}, rotate = 270] [color={rgb, 255:red, 0; green, 0; blue, 0 }  ][line width=0.75]    (10.93,-3.29) .. controls (6.95,-1.4) and (3.31,-0.3) .. (0,0) .. controls (3.31,0.3) and (6.95,1.4) .. (10.93,3.29)   ;
  %Straight Lines [id:da8595225960746886] 
  \draw    (267.19,142.83) -- (314.77,95.24) ;
  \draw [shift={(265.77,144.24)}, rotate = 315] [color={rgb, 255:red, 0; green, 0; blue, 0 }  ][line width=0.75]    (10.93,-3.29) .. controls (6.95,-1.4) and (3.31,-0.3) .. (0,0) .. controls (3.31,0.3) and (6.95,1.4) .. (10.93,3.29)   ;
  \end{tikzpicture}
  
\end{center}
\captionsetup{skip=2pt}
\caption{Acyclic orientations of $G$, with first-level sinks colored in yellow}
\label{fig:thm2-example-acyclic-orientations}
\end{figure}

On the other hand, one can compute using SageMath that the chromatic symmetric function of $G$ is 
\[X_G=6e_{3,1},\]
and $(3, 1)'=(2, 1, 1)$. Therefore, this example agrees with Theorem \ref{thm:stanley3.4}.

In previous work by the first author and Spirkl \cite{delcon}, we generalized the chromatic symmetric function to \emph{vertex-weighted graphs}, and showed that in this setting the chromatic symmetric function admits a natural deletion-contraction relation. We derived a generalization of Theorem \ref{thm:stanley3.3} to vertex-weighted graphs, and provided a novel proof using deletion-contraction that is analogous to Stanley's famous proof that $(-1)^{|V(G)|}\chi_G(-1)$ enumerates acyclic orientations of $G$ \cite{stanleyacyclic}.

This paper has two main parts. First, we prove Theorem \ref{thm:main}, a generalization of Theorem \ref{thm:stanley3.4} to vertex-weighted graphs. We begin by generalizing the notion of a sink sequence to the vertex-weighted setting, allowing us to properly extend the ideas used in \cite{delcon}. This makes the proof of Theorem \ref{thm:main} easier to express as a combinatorial argument via an inductive edge deletion-contraction proof.

Second, we introduce Conjecture \ref{conjecture}, a conjectured generalization of Theorem \ref{thm:main} when $\ell(\mu) = 1$. This generalization would allow for a much wider range of acceptable $\mu$ than those that satisfy the two bullet points of Theorem \ref{thm:stanley3.4}. In particular, Conjecture \ref{conjecture} implies that every $\mu$ with one part is acceptable in unweighted claw-free graphs, implying a combinatorial interpretation for all associated $e$-coefficient sums. The authors expect that if Conjecture \ref{conjecture} can be generalized to $\mu$ of arbitrary length, this could provide a new combinatorial interpretation of any individual $e$-basis coefficient of the chromatic symmetric functions of unweighted claw-free graphs.

The paper is organized as follows: in Section 2, we introduce necessary background in symmetric function theory and graph theory. In Section 3, we prove Theorem \ref{thm:main}, the vertex-weighted generalization of Stanley's Theorem \ref{thm:stanley3.4}. In Section 4, we introduce Conjecture \ref{conjecture} with an illustrative example, and provide supporting evidence in the form of proofs of two special cases. We also discuss the application of Conjecture \ref{conjecture} to unweighted claw-free graphs. We end with concluding remarks in Section 5.

\section{Background}

Throughout this paper, $\mathbb{N}$ will be used to mean positive integers (not including zero), and $\mathcal{P}(\N)$ means the set of all subsets of positive integers, i.e. the power set of $\N$.

\subsection{Partitions and Symmetric Functions}

A \emph{partition} $\pi = \{S_1, \dots, S_k\}$ of a set $S$ is a set  of nonempty disjoint subsets of $S$ whose union is all of $S$ (that is, $S_1 \sqcup \dots \sqcup S_k = S$), and we write $\pi \vdash S$ and $|\pi| = |S|$. The elements of $\pi$ are called \emph{blocks} of the partition.

An \emph{integer partition} is a tuple $\lm = (\lm_1, \dots, \lm_k)$ of positive integers satisfying $\lm_1 \geq \dots \geq \lm_k$. Where $\sum \lm_i = n$, we say that $\lm$ is a \emph{partition of $n$}, and we write $\lm \vdash n$ or $|\lm| = n$. Each integer in the tuple $\lm$ is called a \emph{part} of $\lm$, and the number of parts of $\lm$ is $\ell(\lm)$. We let $n_i(\lm)$ be the number of occurrences of $i$ as a part of $\lm$. For example, if $\mu = (3,2,2,1)$, then $|\mu| = 8$, $\ell(\mu) = 4$, $n_1(\mu) = 1$, and $n_2(\mu) = 2$.

An integer partition $\lm \vdash d$ may also be written as $\lm = 1^{n_1(\lm)} \dots d^{n_d(\lm)}$, giving the multiplicity of each part. In particular, $1^k = \underbrace{(1,\dots,1)}_{\text{ k ones}}$.

Given an integer partition $\lm = (\lm_1, \dots, \lm_k)$, if $m \geq \lm_1$ we write $(m,\lm)$ to indicate the partition $(m,\lm_1,\dots,\lm_k)$, and if $r \leq \lm_k$ we write $(\lm,r)$ to indicate the partition $(\lm_1,\dots,\lm_k,r)$. We write $\lm+1^n$ to mean the partition formed by adding $1$ to each of the first $n$ parts of $\lm$, extending $\lm$ by $0$s if $\ell(\lm) < n$. For example, $(3,2,1) + 1^2 = (4,3,1)$, and $(3,2,1)+1^5 = (4,3,2,1,1)$. When $\ell(\lm) \geq n$, we likewise write $\lm-1^n$ to indicate the partition formed by subtracting $1$ from the first $n$ parts of $\lm$, removing any arising $0$s and rearranging the parts into weakly decreasing order if necessary. For example, $(3,2,2)-1^2=(2,2,1)$, and $(3,1,1) - 1^2 = (2,1)$.

Given integer partitions $\lm$ and $\mu$ with $|\lm| = |\mu|$, we say that $\lm$ \emph{dominates} $\mu$ if for each \\ $i \in \{1, \dots, \ell(\mu)\}$, we have $\sum_{j=1}^i \lm_j \geq \sum_{j=1}^i \mu_j$ (when $\ell(\lambda)<\ell(\mu)$, define $\lm_{\ell(\lm)+1} = \dots = \lm_{\ell(\mu)} = 0$ for checking dominance).

Given an integer partition $\lm$, its \emph{transpose} $\lm'$ is the partition with parts $\lm'_i = \sum_{j=i}^{\infty} n_j(\lm)$. In particular, $\lm'_1 = \ell(\lm)$.

Given $\pi \vdash S$, its corresponding integer partition $\lm(\pi) \vdash |S|$ has parts equal to the cardinalities of the blocks of $\pi$. 

The following information about symmetric function theory can be found in many textbooks, such as \cite{mac, stanleybook}. A \emph{symmetric function} is a power series $f(x_1,x_2,\dots) \in \mathbb{C}[[x_1,x_2,\dots]]$ of finite degree such that for every permutation $\sigma$ of the positive integers $\N$ (with only finitely many non-fixed points), we have $f(x_1,x_2,\dots) = f(x_{\sigma(1)}, x_{\sigma(2)}, \dots)$. The space of symmetric functions, denoted $\Lm$, may be recognized as a graded vector space $\Lm = \bigoplus_{i=0}^{\infty} \Lm^i$, where $\Lm^i$ consists of those symmetric functions which are homogeneous of degree $i$. Each $\Lm^i$ is finite-dimensional, with dimension equal to the number of integer partitions of $i$, and bases of $\Lm^i$ (and thus of $\Lm$) are typically indexed by these integer partitions. Some of the most commonly used bases are
\begin{itemize}
    \item The \emph{monomial basis}, defined by 
    $$
    m_{\lm} = \sum x_{i_1}^{\lm_1} \dots x_{i_{\ell(\lm)}}^{\lm_{\ell(\lm)}}
    $$
    where the sum contains one copy of each monomial formed as $(i_1, \dots, i_{\ell(\lm)})$ ranges across all tuples of distinct positive integers.
    \item The \emph{augmented} monomial basis, defined by
    $$
    \tm_{\lm} = \left(\prod_{i=1}^{\infty} n_i(\lm)!\right)m_{\lm}.
    $$
    \item The \emph{elementary symmetric function} basis, defined by
    $$
    e_n = \sum_{i_1 < \dots < i_n} x_{i_1} \dots x_{i_n}, \, e_{\lm} = e_{\lm_1} \dots e_{\lm_{\ell(\lm)}}.
    $$
\end{itemize}

    If $\{b_{\lm}\}$ is a basis of symmetric functions indexed by integer partitions, $\mu$ is a fixed integer partition, and $f$ is any symmetric function, $[b_{\mu}]f$ denotes the coefficient of $b_{\mu}$ when $f$ is expanded into the $b$-basis. The function $f$ is said to be \emph{$b$-positive} if $[b_{\mu}]f \geq 0$ for every integer partition $\mu$.

\subsection{Graphs}

We use basic graph theory terminology as given in \cite{diestel}. A graph $G = (V(G),E(G))$ consists of a set $V(G)$ of \emph{vertices} and a set $E(G)$ of (unordered) vertex pairs called \emph{edges}. Given an edge $v_1v_2 \in E(G)$ for $v_1,v_2 \in V(G)$, we say that $v_1$ and $v_2$ are the \emph{endpoints} of $e$, and that $e$ is \emph{incident} with $v_1$ and $v_2$. In this paper graphs need not be simple, meaning that we may have multiple edges with the same two vertices (\emph{multi-edges}) or an edge containing the same vertex twice (\emph{loops}). 

Given $S \subseteq V(G)$, we define $G \bk S$ to be the graph $(V(G) \bk S, E(G) \bk E(S))$, where $E(S)$ is the set of edges with at least one endpoint in $S$. We define $G[S]$ to be the graph $(S,E'(S))$, where $E'(S) \subseteq E(G)$ is the set of edges with both endpoints in $S$. We call $G[S]$ the \emph{subgraph of $G$ induced by $S$}, and say that $G[S]$ is an \emph{induced subgraph} of $G$.

A set $S \subseteq V(G)$ is called a \emph{stable set} if there are no edges of $G$ with both endpoints in $S$. A partition $\pi \vdash V(G)$ is called a \emph{stable partition} if each block of $\pi$ is a stable set.

An \emph{orientation} $\gamma$ of $G$ is an assignment of a direction to each edge $e$ of $G$ (that is, an ordering of the two vertices comprising $e$), and we will use $(G, \gamma)$ to denote a graph with orientation $\gamma$ applied. We denote an oriented edge as $v_1 \rightarrow v_2$ and say that the edge points from $v_1$ to $v_2$. An orientation of $G$ is \emph{acyclic} if it contains no directed cycle (that is, the graph has no loops, and for each vertex $v \in V(G)$, there do not exist vertices $v_1, \dots, v_k$ for some $k \geq 1$ such that all of the oriented edges $v \rightarrow v_1, v_1 \rightarrow v_2, \dots, v_k \rightarrow v$ are present in the orientation). 

A \emph{sink} of an orientation of a graph $G$ is any vertex $v \in V(G)$ such that no edges point away from $v$ (in particular, an isolated vertex is a sink of every orientation).

Given a graph $G$ and an edge $e = v_1v_2$, the graph $G \bk e = (V,E(G) \bk e)$ is the graph of $G$ with the edge $e$ \emph{deleted}. We define the \emph{contraction} of $G$ by $e$ as $G/e = G \bk e$ if $e$ is a loop, and otherwise $G/e = (V(G)\bk\{v_1,v_2\}\cup v^*, E(G)/e)$
where $E(G)/e$ consists of all edges of $E(G)$, except that wherever $v_1$ or $v_2$ occurs as an endpoint of an edge, it is replaced by $v^*$. Intuitively, we identify the endpoints of $e$ to a single vertex, and adjust all other edges accordingly. 

\subsection{Vertex-Weighted Graphs}

A \emph{vertex-weighted graph} $(G,w)$ consists of a graph $G$, and a vertex weight function $w: V(G) \rightarrow \mathbb{N}$. All previous definitions hold identically for vertex-weighted graphs, with the exception that when a vertex-weighted graph $(G,w)$ is contracted by a non-loop edge $e = v_1v_2$, we give $G/e$ a new weight function $w/e$ satisfying $(w/e)(v^*) = w(v_1)+w(v_2)$, and for all other $v \in V(G/e) \bk \{v^*\}$, $(w/e)(v) = w(v)$.

Given $S \subseteq V(G)$, we write $w(S) = \sum_{v \in S} w(S)$, and we say that the \emph{total weight} of $(G,w)$ is $w(V(G))$.

Since the usual definition of a graph may be captured by the special case where $w(v) = 1$ for all $v \in V(G)$, we will assume in this paper that all graphs are vertex-weighted.

\subsection{Graph Coloring}

Let $(G,w)$ be a vertex-weighted graph. A \emph{coloring} of $G$ is a map $\kappa: V(G) \rightarrow \mathbb{N}$ such that whenever $v_1v_2 \in E(G)$, $\kappa(v_1) \neq \kappa(v_2)$.
\begin{definition}[\cite{delcon, stanley}]\label{def:xgw}
The \textbf{chromatic symmetric function} of a vertex-weighted graph $(G,w)$ is
$$
X_{(G,w)} = \sum_{\kappa} \prod_{v \in V(G)} x_{\kappa(v)}^{w(v)}
$$
where the sum ranges over all colorings $\kappa$ of $G$.
\end{definition}

Although the chromatic symmetric function does not admit a direct edge deletion-contraction relation for unweighted graphs, for vertex-weighted graphs the following holds.
\begin{lemma}[\cite{delcon}, Lemma 2]\label{lem:delcon}

If $(G,w)$ is a vertex-weighted graph, and $e$ is any edge of $G$, then
$$
X_{(G,w)} = X_{(G \bk e, w)}-X_{(G/e,w/e)}.
$$

\end{lemma}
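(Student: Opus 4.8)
The plan is to prove the identity directly from Definition \ref{def:xgw} by partitioning the proper colorings of $G \bk e$ and matching the two pieces against $X_{(G,w)}$ and $X_{(G/e,w/e)}$. First I would dispose of the degenerate case where $e$ is a loop: then $G$ has no proper colorings, so $X_{(G,w)} = 0$, while by definition $G/e = G \bk e$ and $w/e = w$, making the right-hand side $X_{(G\bk e,w)} - X_{(G\bk e,w)} = 0$ as well. So assume $e = v_1v_2$ is not a loop.

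Next I would compare the defining sums for $G$ and $G \bk e$. Since $V(G) = V(G\bk e)$ and both carry the same weight $w$, a map $\kappa$ is a proper coloring of $G\bk e$ exactly when it respects every edge other than $e$, and such a $\kappa$ is additionally a proper coloring of $G$ precisely when $\kappa(v_1) \neq \kappa(v_2)$. This splits the defining sum for $X_{(G\bk e,w)}$ into the colorings with $\kappa(v_1)\neq\kappa(v_2)$, whose total contribution is exactly $X_{(G,w)}$, plus the colorings that are proper on $G\bk e$ but satisfy $\kappa(v_1)=\kappa(v_2)$.

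The crux is identifying this second sum with $X_{(G/e,w/e)}$. To each proper coloring $\kappa$ of $G\bk e$ with $\kappa(v_1)=\kappa(v_2)$ I would associate the map $\kappa'$ on $V(G/e)$ given by $\kappa'(v^*)=\kappa(v_1)=\kappa(v_2)$ and $\kappa'(v)=\kappa(v)$ otherwise; this is clearly invertible, and since the edges of $G/e$ are the edges of $G$ other than $e$ with $v_1,v_2$ identified to $v^*$, one checks that $\kappa'$ is a proper coloring of $G/e$ if and only if $\kappa$ is proper on $G\bk e$. The weight rule for contraction is exactly what makes the monomials match: because $(w/e)(v^*) = w(v_1)+w(v_2)$ and $\kappa(v_1)=\kappa(v_2)=\kappa'(v^*)$,
$$\prod_{v \in V(G)} x_{\kappa(v)}^{w(v)} = x_{\kappa'(v^*)}^{w(v_1)+w(v_2)} \prod_{v \neq v^*} x_{\kappa'(v)}^{w(v)} = \prod_{v \in V(G/e)} x_{\kappa'(v)}^{(w/e)(v)},$$
so the bijection preserves monomials and the second sum equals $X_{(G/e,w/e)}$. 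Summing yields $X_{(G\bk e,w)} = X_{(G,w)} + X_{(G/e,w/e)}$, which rearranges to the claim.

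I expect the one genuinely delicate point to be the properness check of the bijection when $G$ is not simple. If $v_1$ and $v_2$ merely share a common neighbor, the contraction produces parallel edges at $v^*$, which is harmless for coloring. But if $G$ has a second edge joining $v_1$ and $v_2$, then no proper coloring of $G \bk e$ can satisfy $\kappa(v_1)=\kappa(v_2)$, so the second sum is empty, while that parallel edge becomes a loop in $G/e$ and forces $X_{(G/e,w/e)} = 0$. Confirming that these two vanishings always coincide — so that the correspondence is genuinely a bijection in every case — is the main bookkeeping to get right.
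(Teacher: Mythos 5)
Your proof is correct and is essentially the standard argument for this lemma (the paper only cites it from \cite{delcon}, where it is proved in exactly this way: split the proper colorings of $G \bk e$ according to whether $\kappa(v_1) = \kappa(v_2)$, and match the equal-color part with $X_{(G/e,w/e)}$ via the additive weight rule). Your attention to the loop and multi-edge cases is also the right bookkeeping, since the paper explicitly allows non-simple graphs.
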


\section{Generalizing Theorem \ref{thm:stanley3.4} to Vertex-Weighted Graphs}

In \cite{delcon}, Spirkl and the first author generalized Theorem \ref{thm:stanley3.3} to vertex-weighted graphs. One of the main challenges was correctly generalizing the notion of counting sinks of acyclic orientations to vertex-weighted graphs $(G,w)$: should a sink vertex $v$ be counted once, or with weight $w(v)$? The answer turns out to be something in between these two: we need to pick not just acyclic orientations, but also sink maps that assign each sink $v$ a nonempty subset of $\{1,2,\dots,w(v)\}$ (intuitively, we view the vertex as consisting of $w(v)$ ``mini-vertices", and we choose a nonempty subset of these to be the ``true" sinks). Likewise, we will see that care needs to be taken in generalizing Theorem \ref{thm:stanley3.4}.

Theorem \ref{thm:stanley3.4}, instead of simply counting sinks of an acyclic orientation, now enumerates the sequence of sinks obtained by recursively deleting the sinks of an acyclic orientation and considering the sinks of the remainder of the orientation. Already there is a minor difficulty in combining this with the notion of sink maps above: intuitively if a sink $v$ is assigned a nontrivial subset of $\{1,2,\dots,w(v)\}$, we would like to then consider the graph where the weight of $v$ is decreased, and consists only of the remainder of $\{1,2,\dots,w(v)\}$. The difficulty here is that it will be beneficial to keep track of when different subsets of $\{1,2,\dots,w(v)\}$ are used, but that is not possible under the definition of vertex-weighted graphs. Therefore, it is necessary to extend the notion of weighted graphs.

\subsection{Generalizations Related to Graphs}

\begin{definition}\label{def:xgwset}

A \textbf{set-weighted graph} $(G,\omega)$ consists of a graph $G$ and a map $\omega: V(G) \rightarrow \mathcal{P}(\N)$ such that
\begin{itemize}
    \item For each $v \in V(G)$, the set $\omega(v)$ is nonempty and finite.
    \item For each $n \in \N$, $n$ occurs as an element of at most one $\omega(v)$.
\end{itemize}

We say that the \emph{integer weight} of $v \in V(G)$ is then $w(v) = |\omega(v)|$.
\medskip

Given a set-weighted graph $(G,\omega)$ and an edge $e \in E(G)$, the \textbf{contraction} $(G,\omega)/e = (G/e, \omega/e)$ is defined analogously to contraction in vertex-weighted graphs, except that where $e = v_1v_2$ is a nonloop edge and $v^*$ is the vertex formed by contraction, we define $(\omega/e)(v^*) = \omega(v_1) \cup \omega(v_2)$, and for a vertex $v \neq v^*$ we have $(\omega/e)(v) = \omega(v)$.

\end{definition}

\begin{definition}\label{def:xgom}

The chromatic symmetric function of a set-weighted graph $(G,\omega)$ is given by
$$
X_{(G,\omega)} = \sum_{\kappa} \prod_{v \in V(G)} x_{\kappa(v)}^{w(v)}.
$$

\end{definition}

It is straightforward to verify that Lemma \ref{lem:delcon} extends to a deletion-contraction relation on set-weighted graphs.

\begin{lemma}\label{lem:delconset}

If $(G,\omega)$ is a set-weighted graph, and $e$ is any edge of $G$, then
$$
X_{(G,\omega)} = X_{(G \bk e, \omega)}-X_{(G/e,\omega/e)}.
$$

\end{lemma}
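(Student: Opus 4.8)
The plan is to prove Lemma \ref{lem:delconset} by mirroring exactly the deletion-contraction argument for vertex-weighted graphs (Lemma \ref{lem:delcon}), observing that the set-weighted structure plays no role in the monomials that actually appear in $X_{(G,\omega)}$, since only the integer weights $w(v) = |\omega(v)|$ enter the exponents in Definition \ref{def:xgom}. First I would fix a nonloop edge $e = v_1v_2$ and classify the colorings $\kappa$ of $G \bk e$ into two groups: those with $\kappa(v_1) \neq \kappa(v_2)$, which are precisely the colorings of $G$, and those with $\kappa(v_1) = \kappa(v_2)$. The key observation is that the second group is in natural bijection with the colorings of $G/e$: a coloring of $G \bk e$ assigning the common color to $v_1, v_2$ corresponds to the coloring of $G/e$ that assigns that same color to the merged vertex $v^*$ and is otherwise unchanged.

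The heart of the argument is then to check that this bijection preserves the associated monomial. On the $G \bk e$ side, the factor contributed by $v_1$ and $v_2$ to $\prod_v x_{\kappa(v)}^{w(v)}$ is $x_c^{w(v_1)} x_c^{w(v_2)} = x_c^{w(v_1) + w(v_2)}$, where $c$ is their shared color. On the $G/e$ side, the merged vertex $v^*$ has integer weight $w/e(v^*) = |(\omega/e)(v^*)| = |\omega(v_1) \cup \omega(v_2)|$, and the disjointness condition in Definition \ref{def:xgwset} (each $n \in \N$ occurs in at most one $\omega(v)$) guarantees $\omega(v_1)$ and $\omega(v_2)$ are disjoint, so $|\omega(v_1) \cup \omega(v_2)| = w(v_1) + w(v_2)$. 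Hence $v^*$ contributes exactly $x_c^{w(v_1)+w(v_2)}$, matching the $G \bk e$ side; all other vertices retain their weights and colors under the bijection. It therefore follows that the generating function over the second group of colorings equals $X_{(G/e, \omega/e)}$, and the generating function over the first group equals $X_{(G,\omega)}$, yielding $X_{(G \bk e, \omega)} = X_{(G,\omega)} + X_{(G/e,\omega/e)}$, which rearranges to the claim. The loop case (where $G/e = G \bk e$ by definition and $e$ admits no proper coloring contribution) is handled separately and trivially.

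The main obstacle, though modest, will be verifying the disjointness step cleanly: one must confirm that the second bullet of Definition \ref{def:xgwset} is genuinely needed and is exactly what makes $|\omega(v_1) \cup \omega(v_2)| = w(v_1) + w(v_2)$ hold, so that the set-weighted contraction faithfully reproduces the integer-weighted contraction rule $(w/e)(v^*) = w(v_1) + w(v_2)$. Once this is in place, the proof reduces to the vertex-weighted case of Lemma \ref{lem:delcon}, and indeed one could alternatively argue that since $X_{(G,\omega)}$ depends only on the integer weights $w$, the identity follows immediately from Lemma \ref{lem:delcon} applied to the underlying vertex-weighted graph, provided the contraction operations agree on integer weights — which is precisely what the disjointness condition ensures. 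Given that the excerpt already asserts this extension is "straightforward to verify," I expect the proof to be very short, and I would keep it to a few sentences emphasizing the color-merging bijection and the disjointness identity.
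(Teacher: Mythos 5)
Your proof is correct and takes essentially the same route the paper intends: the paper gives no explicit argument, merely asserting that Lemma \ref{lem:delcon} ``is straightforward to verify'' to extend, i.e.\ that the set-weighted statement reduces to the vertex-weighted one because $X_{(G,\omega)}$ depends only on the integer weights $w(v)=|\omega(v)|$ and the disjointness condition in Definition \ref{def:xgwset} makes $|\omega(v_1)\cup\omega(v_2)|=w(v_1)+w(v_2)$, so the two contraction rules agree. Your color-splitting bijection and your remark that one can simply invoke Lemma \ref{lem:delcon} on the underlying vertex-weighted graph are both exactly this verification, with the loop and multi-edge cases handled correctly.
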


Thus, we now label the ``mini-vertices" explicitly, without changing the fundamental notion of the integer-weighted chromatic symmetric function.

Now, the definitions that follow illustrate how we combine the sink sequences of Theorem \ref{thm:stanley3.4} with the notion above of sink maps as used in the generalization of Theorem \ref{thm:stanley3.3} in \cite{delcon}.

\begin{definition}
Let $\ell\in\N$ and let $(G,\omega)$ be a set-weighted graph. An \textbf{$\ell$-step weight map} of $(G,\omega)$ is a function $S:V(G)\to \bp{\mathcal{P}(\N)}^{\ell}$ such that for all $v\in V(G)$, we have 
\[\bigsqcup_{i=1}^{\ell}S(v)_i\subseteq \omega(v)\]
where $S(v)_i$ is the $i^{th}$ coordinate of $S(v)$ (note that this is a disjoint union, so each element of $\omega(v)$ occurs in at most one of the $S(v)_i$).

We define the \textbf{$\ell$-step weight sequence} of an $\ell$-step weight map $S$ to be $\wts(G,S)=(s_1,\ldots,s_\ell)$, where for all $i \in \{1,\dots,\ell\}$, we have
\[s_i=\sum_{v\in V(G)}|S(v)_i|.\]

\end{definition}

% Todo: Example here
As an example, consider the set-weighted graph $(G,\omega)$ given in Figure \ref{fig:l-step-weight-map}.
Consider $S:V(G)\to\bp{\mathcal{P}(\N)}^4$ given by
\[S(v_1)=\bp{\varnothing,\varnothing,\varnothing,\{1,2\}},\quad S(v_2)=\bp{\{3,5\}, \{4\}, \varnothing, \varnothing},\]
\begin{equation}S(v_3)=\bp{\varnothing,\varnothing,\{6\},\varnothing},\quad S(v_4)=\bp{\{7\},\{8\},\varnothing,\varnothing}.\label{eq:l-step-weight-map}\end{equation}
Then by our definition, $S$ is a 4-step weight map of $(G,\omega)$. The 4-step weight sequence given by $S$ is $(3, 2, 1, 2)$.

\begin{figure}[hbt]
\begin{center}

  \tikzset{every picture/.style={line width=0.75pt}} %set default line width to 0.75pt        

  \begin{tikzpicture}[x=0.75pt,y=0.75pt,yscale=-1,xscale=1]
  %uncomment if require: \path (0,300); %set diagram left start at 0, and has height of 300
  
  %Shape: Circle [id:dp8208339679780665] 
  \draw  [fill={rgb, 255:red, 0; green, 0; blue, 0 }  ,fill opacity=1 ] (23,18.75) .. controls (23,17.23) and (24.23,16) .. (25.75,16) .. controls (27.27,16) and (28.5,17.23) .. (28.5,18.75) .. controls (28.5,20.27) and (27.27,21.5) .. (25.75,21.5) .. controls (24.23,21.5) and (23,20.27) .. (23,18.75) -- cycle ;
  %Shape: Circle [id:dp43060853051487746] 
  \draw  [fill={rgb, 255:red, 0; green, 0; blue, 0 }  ,fill opacity=1 ] (132,18.75) .. controls (132,17.23) and (133.23,16) .. (134.75,16) .. controls (136.27,16) and (137.5,17.23) .. (137.5,18.75) .. controls (137.5,20.27) and (136.27,21.5) .. (134.75,21.5) .. controls (133.23,21.5) and (132,20.27) .. (132,18.75) -- cycle ;
  %Shape: Circle [id:dp7681812911317674] 
  \draw  [fill={rgb, 255:red, 0; green, 0; blue, 0 }  ,fill opacity=1 ] (79,82.75) .. controls (79,81.23) and (80.23,80) .. (81.75,80) .. controls (83.27,80) and (84.5,81.23) .. (84.5,82.75) .. controls (84.5,84.27) and (83.27,85.5) .. (81.75,85.5) .. controls (80.23,85.5) and (79,84.27) .. (79,82.75) -- cycle ;
  %Shape: Circle [id:dp5569239191541793] 
  \draw  [fill={rgb, 255:red, 0; green, 0; blue, 0 }  ,fill opacity=1 ] (186,82.75) .. controls (186,81.23) and (187.23,80) .. (188.75,80) .. controls (190.27,80) and (191.5,81.23) .. (191.5,82.75) .. controls (191.5,84.27) and (190.27,85.5) .. (188.75,85.5) .. controls (187.23,85.5) and (186,84.27) .. (186,82.75) -- cycle ;
  %Straight Lines [id:da8022137017197717] 
  \draw    (28.5,18.75) -- (132,18.75) ;
  %Straight Lines [id:da327350592712369] 
  \draw    (25.75,21.5) -- (79,82.75) ;
  %Straight Lines [id:da21334792563180605] 
  \draw    (84.5,82.75) -- (134.75,21.5) ;
  %Straight Lines [id:da410916962319519] 
  \draw    (84.5,82.75) -- (186,82.75) ;
  
  % Text Node
  \draw (105,-5) node [anchor=north west][inner sep=0.75pt]  [font=\small]  {$v_{2} ,\{3,4,5\}$};
  % Text Node
  \draw (167,91.4) node [anchor=north west][inner sep=0.75pt]  [font=\small]  {$v_{4} ,\{7,8\}$};
  % Text Node
  \draw (-1,-5) node [anchor=north west][inner sep=0.75pt]  [font=\small]  {$v_{1} ,\{1,2\}$};
  % Text Node
  \draw (65,91.4) node [anchor=north west][inner sep=0.75pt]  [font=\small]  {$v_{3} ,\{6\}$};
  \end{tikzpicture}
  
\end{center}
\captionsetup{skip=2pt}
\caption{A set-weighted graph $(G,\omega)$, where the set associated with each vertex is written next to the vertex label.}
\label{fig:l-step-weight-map}
\end{figure}

\begin{definition}
Let $S$ be an $\ell$-step weight map on $(G, \omega)$. For $i \in \{0, \dots, \ell\}$, define the set-weighted \textbf{graph sequence $(G_i(S), \omega_i(S))$ induced by $S$} (where we may suppress $S$ when it is clear) recursively as follows:
\begin{itemize}
    \item $(G_0, \omega_0) = (G, \omega)$.
    \item For $i\geq 1$, given $(G_{i-1}, \omega_{i-1})$:
    \begin{itemize}
        \item Set $V(G_i) = V(G_{i-1}) \setminus \{v \in V(G_{i-1}) : S(v)_{i} = \omega_{i-1}(v)\}$.
        \item For each $v \in V(G_i)$, set $\omega_i(v) = \omega_{i-1}(v) \bk S(v)_i$.
        \item Set $E(G_i)$ to be the set of all edges of $E(G_{i-1})$ with both endpoints in $V(G_i)$.
        
    \end{itemize}
\end{itemize}

\end{definition}

Intuitively, suppose we are given $G_{i-1}$. For all $v\in V(G_{i-1})$, we remove the mini-vertices given by $S(v)_i$, and we remove the ``whole" vertex if there is no mini-vertex left, and then define the resulting graph to be $G_i$.

Taking the set-weighted graph in Figure \ref{fig:l-step-weight-map} and the weight map given in equation \ref{eq:l-step-weight-map} as an example, the following Figure \ref{fig:graph-sequence} illustrates the process defined in the previous definition. The numbers in red represents the weight sequence $S$.

\begin{figure}[hbt]
\begin{center}

\tikzset{every picture/.style={line width=0.75pt}} %set default line width to 0.75pt        

\begin{tikzpicture}[x=0.75pt,y=0.75pt,yscale=-1,xscale=1]
%uncomment if require: \path (0,300); %set diagram left start at 0, and has height of 300

%Shape: Ellipse [id:dp8208339679780665] 
\draw  [fill={rgb, 255:red, 0; green, 0; blue, 0 }  ,fill opacity=1 ] (23.64,32.2) .. controls (23.64,30.75) and (24.8,29.57) .. (26.22,29.57) .. controls (27.65,29.57) and (28.81,30.75) .. (28.81,32.2) .. controls (28.81,33.65) and (27.65,34.82) .. (26.22,34.82) .. controls (24.8,34.82) and (23.64,33.65) .. (23.64,32.2) -- cycle ;
%Shape: Ellipse [id:dp43060853051487746] 
\draw  [fill={rgb, 255:red, 0; green, 0; blue, 0 }  ,fill opacity=1 ] (125.93,32.2) .. controls (125.93,30.75) and (127.08,29.57) .. (128.51,29.57) .. controls (129.93,29.57) and (131.09,30.75) .. (131.09,32.2) .. controls (131.09,33.65) and (129.93,34.82) .. (128.51,34.82) .. controls (127.08,34.82) and (125.93,33.65) .. (125.93,32.2) -- cycle ;
%Shape: Ellipse [id:dp7681812911317674] 
\draw  [fill={rgb, 255:red, 0; green, 0; blue, 0 }  ,fill opacity=1 ] (76.19,93.25) .. controls (76.19,91.8) and (77.35,90.63) .. (78.77,90.63) .. controls (80.2,90.63) and (81.35,91.8) .. (81.35,93.25) .. controls (81.35,94.7) and (80.2,95.88) .. (78.77,95.88) .. controls (77.35,95.88) and (76.19,94.7) .. (76.19,93.25) -- cycle ;
%Shape: Ellipse [id:dp5569239191541793] 
\draw  [fill={rgb, 255:red, 0; green, 0; blue, 0 }  ,fill opacity=1 ] (176.6,93.25) .. controls (176.6,91.8) and (177.76,90.63) .. (179.18,90.63) .. controls (180.61,90.63) and (181.76,91.8) .. (181.76,93.25) .. controls (181.76,94.7) and (180.61,95.88) .. (179.18,95.88) .. controls (177.76,95.88) and (176.6,94.7) .. (176.6,93.25) -- cycle ;
%Straight Lines [id:da8022137017197717] 
\draw    (28.81,32.2) -- (125.93,32.2) ;
%Straight Lines [id:da327350592712369] 
\draw    (26.22,34.82) -- (76.19,93.25) ;
%Straight Lines [id:da21334792563180605] 
\draw    (81.35,93.25) -- (128.51,34.82) ;
%Straight Lines [id:da410916962319519] 
\draw    (81.35,93.25) -- (176.6,93.25) ;
%Shape: Ellipse [id:dp39121679116941777] 
\draw  [fill={rgb, 255:red, 0; green, 0; blue, 0 }  ,fill opacity=1 ] (234.78,29.34) .. controls (234.78,27.89) and (235.93,26.71) .. (237.36,26.71) .. controls (238.78,26.71) and (239.94,27.89) .. (239.94,29.34) .. controls (239.94,30.78) and (238.78,31.96) .. (237.36,31.96) .. controls (235.93,31.96) and (234.78,30.78) .. (234.78,29.34) -- cycle ;
%Shape: Ellipse [id:dp6251357298136739] 
\draw  [fill={rgb, 255:red, 0; green, 0; blue, 0 }  ,fill opacity=1 ] (337.06,29.34) .. controls (337.06,27.89) and (338.22,26.71) .. (339.64,26.71) .. controls (341.07,26.71) and (342.22,27.89) .. (342.22,29.34) .. controls (342.22,30.78) and (341.07,31.96) .. (339.64,31.96) .. controls (338.22,31.96) and (337.06,30.78) .. (337.06,29.34) -- cycle ;
%Shape: Ellipse [id:dp29223067261397206] 
\draw  [fill={rgb, 255:red, 0; green, 0; blue, 0 }  ,fill opacity=1 ] (287.33,90.39) .. controls (287.33,88.94) and (288.48,87.77) .. (289.91,87.77) .. controls (291.33,87.77) and (292.49,88.94) .. (292.49,90.39) .. controls (292.49,91.84) and (291.33,93.01) .. (289.91,93.01) .. controls (288.48,93.01) and (287.33,91.84) .. (287.33,90.39) -- cycle ;
%Shape: Ellipse [id:dp35875447889046175] 
\draw  [fill={rgb, 255:red, 0; green, 0; blue, 0 }  ,fill opacity=1 ] (387.73,90.39) .. controls (387.73,88.94) and (388.89,87.77) .. (390.32,87.77) .. controls (391.74,87.77) and (392.9,88.94) .. (392.9,90.39) .. controls (392.9,91.84) and (391.74,93.01) .. (390.32,93.01) .. controls (388.89,93.01) and (387.73,91.84) .. (387.73,90.39) -- cycle ;
%Straight Lines [id:da7630027319771191] 
\draw    (239.94,29.34) -- (337.06,29.34) ;
%Straight Lines [id:da8335595493682899] 
\draw    (237.36,31.96) -- (287.33,90.39) ;
%Straight Lines [id:da10775822455831752] 
\draw    (292.49,90.39) -- (339.64,31.96) ;
%Straight Lines [id:da26370365101410553] 
\draw    (292.49,90.39) -- (387.73,90.39) ;
%Shape: Ellipse [id:dp6207797341245707] 
\draw  [fill={rgb, 255:red, 0; green, 0; blue, 0 }  ,fill opacity=1 ] (435.59,30.29) .. controls (435.59,28.84) and (436.75,27.67) .. (438.17,27.67) .. controls (439.6,27.67) and (440.75,28.84) .. (440.75,30.29) .. controls (440.75,31.74) and (439.6,32.91) .. (438.17,32.91) .. controls (436.75,32.91) and (435.59,31.74) .. (435.59,30.29) -- cycle ;
%Shape: Ellipse [id:dp684039482168443] 
\draw  [fill={rgb, 255:red, 0; green, 0; blue, 0 }  ,fill opacity=1 ] (488.14,91.34) .. controls (488.14,89.9) and (489.3,88.72) .. (490.72,88.72) .. controls (492.15,88.72) and (493.3,89.9) .. (493.3,91.34) .. controls (493.3,92.79) and (492.15,93.97) .. (490.72,93.97) .. controls (489.3,93.97) and (488.14,92.79) .. (488.14,91.34) -- cycle ;
%Straight Lines [id:da5260677654354218] 
\draw    (438.17,32.91) -- (488.14,91.34) ;
%Shape: Ellipse [id:dp40060504993709434] 
\draw  [fill={rgb, 255:red, 0; green, 0; blue, 0 }  ,fill opacity=1 ] (566.03,44.6) .. controls (566.03,43.15) and (567.18,41.98) .. (568.61,41.98) .. controls (570.03,41.98) and (571.19,43.15) .. (571.19,44.6) .. controls (571.19,46.05) and (570.03,47.22) .. (568.61,47.22) .. controls (567.18,47.22) and (566.03,46.05) .. (566.03,44.6) -- cycle ;
%Straight Lines [id:da6967877605329666] 
\draw [color={rgb, 255:red, 0; green, 0; blue, 0 }  ,draw opacity=1 ][line width=1]    (181.29,62.01) -- (228.49,62.01) ;
\draw [shift={(231.49,62.01)}, rotate = 180] [color={rgb, 255:red, 0; green, 0; blue, 0 }  ,draw opacity=1 ][line width=1]    (14.21,-4.28) .. controls (9.04,-1.82) and (4.3,-0.39) .. (0,0) .. controls (4.3,0.39) and (9.04,1.82) .. (14.21,4.28)   ;
%Straight Lines [id:da6150197037182246] 
\draw [color={rgb, 255:red, 0; green, 0; blue, 0 }  ,draw opacity=1 ][line width=1]    (383.98,62.01) -- (431.18,62.01) ;
\draw [shift={(434.18,62.01)}, rotate = 180] [color={rgb, 255:red, 0; green, 0; blue, 0 }  ,draw opacity=1 ][line width=1]    (14.21,-4.28) .. controls (9.04,-1.82) and (4.3,-0.39) .. (0,0) .. controls (4.3,0.39) and (9.04,1.82) .. (14.21,4.28)   ;
%Straight Lines [id:da18568588030646982] 
\draw [color={rgb, 255:red, 0; green, 0; blue, 0 }  ,draw opacity=1 ][line width=1]    (510,62.01) -- (550.34,62.01) ;
\draw [shift={(553.34,62.01)}, rotate = 180] [color={rgb, 255:red, 0; green, 0; blue, 0 }  ,draw opacity=1 ][line width=1]    (14.21,-4.28) .. controls (9.04,-1.82) and (4.3,-0.39) .. (0,0) .. controls (4.3,0.39) and (9.04,1.82) .. (14.21,4.28)   ;

% Text Node
\draw (100.25,5.76) node [anchor=north west][inner sep=0.75pt]  [font=\small]  {$v_{2} ,\{\textcolor[rgb]{0.82,0.01,0.11}{3} ,4,\textcolor[rgb]{0.82,0.01,0.11}{5}\}$};
% Text Node
\draw (156.08,101.15) node [anchor=north west][inner sep=0.75pt]  [font=\small]  {$v_{4} ,\{\textcolor[rgb]{0.82,0.01,0.11}{7} ,8\}$};
% Text Node
\draw (2.18,4.8) node [anchor=north west][inner sep=0.75pt]  [font=\small]  {$v_{1} ,\{1,2\}$};
% Text Node
\draw (59.85,101.15) node [anchor=north west][inner sep=0.75pt]  [font=\small]  {$v_{3} ,\{6\}$};
% Text Node
\draw (312.28,2.89) node [anchor=north west][inner sep=0.75pt]  [font=\small]  {$v_{2} ,\{\textcolor[rgb]{0.82,0.01,0.11}{4}\}$};
% Text Node
\draw (367.64,98.29) node [anchor=north west][inner sep=0.75pt]  [font=\small]  {$v_{4} ,\{\textcolor[rgb]{0.82,0.01,0.11}{8}\}$};
% Text Node
\draw (213.32,1.94) node [anchor=north west][inner sep=0.75pt]  [font=\small]  {$v_{1} ,\{1,2\}$};
% Text Node
\draw (270.99,98.29) node [anchor=north west][inner sep=0.75pt]  [font=\small]  {$v_{3} ,\{6\}$};
% Text Node
\draw (414.13,2.89) node [anchor=north west][inner sep=0.75pt]  [font=\small]  {$v_{1} ,\{1,2\}$};
% Text Node
\draw (471.8,99.25) node [anchor=north west][inner sep=0.75pt]  [font=\small]  {$v_{3} ,\{\textcolor[rgb]{0.82,0.01,0.11}{6}\}$};
% Text Node
\draw (544.56,17.2) node [anchor=north west][inner sep=0.75pt]  [font=\small]  {$v_{1} ,\{1,2\}$};
% Text Node
\draw (78.1,136.41) node [anchor=north west][inner sep=0.75pt]    {$( G_{0} ,\omega _{0})$};
% Text Node
\draw (280.79,135.45) node [anchor=north west][inner sep=0.75pt]    {$( G_{1} ,\omega _{1})$};
% Text Node
\draw (445.94,135.45) node [anchor=north west][inner sep=0.75pt]    {$( G_{2} ,\omega _{2})$};
% Text Node
\draw (545.41,135.45) node [anchor=north west][inner sep=0.75pt]    {$( G_{3} ,\omega _{3})$};

\end{tikzpicture}
\end{center}
\captionsetup{skip=2pt}
\caption{Graph sequence formed by the 4-step weight sequence $S$}
\label{fig:graph-sequence}
\end{figure}

Essentially, we care about when an $\ell$-step weight map $S$ yields a graph sequence $(G_i,\omega_i)$ that corresponds to the graphs and sinks that are recursively formed in computing the sink sequence of an acyclic orientation $\gamma$ of $(G,\omega)$. However, we write the definitions above to not depend inherently on such a choice of $\gamma$, because it will be easier for proofs to consider all choices of $\gamma$ and $S$ and discard those that do not work together.

\begin{definition}
Given a set-weighted graph $(G,\omega)$ and an acyclic orientation $\gamma$ of $G$, we say that an $\ell$-step weight map $S$ of $(G,\omega)$ is \textbf{$\gamma$-admissible} if for all $i \in \{1,\dots,\ell\}$ and for all $v \in V(G)$, it holds that $S(v)_i\neq \varnothing$ if and only if $v \in V(G_{i-1})$ and $v$ is a sink of the restriction of $\gamma$ to  $G_{i-1}$. When $S$ is $\gamma$-admissible, we will denote its corresponding weight sequence as $\wts(G,\gamma,S)$.
\end{definition}

We again take the graph $(G,\omega)$ in Figure \ref{fig:l-step-weight-map} and the 4-step weight map $S$ in equation \ref{eq:l-step-weight-map} as an example. We give an acyclic orientation $\gamma$ of $G$ by directing the edges as $v_1\to v_2$, $v_1\to v_3$, $v_3\to v_2$, and $v_3\to v_4$. Let us visualize the graph sequence obtained from $(G,\omega)$, $\gamma$, and $S$.

\begin{figure}[hbt]
\begin{center}

  \tikzset{every picture/.style={line width=0.75pt}} %set default line width to 0.75pt        

  \begin{tikzpicture}[x=0.75pt,y=0.75pt,yscale=-1,xscale=1]
  %uncomment if require: \path (0,300); %set diagram left start at 0, and has height of 300
  
  %Shape: Ellipse [id:dp8208339679780665] 
  \draw  [fill={rgb, 255:red, 0; green, 0; blue, 0 }  ,fill opacity=1 ] (23.64,32.2) .. controls (23.64,30.75) and (24.8,29.57) .. (26.22,29.57) .. controls (27.65,29.57) and (28.81,30.75) .. (28.81,32.2) .. controls (28.81,33.65) and (27.65,34.82) .. (26.22,34.82) .. controls (24.8,34.82) and (23.64,33.65) .. (23.64,32.2) -- cycle ;
  %Shape: Ellipse [id:dp43060853051487746] 
  \draw  [fill={rgb, 255:red, 0; green, 0; blue, 0 }  ,fill opacity=1 ] (125.93,32.2) .. controls (125.93,30.75) and (127.08,29.57) .. (128.51,29.57) .. controls (129.93,29.57) and (131.09,30.75) .. (131.09,32.2) .. controls (131.09,33.65) and (129.93,34.82) .. (128.51,34.82) .. controls (127.08,34.82) and (125.93,33.65) .. (125.93,32.2) -- cycle ;
  %Shape: Ellipse [id:dp7681812911317674] 
  \draw  [fill={rgb, 255:red, 0; green, 0; blue, 0 }  ,fill opacity=1 ] (76.19,93.25) .. controls (76.19,91.8) and (77.35,90.63) .. (78.77,90.63) .. controls (80.2,90.63) and (81.35,91.8) .. (81.35,93.25) .. controls (81.35,94.7) and (80.2,95.88) .. (78.77,95.88) .. controls (77.35,95.88) and (76.19,94.7) .. (76.19,93.25) -- cycle ;
  %Shape: Ellipse [id:dp5569239191541793] 
  \draw  [fill={rgb, 255:red, 0; green, 0; blue, 0 }  ,fill opacity=1 ] (176.6,93.25) .. controls (176.6,91.8) and (177.76,90.63) .. (179.18,90.63) .. controls (180.61,90.63) and (181.76,91.8) .. (181.76,93.25) .. controls (181.76,94.7) and (180.61,95.88) .. (179.18,95.88) .. controls (177.76,95.88) and (176.6,94.7) .. (176.6,93.25) -- cycle ;
  %Straight Lines [id:da8022137017197717] 
  \draw    (28.81,32.2) -- (123.93,32.2) ;
  \draw [shift={(125.93,32.2)}, rotate = 180] [color={rgb, 255:red, 0; green, 0; blue, 0 }  ][line width=0.75]    (10.93,-3.29) .. controls (6.95,-1.4) and (3.31,-0.3) .. (0,0) .. controls (3.31,0.3) and (6.95,1.4) .. (10.93,3.29)   ;
  %Straight Lines [id:da327350592712369] 
  \draw    (26.22,34.82) -- (74.89,91.73) ;
  \draw [shift={(76.19,93.25)}, rotate = 229.46] [color={rgb, 255:red, 0; green, 0; blue, 0 }  ][line width=0.75]    (10.93,-3.29) .. controls (6.95,-1.4) and (3.31,-0.3) .. (0,0) .. controls (3.31,0.3) and (6.95,1.4) .. (10.93,3.29)   ;
  %Straight Lines [id:da21334792563180605] 
  \draw    (81.35,93.25) -- (127.25,36.38) ;
  \draw [shift={(128.51,34.82)}, rotate = 128.9] [color={rgb, 255:red, 0; green, 0; blue, 0 }  ][line width=0.75]    (10.93,-3.29) .. controls (6.95,-1.4) and (3.31,-0.3) .. (0,0) .. controls (3.31,0.3) and (6.95,1.4) .. (10.93,3.29)   ;
  %Straight Lines [id:da410916962319519] 
  \draw    (81.35,93.25) -- (174.6,93.25) ;
  \draw [shift={(176.6,93.25)}, rotate = 180] [color={rgb, 255:red, 0; green, 0; blue, 0 }  ][line width=0.75]    (10.93,-3.29) .. controls (6.95,-1.4) and (3.31,-0.3) .. (0,0) .. controls (3.31,0.3) and (6.95,1.4) .. (10.93,3.29)   ;
  %Shape: Ellipse [id:dp39121679116941777] 
  \draw  [fill={rgb, 255:red, 0; green, 0; blue, 0 }  ,fill opacity=1 ] (234.78,29.34) .. controls (234.78,27.89) and (235.93,26.71) .. (237.36,26.71) .. controls (238.78,26.71) and (239.94,27.89) .. (239.94,29.34) .. controls (239.94,30.78) and (238.78,31.96) .. (237.36,31.96) .. controls (235.93,31.96) and (234.78,30.78) .. (234.78,29.34) -- cycle ;
  %Shape: Ellipse [id:dp6251357298136739] 
  \draw  [fill={rgb, 255:red, 0; green, 0; blue, 0 }  ,fill opacity=1 ] (337.06,29.34) .. controls (337.06,27.89) and (338.22,26.71) .. (339.64,26.71) .. controls (341.07,26.71) and (342.22,27.89) .. (342.22,29.34) .. controls (342.22,30.78) and (341.07,31.96) .. (339.64,31.96) .. controls (338.22,31.96) and (337.06,30.78) .. (337.06,29.34) -- cycle ;
  %Shape: Ellipse [id:dp29223067261397206] 
  \draw  [fill={rgb, 255:red, 0; green, 0; blue, 0 }  ,fill opacity=1 ] (287.33,90.39) .. controls (287.33,88.94) and (288.48,87.77) .. (289.91,87.77) .. controls (291.33,87.77) and (292.49,88.94) .. (292.49,90.39) .. controls (292.49,91.84) and (291.33,93.01) .. (289.91,93.01) .. controls (288.48,93.01) and (287.33,91.84) .. (287.33,90.39) -- cycle ;
  %Shape: Ellipse [id:dp35875447889046175] 
  \draw  [fill={rgb, 255:red, 0; green, 0; blue, 0 }  ,fill opacity=1 ] (387.73,90.39) .. controls (387.73,88.94) and (388.89,87.77) .. (390.32,87.77) .. controls (391.74,87.77) and (392.9,88.94) .. (392.9,90.39) .. controls (392.9,91.84) and (391.74,93.01) .. (390.32,93.01) .. controls (388.89,93.01) and (387.73,91.84) .. (387.73,90.39) -- cycle ;
  %Straight Lines [id:da7630027319771191] 
  \draw    (239.94,29.34) -- (335.06,29.34) ;
  \draw [shift={(337.06,29.34)}, rotate = 180] [color={rgb, 255:red, 0; green, 0; blue, 0 }  ][line width=0.75]    (10.93,-3.29) .. controls (6.95,-1.4) and (3.31,-0.3) .. (0,0) .. controls (3.31,0.3) and (6.95,1.4) .. (10.93,3.29)   ;
  %Straight Lines [id:da8335595493682899] 
  \draw    (237.36,31.96) -- (286.03,88.87) ;
  \draw [shift={(287.33,90.39)}, rotate = 229.46] [color={rgb, 255:red, 0; green, 0; blue, 0 }  ][line width=0.75]    (10.93,-3.29) .. controls (6.95,-1.4) and (3.31,-0.3) .. (0,0) .. controls (3.31,0.3) and (6.95,1.4) .. (10.93,3.29)   ;
  %Straight Lines [id:da10775822455831752] 
  \draw    (292.49,90.39) -- (338.39,33.52) ;
  \draw [shift={(339.64,31.96)}, rotate = 128.9] [color={rgb, 255:red, 0; green, 0; blue, 0 }  ][line width=0.75]    (10.93,-3.29) .. controls (6.95,-1.4) and (3.31,-0.3) .. (0,0) .. controls (3.31,0.3) and (6.95,1.4) .. (10.93,3.29)   ;
  %Straight Lines [id:da26370365101410553] 
  \draw    (292.49,90.39) -- (385.73,90.39) ;
  \draw [shift={(387.73,90.39)}, rotate = 180] [color={rgb, 255:red, 0; green, 0; blue, 0 }  ][line width=0.75]    (10.93,-3.29) .. controls (6.95,-1.4) and (3.31,-0.3) .. (0,0) .. controls (3.31,0.3) and (6.95,1.4) .. (10.93,3.29)   ;
  %Shape: Ellipse [id:dp6207797341245707] 
  \draw  [fill={rgb, 255:red, 0; green, 0; blue, 0 }  ,fill opacity=1 ] (435.59,30.29) .. controls (435.59,28.84) and (436.75,27.67) .. (438.17,27.67) .. controls (439.6,27.67) and (440.75,28.84) .. (440.75,30.29) .. controls (440.75,31.74) and (439.6,32.91) .. (438.17,32.91) .. controls (436.75,32.91) and (435.59,31.74) .. (435.59,30.29) -- cycle ;
  %Shape: Ellipse [id:dp684039482168443] 
  \draw  [fill={rgb, 255:red, 0; green, 0; blue, 0 }  ,fill opacity=1 ] (488.14,91.34) .. controls (488.14,89.9) and (489.3,88.72) .. (490.72,88.72) .. controls (492.15,88.72) and (493.3,89.9) .. (493.3,91.34) .. controls (493.3,92.79) and (492.15,93.97) .. (490.72,93.97) .. controls (489.3,93.97) and (488.14,92.79) .. (488.14,91.34) -- cycle ;
  %Straight Lines [id:da5260677654354218] 
  \draw    (438.17,32.91) -- (486.84,89.82) ;
  \draw [shift={(488.14,91.34)}, rotate = 229.46] [color={rgb, 255:red, 0; green, 0; blue, 0 }  ][line width=0.75]    (10.93,-3.29) .. controls (6.95,-1.4) and (3.31,-0.3) .. (0,0) .. controls (3.31,0.3) and (6.95,1.4) .. (10.93,3.29)   ;
  %Shape: Ellipse [id:dp40060504993709434] 
  \draw  [fill={rgb, 255:red, 0; green, 0; blue, 0 }  ,fill opacity=1 ] (566.03,44.6) .. controls (566.03,43.15) and (567.18,41.98) .. (568.61,41.98) .. controls (570.03,41.98) and (571.19,43.15) .. (571.19,44.6) .. controls (571.19,46.05) and (570.03,47.22) .. (568.61,47.22) .. controls (567.18,47.22) and (566.03,46.05) .. (566.03,44.6) -- cycle ;
  %Straight Lines [id:da6967877605329666] 
  \draw [color={rgb, 255:red, 0; green, 0; blue, 0 }  ,draw opacity=1 ][line width=1]    (181.29,62.01) -- (228.49,62.01) ;
  \draw [shift={(231.49,62.01)}, rotate = 180] [color={rgb, 255:red, 0; green, 0; blue, 0 }  ,draw opacity=1 ][line width=1]    (14.21,-4.28) .. controls (9.04,-1.82) and (4.3,-0.39) .. (0,0) .. controls (4.3,0.39) and (9.04,1.82) .. (14.21,4.28)   ;
  %Straight Lines [id:da6150197037182246] 
  \draw [color={rgb, 255:red, 0; green, 0; blue, 0 }  ,draw opacity=1 ][line width=1]    (383.98,62.01) -- (431.18,62.01) ;
  \draw [shift={(434.18,62.01)}, rotate = 180] [color={rgb, 255:red, 0; green, 0; blue, 0 }  ,draw opacity=1 ][line width=1]    (14.21,-4.28) .. controls (9.04,-1.82) and (4.3,-0.39) .. (0,0) .. controls (4.3,0.39) and (9.04,1.82) .. (14.21,4.28)   ;
  %Straight Lines [id:da18568588030646982] 
  \draw [color={rgb, 255:red, 0; green, 0; blue, 0 }  ,draw opacity=1 ][line width=1]    (510,62.01) -- (550.34,62.01) ;
  \draw [shift={(553.34,62.01)}, rotate = 180] [color={rgb, 255:red, 0; green, 0; blue, 0 }  ,draw opacity=1 ][line width=1]    (14.21,-4.28) .. controls (9.04,-1.82) and (4.3,-0.39) .. (0,0) .. controls (4.3,0.39) and (9.04,1.82) .. (14.21,4.28)   ;
  
  % Text Node
  \draw (100.25,5.76) node [anchor=north west][inner sep=0.75pt]  [font=\small]  {$v_{2} ,\{\textcolor[rgb]{0.82,0.01,0.11}{3} ,4,\textcolor[rgb]{0.82,0.01,0.11}{5}\}$};
  % Text Node
  \draw (156.08,101.15) node [anchor=north west][inner sep=0.75pt]  [font=\small]  {$v_{4} ,\{\textcolor[rgb]{0.82,0.01,0.11}{7} ,8\}$};
  % Text Node
  \draw (2.18,4.8) node [anchor=north west][inner sep=0.75pt]  [font=\small]  {$v_{1} ,\{1,2\}$};
  % Text Node
  \draw (59.85,101.15) node [anchor=north west][inner sep=0.75pt]  [font=\small]  {$v_{3} ,\{6\}$};
  % Text Node
  \draw (312.28,2.89) node [anchor=north west][inner sep=0.75pt]  [font=\small]  {$v_{2} ,\{\textcolor[rgb]{0.82,0.01,0.11}{4}\}$};
  % Text Node
  \draw (367.64,98.29) node [anchor=north west][inner sep=0.75pt]  [font=\small]  {$v_{4} ,\{\textcolor[rgb]{0.82,0.01,0.11}{8}\}$};
  % Text Node
  \draw (213.32,1.94) node [anchor=north west][inner sep=0.75pt]  [font=\small]  {$v_{1} ,\{1,2\}$};
  % Text Node
  \draw (270.99,98.29) node [anchor=north west][inner sep=0.75pt]  [font=\small]  {$v_{3} ,\{6\}$};
  % Text Node
  \draw (414.13,2.89) node [anchor=north west][inner sep=0.75pt]  [font=\small]  {$v_{1} ,\{1,2\}$};
  % Text Node
  \draw (471.8,99.25) node [anchor=north west][inner sep=0.75pt]  [font=\small]  {$v_{3} ,\{\textcolor[rgb]{0.82,0.01,0.11}{6}\}$};
  % Text Node
  \draw (544.56,17.2) node [anchor=north west][inner sep=0.75pt]  [font=\small]  {$v_{1} ,\{1,2\}$};
  % Text Node
  \draw (78.1,136.41) node [anchor=north west][inner sep=0.75pt]    {$( G_{0} ,\omega _{0})$};
  % Text Node
  \draw (280.79,135.45) node [anchor=north west][inner sep=0.75pt]    {$( G_{1} ,\omega _{1})$};
  % Text Node
  \draw (445.94,135.45) node [anchor=north west][inner sep=0.75pt]    {$( G_{2} ,\omega _{2})$};
  % Text Node
  \draw (545.41,135.45) node [anchor=north west][inner sep=0.75pt]    {$( G_{3} ,\omega _{3})$};

  \end{tikzpicture}

\end{center}
\captionsetup{skip=2pt}
\caption{Graph sequence formed by $S$ and $\gamma$}
\label{fig:directed-graph-sequence}
\end{figure}

We note that it does hold that for all $i\in\{1,\ldots,\ell\}$ and for all vertices $v$, $S(v)_i\neq \varnothing$ if and only if $v\in V(G_{i-1})$ and $v$ is a sink of the restriction of $\gamma$ to $G_{i-1}$. Therefore, $S$ is $\gamma$-admissible.

For proving our main theorem, we will often be interested in the case where during iteration with respect to an acyclic orientation $\gamma$ we delete all the mini-vertices of each sink up to some point. To make this particular discussion easier we introduce two more terms.

\begin{definition}\label{def:sinkseq}
Let $\gamma$ be an acyclic orientation of $G$. For each $i\in\N$, recursively define graphs by $G^0 = G$ and $G^i$ the graph formed by deleting all sinks from $G^{i-1}$ induced by $\gamma$. Define $\Sink_i(\gamma)$ be the set of all sinks of the restriction of $\gamma$ to $G^{i-1}$. For each $i$, let $\sink_i(\gamma)=|\Sink_i(\gamma)|$.

The \textbf{type} of $\gamma$ is the sequence $\lambda$ of positive integers such that 
\[\lambda_i=\sum_{v\in \Sink_i(\gamma)}w(v)\]
for all $i$ such that $Sink_i(\gamma) \neq \varnothing$. In particular, if we rearrange the terms of $\lambda$ in non-increasing order, then we get an allowable partition of $(G,\omega)$.
\end{definition}

This is just generalizing the equivalent notion of Stanley \cite{stanley} to weighted graphs.

\begin{definition}\label{def:standard}
Let $S$ be an $(\ell+1)$-step weight map of $(G,\omega)$, and let $\gamma$ be an acyclic orientation of $G$. We say that $S$ is in \textbf{$\gamma$-standard form} if for all $1\leq i\leq\ell$,
\[S(v)_i=\begin{cases}
\omega(v) & \text{ if } v\in \Sink_i(\gamma) \\
\varnothing & \text{ if } v\notin \Sink_i(\gamma),
\end{cases}\]
and $S(v)_{\ell+1}\neq \varnothing$ if and only if $v\in\Sink_{\ell+1}(\gamma)$. Note that if $S$ is in $\gamma$-standard form, then $S$ is $\gamma$-admissible.
\end{definition}

Again consider $(G,\omega)$, $S$, and $\gamma$ given in previous examples. Note that the $S$ we gave previously is \textit{not} in $\gamma$-standard form since, for example, $S(v_2)_1=\{3,5\}$ is not the entire weight $\omega(v_2)=\{3,4,5\}$.
One can check that the following $S'$
\[S'(v_1)=\bp{\varnothing,\varnothing,\{1,2\}},\quad S'(v_2)=\bp{\{3, 4, 5\},\varnothing,\varnothing},\]
\[S'(v_3)=\bp{\varnothing,\{6\},\varnothing},\quad S'(v_4)=\bp{\{7,8\},\varnothing,\varnothing}\]
\textit{is} in $\gamma$-standard form.

\subsection{Generalizations Related to Partitions and Symmetric Functions}

We also introduce notation for the specific sums of $e$-basis coefficients we will be looking at.

\begin{definition}\label{def:sigmamuj}
Let $f\in \Lambda^d$ and write $f=\sum_{\lambda\vdash d}c_{\lambda}e_{\lambda}$. Let $\mu=(\mu_1,\ldots,\mu_\ell)\vdash r\leq d$.
Then we define 
\[\sigma_{\mu}(f):=\sum_{\substack{\lambda\vdash d \\ \lambda'=(\mu_1,\ldots,\mu_\ell,\ldots)}}c_\lambda\]
where the sum ranges over all $\lm \vdash d$ such that the first $\ell$ parts of $\lm'$ are nonzero and satisfy $\lm'_i = \mu_i$ for all $i \in \{1,\dots,\ell\}$.
\end{definition}

We also briefly state without proof a well-known property of symmetric function expansions (\cite[Theorem 7.4.4]{stanleybook}) that will be used repeatedly.
\begin{lemma}[\cite{stanleybook}]\label{lem:edom}

Let $\lm$ and $\mu$ be partitions of the same integer such that $[e_{\mu}]m_{\lm} \neq 0$. Then $\mu$ dominates $\lm'$.

\end{lemma}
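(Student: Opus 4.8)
The plan is to reduce the statement to the structure of the transition matrix between the $e$- and $m$-bases and then invert it, tracking the conjugation carefully throughout. First I would expand $e_\mu = e_{\mu_1}\cdots e_{\mu_\ell}$ directly from the definition and read off that $[m_\lm]e_\mu = M_{\mu\lm}$, where $M_{\mu\lm}$ is the number of $0/1$ matrices with row-sum vector $\mu$ and column-sum vector $\lm$: each factor $e_{\mu_i}$ contributes a choice of a $\mu_i$-element set $S_i$ of variable indices, and recording $j \in S_i$ as a $1$ in position $(i,j)$ biject these choices with such matrices, so the coefficient of any monomial of weight $\lm$ is exactly $M_{\mu\lm}$.

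Next I would establish two elementary facts about $M_{\mu\lm}$. First, if $M_{\mu\lm}\neq 0$ then $\lm \trianglelefteq \mu'$: counting the $1$'s in the first $k$ columns of any such matrix gives $\sum_{j\le k}\lm_j = \sum_i |S_i\cap\{1,\dots,k\}| \le \sum_i \min(\mu_i,k) = \sum_{j\le k}\mu'_j$, where the last equality is the standard identity $\sum_i \min(\mu_i,k)=\sum_{j\le k}\mu'_j$ (valid since $\mu'_j = \#\{i : \mu_i \ge j\}$). Second, $M_{\mu\mu'}=1$: forcing equality above for every $k$ forces $S_i=\{1,\dots,\mu_i\}$, so the extremal matrix is unique.

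Then comes the core step: reindex so that the transition matrix becomes unitriangular. Setting $A_{\nu\lm} := [m_\lm]e_{\nu'}=M_{\nu'\lm}$, the two facts say precisely that $A_{\nu\lm}=0$ unless $\lm\trianglelefteq\nu$ and $A_{\nu\nu}=1$; that is, $A$ is unitriangular with respect to dominance order. Since the inverse of a matrix that is unitriangular for a partial order is again unitriangular for the same order, writing $m_\lm = \sum_\nu (A^{-1})_{\lm\nu}\, e_{\nu'}$ shows that the only $e_{\nu'}$ occurring satisfy $\nu\trianglelefteq\lm$. Finally, setting $\mu=\nu'$ gives $[e_\mu]m_\lm\neq 0 \Rightarrow \mu'\trianglelefteq\lm$, which by the order-reversing property of conjugation is equivalent to $\mu\trianglerighteq\lm'$, i.e.\ $\mu$ dominates $\lm'$, as claimed.

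The main obstacle here is bookkeeping rather than depth: one must apply conjugation at exactly the right places, since the direct expansion naturally produces a bound of the form $\lm\trianglelefteq\mu'$ on $[m_\lm]e_\mu$, whereas the statement concerns $[e_\mu]m_\lm$ and asserts $\mu\trianglerighteq\lm'$ — so both an inversion and a transpose are needed. One must also be careful to invoke the correct principle that unitriangularity with respect to the (merely partial) dominance order is preserved under matrix inversion. A cleaner alternative I would keep in reserve is to bypass inversion entirely by using the Hall-inner-product duality $\langle m_\lm,h_\mu\rangle=\delta_{\lm\mu}$ together with the involution $\omega$ and the forgotten basis $f_\mu=\omega(m_\mu)$, but the triangular-inversion route above is the most self-contained.
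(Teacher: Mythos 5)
Your proof is correct. Note that the paper does not prove this lemma at all --- it is stated without proof as a citation to \cite[Theorem 7.4.4]{stanleybook} --- so there is no internal argument to compare against; your write-up is essentially the standard proof from that reference: the $0/1$-matrix interpretation of $[m_\lambda]e_\mu$, the Gale--Ryser-type partial-sum bound giving unitriangularity of the transition matrix with respect to dominance, uniqueness of the extremal matrix giving the diagonal entries equal to $1$, and inversion plus the order-reversing property of conjugation. All the delicate points (equality being forced termwise in the partial-sum argument, and the fact that the inverse of a matrix unitriangular with respect to a partial order is again unitriangular for that order) are handled correctly.
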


Before proceeding to the main theorem of this section, we first prove auxiliary lemmas that will be necessary.

\begin{lemma}\label{lem:mu+1^k}
Let $\mu$ be a partition, and let $\lambda\vdash|\mu|$.
Let $k\geq \mu_1$ be given.
Then 
\[[e_\lambda]m_{\mu}=[e_{\lambda+1^k}]m_{(k,\mu)}\]
where $\lm+1^k$ is the partition formed by adding $1$ to the first $k$ parts of $\lm$ (extending $\lm$ with $0$s if necessary to make $\ell(\lm) \geq k$), and if $\mu = (\mu_1, \dots, \mu_{\ell(\mu)})$ then $(k,\mu) = (k, \mu_1, \dots, \mu_{\ell(\mu)})$.
\end{lemma}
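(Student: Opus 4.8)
First I would introduce a fresh variable $x_0$ and compare the coefficient of $x_0^k$ on the two sides of the $e$-expansion of $m_{(k,\mu)}$, regarded as symmetric functions in the variables $x_0,x_1,x_2,\dots$. On the monomial side this extraction is clean: since $k\geq\mu_1$, every monomial of $m_{(k,\mu)}$ in which $x_0$ carries exponent exactly $k$ is obtained uniquely by prepending $x_0^k$ to a monomial of type $\mu$ in $x_1,x_2,\dots$, so that
\[
[x_0^k]\,m_{(k,\mu)}(x_0,x_1,x_2,\dots)=m_\mu(x_1,x_2,\dots).
\]
This is the identity I would establish first, by a direct bijection on monomials, using $k\geq\mu_1$ to guarantee that deleting one part equal to $k$ from $(k,\mu)$ returns $\mu$.

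Next I would perform the same extraction on the elementary side. Writing $m_{(k,\mu)}=\sum_\nu[e_\nu]m_{(k,\mu)}\,e_\nu$ and using the multilinearity relation $e_n(x_0,x)=e_n(x)+x_0\,e_{n-1}(x)$, the coefficient of $x_0^k$ in $e_\nu(x_0,x)$ is $\sum_{T}e_{\nu^{(T)}}(x)$, where $T$ ranges over the $k$-element subsets of the parts of $\nu$ and $\nu^{(T)}$ denotes $\nu$ with the parts indexed by $T$ each decreased by $1$ (a part that drops to $0$ simply disappears, via the convention $e_0=1$). In particular this extraction vanishes unless $l(\nu)\geq k$.

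The crux, and the step I expect to require the most care, is showing that the resulting double sum over pairs $(\nu,T)$ collapses to a single term. Here I would invoke Lemma \ref{lem:edom}: any $\nu$ with $[e_\nu]m_{(k,\mu)}\neq 0$ must dominate $(k,\mu)'$, and because $k\geq\mu_1$ the partition $(k,\mu)'$ has exactly $k$ parts, which forces $l(\nu)\leq k$. Combined with the constraint $l(\nu)\geq k$ coming from the extraction, only partitions $\nu$ with exactly $k$ parts survive, and for these the sole admissible choice is $T$ equal to the full set of parts, giving $\nu^{(T)}=\nu-1^k$. Equating the two extractions then yields
\[
m_\mu=\sum_{\nu:\,l(\nu)=k}[e_\nu]m_{(k,\mu)}\,e_{\nu-1^k},
\]
and since $\nu\mapsto\nu-1^k$ is a bijection from partitions with exactly $k$ parts onto partitions $\lambda$ with $l(\lambda)\leq k$ (inverse $\lambda\mapsto\lambda+1^k$), matching the coefficients of the linearly independent $e_\lambda$ gives $[e_\lambda]m_\mu=[e_{\lambda+1^k}]m_{(k,\mu)}$ for every $\lambda$ with $l(\lambda)\leq k$. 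Finally, for $\lambda$ with $l(\lambda)>k$ I would note that both sides vanish by Lemma \ref{lem:edom}: $[e_\lambda]m_\mu\neq0$ already forces $l(\lambda)\leq l(\mu')=\mu_1\leq k$, and the analogous domination bound rules out the right-hand coefficient, completing all cases.
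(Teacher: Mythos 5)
Your proof is correct, but it takes a genuinely different route from the paper's. The paper first dualizes via the symmetry of the $e$-to-$m$ transition matrix, reducing the claim to $[e_{(k,\mu)}]m_{\lambda+1^k}=[e_{(k,\mu)}]m_\lambda e_k$; it then writes $e_k=m_{1^k}$, expands the product $m_\lambda m_{1^k}$, notes that $m_{\lambda+1^k}$ appears with coefficient $1$, and kills every other $m_\nu$ in the product (all of which have $l(\nu)>k$) using Lemma \ref{lem:edom}. You instead work directly with the $e$-expansion of $m_{(k,\mu)}$, adjoin a variable $x_0$, and extract $[x_0^k]$ from both sides using $e_n(x_0,x)=e_n(x)+x_0e_{n-1}(x)$; the same dominance lemma, now applied to the partitions $\nu$ indexing the $e$-expansion (which must dominate the $k$-part partition $(k,\mu)'$, forcing $l(\nu)\le k$) combines with the constraint $l(\nu)\ge k$ from the extraction to collapse everything to the single term $e_{\nu-1^k}$. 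All your individual steps check out: the monomial-side identity $[x_0^k]m_{(k,\mu)}=m_\mu$, the vanishing of the extraction for $l(\nu)<k$, the length bound from dominance, and the explicit treatment of $\lambda$ with $l(\lambda)>k$ (a degenerate case the paper leaves implicit). Your approach buys a slightly stronger statement in one pass, namely $m_\mu=\sum_{l(\nu)=k}[e_\nu]m_{(k,\mu)}\,e_{\nu-1^k}$, which yields all the coefficient equalities simultaneously; the paper's version is shorter because the symmetry of the transition matrix does the bookkeeping, but both arguments ultimately hinge on Lemma \ref{lem:edom} and on the fact that $(k,\mu)'$ (equivalently $m_{1^k}$) has exactly $k$ parts.
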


\begin{proof}

It is well-known that for any integer partitions $\lm$ and $\mu$ it holds that $[e_{\lm}]m_{\mu} = [e_{\mu}]m_{\lm}$ (that is, the transition matrix between these bases is symmetric) \cite[Corollary 7.4.2]{stanleybook}. Therefore, it is equivalent to show that 
\[[e_\mu]m_\lambda=[e_{(k,\mu)}]m_{\lambda+1^k}.\]
Furthermore, since $[e_\mu]m_\lambda=[e_{(k,\mu)}]m_\lambda e_k$ and $e_k=m_{1^k}$,
it suffices to show that 
\[[e_{(k,\mu)}]m_{\lambda+1^k}=[e_{(k,\mu)}]m_{\lambda}m_{1^k}.\]
By expanding $m_\lambda m_{1^k}$, it is straightforward to verify that $[m_{\lambda+1^k}]m_\lambda m_{1^k}=1$.
Let $\nu$ be such that $[m_\nu]m_\lambda m_{1^k}\neq 0$ and $\nu\neq \lambda+1^k$.
Then again by expanding $m_\lambda m_{1^k}$, we may verify that $\ell(\nu)>k$. It follows that $(k,\mu)$ does not dominate $\nu'$, in which case it follows from Lemma \ref{lem:edom} that $[e_{(k,\mu)}]m_\nu=0$, and this finishes the proof.
\end{proof}

\begin{lemma}
\label{base-case-induction}
Let $\mu$ and $\nu$ be partitions. Let $k\in\N$ be such that $k\geq \mu_1$ and $k\geq\nu_1$.
Then 
\[\sigma_{(k,\nu)}(m_{(k,\mu)})=\sigma_\nu(m_\mu).\]
\end{lemma}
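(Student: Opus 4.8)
The plan is to expand both sides as explicit sums of monomial-to-elementary transition coefficients and then match them term-by-term through the partition operation $\lambda \mapsto \lambda + 1^k$, whose coefficient-level behavior is exactly what Lemma \ref{lem:mu+1^k} records. Writing $d = |\mu|$, Definition \ref{def:sigmamuj} gives
$$\sigma_\nu(m_\mu) = \sum_{\lambda} [e_\lambda] m_\mu \qquad\text{and}\qquad \sigma_{(k,\nu)}(m_{(k,\mu)}) = \sum_{\tilde\lambda} [e_{\tilde\lambda}] m_{(k,\mu)},$$
where the first sum ranges over $\lambda \vdash d$ with $\lambda'_i = \nu_i$ for $i \le \ell(\nu)$, and the second over $\tilde\lambda \vdash d + k$ with $\tilde\lambda'_1 = k$ and $\tilde\lambda'_{i+1} = \nu_i$ for $i \le \ell(\nu)$. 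Since Lemma \ref{lem:mu+1^k} gives $[e_\lambda]m_\mu = [e_{\lambda+1^k}]m_{(k,\mu)}$ for every $\lambda \vdash d$, it will suffice to exhibit a bijection $\lambda \mapsto \lambda + 1^k$ between the two index sets, after discarding terms whose coefficient vanishes.

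The key structural computation is that whenever $\ell(\lambda) \le k$ one has $(\lambda + 1^k)' = (k, \lambda')$: adding one box to each of the first $k$ rows of $\lambda$ (extending by empty rows as needed) yields a partition with exactly $k$ parts whose transpose simply prepends a part equal to $k$ and leaves the remaining columns unchanged. Hence $\lambda \mapsto \lambda + 1^k$ is a bijection from partitions of $d$ with at most $k$ parts onto partitions of $d+k$ with exactly $k$ parts, and under it the constraint $\lambda'_i = \nu_i$ (for $i \le \ell(\nu)$) corresponds precisely to the pair of constraints $\tilde\lambda'_1 = k$ and $\tilde\lambda'_{i+1} = \nu_i$ (for $i \le \ell(\nu)$), which, using $k \ge \nu_1 \ge 1$ to guarantee the required parts are nonzero, are exactly the defining constraints of $\sigma_{(k,\nu)}$.

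To make the two index sets correspond exactly, I would invoke Lemma \ref{lem:edom} to restrict each sum to its nonvanishing terms: applying the symmetry $[e_\lambda]m_\mu = [e_\mu]m_\lambda$ and transposing the resulting dominance relation shows that $[e_\lambda]m_\mu \ne 0$ forces $\mu$ to dominate $\lambda'$, hence $\ell(\lambda) = \lambda'_1 \le \mu_1 \le k$. Thus every term of $\sigma_\nu(m_\mu)$ with nonzero coefficient already satisfies $\ell(\lambda)\le k$, while every $\tilde\lambda$ indexing $\sigma_{(k,\nu)}(m_{(k,\mu)})$ has $\ell(\tilde\lambda) = \tilde\lambda'_1 = k$ by its defining constraint; the bijection of the previous paragraph then carries one index set onto the other, and Lemma \ref{lem:mu+1^k} equates the matched coefficients, finishing the proof. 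The only genuinely delicate point — and the step I expect to require the most care — is verifying that passing to $\lambda + 1^k$ neither creates nor omits index terms: one must confirm that the dominance bound $\ell(\lambda)\le k$ holds for all nonvanishing terms (so restricting to $\ell(\lambda)\le k$ is harmless) and that the transpose identity $(\lambda+1^k)' = (k,\lambda')$ holds precisely in that regime. Everything else is routine bookkeeping once Lemma \ref{lem:mu+1^k} is available.
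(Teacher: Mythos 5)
Your proposal is correct and takes essentially the same route as the paper's proof: both match the terms of the two sums through the bijection $\lambda \mapsto \lambda + 1^k$ (equivalently $\lambda \mapsto \lambda - 1^k$ in the other direction) and equate the corresponding coefficients using Lemma \ref{lem:mu+1^k}. The only difference is that you justify $\ell(\lambda) \leq k$ for the nonvanishing terms via Lemma \ref{lem:edom}, whereas the paper gets this for free from $\ell(\lambda) = \lambda'_1 = \nu_1 \leq k$; both are valid, and your extra step is actually needed only in the degenerate case where $\nu$ is empty.
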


\begin{proof}

Let $\lambda\vdash k+|\mu|$ be such that $\lambda'$ has the form $(k,\nu,\ldots)$ (so in particular $\ell(\lm) = k$). 
Then $\lambda_* = \lambda-1^k$ is a partition of $|\mu|$ such that $\lambda_*'$ has the form $(\nu,\ldots)$, and by the previous lemma, we have 
\[[e_\lambda]m_{(k,\mu)}=[e_{\lambda_*}]m_\mu.\]
% which implies that $\sigma_{k,\nu}(m_{(k,\mu)})\leq\sigma_\nu(m_\mu)$.
%! NOT LESS THAN OR EQUAL TO !!!!!!!!!
Conversely, suppose $\lambda$ is a partition of $|\mu|$ such that $\lambda'$ has the form $(\nu,\ldots)$. Let $\lambda_*=\lambda+1^k$ be a partition of $k+|\mu|$.
Note that since $k\geq\nu_1$, $\lambda_*'$ has the form $(k,\nu,\ldots)$.
Again, using the previous lemma, we get 
\[[e_\lambda]m_\mu=[e_{\lambda_*}]m_{(k,\mu)}.\]
% This means that $\sigma_\nu(m_\mu)\leq\sigma_{k,\nu}(m_{(k,\mu)})$. 
Therefore, we have $\sigma_{k,\nu}(m_{(k,\mu)})=\sigma_\nu(m_\mu)$, as desired.
\end{proof}

Before proceeding, we introduce terminology naming the specific properties from Theorem \ref{thm:stanley3.4} that will be referenced in some final lemmas and in the main theorem generalizing Theorem \ref{thm:stanley3.4}.

\begin{definition}\label{def:partialdom}
Let $\mu=(\mu_1,\ldots,\mu_\ell)$ and $\nu=(\nu_1,\ldots,\nu_m)$ be finite sequences of positive integers (note that $\mu$ and $\nu$ need not be partitions).
We say $\mu$ \textbf{partially dominates} $\nu$ if either $\mu_i=\nu_i$ for all $i \in \{1,\dots,\ell\}$, or there exists some $i \in \{1,\dots,\ell\}$ such that $\mu_1+\cdots+\mu_i>\nu_1+\cdots+\nu_i$ (where we take $\nu_j = 0$ if $j > m$).
\end{definition}

In other words, $\mu$ partially dominates $\nu$ if $\nu$ does not dominate $\mu$ ``nontrivially". This is the required condition for either of the two bullet points of Theorem \ref{thm:stanley3.4}.

\begin{definition}\label{def:maxallow}
A partition $\mu=(\mu_1,\ldots,\mu_\ell)$ is \textbf{allowable} in a set-weighted graph $(G,\omega)$ if there exists $W \subseteq V(G)$ and a stable partition $(S_1,\ldots,S_\ell)$ of $W$ such that $\sum_{v\in S_i}w(v)=\mu_i$ for all $i \in \{1,\dots,\ell\}$.

A partition $\mu\vdash r\leq d$ is \textbf{maximal} in $(G,\omega)$ if $\mu$ partially dominates all allowable partitions in $(G,\omega)$ (not just allowable partitions of $r$). Note that $\mu$ need not be allowable.
\end{definition}

One example of these definitions for unweighted graphs was given in the introduction after Theorem \ref{thm:stanley3.4}. 
For an example on set-weighted graphs, let us consider the graph $(G,\omega)$ in Figure \ref{fig:l-step-weight-map}.
All \textit{allowable} partitions of $(G,\omega)$ are 
\begin{gather*}
(1), (2), (3), (2,2), \\
(4, 1), (3, 2), (2, 2, 1), \\
(5, 1), (3, 2, 1), \\
(5, 2), (4, 3), (3, 2, 2), \\
(5, 2, 1), (4, 3, 1), (3, 2, 2, 1).
\end{gather*}
We note that $\mu=(5,2)$ is a \textit{maximal} partition.
Indeed, for all allowable partitions $\nu$ that do not start with 5, we have $\mu_1>\nu_1$.
For $\nu=(5,1)$, we have $\mu_1+\mu_2>\nu_1+\nu_2$.
For $\nu=(5, 2)$ or $\nu=(5, 2, 1)$, we have $\mu_1=\nu_1$ and $\mu_2=\nu_2$.
Therefore, $\mu$ partially dominates all allowable partitions, so $\mu$ is maximal.

Now we proceed with further technical lemmas.

\begin{lemma}
\label{maximal}
Let $(G,\omega)$ be a set-weighted graph with $n$ vertices and total weight $d$. Let \\ $\mu=(\mu_1,\ldots,\mu_\ell)$ be a partition.
Let $\gamma$ be an acyclic orientation of $G$, and let $\lambda$ be the type of $\gamma$. Then 
\begin{itemize}
  \item[(a)] If $\mu_i=\lambda_i$ for all $1\leq i\leq\ell$, then there exists exactly one $\gamma$-admissible $\ell$-step weight map on $(G,\omega)$ with $\wts(G,\gamma,S)=(\mu_1,\ldots,\mu_\ell)$, namely the map $S$ such that for all $1\leq i\leq\ell$,
  \[S(v)_i=\begin{cases}
  \omega(v) & \text{ if } v\in \Sink_i(\gamma) \\
  \varnothing & \text{ if } v\notin \Sink_i(\gamma).
  \end{cases}\]
  \item[(b)] If there exists some $1\leq i\leq\ell$ such that $\mu_1+\cdots+\mu_i>\lambda_1+\cdots+\lambda_i$, then there does not exist a $\gamma$-admissible $\ell$-step weight map on $(G,\omega)$ such that $\wts(G,S)=(\mu_1,\ldots,\mu_\ell)$.
  \item[(c)] Let $\mu$ be a maximal partition in $(G,\omega)$, and let $S$ be an $(\ell+1)$-step weight map on $(G,\omega)$ with $\wts(G,\gamma,S)=(\mu_1,\ldots,\mu_\ell,j)$ for some $j$. Then $S$ is $\gamma$-admissible if and only if $S$ is in $\gamma$-standard form. 
\end{itemize}
\end{lemma}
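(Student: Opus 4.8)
The plan is to extract a single technical fact that drives all three parts: for any $\gamma$-admissible weight map $S$, the weight $S$ assigns is dominated by the type $\lambda$ of $\gamma$ in the sense of partial sums. Concretely, for each $v$ let $\rho(v)$ be the index $i$ with $v\in\Sink_i(\gamma)$, so that $L_i:=\Sink_1(\gamma)\cup\cdots\cup\Sink_i(\gamma)$ is exactly the set of vertices deleted in the first $i$ rounds of the full process $G^0,G^1,\dots$, and $w(L_i)=\lambda_1+\cdots+\lambda_i$. The core claim is: if $S$ is $\gamma$-admissible and $S(v)_k\neq\varnothing$, then $\rho(v)\le k$. I would prove this by strong induction on $k$. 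By admissibility $S(v)_k\neq\varnothing$ forces $v$ to be a sink of $\gamma$ restricted to $G_{k-1}$, so every out-neighbor $u$ of $v$ in $G$ lies outside $V(G_{k-1})$; its deletion at some round $k'\le k-1$ means $S(u)_{k'}=\omega_{k'-1}(u)\neq\varnothing$, whence $\rho(u)\le k'\le k-1$ by induction. Since $\rho(v)$ is one more than the largest rank among its out-neighbors (and $\rho(v)=1$ if there are none), we get $\rho(v)\le k$. Summing over $v$, the total weight $S$ places in its first $i$ coordinates is $\sum_{j\le i}s_j=\sum_{v\in L_i}\bigl|\bigsqcup_{j\le i}S(v)_j\bigr|\le w(L_i)=\sum_{j\le i}\lambda_j$.

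Part (b) is then immediate: the hypothesis $\mu_1+\cdots+\mu_i>\lambda_1+\cdots+\lambda_i$ contradicts $\sum_{j\le i}\mu_j=\sum_{j\le i}s_j\le\sum_{j\le i}\lambda_j$, so no such $\gamma$-admissible $S$ exists. For part (a), the hypothesis $\mu_i=\lambda_i$ for all $i\le\ell$ forces every inequality above to become equality, which pins down $S$. I would argue by induction on $i$ that $G_i=G^i$ with full weights retained (that is, $\omega_i=\omega$ on surviving vertices): assuming $G_{i-1}=G^{i-1}$ with full weights, admissibility makes the support of $S(\cdot)_i$ equal to $\Sink_i(\gamma)$, and since $\sum_{v\in\Sink_i(\gamma)}|S(v)_i|=\mu_i=\lambda_i=\sum_{v\in\Sink_i(\gamma)}w(v)$ with each $|S(v)_i|\le w(v)$, every $v\in\Sink_i(\gamma)$ must satisfy $S(v)_i=\omega(v)$. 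This simultaneously verifies that the claimed map is admissible with the right weight sequence and shows it is the only such map.

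For part (c), the direction ``standard $\Rightarrow$ admissible'' is the remark following Definition \ref{def:standard}, so it remains to show a $\gamma$-admissible $S$ with $\wts(G,\gamma,S)=(\mu_1,\ldots,\mu_\ell,j)$ is standard when $\mu$ is maximal. I would first apply the domination bound to get $\sum_{j\le i}\mu_j\le\sum_{j\le i}\lambda_j$ for all $i\le\ell$, then observe that reordering $\lambda$ into non-increasing order only increases partial sums, so $\sum_{j\le i}\mu_j\le\sum_{j\le i}\widehat\lambda_j$, where $\widehat\lambda$ is the allowable partition obtained by sorting $\lambda$. Maximality says $\mu$ partially dominates $\widehat\lambda$; the partial-sum inequality rules out the strict alternative in Definition \ref{def:partialdom}, forcing $\mu_i=\widehat\lambda_i$ for all $i\le\ell$. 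Comparing partial sums once more then gives $\lambda_i=\widehat\lambda_i=\mu_i$ for $i\le\ell$, i.e. $\mu_i=\lambda_i$. The uniqueness argument of part (a), applied to steps $1,\ldots,\ell$, shows $S$ is standard there and $G_\ell=G^\ell$ with full weights; admissibility at step $\ell+1$ then reads $S(v)_{\ell+1}\neq\varnothing$ iff $v$ is a sink of $\gamma$ restricted to $G_\ell=G^\ell$, i.e. iff $v\in\Sink_{\ell+1}(\gamma)$, which is exactly the remaining standard-form condition.

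The main obstacle is the rank bound underlying the first paragraph: the induction must carefully track that a vertex assigned nonempty weight at step $k$ has all of its $\gamma$-out-neighbors fully deleted in the set-weighted sequence $(G_i,\omega_i)$ determined by $S$ (which a priori differs from the full-deletion sequence $G^i$), and then convert ``fully deleted by round $k-1$'' into ``touched by $S$ at some earlier step.'' The secondary delicate point is the sorting step in part (c): one must verify that the partial-sum inequalities together with partial domination against the sorted partition $\widehat\lambda$ upgrade all the way to the coordinatewise equality $\mu_i=\lambda_i$ in the original level order, not merely in sorted order.
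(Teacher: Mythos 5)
Your proof is correct and follows essentially the same route as the paper's: part (b) via the observation that an admissible map can only draw weight at step $i$ from vertices in $\Sink_1(\gamma)\cup\cdots\cup\Sink_i(\gamma)$, part (a) by forcing equality in that bound, and part (c) by combining maximality with the sorted type $\widehat\lambda$ to reduce to (a) and (b). The paper's own proof is far terser (it declares (a) straightforward and gives (b) in one line), so your write-up mainly supplies the rank bound $\rho(v)\le k$ and the inductive identification $G_i=G^i$ that the paper leaves implicit.
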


\begin{proof}

It is straightforward to verify (a). For (b), assume to the contrary that there exists a $\gamma$-admissible $\ell$-step weight map $S$ such that $\wts(G,\gamma,S)=(\mu_1,\ldots,\mu_\ell)$.
Then since $S$ is $\gamma$-admissible, we have $\mu_1+\cdots+\mu_i\leq \lambda_1+\cdots+\lambda_i$, as otherwise there is not enough weight among the corresponding vertices of $G$ to build $S$, giving a contradiction.

For (c), it follows from the definition that if $S$ is in $\gamma$-standard form, then $S$ is $\gamma$-admissible.
Conversely, 
assume that $S$ is $\gamma$-admissible.
Since $\mu$ is maximal, $\mu$ partially dominates the partition obtained by sorting the parts of $\lambda$ in non-decreasing order. It is easy to verify that then $\mu$ also partially dominates $\lambda$ as an unordered integer sequence.
Then by part (b), it is the case that $\mu_i=\lambda_i$ for all $1\leq i\leq\ell$, and then by part (a) it follows that $S$ is in $\gamma$-standard form.
\end{proof}

\begin{lemma}\label{lem:sinkpath}

Let $G$ be a graph and $\gamma$ an acyclic orientation of $G$. Then for every $v \in V(G)$, $v \in \Sink_i(\gamma)$ if and only if the length of the longest directed path in $\gamma$ starting at $v$ contains $i$ vertices.

\end{lemma}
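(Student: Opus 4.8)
The plan is to prove both directions simultaneously by induction on $i$, after recording the standard recursive description of longest directed paths. Write $L(v)$ for the number of vertices in the longest directed path of $\gamma$ starting at $v$; because $\gamma$ is acyclic and $G$ is finite, every directed path from $v$ is finite, so $L(v)$ is a well-defined positive integer. The key identity is
\[
L(v) = 1 + \max\{L(u) : v \to u \text{ in } \gamma\},
\]
with the convention that the maximum over the empty set is $0$, so that $L(v) = 1$ precisely when $v$ has no outgoing edge; this holds since a longest directed path from $v$ is either $v$ alone or an edge $v \to u$ followed by a longest directed path from $u$. I would also note at the outset that the restriction of $\gamma$ to any nonempty induced subgraph is acyclic and hence has a sink, so the sets $\Sink_i(\gamma)$ partition $V(G)$; moreover by construction $V(G^{i-1}) = V(G) \setminus (\Sink_1(\gamma) \cup \cdots \cup \Sink_{i-1}(\gamma))$.

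For the base case $i = 1$, we have $v \in \Sink_1(\gamma)$ iff $v$ is a sink of $G^0 = G$ iff $v$ has no outgoing edge iff $L(v) = 1$. For the inductive step, assume the equivalence for all $j < i$. Then the vertices deleted before forming $G^{i-1}$ are exactly those with $L < i$, so $V(G^{i-1}) = \{v : L(v) \geq i\}$. If $L(v) = i$, then $v \in V(G^{i-1})$, and every outgoing edge $v \to u$ satisfies $L(u) \leq i - 1$ by the recursion, hence $u \notin V(G^{i-1})$; thus $v$ is a sink of the restriction of $\gamma$ to $G^{i-1}$, i.e., $v \in \Sink_i(\gamma)$.

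Conversely, if $v \in \Sink_i(\gamma)$, then $v \in V(G^{i-1})$ gives $L(v) \geq i$, while $v$ being a sink of the restriction forces every outgoing neighbor $u$ to lie outside $V(G^{i-1})$, so $L(u) \leq i - 1$; the recursion then yields $L(v) \leq i$, and combining the two bounds gives $L(v) = i$. The only subtlety, and the place to argue carefully, is the bookkeeping that translates ``$u$ has already been deleted from $G^{i-1}$'' into the strict inequality $L(u) < i$ (and conversely) via the induction hypothesis; once the identification $V(G^{i-1}) = \{v : L(v) \geq i\}$ is in hand, both directions follow immediately from the longest-path recursion, so no genuinely hard step remains.
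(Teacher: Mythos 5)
Your proof is correct and follows essentially the same route as the paper's: induction on $i$, relating the recursive sink-deletion process to the lengths of longest directed paths. Your formulation via the recursion $L(v) = 1 + \max\{L(u) : v \to u\}$ and the explicit identification $V(G^{i-1}) = \{v : L(v) \geq i\}$ is a cleaner packaging of the same idea, and in fact handles the converse direction explicitly where the paper dismisses it as easy to check.
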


\begin{proof}

The proof is by induction on $i$.  It is straightforward to verify the claim for $i=1$. 

For the inductive step, assume the claim holds for all positive integers less than or equal to a fixed $k$. Using the notation of Definition \ref{def:sinkseq}, suppose that $v \in \Sink_{k+1}(\gamma)$, so $v$ is a sink of the restriction of $\gamma$ to $G^{k}$. Note that each step in the construction moving from $G^i$ to $G^{i+1}$ for $i \in \{0, \dots, k-1\}$ removes the last vertex of each directed path starting at $v$ that is present in $G^i$, and no other vertices along these paths. Since $v$ has not been deleted, it follows that at least one such directed path contains at least $k$ vertices in addition to $v$, so has length at least $k+1$.

On the other hand, if there was a directed path starting at $v$ in $\gamma$ of length at least $k+2$, then $v$ would not be a sink of $G^k$, since it would have an outgoing edge remaining. Therefore, the longest directed path in $\gamma$ starting at $v$ contains $k+1$ vertices.

Conversely, it is easy to check that if the longest directed path in $\gamma$ starting at $v$ contains $i$ vertices, then $v \in \Sink_i(\gamma)$, since as noted above the process defining the sink sets removes exactly one vertex from each such directed path at each step.
\end{proof}

As our proof will mainly use edge deletion and contraction, we will need to understand how these operations affect the type of an acyclic orientation. Actually, we will be using a non-edge as our focal point, so we introduce the following notation.

\begin{definition}\label{def:delconori}

Let $\gamma$ be a (not necessarily acyclic) orientation of $G$, and let $S$ be an $(\ell+1)$-step weight map of $G$. Let $e = v_1v_2$ be a \textnormal{non-edge} of $G$.
\begin{itemize}
  \item In $G+e$, let $\varphi_{v_1}(\gamma)$ be the orientation whose restriction to $G$ is $\gamma$, and the direction of $e$ is $v_1\to v_2$. Let $\varphi_{v_2}(\gamma)$ be the orientation whose restriction to $G$ is $\gamma$, and the direction of $e$ is $v_2\to v_1$. Let $\psi_{+}(S)$ be the $(\ell+1)$-step weight map on $G+e$ such that $\psi_{+}(S)=S$.
  \item In $G/e$, let $v^*$ represent the vertex formed by contraction, and let $\varphi_{v^*}(\gamma)$ be the orientation obtained by contracting $e$ in $\gamma$.
  Let $\psi_{*}(S)$ be the $(\ell+1)$-step weight map on $G/e$ such that for all $v\in V(G/e)$ and for all $i \in \{1,\dots,\ell\}$,
  \[\psi_{*}(S)(v)_i=\begin{cases}
  S(v)_i & \text{ if } v\neq v^* \\
  S(v_1)_i\cup S(v_2)_i & \text{ if } v=v^*.
  \end{cases}\]
  \item If $\gamma$ is an acyclic orientation with type $\lm$, then whenever they are acyclic, denote the types of $\varphi_{v_1}(\gamma)$, $\varphi_{v_2}(\gamma)$, and $\varphi_{v^*}(\gamma)$ by $\lm^{v_1}, \lm^{v_2},$ and $\lm^{v^*}$ respectively.
\end{itemize}
    
\end{definition}

\begin{figure}[hbt]
\begin{center}

\tikzset{every picture/.style={line width=0.75pt}} %set default line width to 0.75pt        

% [inline block 0: 1 envs, 24475 chars -> data_tex | \begin{tikzpicture}[x=0.75pt,y=0.75pt,yscale=-0.8,xscale=1] %uncomment if require: \path (0,621); %set diagram left star...]


\end{center}
\captionsetup{skip=2pt}
\caption{Examples of $(G+e,\varphi_{v_1}(\gamma))$, $(G+e,\varphi_{v_2}(\gamma))$, and $(G/e,\varphi_{v^*}(\gamma))$.}
\label{fig:non-edge-example}
\end{figure}

Thus, for example, if $\gamma$ has type $\lm$, then $\lm^{v_1}_2$ denotes the second entry of the type of $\varphi_{v_1}(\gamma)$ (recalling as before that the entries of the type need not be in non-increasing order).
In Figure \ref{fig:non-edge-example}, we give examples of the various constructions mentioned above in Definition \ref{def:delconori}.

We will need one more important lemma for our proof. 

\begin{lemma}\label{obs:sinks}
Let $\gamma$ be an \textbf{acyclic} orientation of $G$, and let $e = v_1v_2$ be a \textnormal{non-edge} of $G$. Suppose that there is no directed path in $\gamma$ from $v_1$ to $v_2$ nor from $v_2$ to $v_1$, so that all of $\varphi_{v_1}(\gamma)$, $\varphi_{v_2}(\gamma)$, and $\varphi_{v^*}(\gamma)$ are acyclic as well.

\iffalse{
\begin{itemize}
    \item[(a)] $v^* \in \Sink_{\max(i,j)}(\varphi_{v^*}(\gamma))$ and $v \in \Sink_{k'}(\varphi_{v^*}(\gamma))$ with $k \leq k' \leq k+|i-j|$.
    \item[(b)] $v_1 \in \Sink_{\max(i,j+1)}(\varphi_{1}(\gamma))$, $v_2 \in \Sink_{j}(\varphi_{1}(\gamma))$, and $v \in \Sink_{k'}(\varphi_{1}(\gamma))$ with $k \leq k' \leq \max(k,k+j+1-i)$.
    \item[(c)] $v_2 \in \Sink_{\max(i+1,j)}(\varphi_{2}(\gamma))$, $v_1 \in \Sink_{i}(\varphi_{2}(\gamma))$, and $v \in \Sink_{k'}(\varphi_{2}(\gamma))$ with $k \leq k' \leq \max(k,k+i+1-j)$.
\end{itemize}  
}\fi

Then for all $k \in \mathbb{N}$ and all $a \in \{v_1,v_2,v^*\}$,
\begin{equation}\label{eq:sinkcontract}
\lm_1^a + \dots + \lm_k^a \leq \lm_1 + \dots + \lm_k.
\end{equation}

Suppose further that $v_1 \in \Sink_i(\gamma)$ and $v_2 \in \Sink_j(\gamma)$. Then the above inequality is \textbf{strict} in the following cases:
\begin{itemize}
\item $a = v^*$, $k = i$ and $i < j$.
\item $a=v^*$, $k=j$ and $j < i$.
\item $a = v_1$, $k = i$, and $i < j+1$.
\item $a = v_2$, $k = j$ and $j < i+1$.
\end{itemize}

\end{lemma}

\begin{proof}

The main inequality is clear, since the sink level of any vertex does not decrease in passing from $\gamma$ to any of $\varphi_{v_1}(\gamma), \varphi_{v_2}(\gamma),$ or $\varphi_{v^*}(\gamma)$, since any directed path in $\gamma$ from a vertex to a sink still exists. Thus, the number of vertices at sink level $k$ or below in $\gamma$ cannot increase in passing to any of $\varphi_{v_1}(\gamma), \varphi_{v_2}(\gamma),$ or $\varphi_{v^*}(\gamma)$.

To prove strict inequality in the special cases, it is enough to show that some vertex at sink level $k$ or below has its sink level increase to above level $k$. This is easy to check directly in each case; for example, the important vertex is $v_1$ for the first case, since the weight of this vertex is incorporated into $v^*$ and thus rises from contributing to $\lm_i$ to contributing to $\lm^*_j$ for $j > i$ after contraction. Similarly, it is straightforward to verify that the critical vertex is $v_1$ for the third case, and $v_2$ for the second and fourth cases.
\end{proof}

In particular, we will often be using this lemma in conjunction with Lemma \ref{maximal}(b) in the proof of the main theorem.

\medskip

% \begin{lemma}\label{lem:pathlength}

% Let $G$ be a graph, and $\gamma$ an acyclic orientation of $G$. Let $e = v_1v_2$ with $v_1 \neq v_2$ be a \emph{non-edge} of $G$ such that there is no directed path from $v_1$ to $v_2$ or from $v_2$ to $v_1$ in $\gamma$.

% Let $G^* = (G \cup \{e\})/e$, with $v^*$ the vertex formed by contraction, and $\gamma^*$ the orientation formed 

% \begin{itemize}
%     \item[(a)] Suppose that $v_1 \in \Sink_i(\gamma)$ and 
% \end{itemize}

% \end{lemma}

We will need one more tool, which is a set-weighted version of Theorem \ref{thm:stanley3.3}. The original formulation of this theorem was proved by the first author and Spirkl for vertex-weighted graphs, but it is straightforward to extend to set-weighted graphs, and it is presented here using the terminology developed so far.

\begin{theorem}[Theorem 8, \cite{delcon}]\label{one-level-sink}

Let $(G,\omega)$ be a vertex-weighted graph with $n$ vertices and total weight $d$. Then 
\[\sigma_j\bp{X_{(G,\omega)}}=(-1)^{d-n}\sum_{\wts(\gamma,S)=(j)}(-1)^{j-\sink_1(\gamma)},\]
where the sum ranges over all ordered pairs consisting of an acyclic orientation $\gamma$ of $G$ and a $\gamma$-admissible one-step weight map $S$ of $G$ such that $\wts(G,\gamma,S)=(j)$.

\end{theorem}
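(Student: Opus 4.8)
The plan is to reduce the set-weighted assertion to the vertex-weighted formulation already established as Theorem 8 of \cite{delcon}, rather than rerun the entire deletion-contraction argument. The first observation is that the chromatic symmetric function of a set-weighted graph depends only on the integer weights: comparing Definitions \ref{def:xgw} and \ref{def:xgom}, the exponent of $x_{\kappa(v)}$ is $w(v) = |\omega(v)|$ in both, and the proper colorings of $G$ are the same object in either setting, so $X_{(G,\omega)} = X_{(G,w)}$. Since $\sigma_j$ is a linear functional on $\Lambda$ (it merely sums a prescribed subfamily of $e$-coefficients), it follows at once that $\sigma_j\bp{X_{(G,\omega)}} = \sigma_j\bp{X_{(G,w)}}$, which the vertex-weighted theorem evaluates for us.

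It then remains to check that the signed sum on the right-hand side of \ref{one-level-sink} agrees with the corresponding sum in the vertex-weighted Theorem 8 of \cite{delcon}. I would do this by a cardinality-preserving bijection of the weight data. Fix an acyclic orientation $\gamma$. A $\gamma$-admissible one-step weight map $S$ is exactly a choice, for each $v \in \Sink_1(\gamma)$, of a nonempty subset $S(v)_1 \subseteq \omega(v)$, with $S(v)_1 = \varnothing$ forced at every non-sink, subject to $\sum_v |S(v)_1| = j$. Replacing each $\omega(v)$ by $\{1,\dots,w(v)\}$ through any fixed bijection carries these choices bijectively onto the vertex-weighted sink maps of \cite{delcon}, preserving every cardinality $|S(v)_1|$ and hence both the constraint $\wts(G,\gamma,S)=(j)$ and the sink count $\sink_1(\gamma)$. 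Each summand $(-1)^{j-\sink_1(\gamma)}$ is therefore preserved, the two signed sums coincide term by term, and the set-weighted statement drops out of the vertex-weighted one.

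For completeness I would also record the direct route, which parallels Stanley's acyclic-orientation argument \cite{stanleyacyclic} and the proof in \cite{delcon}: induct on $|E(G)|$ using linearity of $\sigma_j$ together with the set-weighted deletion-contraction relation (Lemma \ref{lem:delconset}), giving $\sigma_j\bp{X_{(G,\omega)}} = \sigma_j\bp{X_{(G \bk e,\omega)}} - \sigma_j\bp{X_{(G/e,\omega/e)}}$. Because $G \bk e$ retains the $n$ vertices of $G$ while $G/e$ has $n-1$, the prefactor $(-1)^{d-n}$ is unchanged under deletion but negated under contraction, so the subtraction becomes an addition and the inductive step collapses to the purely combinatorial recursion $\Sigma(G) = \Sigma(G\bk e) + \Sigma(G/e)$, where $\Sigma$ denotes the inner signed sum over admissible pairs. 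The base case of an edgeless graph makes every vertex a sink, and one verifies the identity against the $e$-expansion of a product of power sums.

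The main obstacle lies entirely in this direct approach: one must track how $\Sink_1$ and the admissible maps transform under deletion and contraction of $e = v_1v_2$. Deleting $e$ can promote a former non-sink to a sink; in $G/e$ the merged vertex $v^*$ (with $\omega(v^*) = \omega(v_1)\cup\omega(v_2)$) is a sink precisely when the induced orientation is acyclic and no edge points out of the identified pair; and the nonempty-subset choices together with the parities $(-1)^{j-\sink_1}$ must be matched across all these cases. The reduction of the first two paragraphs sidesteps this bookkeeping completely, which is exactly why the passage from vertex-weighted to set-weighted graphs is only a matter of observing that both sides are invariant under relabeling the weight sets.
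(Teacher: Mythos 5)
Your reduction is exactly the argument the paper intends: the paper gives no proof of Theorem \ref{one-level-sink}, citing it as Theorem 8 of \cite{delcon} and asserting only that the extension from vertex-weighted to set-weighted graphs is ``straightforward,'' which is precisely your observation that $X_{(G,\omega)}$ depends only on the integer weights $w(v)=|\omega(v)|$ together with the cardinality-preserving relabeling of each $\omega(v)$ as $\{1,\dots,w(v)\}$, under which admissible one-step weight maps, the constraint $\wts(G,\gamma,S)=(j)$, and the signs $(-1)^{j-\sink_1(\gamma)}$ all correspond term by term. Your sketched direct deletion-contraction route is not needed and is not what the paper does; the two-paragraph reduction is complete on its own.
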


\subsection{Main Theorem}

We are now ready to present the main theorem.

\begin{theorem}\label{thm:main}
Let $(G,\omega)$ be a set-weighted graph with $n$ vertices and total weight $d$. Suppose that $X_{(G,\omega)}=\sum_{\lambda\vdash d}c_{\lambda}e_{\lambda}$. Let $\mu=(\mu_1,\ldots,\mu_\ell)\vdash r\leq d$ be a maximal partition in $(G,\omega)$. Fix $0\leq j\leq d-r$, where we can choose $j=0$ only when $r=d$. Then
\begin{equation}
\label{main-thm}
\sigma_{\mu,j}\bp{X_{(G,\omega)}}=(-1)^{d-n}\sum_{\substack{\wts(\gamma,S)=(\mu_1,\ldots,\mu_\ell,j) \\ S\text{ admissible}}} (-1)^{|(\mu,j)|-\sum_{i=1}^{\ell+1}\sink_i(\gamma)},
\end{equation}
summed over all ordered pairs consisting of an acyclic orientation $\gamma$ of $G$, and a $\gamma$-admissible $(\ell+1)$-step weight map $S$ of $G$ such that $\wts(G, \gamma,S)=(\mu_1,\ldots,\mu_\ell,j)$.
\end{theorem}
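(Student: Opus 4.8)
The plan is to reduce the $\ell$-level identity of Theorem \ref{thm:main} to the one-level identity of Theorem \ref{one-level-sink}, which I will treat as the engine of the argument. I would handle the algebraic left-hand side $\sigma_{\mu,j}\bp{X_{(G,\omega)}}$ and the combinatorial right-hand side separately, in each case peeling off the contribution of the first $\ell$ sink levels and reducing to a ``level-$(\ell+1)$'' remainder, and then reconcile the two reductions. Maximality of $\mu$ is the hypothesis that makes both peelings legitimate, and I would expect it to be invoked once on each side.

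For the left-hand side I would first expand in the augmented monomial basis: grouping proper colorings by the stable partition they induce gives $X_{(G,\omega)}=\sum_{\pi}\tm_{\lambda(\pi)}$, summed over stable partitions $\pi$ of $V(G)$, where $\lambda(\pi)$ records the block weights (so $[\tm_\nu]X_{(G,\omega)}$ counts stable partitions of type $\nu$, and every such $\nu$ is allowable). Since $\sigma_{\mu,j}$ is linear, it suffices to evaluate $\sigma_{\mu,j}(\tm_\nu)$. This is where maximality enters: any contributing $\lambda$ has $\lambda'$ beginning $(\mu_1,\ldots,\mu_\ell)$, so $[e_\lambda]m_\nu\neq 0$ forces (via Lemma \ref{lem:edom} and transposing dominance) that $\nu$ dominates $\lambda'$, i.e. $\sum_{i\le t}\nu_i\ge \sum_{i\le t}\mu_i$ for $t\le\ell$; partial domination of the allowable $\nu$ by $\mu$ then forces $\nu_i=\mu_i$ for all $i\le\ell$. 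Thus $\sigma_{\mu,j}(\tm_\nu)=0$ unless $\nu=(\mu_1,\ldots,\mu_\ell,\rho)$ with $\rho_1\le\mu_\ell$, and the transpose constraint $\lambda'_{\ell+1}\le\lambda'_\ell$ likewise forces $j\le\mu_\ell$. For these surviving $\nu$ I would apply Lemma \ref{base-case-induction} a total of $\ell$ times, peeling $\mu_1,\ldots,\mu_\ell$ in turn, to get $\sigma_{\mu,j}(m_\nu)=\sigma_{(j)}(m_\rho)$; after tracking the multiplicity factors relating $\tm$ to $m$, this rewrites the left-hand side as a sum, over configurations of the first $\ell$ levels, of one-level quantities $\sigma_{(j)}$ of the chromatic symmetric functions of the corresponding reduced graphs.

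For the right-hand side, part (c) of Lemma \ref{maximal} shows that for maximal $\mu$ a $\gamma$-admissible $(\ell+1)$-step weight map with $\wts(G,\gamma,S)=(\mu_1,\ldots,\mu_\ell,j)$ is exactly one in $\gamma$-standard form; hence only orientations $\gamma$ whose type begins $(\mu_1,\ldots,\mu_\ell)$ contribute, the weight map is forced on the first $\ell$ levels, and the sole freedom is the choice of nonempty subsets of total weight $j$ at the level-$(\ell+1)$ sinks. Lemma \ref{lem:sinkpath} shows that the sink-level function grades $\gamma$, so deleting the sinks $\Sink_1(\gamma),\ldots,\Sink_\ell(\gamma)$ yields an acyclic orientation $\gamma'$ of the induced reduced graph with $\Sink_1(\gamma')=\Sink_{\ell+1}(\gamma)$. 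Writing the sign exponent $|(\mu,j)|-\sum_{i=1}^{\ell+1}\sink_i(\gamma)$ as $\bp{r-\sum_{i=1}^{\ell}\sink_i(\gamma)}+\bp{j-\sink_1(\gamma')}$ and comparing $(-1)^{d-n}$ with the sign $(-1)^{(d-r)-(n-|\Sink_{\le\ell}|)}$ attached to the reduced graph (the two prefactors cancel), the inner sum over the level-$(\ell+1)$ data becomes exactly the right-hand side of Theorem \ref{one-level-sink} applied to the reduced graph, i.e. $\sigma_{(j)}$ of its chromatic symmetric function.

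Matching the two reductions then finishes the proof. The main obstacle I anticipate is the bookkeeping that identifies the two reduced expressions. On the combinatorial side, regrouping orientations by their first $\ell$ levels produces, for each fixed configuration, a consistency constraint at the interface (a level-$(\ell+1)$ sink of $\gamma$ must retain an out-edge into level $\ell$), so a priori only a subset of the acyclic orientations of each reduced graph appears, whereas Theorem \ref{one-level-sink} sums over all of them; reconciling this with the algebraic side, where the $\tm$-to-$m$ multiplicity factors govern precisely how many first-$\ell$-level configurations give rise to each reduced graph, is the delicate heart of the argument. Maximality is what guarantees that no spurious configurations survive on either side, and I expect Theorem \ref{one-level-sink} (proved in \cite{delcon} by edge deletion-contraction through Lemma \ref{lem:delconset}) to supply the only genuinely analytic input, with everything above being a careful combinatorial and sign reconciliation.
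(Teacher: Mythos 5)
Your proposal takes a genuinely different route from the paper's: the paper proves Theorem \ref{thm:main} by induction on the number of non-edges of $(G,\omega)$, using the deletion-contraction relation of Lemma \ref{lem:delconset} and a term-by-term sign identity \eqref{ind-bijection} among orientations of $G$, $G+e$ and $G/e$; Theorem \ref{one-level-sink} and Lemma \ref{base-case-induction} appear only in the complete-graph base case. Your two separate reductions are individually sound: using Lemma \ref{lem:edom} together with maximality to kill every $\tm_\nu$ whose type is not of the form $(\mu_1,\ldots,\mu_\ell,\rho)$, peeling with Lemma \ref{base-case-induction}, identifying the surviving $(\gamma,S)$ via Lemma \ref{maximal}(c) and Lemma \ref{lem:sinkpath}, and the sign bookkeeping all check out.

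However, the step you yourself flag as ``the delicate heart'' is a genuine gap, and it is where essentially all of the difficulty of the theorem lives. After peeling, your combinatorial side is a sum over ordered tuples $(S_1,\ldots,S_\ell)$ of disjoint stable sets of weights $\mu_1,\ldots,\mu_\ell$ of one-level quantities for $G'=G\setminus(S_1\cup\cdots\cup S_\ell)$, but restricted to tuples and acyclic orientations $\gamma'$ of $G'$ satisfying nontrivial incidence constraints: every $v\in S_i$ with $i\geq 2$ must have a neighbour in $S_{i-1}$, and every sink of $\gamma'$ must have a neighbour in $S_\ell$, since otherwise the unique compatible orientation $\gamma$ of $G$ fails to have $\Sink_i(\gamma)=S_i$. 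Your algebraic side instead produces $\sigma_{(j)}\bp{X_{(G',\omega')}}$ summed over \emph{all} such tuples with no incidence constraint, because a stable partition of type $(\mu_1,\ldots,\mu_\ell,\rho)$ imposes none. These two sums are not term-by-term equal, and making them agree requires a cancellation mechanism (a sign-reversing involution or inclusion--exclusion over the violated constraints) that you do not propose; this is exactly the difficulty the paper's induction sidesteps, since for complete graphs every incidence constraint holds automatically and the count reduces to the multiplicity factor $\prod n_a!/\prod n'_a!$, while the inductive step never compares the two global sums at all. A smaller loose end: when $j>\mu_\ell$ you show only that the left-hand side vanishes; on the right one still needs that maximality forces the type of any contributing $\gamma$ to satisfy $\lambda_{\ell+1}\leq\mu_\ell$. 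Until the reconciliation step is supplied, this is a plan rather than a proof.
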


\begin{figure}[hbt]
\begin{center}

  \tikzset{every picture/.style={line width=0.75pt}} %set default line width to 0.75pt        

  \begin{tikzpicture}[x=0.75pt,y=0.75pt,yscale=-1,xscale=1]
  %uncomment if require: \path (0,300); %set diagram left start at 0, and has height of 300
  
  %Shape: Ellipse [id:dp8383673208823936] 
  \draw  [fill={rgb, 255:red, 0; green, 0; blue, 0 }  ,fill opacity=1 ] (22.19,26.25) .. controls (22.19,24.8) and (23.35,23.63) .. (24.77,23.63) .. controls (26.2,23.63) and (27.35,24.8) .. (27.35,26.25) .. controls (27.35,27.7) and (26.2,28.88) .. (24.77,28.88) .. controls (23.35,28.88) and (22.19,27.7) .. (22.19,26.25) -- cycle ;
  %Shape: Ellipse [id:dp7803962104872111] 
  \draw  [fill={rgb, 255:red, 0; green, 0; blue, 0 }  ,fill opacity=1 ] (72.19,26.25) .. controls (72.19,24.8) and (73.35,23.63) .. (74.77,23.63) .. controls (76.2,23.63) and (77.35,24.8) .. (77.35,26.25) .. controls (77.35,27.7) and (76.2,28.88) .. (74.77,28.88) .. controls (73.35,28.88) and (72.19,27.7) .. (72.19,26.25) -- cycle ;
  %Shape: Ellipse [id:dp0777613211913335] 
  \draw  [fill={rgb, 255:red, 0; green, 0; blue, 0 }  ,fill opacity=1 ] (22.19,76.25) .. controls (22.19,74.8) and (23.35,73.63) .. (24.77,73.63) .. controls (26.2,73.63) and (27.35,74.8) .. (27.35,76.25) .. controls (27.35,77.7) and (26.2,78.88) .. (24.77,78.88) .. controls (23.35,78.88) and (22.19,77.7) .. (22.19,76.25) -- cycle ;
  %Shape: Ellipse [id:dp6493845914760905] 
  \draw  [fill={rgb, 255:red, 0; green, 0; blue, 0 }  ,fill opacity=1 ] (72.19,76.25) .. controls (72.19,74.8) and (73.35,73.63) .. (74.77,73.63) .. controls (76.2,73.63) and (77.35,74.8) .. (77.35,76.25) .. controls (77.35,77.7) and (76.2,78.88) .. (74.77,78.88) .. controls (73.35,78.88) and (72.19,77.7) .. (72.19,76.25) -- cycle ;
  %Straight Lines [id:da8718184194514824] 
  \draw    (27.35,26.25) -- (72.19,26.25) ;
  %Straight Lines [id:da3120916889847045] 
  \draw    (24.77,28.88) -- (24.77,73.63) ;
  %Straight Lines [id:da11331879900849295] 
  \draw    (25.5,75.5) -- (74.5,26.5) ;
  %Straight Lines [id:da29209899168412834] 
  \draw    (27.35,76.25) -- (72.19,76.25) ;
  
  % Text Node
  \draw (-10,4.15) node [anchor=north west][inner sep=0.75pt]  [font=\small]  {$v_1, \{1,2\}$};
  % Text Node
  \draw (55,4.15) node [anchor=north west][inner sep=0.75pt]  [font=\small]  {$v_2, \{3,4\}$};
  % Text Node
  \draw (-17,83.15) node [anchor=north west][inner sep=0.75pt]  [font=\small]  {$v_3, \{5,6,7\}$};
  % Text Node
  \draw (60,83.15) node [anchor=north west][inner sep=0.75pt]  [font=\small]  {$v_4, \{8,9\}$};

  \end{tikzpicture}
  
\end{center}
\captionsetup{skip=2pt}
\caption{$(G,\omega)$}
\label{fig:main-thm-example-graph}
\end{figure}

Before diving into the proof of the main theorem, we give an example applying the theorem. Consider the graph $(G,\omega)$ in Figure \ref{fig:main-thm-example-graph}.
All \textit{allowable} partitions are 
\[(2), (2, 2), (2, 2, 2),\]
\[(3), (3, 2), (3, 2, 2), (3, 2, 2, 2),\]
\[(4), (4, 3), (4, 2),\]
\[(4, 3, 2).\]
We observe that $\mu=(4)$ is a \textit{maximal} partition since it partially dominates all allowable partitions. Let us apply Theorem \ref{thm:main} with $\mu=(4)$ and $j=3$ on $(G,\omega)$. Using SageMath, one can compute 
\[\sigma_{4,3}\bp{X_{(G,\omega)}}=-2.\]
We then compute the right-hand side of (\ref{main-thm}). 
Note that $d=9$ and $n=4$.
We first list all acyclic orientations of $G$ in Figure \ref{fig:main-thm-example-orientations}. (We omit the vertex names $v_1,v_2,v_3,v_4$ in the diagrams.) For each acyclic orientation, the first-level sinks are colored in yellow. We observe that only the two acyclic orientations boxed in red can produce admissible weight maps that have the weight sequence $(4, 3)$.
Let us call the boxed acyclic orientation in the first row of Figure \ref{fig:main-thm-example-orientations} $\gamma_1$, and the other boxed acyclic orientation $\gamma_2$.
Note that the weight map $S_1$ given by: (the vertex labels are given in Figure \ref{fig:main-thm-example-graph})
\[S_1(v_1)=\bp{\{1, 2\},\varnothing}, S_1(v_2)=\bp{\varnothing,\varnothing},\]
\[S_1(v_3)=\bp{\varnothing, \{5,6,7\}}, S_1(v_4)=\bp{\{8,9\},\varnothing}\]
is the only $\gamma_1$-admissible 2-step weight map that produces a weight sequence of $(4,3)$.
For $\gamma_2$, the weight map $S_2$ given by:
\[S_2(v_1)=\bp{\varnothing,\varnothing}, S_2(v_2)=\bp{\{3, 4\},\varnothing},\]
\[S_2(v_3)=\bp{\varnothing, \{5, 6, 7\}}, S_2(v_4)=\bp{\{8, 9\},\varnothing}\]
is the only $\gamma_2$-admissible 2-step weight map that produces a weight sequence of $(4,3)$. Therefore, the right-hand side of (\ref{main-thm}) becomes
\begin{align*}
  &\quad (-1)^{9-4}\lrp{(-1)^{|(4,3)|-\sum_{i=1}^{2}\sink_i(\gamma_1)}+(-1)^{|(4,3)|-\sum_{i=1}^{2}\sink_i(\gamma_1)}} \\
  &= -\lrp{(-1)^{7-2-1}+(-1)^{7-2-1}} = -2 = \sigma_{4,3}\bp{X_{(G,\omega)}},
\end{align*}
which agrees with Theorem \ref{thm:main}.

\begin{figure}[hbt]
\begin{center}

\tikzset{every picture/.style={line width=0.75pt}} %set default line width to 0.75pt        

% [inline block 1: 1 envs, 40701 chars -> data_tex | \begin{tikzpicture}[x=0.75pt,y=0.75pt,yscale=-1,xscale=1] %uncomment if require: \path (0,380); %set diagram left start ...]


\end{center}
\captionsetup{skip=2pt}
\caption{All acyclic orientations of $(G,\omega)$, sinks are colored in yellow}
\label{fig:main-thm-example-orientations}
\end{figure}

We will shortly start the proof of Theorem \ref{thm:main}.

\subsection*{Proof Outline}

For clarity given the length of the proof, we first provide a roadmap here. The main points in this outline will correspond to headings in the proof for ease of reading.

The proof will be by induction on the number of \textit{non-edges} of $(G, \omega)$. For the base case, we consider when the underlying graph $G$ has 0 non-edges, i.e. when $G$ is a complete graph. In this case $X_{(G,\omega)} = \tm_{\lm}$ where the parts of $\lm$ are equal to the integer weights of the vertices of $G$. We prove the theorem in this case using the fact that $\mu$ partially dominates $\lambda$; by definition, partial dominance can happen in two ways, so we consider the two cases separately.

For the inductive step, we use the deletion-contraction relation of chromatic symmetric functions. We fix some non-edge $e=v_1v_2$ and consider the graphs $G+e$ and $G/e$, which both have fewer non-edges than $G$. To apply the inductive hypothesis, we first show that every maximal partition of $G$ is also a maximal partition in $G+e$ and a maximal partition in $G/e$. We then use the inductive hypothesis and the deletion-contraction relation to simplify \eqref{main-thm}, the equation of Theorem \ref{thm:main}. Finally, we divide into cases depending on the sink-level relationship between $v_1$ and $v_2$.

% Todo: Outline of the proof, especially the inductive step

\begin{proof}

As stated before, the proof is by induction on the number of \textit{non-edges} of $(G, \omega)$.
We assume that all orientations $\gamma$ occurring in sums within the proof are acyclic unless otherwise stated.

\subsection*{Base Case}

It suffices to consider the case when $G$ is a simple complete graph, since any graph with multi-edges has the same chromatic symmetric function as the same graph with each multi-edge replaced by a single edge.

Let $(G,\omega)$ be a complete set-weighted graph, with vertices labelled $v_1, \dots, v_n$ such that their integer weights satisfy $w(v_1)\geq w(v_2)\geq\cdots\geq w(v_n)$.
Note that $X_{(G,\omega)}=\widetilde{m}_\lambda$, where $\lambda=(w(v_1),\ldots,w(v_n))\vdash d$ is an allowable partition of $(G,\omega)$.
Since $\mu$ is maximal, $\mu$ partially dominates $\lambda$. There are two ways this can happen; we examine what happens in both.

\subsubsection*{$\mu$ partially dominates $\lm$: Case 1}

First suppose there exists some $i \in \{1,\dots,\ell\}$ such that $\mu_1+\ldots+\mu_i>w(v_1)+\cdots+w(v_i)$.
Let $\nu\vdash d$ such that $\nu'$ has the form $(\mu_1,\ldots,\mu_\ell,j,\ldots)$.
Then we know that $\lambda$ does not dominate $\nu'$, or equivalently $\nu$ does not dominate $\lambda'$.
Hence by Lemma \ref{lem:edom} we have $[e_\nu]\tm_\lambda = [e_\nu]m_\lambda=0$, so the left-hand side of \eqref{main-thm} in this case is
\[\sigma_{\mu,j}\bp{X_{(G,\omega)}}=0.\]
To compute the right-hand side of \eqref{main-thm}, we claim that there does not exist an acyclic orientation $\gamma$ and a $\gamma$-admissible $(\ell+1)$-step weight map $S$ such that $\wts(G, \gamma,S)=(\mu_1,\ldots,\mu_\ell,j)$.
Indeed, let $\gamma$ be any acyclic orientation, and let $\nu$ be the type of $\gamma$. Since $G$ is a complete graph, $\nu$ is a permutation of $\lambda$. Since $w(v_1)\geq\cdots\geq w(v_n)$, we must have 
\[\nu_1+\cdots+\nu_i\leq w(v_1)+\cdots+w(v_i)<\mu_1+\cdots+\mu_i.\]
Then by Lemma \ref{maximal}, there does not exist a $\gamma$-admissible $\ell$-step weight map with weight sequence $(\mu_1,\ldots,\mu_\ell)$, so it follows that there does not exist a $\gamma$-admissible $(\ell+1)$-step weight map with weight sequence $(\mu_1,\ldots,\mu_\ell,j)$. Therefore, the right-hand side of \eqref{main-thm} is $0$, and \eqref{main-thm} holds in this case.

\subsubsection*{$\mu$ partially dominates $\lm$: Case 2}

We then assume that $\mu_i = w(v_i)$ for each $i \in \{1,\dots,\ell\}$. Let us first simplify the left-hand side of \eqref{main-thm}. 
Recall that for each $a\in\N$, $n_a$ denotes the number of times that $a$ occurs as a part of $\lambda$. Then 
\[\sigma_{\mu,j}\bp{X_{(G,\omega)}}=\sigma_{\mu,j}(\widetilde{m}_\lambda)=\sigma_{\mu,j}\bp{\lrp{\ts{\prod_{a=1}^{\infty}n_a!}}m_\lambda}=\bp{\ts{\prod_{a=1}^{\infty}n_a!}}\cdot \sigma_{\mu,j}(m_\lambda).\]
Then since $\mu_i=w(v_i)$ for all $i \in \{1,\dots,\ell\}$, we can apply Lemma \ref{base-case-induction} repeatedly and obtain 

\begin{align*}
  \sigma_{\mu,j}\bp{X_{(G,\omega)}} &= \bp{\ts{\prod_{a=1}^{\infty}n_a!}}\cdot \sigma_{\mu,j}(m_\lambda) \\
  &= \bp{\ts{\prod_{a=1}^{\infty}n_a!}}\cdot \sigma_j\bp{m_{(w(v_{\ell+1}),\ldots,w(v_n))}} \\
  &= \frac{\prod_{a=1}^{\infty}n_a!}{\prod_{a=1}^{\infty}n'_a!}\cdot \sigma_j\bp{\widetilde{m}_{(w(v_{\ell+1}),\ldots,w(v_n))}},
\end{align*}
where for each $a\in\N$, $n'_a$ is the number of times that $a$ occurs as a part of the partition $\bp{w(v_{\ell+1}),\ldots,w(v_n)}$.

Now we may use Theorem \ref{one-level-sink} to evaluate $\sigma_j\bp{\widetilde{m}_{w(v_{\ell+1}),\ldots,w(v_n)}}$.
Let $(G',\omega')$ be the complete graph formed by deleting the vertices $v_1, \dots, v_{\ell}$ from $(G,\omega)$, so $(G',\omega')$ is the complete graph with vertex set $\{v_{\ell+1},\ldots,v_{n}\}$ and vertex set weights $\omega'(v_i) = \omega(v_i)$ for $i \in \{\ell+1,\dots,n\}$. Note that then $w'(v_i)=w(v_i)$ for all $i \in \{\ell+1,\dots,n\}$.
%Let $(G',w')$ be the clique with vertex set $\{u_1,\ldots,u_{n-\ell}\}$ such that $w'(u_i)=w(v_{\ell+i})$ for all $1\leq i\leq n-\ell$.

Let $\mathcal{S}'$ be the set of all ordered pairs $(\gamma,S)$ such that $\gamma$ is an acyclic orientation of $G'$, and $S$ is a $\gamma$-admissible one-step weight map with weight sequence $(j)$. Note that $\sink_1(G',\gamma)=1$ for any such $\gamma$ since $G'$ is a complete graph.

Then by applying Theorem \ref{one-level-sink} we have
\begin{align*}
\sigma_{\mu,j}\bp{X_{(G,\omega)}} &= \frac{\prod_{a=1}^{\infty}n_a!}{\prod_{a=1}^{\infty}n'_a!}\cdot 
(-1)^{w(v_{\ell+1})+\cdots+w(v_n)-n+\ell}\sum_{\wts(G',\gamma,S)=(j)}(-1)^{j-1} \\
&= \frac{\prod_{a=1}^{\infty}n_a!}{\prod_{a=1}^{\infty}n'_a!}\cdot 
(-1)^{w(v_{\ell+1})+\cdots+w(v_n)-n+\ell+j-1}\cdot |\mathcal{S}'|.
\end{align*}

Next we will simplify the right-hand side of \eqref{main-thm} and show that it is equal to the above. By Lemma \ref{maximal}, it suffices to consider only the ordered pairs $(\gamma, S)$ in which $\gamma$ is an acyclic orientation of $G$ whose type has the form $(\mu_1,\ldots,\mu_\ell,\ldots)$. Therefore, let $\mathcal{S}$ be the set of all ordered pairs $(\gamma,S)$ such that $\gamma$ is such an acyclic orientation, and $S$ is a $\gamma$-admissible $(\ell+1)$-step weight map with weight sequence $(\mu_1,\ldots,\mu_\ell,j)$.

%Let $\gamma$ be an acyclic orientation of $G$ with type $\nu$.

Then the right-hand side of \eqref{main-thm} becomes
\[(-1)^{d-n}\sum_{(\gamma,S)\in \mathcal{S}}(-1)^{j+\mu_1+\cdots+\mu_\ell-\ell-1}=(-1)^{d-n+j+\mu_1+\cdots+\mu_\ell-\ell-1}\cdot |\mathcal{S}|
,\]
where we have used the fact that $\sum_{i=1}^{\ell+1}\sink_i(\gamma)=\ell+1$ since $G$ is a complete graph.

To construct an element $(\gamma,S)\in \mathcal{S}$, we first pick vertices $v_{k_1},\ldots,v_{k_\ell}$ such that $\Sink_i(\gamma)=\{v_{k_i}\}$ and $w(v_{k_i})=\mu_i$ for all $i \in \{1,\dots,\ell\}$. 
This can be done in $\bp{\prod_{a=1}^{\infty}n_a!}/\bp{\prod_{a=1}^{\infty}n'_a!}$ ways, where $n_a$ and $n'_a$ are defined as above when considering the left-hand side of \eqref{main-thm}. We then must construct $S$ such that for $i \in \{1,\dots,\ell\}$, we have $S(v_{k_i})_i = \omega(v_{k_i})$ and $S(v)_i = \varnothing$ if $v \neq v_{k_i}$.

Then, since we need $\wts(\gamma,S)=(\mu_1,\ldots,\mu_\ell,j)$, to finish constructing $(\gamma, S)$ we require an acyclic orientation $\gamma'$ of $G' = G \bk \{v_{k_1},\ldots,v_{k_\ell}\}$ and a $\gamma'$-admissible one-step weight map on $G'$ with sink weight sequence $(j)$. By definition this may be chosen in $|\mathcal{S}'|$ ways, where $\mathcal{S}'$ is defined above. This shows that 
\[|\mathcal{S}|=\frac{\prod_{a=1}^{\infty}n_a!}{\prod_{a=1}^{\infty}n'_a!}\cdot|\mathcal{S}'|.\]

Therefore, the right-hand side of \eqref{main-thm} becomes
\begin{align*}
  (-1)^{d-n+j+\mu_1+\cdots+\mu_\ell-\ell-1}\cdot\frac{\prod_{a=1}^{\infty}n_a!}{\prod_{a=1}^{\infty}n'_a!}\cdot|\mathcal{S}'| &= (-1)^{d-n+j-\mu_1-\cdots-\mu_\ell+\ell-1}\cdot\frac{\prod_{a=1}^{\infty}n_a!}{\prod_{a=1}^{\infty}n'_a!}\cdot|\mathcal{S}'| \\
  &= (-1)^{w(v_{\ell+1})+\cdots+w(v_n)-n+\ell+j-1}\cdot\frac{\prod_{a=1}^{\infty}n_a!}{\prod_{a=1}^{\infty}n'_a!}\cdot|\mathcal{S}'| \\
  &= \sigma_{\mu,j}\bp{X_{(G,\omega)}},
\end{align*}
which establishes the base case of the theorem.

\subsection*{Inductive Step}

Let $(G,\omega)$ be a set-weighted graph with $g\geq 1$ non-edges, and assume by induction that \eqref{main-thm} holds for all set-weighted graphs with fewer than $g$ non-edges.

Let $e=v_1v_2\notin E(G)$. We may assume $e$ is not a loop, as otherwise both sides of \eqref{main-thm} are equal to $0$, and the result holds. For clarity, we will sometimes suppress explicit mention of $\omega$ in the remainder of this proof, but whenever a graph is mentioned, it is assumed to be a set-weighted graph, and the set-weighting function will be apparent in terms of $\omega$.

Let $G+e$ be the (set-weighted) graph such that $V(G+e)=V(G)$ and $E(G+e)=E(G)\cup\{e\}$, and let $G/e = (G+e)/e$.
We will use previously established terminology, so when $e$ is a nonloop edge, $v^* \in V(G/e)$ is the resulting vertex after contracting $v_1$ and $v_2$.

It is easy to see that both $G+e$ and $G/e$ have fewer than $g$ non-edges. In order to apply the inductive hypothesis, we would like to show that if $\mu$ is a maximal partition of $G$, then $\mu$ is also a maximal partition of both $G+e$ and $G/e$.

\subsubsection*{Checking the Inductive Hypothesis for $G+e$}

We first check that $\mu$ is a maximal partition for $G+e$. Let $\nu$ be an allowable partition in $G+e$. Then there exists some $W\subseteq V(G+e)=V(G)$ and a stable partition $(S_1,\ldots,S_k)$ of $W$ such that $w(S_i)=\nu_i$ for all $i \in \{1, \dots, \ell(\nu)\}$. For each such $i$, since $S_i$ is stable in $G+e$, $S_i$ is also stable in $G$, so $\nu$ is also an allowable partition in $G$. Since $\mu$ is a maximal partition in $G$, $\mu$ partially dominates $\nu$. Since the choice of $\nu$ was arbitrary, we conclude that $\mu$ is maximal in $G+e$.

\subsubsection*{Checking the Inductive Hypothesis for $G/e$}

Next, we check that $\mu$ is also a maximal partition for $G/e$.
Let $\nu$ be an allowable partition in $G/e$.
Then there exists some $W\subseteq V(G/e)$ and a stable partition $(S_1,\ldots,S_k)$ of $W$ such that $w(S_i)=\nu_i$ for all $i \in \{1,\dots,\ell(\nu)\}$.
If $v^*\notin S_i$ for some such $i$, then each $S_i$ is a stable set in the graph $G$, so that $(S_1,\ldots,S_k)$ is a stable partition of a subset of $V(G)$, and in this case, $\nu$ is an allowable partition in $G$. Then since $\mu$ is maximal in $G$, we have that $\mu$ partially dominates $\nu$. 

Suppose that instead $v^*\in S_{i_0}$ for some $i_0 \in \{1,\dots,\ell(\nu)\}$. Let $S'_{i_0}=(S_{i_0}\setminus\{v^*\})\cup\{v_1,v_2\}$ be a set of vertices in $G$.
Since $v_1v_2\notin E(G)$, $S'_{i_0}$ is stable in $G$, and for $i\neq i_0$, it is clear that $S_i$ is stable in $G$. It follows that $((S_1,\dots,S_k) \bk S_{i_0}) \cup S'_{i_0}$ is a stable partition of some subset of vertices of $G$. Since $w(S_{i_0})=w(S'_{i_0})$, it follows that $\nu$ is an allowable partition in $G$, and that $\mu$ partially dominates $\nu$.

Since $\mu$ partially dominates $\nu$ in both cases, it follows that $\mu$ is a maximal partition in $G/e$.

\subsubsection*{Simplification Using the Inductive Hypothesis}

Therefore, we can apply the inductive hypothesis on $G+e$ and $G/e$. 
We have 
\[
\sigma_{\mu,j}\bp{X_{G+e}}=(-1)^{d-n}\sum_{\substack{\wts(G+e,\gamma,S)=(\mu_1,\ldots,\mu_\ell,j) \\ S\text{ admissible}}}(-1)^{j+\sum_{i=1}^{\ell}\mu_i-\sum_{i=1}^{\ell+1}\sink_i(\gamma)}
\]
and
\[
\sigma_{\mu,j}\bp{X_{G/e}}=(-1)^{d-n+1}\sum_{\substack{\wts(G/e,\gamma,S)=(\mu_1,\ldots,\mu_\ell,j) \\ S\text{ admissible}}}(-1)^{j+\sum_{i=1}^{\ell}\mu_i-\sum_{i=1}^{\ell+1}\sink_i(\gamma)}.
\]

By the deletion-contraction relation of Lemma \ref{lem:delconset}, we have 
\[(-1)^{d-n}\sigma_{\mu,j}\bp{X_{G+e}}=(-1)^{d-n}\sigma_{\mu,j}\bp{X_{G}}-(-1)^{d-n}\sigma_{\mu,j}\bp{X_{G/e}}.\]
To prove \eqref{main-thm}, it thus suffices to prove that 
\begin{align*}
  &\quad \sum_{\substack{\wts(G,\gamma,S)=(\mu,j) \\ S\text{ admissible}}}(-1)^{j+\sum_{i=1}^{\ell}\mu_i-\sum_{i=1}^{\ell+1}\sink_i(\gamma)} \\
  &= \sum_{\substack{\wts(G+e,\gamma,S)=(\mu,j) \\ S\text{ admissible}}}(-1)^{j+\sum_{i=1}^{\ell}\mu_i-\sum_{i=1}^{\ell+1}\sink_i(\gamma)}-\sum_{\substack{\wts(G/e,\gamma,S)=(\mu,j) \\ S\text{ admissible}}}(-1)^{j+\sum_{i=1}^{\ell}\mu_i-\sum_{i=1}^{\ell+1}\sink_i(\gamma)}
\end{align*}

or equivalently, it suffices to show that 
\begin{align}
&\quad \sum_{\substack{\wts(G,\gamma,S)=(\mu,j) \\ S\text{ admissible}}}(-1)^{\sum_{i=1}^{\ell+1}\sink_i(\gamma)} \nonumber \\
&=\sum_{\substack{\wts(G+e,\gamma,S)=(\mu,j) \\ S\text{ admissible}}}(-1)^{\sum_{i=1}^{\ell+1}\sink_i(\gamma)}-\sum_{\substack{\wts(G/e,\gamma,S)=(\mu,j) \\ S\text{ admissible}}}(-1)^{\sum_{i=1}^{\ell+1}\sink_i(\gamma)}.
\label{induction}
\end{align}

Given a set-weighted graph $H$ (with weight function suppressed), an orientation $\gamma$ on $H$ (not necessarily acyclic), and an $(\ell+1)$-step weight map $S$ of $H$, we define 
\[
T(H,\gamma,S)=\begin{cases}
(-1)^{\sum_{i=1}^{\ell+1}\sink_i(\gamma)} & \text{ if }\gamma\text{ is acyclic and } S \text{ is }\gamma\text{-admissible} \\
0 & \text{ otherwise.} 
\end{cases}  
\]

Hence, in order to prove the new equality \eqref{induction}, it suffices to show that 
\begin{equation}
\label{induction-goal}
\sum_{\substack{(G,\gamma,S) \\ \wts(G,S)=(\mu,j)}} T(G,\gamma,S)
=\sum_{\substack{(G+e,\gamma,S) \\ \wts(G+e,S)=(\mu,j)}} T(G+e,\gamma,S)
-\sum_{\substack{(G/e,\gamma,S) \\ \wts(G/e,S)=(\mu,j)}} T(G/e,\gamma,S),
\end{equation}
where the sums each range over all (not necessarily acyclic) orientations $\gamma$ of the corresponding graph, and all $(\ell+1)$-step weight maps $S$ of the corresponding graph, and not just the ones which are $\gamma$-admissible. This will allow us to more easily demonstrate the necessary bijections to show the equality holds.

We reiterate a summary of Definition \ref{def:delconori} here, as this notation will be used frequently throughout the remainder of this proof. For $\gamma$ any orientation of $G$ and $S$ an $(\ell+1)$-step weight map of $G$, we defined:
\begin{itemize}
  \item In $G+e$ 
  \begin{itemize}
      \item $\varphi_{v_1}(\gamma)$ is the orientation adding $v_1\to v_2$.
      \item $\varphi_{v_2}(\gamma)$ is the orientation adding $v_2\to v_1$.
      \item $\psi_{+}(S)$ be the $(\ell+1)$-step weight map with $\psi_{+}(S)=S$.
  \end{itemize}   
  \item In $G/e$ \begin{itemize}
      \item $\varphi_{v^*}(\gamma)$ is the orientation contracting $e$.
      \item $\psi_{*}(S)$ is the $(\ell+1)$-step weight map that is the same as $S$ except at $v^*$, where its entries are from the union of those at $v_1$ and $v_2$.
  \end{itemize} 
\end{itemize}

Note that we may easily verify $\wts(G+e,\psi_+(S))=(\mu_1,\ldots,\mu_\ell,j)$ and $\wts(G/e,\psi_*(S))=(\mu_1,\ldots,\mu_\ell,j)$.

We claim that 
\begin{equation}
\label{ind-bijection}
T(G,\gamma,S) = T\bp{G+e,\varphi_{v_1}(\gamma),\psi_+(S)} + T\bp{G+e,\varphi_{v_2}(\gamma),\psi_+(S)} - T\bp{G/e,\varphi_*(\gamma),\psi_*(S)}
\end{equation}
for all orientations $\gamma$ of $G$ and all $(\ell+1)$-step weight maps $S$ on $G$ with $\wts(G,S)=(\mu_1,\ldots,\mu_\ell,j)$. This is sufficient to prove the result, since 
\begin{itemize}
    \item Every orientation of either $G+e$ or $G/e$ corresponds under the inverse of an appropriate $\varphi$ to a unique orientation of $G$.
    \item Every $(\ell+1)$-step weight map of either $G+e$ or $G/e$ corresponds under the inverse of an appropriate $\psi$ to a unique $(\ell+1)$-step weight map of $G$.
\end{itemize}
so the equalities of the form \eqref{ind-bijection} across all orientations $\gamma$ of $G$ and all $(\ell+1)$-step weight maps $S$ on $G$ include every term among the sums in \eqref{induction-goal} exactly once.

\medskip\medskip

\subsubsection*{Proving the Final Equality Holds in All Cases}

For the rest of the proof, we split into cases depending on the nature of $\gamma$, $S$, and the endpoints of the edge $e$.

\subsubsection*{Case 1. $\gamma$ is not an acyclic orientation.}

In this case, each of $\varphi_{v_1}(\gamma)$, $\varphi_{v_2}(\gamma)$, and $\varphi_{v^*}(\gamma)$ are also not acyclic, so every term of \eqref{ind-bijection} is equal to $0$, and equality holds.

For the remainder of the proof, we may assume that $\gamma$ is acyclic.

\subsubsection*{Case 2. $\gamma$ has a directed path from $v_1$ to $v_2$ or from $v_2$ to $v_1$.}

Note that since $\gamma$ is acyclic, there cannot be a directed path from $v_1$ to $v_2$ and a directed path from $v_2$ to $v_1$ at the same time.
Assume without loss of generality that there exists a directed path from $v_1$ to $v_2$ in $\gamma$.

In this case, neither $\varphi_{v_2}(\gamma)$ nor $\varphi_{v^*}(\gamma)$ is acyclic, so 
\[T\bp{G+e,\varphi_{v_2}(\gamma),\psi_+(S)}=0\quad\text{and}\quad T\bp{G/e,\varphi_*(\gamma),\psi_*(S)}=0.\]
We then consider the orientation $\varphi_{v_1}(\gamma)$ on $G+e$. Since there is no directed path from $v_2$ to $v_1$ in $\gamma$, $\varphi_{v_1}(\gamma)$ is acyclic.
We claim that $S$ is $\gamma$-admissible if and only if $\psi_+(S)$ is $\varphi_{v_1}(\gamma)$-admissible.
Indeed, by Lemma \ref{maximal}(c), we have that $S$ is $\gamma$-admissible if and only if $S$ is in $\gamma$-standard form.
Let $v_1\in\Sink_i(\gamma)$ and let $v_2\in\Sink_j(\gamma)$ for some $i$ and $j$. Since there is a directed path from $v_1$ to $v_2$, we must have $i>j$. It follows that $\Sink_m(\gamma)=\Sink_m(\varphi_{v_1}(\gamma))$ for all $m$.
Then since $\psi_+(S)=S$, we have that $S$ is in $\gamma$-standard form if and only if $\psi_+(S)$ is in $\varphi_{v_1}(\gamma)$-standard form, which happens if and only if $\psi_+(S)$ is $\varphi_{v_1}(\gamma)$-admissible in $G+e$ by Lemma \ref{maximal}(c).
This means that for every $S$,
\[T(G,\gamma,S)=T\bp{G+e,\varphi_{v_1}(\gamma),\psi_+(S)}.\]
Therefore, \eqref{ind-bijection} holds in this case.

For the remainder of the proof, we may thus assume that there does not exist a direct path from $v_1$ to $v_2$ or from $v_2$ to $v_1$, and we may therefore assume that $\varphi_{v_1}(\gamma)$, $\varphi_{v_2}(\gamma)$, and $\varphi_{v^*}(\gamma)$ are all acyclic. 

Furthermore, define the indices $i$ and $j$ by letting $v_1 \in \Sink_i(\gamma)$ and $v_2 \in \Sink_j(\gamma)$. The remaining cases will consider different possibilities for $i$ and $j$.

\subsubsection*{Case 3. At least one of $i$ and $j$ is an element of $\{1,\dots,\ell\}$, and $i \neq j$.}

% Since there is no directed path $v_1$ to $v_2$ or from $v_2$ to $v_1$, we know that $\varphi_{v_1}(\gamma)$, $\varphi_{v_2}(\gamma)$, and $\varphi_{v^*}(\gamma)$ are all acyclic.
Assume without loss of generality that $i \in \{1,\dots,\ell\}$ (recalling that $\ell = \ell(\mu)$).

We first consider the subcase where $j>i$.
In this subcase, we claim that $\psi_{*}(S)$ is not $\varphi_{v^*}(\gamma)$-admissible in $G/e$.
%Indeed, first observe that for all $v\notin\{v_1,v_2\}$ and all $m$, we have $v\in\Sink_m(G,\gamma)$ if and only if $v\in\Sink_m(G/e,\varphi_*(\gamma))$.
Note that $v^*\in\Sink_j(\varphi_*(\gamma))$. 
Let $\lambda$ be the type of $\gamma$ and let $\lambda^*$ be the type of $\varphi_*(\gamma)$. 
Recall that $\mu$ partially dominates $\lambda$.

If $\mu_m=\lambda_m$ for all $m \in \{1,\dots,\ell\}$, then we have $\mu_1+\cdots+\mu_i=\lambda_1+\cdots+\lambda_i>\lambda^*_1+\cdots+\lambda^*_i$ by Lemma \ref{obs:sinks}. If instead there exists some $m \in \{1,\dots,\ell\}$ such that $\mu_1+\cdots+\mu_m>\lambda_1+\cdots+\lambda_m$, then we also have $\mu_1+\cdots+\mu_m>\lambda^*_1+\cdots+\lambda^*_m$ by Lemma \ref{obs:sinks}.
Either way, by Lemma \ref{maximal}(b), we conclude that $\psi_*(S)$ cannot be $\varphi_*(\gamma)$-admissible in $G/e$.

We also claim that $\psi_+(S)$ is not $\varphi_{v_1}(\gamma)$-admissible.
Note that $v_1\in\Sink_{j+1}(\varphi_{v_1}(\gamma))$. Therefore, since $\mu$ partially dominates the type of $\varphi_{v_1}(\gamma)$, following the same steps as above using the type of $\varphi_{v_1}(\gamma)$, we conclude that $\psi_{+}(S)$ cannot be $\varphi_{v_1}(\gamma)$-admissible in $G+e$.

Thus we have demonstrated that
\[T\bp{G+e,\varphi_{v_1}(\gamma),\psi_+(S)}=0\quad\text{and}\quad T\bp{G/e,\varphi_*(\gamma),\psi_*(S)}=0.\]

It remains to consider the orientation $\varphi_{v_2}(\gamma)$ on $G+e$.
Proceeding exactly as in Case 2, we can show that $S$ is $\gamma$-admissible if and only if $\psi_+(S)$ is $\varphi_{v_2}(\gamma)$-admissible.
Hence, 
\[T(G,\gamma,S)=T\bp{G+e,\varphi_{v_2}(\gamma),\psi_+(S)},\]
and \eqref{ind-bijection} holds in this case.

For the case where $j<i$, we can use an identical argument to show that $\psi_{*}(S)$ is not $\varphi_{v^*}(\gamma)$-admissible in $G/e$, and $\psi_+(S)$ is not $\varphi_{v_2}(\gamma)$-admissible in $G+e$. It is also easy to show that $S$ is $\gamma$-admissible in $G$ if and only if $\psi_{+}(S)$ is $\varphi_{v_1}(\gamma)$-admissible in $G+e$, so \eqref{ind-bijection} holds.

\subsubsection*{Case 4. $i$ and $j$ are elements of $\{1,\dots,\ell\}$ with $i=j$.}

We claim that $\psi_+(S)$ is not $\varphi_{v_1}(\gamma)$-admissible in $G+e$. Let $\lambda$ be the type of $\gamma$ and let $\lambda^{v_1}$ be the type of $\varphi_{v_1}(\gamma)$.

Then by Lemma \ref{obs:sinks}, $\lambda^{v_1}_1 + \dots + \lambda^{v_1}_i<\lambda_1 + \dots + \lambda_i$ and $\lambda^{v_1}_1+\cdots+\lambda^{v_1}_m\leq \lambda_1+\cdots+\lambda_m$ for all $m$, so by the same argument as in Case 3 we see that $\psi_+(S)$ is not $\varphi_{v_1}(\gamma)$-admissible. By a symmetrical argument, also $\psi_+(S)$ is not $\varphi_{v_2}(\gamma)$-admissible. Therefore, we have 

\[T\bp{G+e,\varphi_{v_1}(\gamma),\psi_+(S)}=0\quad\text{and}\quad T\bp{G+e,\varphi_{v_2}(\gamma),\psi_+(S)}=0.\]
We then claim that $S$ is $\gamma$-admissible in $G$ if and only if $\psi_*(S)$ is $\varphi_*(\gamma)$-admissible in $G/e$.
By Lemma \ref{maximal}(c), $S$ is $\gamma$-admissible if and only if $S$ is in $\gamma$-standard form.
By Lemma \ref{obs:sinks}, $\Sink_i(\varphi_*(\gamma))=(\Sink_i(\gamma)\setminus\{v_1,v_2\})\cup\{v^*\}$, and $\Sink_m(\varphi_*(\gamma))=\Sink_m(\gamma)$ for all $m\neq i$, so we have that $S$ is in $\gamma$-standard form if and only if $\psi_*(S)$ is in $\varphi_{v^*}(\gamma)$-standard form.
Using Lemma \ref{maximal}(c) again, we know that $\psi_*(S)$ is in standard form in $(G/e,\varphi_{v^*}(\gamma))$ if and only if $\psi_*(S)$ is $\varphi_{v^*}(\gamma)$-admissible in $G/e$. Furthermore, when both are admissible we have $(\sum_{k=1}^{\ell+1} \sink_k(\gamma))-1$ = $\sum_{k=1}^{\ell+1} \sink_k(\varphi_*(\gamma))$, so in any case
\[T(G,\gamma,S)=-T\bp{G/e,\varphi_*(\gamma),\psi_*(S)},\]
and thus \eqref{ind-bijection} holds.

From now on, we can assume that both $i$ and $j$ are larger than $\ell$.

\subsubsection*{Case 5. $i,j>\ell$, and it is \textit{not} the case that for all $v \in V(G)$ and $k \in \{1,\dots,\ell\}$, we have $S(v)_k=\{1,\ldots,w(v)\}$ if $v\in\Sink_k(\gamma)$ and $S(v)_k=\varnothing$ if $v\notin\Sink_k(\gamma)$.}

That is, $S$ does not satisfy the first part of the definition for being in $\gamma$-standard form.

By Lemma \ref{maximal}(c), we know that $S$ is not $\gamma$-admissible, so $T(G,\gamma,S)=0$.

Since $i,j>\ell$, neither $v_1$ nor $v_2$ lie on any directed path starting at a vertex $v \in Sink_k(\gamma)$ with $k \leq \ell$. Therefore, for every $m \in \{1,\dots,\ell\}$, $\Sink_m(\gamma)=\Sink_m(\varphi_{v_1}(\gamma))=\Sink_m(\varphi_{v_2}(\gamma))=\Sink_m(\varphi_*(\gamma),\psi_*(S))$. It follows that $\psi_+(S)$ is not in either $\varphi_{v_1}(\gamma)$-standard form or in $\varphi_{v_2}(\gamma)$-standard form, and that $\psi_*(S)$ is not in $\varphi_*(\gamma)$-standard form
.
Hence by Lemma \ref{maximal}(c), $\psi_+(S)$ is neither $\varphi_{v_1}(\gamma)$-admissible nor $\varphi_{v_2}(\gamma)$-admissible, and $\psi_*(S)$ is not $\varphi_*(\gamma)$-admissible, so $T\bp{G+e,\varphi_{v_1}(\gamma),\psi_+(S)}=T\bp{G+e,\varphi_{v_2}(\gamma),\psi_+(S)}=T\bp{G/e,\varphi_{v^*}(\gamma),\psi_*(S)}=0$, and \eqref{ind-bijection} holds.
\medskip

For all remaining cases, we may thus assume that for all $k \in \{1,\dots,\ell\}$, $S(v)_k=\{1,\ldots,w(v)\}$ if $v\in\Sink_k(\gamma)$ and $S(v)_k=\varnothing$ if $v\notin\Sink_k(\gamma)$, so $S$ satisfies the first portion of the definition for being in $\gamma$-standard form.

\subsubsection*{Case 6. $i=j=\ell+1$.}

That is, $v_1$ and $v_2$ are both elements of $\Sink_{\ell+1}(\gamma)$. If $S(v_1)_{\ell+1}=S(v_2)_{\ell+1}=\varnothing$, then using Lemma \ref{obs:sinks} it is straightforward to check that all terms in \eqref{ind-bijection} will be $0$.
Thus, we assume that at least one of $S(v_1)_{\ell+1}$ and $S(v_2)_{\ell+1}$ is non-empty.
We proceed with subcases.

\subsubsection*{Case 6.1. Exactly one of $S(v_1)_{\ell+1}$ and $S(v_2)_{\ell+1}$ is non-empty.}

Suppose first that $S(v_1)_{\ell+1}\neq \varnothing$ and $S(v_2)_{\ell+1}=\varnothing$.
Then $S$ is not in $\gamma$-standard form, so $T(G,\gamma,S)=0$.
Also, $\psi_{+}(S)$ is not in $\varphi_{v_1}(\gamma)$-standard form, so $T\bp{G+e,\varphi_{v_1}(\gamma),\psi_+(S)}=0$.

Now, if for all $v \in V(G) \setminus \{v_1,v_2\}$, we have $v\in\Sink_{\ell+1}(\gamma)$ if and only if $S(v)_{\ell+1}\neq \varnothing$, then $\psi_+(S)$ is in $\varphi_{v_2}(\gamma)$-standard form and $\psi_*(S)$ is in $\varphi_*(\gamma)$-standard form, so $\psi_+(S)$ is $\varphi_{v_2}(\gamma)$-admissible and $\psi_*(S)$ is $\varphi_{v^*}(\gamma)$-admissible.
Otherwise, $\psi_+(S)$ is not in standard form in $\varphi_{v_2}(\gamma)$, and $\psi_*(S)$ is not in standard form in $\varphi_*(\gamma)$, and it follows that $\psi_+(S)$ is not $\varphi_{v_2}(\gamma)$-admissible and $\psi_*(S)$ is not $\varphi_{v^*}(\gamma)$-admissible.
Either way, we have $T\bp{G+e,\varphi_{v_2}(\gamma),\psi_+(S)}=T\bp{G/e,\varphi_{v^*}(\gamma),\psi_*(S)}$, and \eqref{ind-bijection} holds.

A symmetrical argument shows that if $S(v_1)_{\ell+1}=\varnothing$ and $S(v_2)_{\ell+1}\neq\varnothing$, then \eqref{ind-bijection} also holds.

\subsubsection*{Case 6.2. Both $S(v_1)_{\ell+1}$ and $S(v_2)_{\ell+1}$ are non-empty.}

In this case $\psi_+(S)$ is not in either $\varphi_{v_1}(\gamma)$-standard form in  or $\varphi_{v_2}(\gamma)$-standard form, so $T\bp{G+e,\varphi_{v_1}(\gamma),\psi_+(S)}=T\bp{G+e,\varphi_{v_2}(\gamma),\psi_+(S)}=0$.

If for all $v \in V(G)$, we have $S(v)_{\ell+1}\neq \varnothing$ if and only if $v\in\Sink_{\ell+1}(\gamma)$, then $S$ is in $\gamma$-standard form and thus is $\gamma$-admissible, and also $\psi_*(S)$ is $\varphi_*(\gamma)$-admissible.
Since when both are admissible $(\sum_{k=1}^{\ell+1} \sink_k(\gamma))-1$ = $\sum_{k=1}^{\ell+1} \sink_k(\varphi_*(\gamma))$, we have $T(G,\gamma,S)=-T\bp{G/e,\varphi_*(\gamma),\psi_*(S)}$.

Otherwise, $S$ is not in $\gamma$-standard form and $\psi_*(S)$ is not in $\varphi_*(\gamma)$-standard form, so $S$ is not $\gamma$-admissible and $\psi_*(S)$ is not $\varphi_*(\gamma)$-admissible. Thus,
$T(G,\gamma,S)=T\bp{G/e,\varphi_*(\gamma),\psi_*(S)}=0$. 

In either case, \eqref{ind-bijection} holds.

\subsubsection*{Case 7. $\{i,j\} = \{\ell+1, k\}$ with $k > \ell+1$.}

We assume without loss of generality that $i = \ell+1$ and $j > \ell+1$.
First note that if $S(v_2)_{\ell+1}\neq \varnothing$, using Lemma \ref{obs:sinks} it is straightforward to verify that $S$ is not in $\gamma$-standard form, $\psi_+(S)$ is not in $\varphi_{v_1}(\gamma)$-standard form or in $\varphi_{v_2}(\gamma)$-standard form, and $\psi_*(S)$ is not in $\varphi_*(\gamma)$-standard form, so by Lemma \ref{maximal}(c) all terms in \eqref{ind-bijection} are $0$.

We may thus assume that $S(v_2)_{\ell+1}=\varnothing$, and divide into subcases.

\subsubsection*{Case 7.1. $S(v_1)_{\ell+1}=\varnothing$.}

Then $S$ is not in $\gamma$-standard form, so $T(G,\gamma,S)=0$.
We also have that $\psi_+(S)$ is not in $\varphi_{v_2}(\gamma))$-standard form, so $T\bp{G+e,\varphi_{v_2}(\gamma),\psi_+(S)}=0$.

Suppose first that for all $v \in V(G) \bk \{v_1\}$, we have $S(v)_{\ell+1}\neq \varnothing$ if and only if $v\in\Sink_{\ell+1}(\gamma)$. For $w \in \Sink_k(\gamma)$ with $k \leq \ell+1$, no directed path in $\gamma$ starting at $w$ contains $v_1$ or $v_2$, so $w \in \Sink_{k}(\varphi_*(\gamma))$ and $w \in \Sink_k(\varphi_{v_1}(\gamma))$. If $w \in \Sink_{k}(\gamma)$ with $k > \ell+1$, then by Lemma \ref{obs:sinks}, $w \in \Sink_{k^{*}}(\varphi_*(\gamma))$ and $w \in \Sink_{k^{v_1}}(\varphi_{v_1}(\gamma))$ for $k^{*}, k^{v_1} \geq k$. It follows that $\psi_*(S)$ is in $\varphi_*(\gamma)$-standard form, so $\psi_*(S)$ is $\varphi_*(\gamma)$-admissible.
Likewise, $\psi_+(S)$ is $\varphi_{v_1}(\gamma)$-admissible. Hence, $T\bp{G/e,\varphi_*(\gamma),\psi_*(S)}=T\bp{G+e,\varphi_{v_1}(\gamma),\psi_+(S)}$, and \eqref{ind-bijection} holds.

Otherwise, using a similar argument we see that $\psi_*(S)$ is not $\varphi_*(\gamma)$-admissible and $\psi_+(S)$ is not $\varphi_{v_1}(\gamma)$-admissible, so all terms in \eqref{ind-bijection} are $0$, and \eqref{ind-bijection} holds.

\subsubsection*{Case 7.2. $S(v_1)_{\ell+1}\neq \varnothing$.}

We see that $\psi_+(S)$ is not in $\varphi_{v_1}(\gamma)$-standard form, so $\psi_+(S)$ is not $\varphi_{v_1}(\gamma)$-admissible, and $T\bp{G+e,\varphi_{v_1}(\gamma),\psi_+(S)}=0$.
Additionally, $\psi_*(S)$ is not in $\varphi_*(\gamma)$-standard form, so $T\bp{G/e,\varphi_*(\gamma),\psi_*(S)}=0$.

Using an argument analogous to that in Case 7.1,
if for all $v \in V(G)$ we have $S(v)_{\ell+1}\neq \varnothing$ if and only if $v\in\Sink_{\ell+1}(\gamma)$, then $S$ is $\gamma$-admissible and $\psi_+(S)$ is $\varphi_{v_2}(\gamma)$-admissible.
Otherwise, $S$ is not $\gamma$-admissible and $\psi_+(S)$ is not $\varphi_{v_2}(\gamma)$-admissible.
Either way $T(G,\gamma,S)=T\bp{G+e,\varphi_{v_2}(\gamma),\psi_+(S)}$, and \eqref{ind-bijection} holds.

\subsubsection*{Case 8. $i > \ell+1$ and $j > \ell+1$.}

It is straightforward to check using Lemma \ref{obs:sinks} that if one of $S(v_1)_{\ell+1}$ and $S(v_2)_{\ell+1}$ is non-empty, then all terms in \eqref{ind-bijection} will be $0$.
We thus assume $S(v_1)_{\ell+1}=S(v_2)_{\ell+1}=\varnothing$, and so $S$ is $\gamma$-admissible.

If for all $v \in V(G) \setminus \{v_1,v_2\}$ we have $S(v)_{\ell+1}\neq \varnothing$ if and only if $v\in\Sink_{\ell+1}(\gamma)$, then using an argument analogous to that in Case 7.1, it follows that $\psi_+(S)$ is both $\varphi_{v_1}(\gamma)$-admissible and $\varphi_{v_2}(\gamma)$-admissible, and $\psi_*(S)$ is $\varphi_*(\gamma)$-admissible.
Since then $\sum_{k=1}^{\ell+1} \sink_k(\gamma)$ =$\sum_{k=1}^{\ell+1} \sink_k(\varphi_{v_1}(\gamma))$ = $\sum_{k=1}^{\ell+1} \sink_k(\varphi_{v_2}(\gamma))$ =  $1+\sum_{k=1}^{\ell+1} \sink_k(\varphi_*(\gamma))$, we have $$T(G,\gamma,S)=T\bp{G+e,\varphi_{v_1}(\gamma),\psi_+(S)}+T\bp{G+e,\varphi_{v_2}(\gamma),\psi_+(S)}-T\bp{G/e,\varphi_{v^*}(\gamma),\psi_*(S)}$$ and  \eqref{ind-bijection} holds.

Otherwise, we may check that all terms in \eqref{ind-bijection} are $0$, so \eqref{ind-bijection} holds.

\medskip
\medskip

It is easy to check that Cases 1 through 8 include all possibilities. Thus \eqref{ind-bijection} holds for all orientations $\gamma$ on $G$ and all $(\ell+1)$-step weight maps $S$ of $G$, and we are done. 
% \begin{align*}
%   &\quad\, \sum_{\substack{(G,\gamma,S)\text{ acyclic} \\ \wts(G,S)=(\mu,j)}} T(G,\gamma,S) \\
%   &= \sum_{\substack{(G,\gamma,S)\text{ acyclic} \\ \wts(G,S)=(\mu,j)}} T\bp{G+e,\varphi_{v_1}(\gamma),\psi_+(S)} + \sum_{\substack{(G,\gamma,S)\text{ acyclic} \\ \wts(G,S)=(\mu,j)}} T\bp{G+e,\varphi_{v_2}(\gamma),\psi_+(S)} \\
%   &\quad\, - \sum_{\substack{(G,\gamma,S)\text{ acyclic} \\ \wts(G,S)=(\mu,j)}} T\bp{G/e,\varphi_*(\gamma),\psi_*(S)} \\
%   &= \sum_{\substack{(G,\gamma,S)\text{ acyclic} \\ \wts(G,S)=(\mu,j) \\ (G+e,\varphi_{v_1}(\gamma))\text{ acyclic}}} T\bp{G+e,\varphi_{v_1}(\gamma),\psi_+(S)} + \sum_{\substack{(G,\gamma,S)\text{ acyclic} \\ \wts(G,S)=(\mu,j) \\ (G+e,\varphi_{v_2}(\gamma))\text{ acyclic}}} T\bp{G+e,\varphi_{v_2}(\gamma),\psi_+(S)} \\
%   &\quad\, - \sum_{\substack{(G,\gamma,S)\text{ acyclic} \\ \wts(G,S)=(\mu,j) \\ (G/e,\varphi_*(\gamma))\text{ acyclic}}} T\bp{G/e,\varphi_*(\gamma),\psi_*(S)}.
% \end{align*}
\end{proof}

\section{A Conjectured Strengthening of Theorem \ref{thm:stanley3.4}}

In this section, we conjecture a stronger version of Theorem \ref{thm:stanley3.4} for the case when $\mu$ has one part, with the expectation that a proof of this strengthening would likely extend to prove a corresponding theorem for any $\mu$. 

Due to the strength of Lemma \ref{maximal} (c), in the previous section any time we deleted mini-vertices from a vertex, we could assume that we always deleted all mini-vertices except possibly in the last step. In general, we would like to be able to consider partial deletions from a set-weighted vertex throughout the process, potentially allowing for a more general construction and theorem.

In this section we first introduce the conjecture. We then provide supporting numerical evidence that also serves to illustrate the main ideas behind the conjecture. In later sections, we demonstrate the particular relevance of the conjecture to unweighted claw-free graphs, and provide supporting theoretical evidence in the form of proofs of nontrivial special cases.

\subsection{Introducing the Conjecture}

First, we begin with some definitions that extend previously given ones (and provide examples shortly for illustration).

\begin{definition}
Let $\ell\in\N$ and let $(G,\omega)$ be a set-weighted graph. A \textbf{generalized $\ell$-step weight map} of $(G,\omega)$ is a function $S:\mathcal{P}(V(G))\to (\mathcal{P}(\N))^{\ell}$ such that for all nonempty $A \subseteq V(G)$, we have 
\[\bigsqcup_{i=1}^{\ell}S(A)_i\subseteq \omega(A)\]
where $\omega(A) = \bigsqcup_{v \in A} \omega(v)$.

We define the \textbf{generalized $\ell$-step weight sequence} of a generalized $\ell$-step weight map 
$S$ to be $\wts(G,S)=(|S_1|,\ldots,|S_\ell|)$, where for all $i \in \{1,\dots,l\}$, we have
\[|S_i|=\sum_{A\subseteq V(G)}|S(A)_i|.\]

\end{definition}

Thus, now instead of only each vertex having its own map, every possible set of vertices may have its own map. Our conjecture is for $\mu$ with one part, so we will specifically be looking at the interplay between acyclic orientations and generalized $2$-step weight maps.

As a concrete example, consider the graph $(G,\omega)$ given in Figure \ref{fig:generalized-map-example}, and the function $S:\mathcal{P}(V(G))\to(\mathcal{P}(\N))^2$ given by 
\begin{gather*}
  S(\{v_1\})=\bp{\{1,2\},\varnothing}, S(\{v_2\})=\bp{\{3\},\varnothing}, S(\{v_3\})=\bp{\{4,7\},\{6,8\}} \\
  S(\{v_4,v_5\})=\bp{\varnothing,\{9,11\}} \\
  S(A)=(\varnothing,\varnothing)\text{ for all other }A\subseteq V(G).
\end{gather*}
Then we can check that $S$ is a \textit{generalized 2-step weight map} of $(G,\omega)$.
The \textit{generalized 2-step weight sequence} of $S$ is $(5, 4)$.

\begin{figure}[hbt]
\begin{center}
\tikzset{every picture/.style={line width=0.75pt}} %set default line width to 0.75pt        

\begin{tikzpicture}[x=0.75pt,y=0.75pt,yscale=-1,xscale=1]
%uncomment if require: \path (0,667); %set diagram left start at 0, and has height of 667

%Shape: Ellipse [id:dp19883092898417187] 
\draw  [fill={rgb, 255:red, 0; green, 0; blue, 0 }  ,fill opacity=1 ] (85.36,24.4) .. controls (85.36,22.22) and (87.04,20.45) .. (89.1,20.45) .. controls (91.17,20.45) and (92.84,22.22) .. (92.84,24.4) .. controls (92.84,26.58) and (91.17,28.35) .. (89.1,28.35) .. controls (87.04,28.35) and (85.36,26.58) .. (85.36,24.4) -- cycle ;
%Shape: Ellipse [id:dp42056564248233963] 
\draw  [fill={rgb, 255:red, 0; green, 0; blue, 0 }  ,fill opacity=1 ] (210.58,24.4) .. controls (210.58,22.22) and (212.25,20.45) .. (214.31,20.45) .. controls (216.38,20.45) and (218.05,22.22) .. (218.05,24.4) .. controls (218.05,26.58) and (216.38,28.35) .. (214.31,28.35) .. controls (212.25,28.35) and (210.58,26.58) .. (210.58,24.4) -- cycle ;
%Shape: Ellipse [id:dp6878038254621286] 
\draw  [fill={rgb, 255:red, 0; green, 0; blue, 0 }  ,fill opacity=1 ] (147.97,24.4) .. controls (147.97,22.22) and (149.64,20.45) .. (151.71,20.45) .. controls (153.77,20.45) and (155.45,22.22) .. (155.45,24.4) .. controls (155.45,26.58) and (153.77,28.35) .. (151.71,28.35) .. controls (149.64,28.35) and (147.97,26.58) .. (147.97,24.4) -- cycle ;
%Shape: Ellipse [id:dp9516171560668987] 
\draw  [fill={rgb, 255:red, 0; green, 0; blue, 0 }  ,fill opacity=1 ] (54.76,80.05) .. controls (54.76,77.87) and (56.43,76.1) .. (58.5,76.1) .. controls (60.56,76.1) and (62.23,77.87) .. (62.23,80.05) .. controls (62.23,82.23) and (60.56,84) .. (58.5,84) .. controls (56.43,84) and (54.76,82.23) .. (54.76,80.05) -- cycle ;
%Shape: Ellipse [id:dp22017140022377224] 
\draw  [fill={rgb, 255:red, 0; green, 0; blue, 0 }  ,fill opacity=1 ] (179.97,80.05) .. controls (179.97,77.87) and (181.64,76.1) .. (183.71,76.1) .. controls (185.77,76.1) and (187.44,77.87) .. (187.44,80.05) .. controls (187.44,82.23) and (185.77,84) .. (183.71,84) .. controls (181.64,84) and (179.97,82.23) .. (179.97,80.05) -- cycle ;
%Shape: Ellipse [id:dp2284914188749505] 
\draw  [fill={rgb, 255:red, 0; green, 0; blue, 0 }  ,fill opacity=1 ] (117.36,80.05) .. controls (117.36,77.87) and (119.04,76.1) .. (121.1,76.1) .. controls (123.17,76.1) and (124.84,77.87) .. (124.84,80.05) .. controls (124.84,82.23) and (123.17,84) .. (121.1,84) .. controls (119.04,84) and (117.36,82.23) .. (117.36,80.05) -- cycle ;
%Straight Lines [id:da896277431795713] 
\draw    (58.57,79.65) -- (90.57,24) ;
%Straight Lines [id:da7570875188712909] 
\draw    (121.1,80.05) -- (89.1,24.4) ;
%Straight Lines [id:da36397938345157366] 
\draw    (122.57,79.65) -- (151.79,24) ;
%Straight Lines [id:da6609038074460438] 
\draw    (183.78,79.65) -- (151.79,24) ;
%Straight Lines [id:da29832592849455986] 
\draw    (183.78,79.65) -- (215.78,24) ;

% Text Node
\draw (57,2.4) node [anchor=north west][inner sep=0.75pt]  [font=\small]  {$v_{1} ,\{1,2\}$};
% Text Node
\draw (130,2.4) node [anchor=north west][inner sep=0.75pt]  [font=\small]  {$v_{2} ,\{3\}$};
% Text Node
\draw (177,2.4) node [anchor=north west][inner sep=0.75pt]  [font=\small]  {$v_{3} ,\{4,5,6,7,8\}$};
% Text Node
\draw (2,88.4) node [anchor=north west][inner sep=0.75pt]  [font=\small]  {$v_{4} ,\{9,10\}$};
% Text Node
\draw (72,88.4) node [anchor=north west][inner sep=0.75pt]  [font=\small]  {$v_{5} ,\{11,12,13\}$};
% Text Node
\draw (176,88.4) node [anchor=north west][inner sep=0.75pt]  [font=\small]  {$v_{6} ,\{14\}$};

\end{tikzpicture}

\end{center}
\captionsetup{skip=2pt}
\caption{Example graph $(G,\omega)$ for the generalized weight map.}
\label{fig:generalized-map-example}
\end{figure}

\begin{definition}\label{def:drop}
Let $(G,\omega)$ be a set-weighted graph, and let $\gamma$ be an acyclic orientation of $G$.

\medskip

For each vertex $v \in V(G)$, let $\omega(v)_j$ be the $j^{th}$-smallest element of $\omega(v)$ for $j \in \{1,\dots,w(v)\}$. Define $C_v$ to be the oriented (cyclic) graph with vertex set $\omega(v)$ and directed edge set \[ \bigsqcup_{j=1}^{w(v)-1} \big\{\omega(v)_j\omega(v)_{j+1}\big\} \sqcup \big\{\omega(v)_{w(v)}\omega(v)_1\big\}.\]

\medskip

A generalized $2$-step weight map $S$ of $(G,\omega)$ is \textbf{$\gamma$-admissible} if
\begin{itemize}
  \item[(1)] Across all nonempty $A \subseteq V(G)$, $S(A)_1\neq \varnothing$ if and only if $A=\{v\}$ for some $v\in\Sink_1(\gamma)$. 
  
  In this case, let $D=\big\{v\in \Sink_1(\gamma):S(\{v\})_1=\omega(v)\big\}$ (the set of sinks of $\gamma$ which are annihilated by $S$).
  
  Let $\Sink_2'(\gamma,S)=\Sink_1(\gamma|_{V(G)-D})\setminus\Sink_1(\gamma)$. Note that $\Sink_2'(\gamma,S)\subseteq\Sink_2(\gamma)$ is the set of second-level sinks of $\gamma$ that are ``uncovered" by the removal of the vertices of $D$.
  
  Let $M_1, \dots, M_k$ be the connected components of $G[D \cup \Sink_2'(\gamma,S)]$.

  \item[(2)] For all $v\in\Sink_1(\gamma)\setminus D$, $S(\{v\})_2$ is the set of sinks in $C_v-S(\{v\})_1$. 
  
  \item[(3)] For each $i \in \{1,\dots,k\}$, there is exactly one nonempty $B_i\subseteq(\Sink_2'(\gamma,S) \cap M_i)$ such that $S(B_i)_2\neq \varnothing$. Define \[D_{B_i}=\{v\in (D \cap M_i):v\text{ is adjacent with some vertex in }B_i\}.\] 
  (Intuitively, $D_{B_i}$ are the annihilated sinks that caused $B_i$ to become sinks.)
  Let $T_i = \omega(B_i)$ if $w(B_i) \leq w(D_{B_i})$, and otherwise let $T_i$ consist of the $w(D_{B_i})$ smallest elements of $\omega(B_i)$. Then $$\varnothing \subsetneq S(B_i)_2\subseteq T_i.$$
  \item[(4)] For all remaining $A\subseteq V(G)$ such that $S(A)_2$ has not yet been defined, $S(A)_2=\varnothing$.
\end{itemize}
\end{definition}

\begin{figure}[hbt]
\begin{center}
\tikzset{every picture/.style={line width=0.75pt}} %set default line width to 0.75pt        
% [inline block 2: 1 envs, 20494 chars -> data_tex | \begin{tikzpicture}[x=0.75pt,y=0.75pt,yscale=-1,xscale=1] %uncomment if require: \path (0,667); %set diagram left start ...]

\end{center}
\captionsetup{skip=2pt}
\caption{Illustration of Definition \ref{def:drop}. Elements selected by the weight map are colored in red.}
\label{fig:generalized-admissibility}
\end{figure}

As an example of applying Definition \ref{def:drop}, we show that the generalized 2-step map $S$ given for the graph $(G,\omega)$ in Figure \ref{fig:generalized-map-example} is $\gamma$-admissible, where $\gamma$ is given by $v_4\to v_1$, $v_5\to v_1$, $v_5\to v_2$, $v_6\to v_2$, and $v_6\to v_3$.
Recall that we defined $S$ as 
\begin{gather*}
  S(\{v_1\})=\bp{\{1,2\},\varnothing}, S(\{v_2\})=\bp{\{3\},\varnothing}, S(\{v_3\})=\bp{\{4,7\},\{6,8\}} \\
  S(\{v_4,v_5\})=\bp{\varnothing,\{9,11\}} \\
  S(A)=(\varnothing,\varnothing)\text{ for all other }A\subseteq V(G).
\end{gather*}
The setup of applying $S$ to $(G,\omega,\gamma)$ is illustrated in Figure \ref{fig:generalized-admissibility}, with the elements selected by $S$ colored in red.
We will go through the conditions in Definition \ref{def:drop} one by one to verify that $S$ is $\gamma$-admissible:
\begin{itemize}
  \item[(1)] We have $\Sink_1(\gamma)=\{v_1,v_2,v_3\}$, and we see that $S(A)_1\neq \varnothing$ if and only if $A=\{v\}$ for some $v\in\{v_1,v_2,v_3\}$. Then we note that $D=\{v_1,v_2\}$ and $\Sink_2'(\gamma,S)=\{v_4,v_5\}$. Finally, there is only one connected component $M_1$ in $G[D\cup\Sink_2'(\gamma,S)]=G[\{v_1,v_2,v_4,v_5\}]$.
  \item[(2)] We then note that $\Sink_1(\gamma)\setminus D=\{v_3\}$. The cycle graph $C_{v_3}=C_5$ is illustrated in Figure \ref{fig:generalized-admissibility}. We see that the set of sinks in $C_{v_3}-\{4,7\}$ is $\{6,8\}$, which coincides with $S(\{v_3\})_2$.
  \item[(3)] Then we observe that $B_1=\{v_4,v_5\}$ is the only subset of $\Sink_2'(\gamma,S)\cap M_1=\{v_4,v_5\}$ such that $S(B_1)_2\neq \varnothing$. We have $D_{B_1}=\{v_1,v_2\}$ and $w(D_{B_1})=3<w(B_1)=5$. We then perform a \textit{weight-drop}, so $T_1=\{9,10,11\}$. Finally, we check that it is indeed the case that $\varnothing\subsetneq S(B_1)_2\subseteq T_1$.
  \item[(4)] Finally, it is indeed the case that $S(A)_2=\varnothing$ for all remaining $A\subseteq V(G)$.  
\end{itemize}
Therefore, $S$ is $\gamma$-admissible.

To summarize, there are two substantial additions made here to the definition of $\gamma$-admissibility. 

First, each weighted vertex is effectively replaced by a directed cycle for the purpose of determining second-level sinks ``within" a weighted vertex; that this is the correct way to do so is implied by the proof of Theorem \ref{no-edge-thm} in Section 4.3, a special case of our conjecture.

Second, when vertices are entirely removed as first-level sinks, we give a new process for choosing second-level sinks from among the newly uncovered vertices. Not only must we choose a subset of the revealed vertices, but we may see a \emph{weight-drop} phenomenon where if the weight of this subset is greater than the weight of its annihilated neighbors, we must drop the weight permitted for second-level sinks to match the smaller value. The method of choosing a subset of revealed vertices is suggested by numerical data and may be easily seen to agree with Theorem \ref{thm:main} where both apply. That the weight-drop phenomenon is necessary is implied by the proof of Theorem \ref{one-edge-thm} in Section 4.3, another special case of our conjecture.

For the original definition of maximal $\mu$ with respect to $(G,\omega)$ used in Theorem \ref{thm:main}, we did not require this complexity. However, our stronger conjecture would allow for a broader range of viable $\mu$ in the case where $\mu$ has one part (so is an integer).

\begin{definition}\label{def:sallow}

Given $U,T$ disjoint subsets of the vertex set of a set-weighted graph $(G,\omega)$, let $M_1, \dots, M_k$ be the connected components of $G[U \cup T]$. For $i \in \{1,\dots,k\}$, let $U_i = U \cap V(M_i)$ and $T_i = T \cap V(M_i)$.

Let $\mu$ be a positive integer. We say $\mu$ is \textbf{s-allowable} in $(G,\omega)$ if for all disjoint stable sets $U, T \subseteq V(G)$ such that 
\begin{itemize}
    \item For all $v\in U$ there exists some $u\in T$ with $uv\in E(G)$, and
    \item For all $u\in T$ there exists some $v\in U$ with $uv\in E(G)$,
\end{itemize} 
it holds that for any choice of positive integers $\mu_1,\dots,\mu_k$ such that $\mu_1+\dots+\mu_k = \mu$, we have that for all $i \in \{1,\dots,k\}$,
$\mu_i\leq\min\{w(U_i),w(T_i)\}$ or $\mu_i\geq\max\{w(U_i),w(T_i)\}$.

\end{definition}
% In particular, $\mu$ is $s$-allowable if for all such $S$ and $T$, it is the case that $\mu \leq \min\{w(S),w(T)\}$ or $\mu \geq \max\{w(S),w(T)\}$; however, it is possible for $\mu$ to be $s$-allowable without satisfying this stricter condition (for instance, see the example following Conjecture \ref{conjecture}).

Note that under previous definitions, a single integer $\mu$ is maximal if and only if $\mu$ is greater than or equal to the weight of the largest stable set of $(G,\omega)$. $s$-allowability is a much more flexible condition; for instance, $\mu$ smaller than the size of the smallest vertex weight of $(G,\omega)$ is always $s$-allowable, and in particular $\mu=1$ is always $s$-allowable. On the other end, it is possible for $\mu$ smaller than the weight of the largest stable set of $(G,\omega)$ to be $s$-allowable if this largest stable set does not meet the criteria above with respect to some other disjoint stable set of $(G,\omega)$.

We require one more piece, which is a generalization of the sign of a pair $(\gamma, S)$.

\begin{definition}
Let $(G,\omega)$ be a set-weighted graph. Let $\gamma$ be an acyclic orientation on $G$, and let $S$ be a $\gamma$-admissible $2$-step weight map on $G$. Let \[\mathcal{I}=\{A\subseteq V(G):\text{at least one of }S(A)_i\text{ is non-empty for }i\in\{1,2\}\}.\]
For each $A\in I$, let $i_A$ be the smallest index $i$ such that $S(A)_{i}\neq \varnothing$.
We define the \textbf{sign} of $(\gamma,S)$ to be 
\[\sgn(\gamma,S):= (-1)^{\sum_{A\in \mathcal{I}}\left|S(A)_{i_A}\right|-|A|}.\]
\end{definition}

Thus, the sign generated by a given $A$ corresponds to the parity of the number of unused mini-vertices of $A$. It is straightforward to verify that this definition of the sign of $(\gamma,S)$ agrees with that in Theorem \ref{thm:main} in the cases where $A$ is a single vertex. We now state our main conjecture.

\begin{conj}
\label{conjecture}
Let $(G,\omega)$ be a set-weighted graph with $n$ vertices and total weight $d$.
Write $X_{(G,\omega)}=\sum_{\lambda\vdash d}c_{\lambda}e_{\lambda}$.
Let $\mu \leq d$ be an integer (viewed as a partition with a single part) that is $s$-allowable in $(G,\omega)$.
Fix $j \in \{0, \dots, d-\mu\}$, where we can have $j=0$ only when $\mu=d$. Then 
\begin{equation}
\label{conjecture-equation}
\sigma_{\mu,j}\bp{X_{(G,\omega)}}=(-1)^{d-n}\sum_{\substack{\wts(\gamma,S)=(\mu,j) \\ S\text{ admissible}}}\sgn(\gamma,S),
\end{equation}
summed over all acyclic orientations $\gamma$ of $G$ and all $\gamma$-admissible generalized $2$-step weight maps $S$ of $G$ such that $\wts(\gamma,S)=(\mu,j)$.
\end{conj}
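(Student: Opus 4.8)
The plan is to replicate the inductive architecture of Theorem~\ref{thm:main}: induct on the number of non-edges of $(G,\omega)$, and for a chosen non-edge $e=v_1v_2$ use the deletion-contraction relation of Lemma~\ref{lem:delconset} to pass to $(G+e,\omega)$ and $(G/e,\omega/e)$, reusing the orientation maps $\varphi_{v_1},\varphi_{v_2},\varphi_*$ from that proof. Defining $T(H,\gamma,S):=\sgn(\gamma,S)$ when $\gamma$ is acyclic and $S$ is $\gamma$-admissible in the sense of Definition~\ref{def:drop}, and $T(H,\gamma,S):=0$ otherwise, the signed sum on the right of \eqref{conjecture-equation} becomes $(-1)^{d-n}\sum_{(\gamma,S)}T(G,\gamma,S)$; since contraction lowers the vertex count by one, the factor $(-1)^{d-n}$ again absorbs the sign discrepancy between $G$ and $G/e$, and the statement reduces to the term-by-term identity
\[
T(G,\gamma,S)=T\bp{G+e,\varphi_{v_1}(\gamma),\psi_+(S)}+T\bp{G+e,\varphi_{v_2}(\gamma),\psi_+(S)}-T\bp{G/e,\varphi_*(\gamma),\psi_*(S)},
\]
the exact analogue of \eqref{ind-bijection}, where $\psi_+$ and $\psi_*$ are transport maps on generalized $2$-step weight maps yet to be defined. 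For the base case I would take $G$ complete and combine the two special cases of Section~4.3: in a complete graph $\Sink_1(\gamma)$ is a single vertex $v$, which is either partially deleted --- in which case $\Sink_2'(\gamma,S)=\varnothing$ and the second-level sinks are governed by the cycle $C_v$, precisely the content of Theorem~\ref{no-edge-thm} --- or fully deleted, in which case $j$ is constrained against the unique uncovered second sink by the weight-drop mechanism of Theorem~\ref{one-edge-thm}; together with the bookkeeping of $\widetilde m_\lambda$ from the base case of Theorem~\ref{thm:main}, this should settle the complete graph.

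Two ingredients must be in place before the case analysis. First, $s$-allowability (Definition~\ref{def:sallow}) must be shown to pass from $G$ to both $G+e$ and $G/e$, paralleling the maximality-preservation argument of Theorem~\ref{thm:main}: every stable set of $G+e$ is stable in $G$, and every stable set of $G/e$ lifts to one of $G$ by replacing $v^*$ with $\{v_1,v_2\}$ (legitimate since $v_1v_2\notin E(G)$), so each defining pair of disjoint mutually-adjacent stable sets in the new graph pulls back to such a pair in $G$ with unchanged component weights $w(S_i),w(T_i)$, and the min/max condition transfers. Second, and more seriously, I need a structural replacement for Lemma~\ref{maximal}(c): because the conjecture deliberately permits partial deletions, admissible maps are no longer rigid, so rather than forcing $\gamma$-standard form I must characterize, for fixed sink levels of $v_1$ and $v_2$, exactly how $\gamma$-admissibility of $S$ corresponds to admissibility of $\psi_+(S)$ and $\psi_*(S)$. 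This is where all four clauses of Definition~\ref{def:drop} interact, since adding or contracting $e$ can merge the connected components $M_i$ of $G[D\cup\Sink_2'(\gamma,S)]$, change which vertices are uncovered into $\Sink_2'(\gamma,S)$, and shift the weight-drop thresholds $T_i$.

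The core of the proof is then the case analysis establishing the displayed identity, organized as in Theorem~\ref{thm:main} by the sink levels $i,j$ of $v_1,v_2$ (read off via Lemma~\ref{lem:sinkpath} and Observation~\ref{obs:sinks}), but now further subdivided according to the first-step status of $v_1,v_2$ (fully deleted, partially deleted, or untouched), and according to whether $v_1,v_2$ fall into $\Sink_2'(\gamma,S)$ in the same or in distinct components $M_i$. In each case one verifies that the parities recorded by $\sgn(\gamma,S)$ --- the number of unused mini-vertices summed over the support --- agree across the three orientations, with a single sign flip occurring exactly when the block containing $v_1,v_2$ collapses to $v^*$ under contraction, the analogue of the $\sum_i\sink_i$ discrepancy seen in Cases~4, 6.2 and~8 of Theorem~\ref{thm:main}.

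I expect the transport map $\psi_*$ under contraction to be the principal obstacle. In Theorem~\ref{thm:main} weight maps were functions on $V(G)$, so the disjointness of $\omega(v_1)$ and $\omega(v_2)$ forced a unique splitting and made $\psi_*(S)(v^*)_i=S(v_1)_i\cup S(v_2)_i$ a genuine bijection on all weight maps, letting \eqref{ind-bijection} be summed termwise over every orientation and every map. Here a generalized weight map has domain $\mathcal P(V(G))$, a subset of $V(G/e)$ containing $v^*$ has several preimages in $\mathcal P(V(G))$, and the total number of generalized maps differs between $G$ and $G/e$, so no bijection on all maps exists and the clean ``inverse of $\psi$'' step breaks down. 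The resolution I would pursue is to exploit the rigidity imposed by clauses (1) and (3) of Definition~\ref{def:drop} --- admissible maps are supported only on sink singletons at the first step and on the single chosen $B_i\subseteq\Sink_2'(\gamma,S)\cap M_i$ per component at the second --- so that on the admissible locus the support is constrained tightly enough to define $\psi_*$ and its inverse unambiguously, after which $T$ is extended by zero off this locus exactly as in Theorem~\ref{thm:main}. Verifying that the weight-drop thresholds of Definition~\ref{def:drop}(3) behave consistently when components merge under contraction is the delicate point, and I expect it to consume the bulk of the argument; should the bijective bookkeeping prove intractable, a fallback is a direct $P$-partition/quasisymmetric computation in the spirit of Stanley's original proof of Theorem~\ref{thm:stanley3.4}, adapted to the cycle-and-weight-drop model encoded in Definition~\ref{def:drop}.
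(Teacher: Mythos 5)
The statement you are addressing is Conjecture~\ref{conjecture}, which the paper explicitly leaves \emph{open}: the authors prove only two special cases (Theorem~\ref{no-edge-thm} for edgeless graphs and Theorem~\ref{one-edge-thm} for a single edge on two vertices) as supporting evidence, so there is no proof in the paper to compare yours against. Your proposal is an honest research plan rather than a proof, and by your own account it leaves unresolved exactly the points on which the whole argument turns: the transport map $\psi_*$ on generalized $2$-step weight maps is never defined (and, as you note, cannot be a bijection on all maps since subsets of $V(G/e)$ containing $v^*$ have multiple preimages in $\mathcal{P}(V(G))$); the structural replacement for Lemma~\ref{maximal}(c) characterizing when admissibility transfers under $\varphi_{v_1},\varphi_{v_2},\varphi_*$ is stated as a goal but not supplied; and the interaction of the weight-drop thresholds $T_i$ of Definition~\ref{def:drop}(3) with merging components under contraction --- which you correctly identify as the delicate point --- is not analyzed. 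A plan that defers all of these is not a proof of the conjecture.

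Two steps you present as routine are in fact doubtful. First, the claim that $s$-allowability passes from $G$ to $G+e$ and $G/e$ does not follow from the pullback of stable sets alone: Definition~\ref{def:sallow} quantifies over pairs $(S,T)$ satisfying a \emph{mutual-adjacency} condition and imposes a min/max constraint componentwise on $G[S\sqcup T]$. Adding $e$ can create a defining pair whose only adjacency witness for some vertex is $e$ itself (so the pair is not a defining pair in $G$ and the hypothesis on $G$ says nothing about it), and both adding and contracting $e$ can merge components $M_i$, changing the admissible decompositions $\mu_1+\cdots+\mu_k=\mu$ and hence the condition being checked. Without a proof that these new or restructured pairs still satisfy the constraint, the induction cannot even be set up. Second, the base case is not a corollary of the two proved special cases: for a complete graph, $s$-allowable $\mu$ need not be maximal in the sense of Theorem~\ref{thm:main} (it need only avoid the interval between the two relevant vertex weights for every pair of vertices), and neither Theorem~\ref{no-edge-thm} nor Theorem~\ref{one-edge-thm} addresses a complete graph on $n\geq 3$ vertices where a fully deleted first-level sink uncovers a second-level sink subject to the weight-drop rule; that computation would have to be done from scratch.
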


The main generalization that Conjecture \ref{conjecture} makes from Theorem \ref{thm:main} is that in Theorem \ref{thm:main}, we always delete the whole vertex except possibly in the last step. But in Conjecture \ref{conjecture}, we allow partial deletions of mini-vertices in the first step.
We now provide a concrete example to illustrate clearly what Conjecture \ref{conjecture} claims.

Consider $P_{5,7,5}$, a set-weighted three-vertex path with vertices $v_1,v_2,v_3$, edges $v_1v_2$ and $v_2v_3$, $\omega(v_1) = \{1,\dots,5\}$, $\omega(v_2) = \{6,\dots,12\}$, and $\omega(v_3) = \{13,\dots,17\}$. It is easy to verify using the $p$-basis expansion of vertex-weighted chromatic symmetric functions \cite[Lemma 3]{delcon} that $X_{(P_{5,7,5},\, \omega)} = p_{755}-2p_{(12)5}+p_{(17)}$ (where integers of more than one digit are enclosed in parentheses for clarity). Using SageMath to convert this to the $e$-basis, we can determine $\sigma_{\mu,j}(X_{(P_{5,7,5},\, \omega)})$ for any desired integers $\mu \geq 1$ and $j \geq 0$.

\begin{figure}[hbt]
\begin{center}
    \begin{tikzpicture}[scale=1.5]
        \node[label=above:{$v_1, 5$}, fill=black, circle] at (0, 0)(1){};
        \node[label=above:{$v_2, 7$}, fill=black, circle] at (1, 0)(2){};
        \node[label=above:{$v_3, 5$}, fill=black, circle] at (2, 0)(3){};
    
        \draw[black, thick] (2) -- (1);
        \draw[black, thick] (3) -- (2);
    
    \end{tikzpicture}
\end{center}
\caption{The graph $P_{5,7,5}$ with vertex weights given.}
\label{fig:p232}
\end{figure}

For example, we may compute that $\sigma_{7,3}(X_{(P_{5,7,5},\, \omega)}) = \sigma_{7,3}(20e_{6431111}+20e_{6521111}+70e_{7331111}+140e_{7421111}-210e_{8321111}-105e_{9221111}) = -65$.

Note that the only pairs $(U,T)$ of stable sets satisfying the conditions outlined in Definition \ref{def:sallow} on $s$-allowability are $\{U,T\} = \{\{v_i\},\{v_2\}\}$ for $i \in \{1,3\}$ and $\{U,T\} = \{\{v_1,v_3\},\{v_2\}\}$, and all of these cases result in $G[U \sqcup T]$ connected. In the former cases $\{|U|,|T|\} = \{5,7\}$ and in the latter case $\{|U|,|T|\} = \{7,10\}$, so a positive integer $\mu$ is $s$-allowable for $P_{5,7,5}$ if and only if $\mu \notin \{6,8,9\}$. In particular, $\mu = 7$ is $s$-allowable.

Now, we determine what Conjecture \ref{conjecture} predicts for the value of $\sigma_{7,3}(X_{(P_{5,7,5},\, \omega)})$. Note that this graph has weight $17$ and $3$ vertices, so our outer sign is $(-1)^{17-3} = 1$, so Conjecture \ref{conjecture} predicts that
\begin{equation}\label{eq:sigma63}
 \sum_{\substack{\wts(\gamma,S)=(3,2) \\ S\text{ admissible}}}\sgn(\gamma,S) = \sigma_{7,3}(X_{(P_{5,7,5},\, \omega)}) = -65.
\end{equation}

There are four acyclic orientations of $P_{5,7,5}$; three have unique sinks, and one has two sinks. The two orientations $\gamma_1$ and $\gamma_3$ with unique sink $v_1$ and $v_3$ do not admit any admissible weight map $S$, as $w(\Sink_1(\gamma_1)) = w(\Sink_1(\gamma_3)) = 5$, and we require $\sum_{v \in V(P_{5,7,5})} |S(\{v\})_1| = 7$.

Of the other orientations, let us first consider the orientation $\gamma_{1,3}$ in which $v_1$ and $v_3$ are both sinks. Then we must have $|S(\{v_1\})_1 \cup S(\{v_3\})_1| = 7$. We consider all possible ways this can occur:
\begin{itemize}
    \item $|S(\{v_1\})_1| = 2$ and $|S(\{v_3\})_1| = 5$. Then the only $A \subseteq V(P_{5,7,5})$ that may have $S(A)_2 \neq \varnothing$ is $A = \{v_1\}$. Furthermore, $S(A)_2$ must be a subset of the sinks of a directed five-vertex cycle with two vertices deleted. However, there are at most two sinks in such a graph, contradicting that we require $|S_2| = 3$. Thus, in this case no valid $S$ is possible.
    \item $|S(\{v_1\})_1| = 5$ and $|S(\{v_3\})_1| = 2$. This is identical to the above case.
    \item $|S(\{v_1\})_1| = 3$ and $|S(\{v_3\})_1| = 4$. Then we require $S(\{v_3\})_2 = \omega(v_3) \setminus S(\{v_3\})_1$ and $|S(\{v_1\})_2| = 2$. Furthermore, if $C_{v_1}$ is the directed cycle with vertex set $\{1,2,3,4,5\}$ and edge set $\{1 \rightarrow 2, 2 \rightarrow 3, 3 \rightarrow 4, 4 \rightarrow 5, 5 \rightarrow 1\}$, $S(\{v_1\})_2$ must be a subset of the sinks of $C_{v_1} \setminus S(\{v_1\})_1$. We split into subcases based on $S(\{v_1\})_1$.
    \begin{itemize}
        \item $S(\{v_1\})_1$ consists of three consecutive vertices of $C_{v_1}$. Then there is only one sink in $C_{v_1} \setminus S(\{v_1\})_1$, so no valid $S$ is possible.
        \item In all other cases, there are exactly two sinks of $C_{v_1} \setminus S(\{v_1\})_1$, so $S_2(v_1)$ is uniquely determined.
    \end{itemize}
   There are $\binom{5}{4} = 5$ ways to choose $S(\{v_3\})_1$ and $S(\{v_3\})_2$, and it is straightforward to verify that there are $5$ valid choices of $S(\{v_1\})_1$, giving $25$ valid choices of $S$. One example is illustrated in Figure \ref{fig:conjecture-example}. Furthermore, note that all valid $S$ have sign $(-1)^{3-1} \cdot (-1)^{4-1} = -1$, so this subcase contributes $-25$ to the left-hand side of \eqref{eq:sigma63}.
   \item $|S(\{v_1\})_1| = 4$ and $|S(\{v_3\})_1| = 3$. This is identical to the previous case, contributing $-25$ to the left-hand side of \eqref{eq:sigma63}.
\end{itemize}
Thus, terms in the sum with $\gamma = \gamma_{1,3}$ contribute $-50$.

\begin{figure}[hbt]
\begin{center}
\tikzset{every picture/.style={line width=0.75pt}} %set default line width to 0.75pt        
\begin{tikzpicture}[x=0.75pt,y=0.75pt,yscale=-1,xscale=1]
%uncomment if require: \path (0,505); %set diagram left start at 0, and has height of 505

%Shape: Ellipse [id:dp02610342459133075] 
\draw  [color={rgb, 255:red, 208; green, 2; blue, 27 }  ,draw opacity=1 ][fill={rgb, 255:red, 208; green, 2; blue, 27 }  ,fill opacity=1 ] (39.67,46.67) .. controls (39.67,45.41) and (40.63,44.39) .. (41.82,44.39) .. controls (43.01,44.39) and (43.97,45.41) .. (43.97,46.67) .. controls (43.97,47.92) and (43.01,48.94) .. (41.82,48.94) .. controls (40.63,48.94) and (39.67,47.92) .. (39.67,46.67) -- cycle ;
%Shape: Ellipse [id:dp8624846651041898] 
\draw  [fill={rgb, 255:red, 0; green, 0; blue, 0 }  ,fill opacity=1 ] (52.58,61.72) .. controls (52.58,60.47) and (53.54,59.45) .. (54.73,59.45) .. controls (55.92,59.45) and (56.88,60.47) .. (56.88,61.72) .. controls (56.88,62.98) and (55.92,64) .. (54.73,64) .. controls (53.54,64) and (52.58,62.98) .. (52.58,61.72) -- cycle ;
%Shape: Ellipse [id:dp6826938933810895] 
\draw  [color={rgb, 255:red, 208; green, 2; blue, 27 }  ,draw opacity=1 ][fill={rgb, 255:red, 208; green, 2; blue, 27 }  ,fill opacity=1 ] (46.58,79.72) .. controls (46.58,78.47) and (47.54,77.45) .. (48.73,77.45) .. controls (49.92,77.45) and (50.88,78.47) .. (50.88,79.72) .. controls (50.88,80.98) and (49.92,82) .. (48.73,82) .. controls (47.54,82) and (46.58,80.98) .. (46.58,79.72) -- cycle ;
%Shape: Ellipse [id:dp5631651739338643] 
\draw  [color={rgb, 255:red, 208; green, 2; blue, 27 }  ,draw opacity=1 ][fill={rgb, 255:red, 208; green, 2; blue, 27 }  ,fill opacity=1 ] (23.36,76) .. controls (23.36,74.74) and (24.33,73.72) .. (25.51,73.72) .. controls (26.7,73.72) and (27.67,74.74) .. (27.67,76) .. controls (27.67,77.26) and (26.7,78.28) .. (25.51,78.28) .. controls (24.33,78.28) and (23.36,77.26) .. (23.36,76) -- cycle ;
%Shape: Ellipse [id:dp15047205631680272] 
\draw  [fill={rgb, 255:red, 0; green, 0; blue, 0 }  ,fill opacity=1 ] (22.58,55.72) .. controls (22.58,54.47) and (23.54,53.45) .. (24.73,53.45) .. controls (25.92,53.45) and (26.88,54.47) .. (26.88,55.72) .. controls (26.88,56.98) and (25.92,58) .. (24.73,58) .. controls (23.54,58) and (22.58,56.98) .. (22.58,55.72) -- cycle ;
%Straight Lines [id:da17156036071170577] 
\draw    (44.48,47.55) -- (54.29,60.41) ;
\draw [shift={(55.5,62)}, rotate = 232.67] [color={rgb, 255:red, 0; green, 0; blue, 0 }  ][line width=0.75]    (6.56,-1.97) .. controls (4.17,-0.84) and (1.99,-0.18) .. (0,0) .. controls (1.99,0.18) and (4.17,0.84) .. (6.56,1.97)   ;
%Straight Lines [id:da8667614238986663] 
\draw    (54.5,61) -- (50.69,75.48) ;
\draw [shift={(50.18,77.42)}, rotate = 284.74] [color={rgb, 255:red, 0; green, 0; blue, 0 }  ][line width=0.75]    (6.56,-1.97) .. controls (4.17,-0.84) and (1.99,-0.18) .. (0,0) .. controls (1.99,0.18) and (4.17,0.84) .. (6.56,1.97)   ;
%Straight Lines [id:da7140422168967089] 
\draw    (46.48,79.61) -- (29.63,76.38) ;
\draw [shift={(27.67,76)}, rotate = 10.87] [color={rgb, 255:red, 0; green, 0; blue, 0 }  ][line width=0.75]    (6.56,-1.97) .. controls (4.17,-0.84) and (1.99,-0.18) .. (0,0) .. controls (1.99,0.18) and (4.17,0.84) .. (6.56,1.97)   ;
%Straight Lines [id:da3767321162719848] 
\draw    (24.73,73.45) -- (24.73,60) ;
\draw [shift={(24.73,58)}, rotate = 90] [color={rgb, 255:red, 0; green, 0; blue, 0 }  ][line width=0.75]    (6.56,-1.97) .. controls (4.17,-0.84) and (1.99,-0.18) .. (0,0) .. controls (1.99,0.18) and (4.17,0.84) .. (6.56,1.97)   ;
%Straight Lines [id:da8820948858818818] 
\draw    (24.73,55.72) -- (37.96,47.7) ;
\draw [shift={(39.67,46.67)}, rotate = 148.77] [color={rgb, 255:red, 0; green, 0; blue, 0 }  ][line width=0.75]    (6.56,-1.97) .. controls (4.17,-0.84) and (1.99,-0.18) .. (0,0) .. controls (1.99,0.18) and (4.17,0.84) .. (6.56,1.97)   ;
%Shape: Circle [id:dp4866743634192987] 
\draw   (4,64.75) .. controls (4,45.01) and (20.01,29) .. (39.75,29) .. controls (59.49,29) and (75.5,45.01) .. (75.5,64.75) .. controls (75.5,84.49) and (59.49,100.5) .. (39.75,100.5) .. controls (20.01,100.5) and (4,84.49) .. (4,64.75) -- cycle ;
%Shape: Ellipse [id:dp47603474526390355] 
\draw  [color={rgb, 255:red, 0; green, 0; blue, 0 }  ,draw opacity=1 ][fill={rgb, 255:red, 0; green, 0; blue, 0 }  ,fill opacity=1 ] (145.35,65.09) .. controls (145.35,62.13) and (147.62,59.72) .. (150.42,59.72) .. controls (153.23,59.72) and (155.5,62.13) .. (155.5,65.09) .. controls (155.5,68.05) and (153.23,70.46) .. (150.42,70.46) .. controls (147.62,70.46) and (145.35,68.05) .. (145.35,65.09) -- cycle ;
%Shape: Ellipse [id:dp5695539674386445] 
\draw  [color={rgb, 255:red, 208; green, 2; blue, 27 }  ,draw opacity=1 ][fill={rgb, 255:red, 208; green, 2; blue, 27 }  ,fill opacity=1 ] (266.35,46) .. controls (266.35,44.74) and (267.31,43.72) .. (268.5,43.72) .. controls (269.69,43.72) and (270.65,44.74) .. (270.65,46) .. controls (270.65,47.26) and (269.69,48.28) .. (268.5,48.28) .. controls (267.31,48.28) and (266.35,47.26) .. (266.35,46) -- cycle ;
%Shape: Ellipse [id:dp5389553541897623] 
\draw  [color={rgb, 255:red, 208; green, 2; blue, 27 }  ,draw opacity=1 ][fill={rgb, 255:red, 208; green, 2; blue, 27 }  ,fill opacity=1 ] (278.58,61.72) .. controls (278.58,60.47) and (279.54,59.45) .. (280.73,59.45) .. controls (281.92,59.45) and (282.88,60.47) .. (282.88,61.72) .. controls (282.88,62.98) and (281.92,64) .. (280.73,64) .. controls (279.54,64) and (278.58,62.98) .. (278.58,61.72) -- cycle ;
%Shape: Ellipse [id:dp04249096793875795] 
\draw  [color={rgb, 255:red, 208; green, 2; blue, 27 }  ,draw opacity=1 ][fill={rgb, 255:red, 208; green, 2; blue, 27 }  ,fill opacity=1 ] (272.58,79.72) .. controls (272.58,78.47) and (273.54,77.45) .. (274.73,77.45) .. controls (275.92,77.45) and (276.88,78.47) .. (276.88,79.72) .. controls (276.88,80.98) and (275.92,82) .. (274.73,82) .. controls (273.54,82) and (272.58,80.98) .. (272.58,79.72) -- cycle ;
%Shape: Ellipse [id:dp8956186056007667] 
\draw  [color={rgb, 255:red, 208; green, 2; blue, 27 }  ,draw opacity=1 ][fill={rgb, 255:red, 208; green, 2; blue, 27 }  ,fill opacity=1 ] (249.36,76) .. controls (249.36,74.74) and (250.33,73.72) .. (251.51,73.72) .. controls (252.7,73.72) and (253.67,74.74) .. (253.67,76) .. controls (253.67,77.26) and (252.7,78.28) .. (251.51,78.28) .. controls (250.33,78.28) and (249.36,77.26) .. (249.36,76) -- cycle ;
%Shape: Ellipse [id:dp4716212261040351] 
\draw  [fill={rgb, 255:red, 0; green, 0; blue, 0 }  ,fill opacity=1 ] (248.58,55.72) .. controls (248.58,54.47) and (249.54,53.45) .. (250.73,53.45) .. controls (251.92,53.45) and (252.88,54.47) .. (252.88,55.72) .. controls (252.88,56.98) and (251.92,58) .. (250.73,58) .. controls (249.54,58) and (248.58,56.98) .. (248.58,55.72) -- cycle ;
%Straight Lines [id:da11806136544761614] 
\draw    (270.48,47.55) -- (277.87,57.24) ;
\draw [shift={(279.08,58.83)}, rotate = 232.67] [color={rgb, 255:red, 0; green, 0; blue, 0 }  ][line width=0.75]    (6.56,-1.97) .. controls (4.17,-0.84) and (1.99,-0.18) .. (0,0) .. controls (1.99,0.18) and (4.17,0.84) .. (6.56,1.97)   ;
%Straight Lines [id:da028700039659722565] 
\draw    (280.22,63.67) -- (276.96,76.05) ;
\draw [shift={(276.45,77.99)}, rotate = 284.74] [color={rgb, 255:red, 0; green, 0; blue, 0 }  ][line width=0.75]    (6.56,-1.97) .. controls (4.17,-0.84) and (1.99,-0.18) .. (0,0) .. controls (1.99,0.18) and (4.17,0.84) .. (6.56,1.97)   ;
%Straight Lines [id:da9865717766485653] 
\draw    (272.4,79.6) -- (255.63,76.38) ;
\draw [shift={(253.67,76)}, rotate = 10.87] [color={rgb, 255:red, 0; green, 0; blue, 0 }  ][line width=0.75]    (6.56,-1.97) .. controls (4.17,-0.84) and (1.99,-0.18) .. (0,0) .. controls (1.99,0.18) and (4.17,0.84) .. (6.56,1.97)   ;
%Straight Lines [id:da9999429006914711] 
\draw    (250.73,73.45) -- (250.73,60) ;
\draw [shift={(250.73,58)}, rotate = 90] [color={rgb, 255:red, 0; green, 0; blue, 0 }  ][line width=0.75]    (6.56,-1.97) .. controls (4.17,-0.84) and (1.99,-0.18) .. (0,0) .. controls (1.99,0.18) and (4.17,0.84) .. (6.56,1.97)   ;
%Straight Lines [id:da5077149283791815] 
\draw    (250.73,55.72) -- (263.96,47.7) ;
\draw [shift={(265.67,46.67)}, rotate = 148.77] [color={rgb, 255:red, 0; green, 0; blue, 0 }  ][line width=0.75]    (6.56,-1.97) .. controls (4.17,-0.84) and (1.99,-0.18) .. (0,0) .. controls (1.99,0.18) and (4.17,0.84) .. (6.56,1.97)   ;
%Shape: Circle [id:dp9153229688954312] 
\draw   (230,64.75) .. controls (230,45.01) and (246.01,29) .. (265.75,29) .. controls (285.49,29) and (301.5,45.01) .. (301.5,64.75) .. controls (301.5,84.49) and (285.49,100.5) .. (265.75,100.5) .. controls (246.01,100.5) and (230,84.49) .. (230,64.75) -- cycle ;
%Straight Lines [id:da2413498973586885] 
\draw    (145.35,65.09) -- (77.5,64.76) ;
\draw [shift={(75.5,64.75)}, rotate = 0.28] [color={rgb, 255:red, 0; green, 0; blue, 0 }  ][line width=0.75]    (10.93,-3.29) .. controls (6.95,-1.4) and (3.31,-0.3) .. (0,0) .. controls (3.31,0.3) and (6.95,1.4) .. (10.93,3.29)   ;
%Straight Lines [id:da4307806469045199] 
\draw    (155.5,65.09) -- (228,64.76) ;
\draw [shift={(230,64.75)}, rotate = 179.74] [color={rgb, 255:red, 0; green, 0; blue, 0 }  ][line width=0.75]    (10.93,-3.29) .. controls (6.95,-1.4) and (3.31,-0.3) .. (0,0) .. controls (3.31,0.3) and (6.95,1.4) .. (10.93,3.29)   ;
%Shape: Ellipse [id:dp810784359432281] 
\draw  [color={rgb, 255:red, 208; green, 2; blue, 27 }  ,draw opacity=1 ][fill={rgb, 255:red, 208; green, 2; blue, 27 }  ,fill opacity=1 ] (52.58,218.72) .. controls (52.58,217.47) and (53.54,216.45) .. (54.73,216.45) .. controls (55.92,216.45) and (56.88,217.47) .. (56.88,218.72) .. controls (56.88,219.98) and (55.92,221) .. (54.73,221) .. controls (53.54,221) and (52.58,219.98) .. (52.58,218.72) -- cycle ;
%Shape: Ellipse [id:dp5872139589448453] 
\draw  [color={rgb, 255:red, 208; green, 2; blue, 27 }  ,draw opacity=1 ][fill={rgb, 255:red, 208; green, 2; blue, 27 }  ,fill opacity=1 ] (22.58,212.72) .. controls (22.58,211.47) and (23.54,210.45) .. (24.73,210.45) .. controls (25.92,210.45) and (26.88,211.47) .. (26.88,212.72) .. controls (26.88,213.98) and (25.92,215) .. (24.73,215) .. controls (23.54,215) and (22.58,213.98) .. (22.58,212.72) -- cycle ;
%Shape: Circle [id:dp9681557615141392] 
\draw   (4,221.75) .. controls (4,202.01) and (20.01,186) .. (39.75,186) .. controls (59.49,186) and (75.5,202.01) .. (75.5,221.75) .. controls (75.5,241.49) and (59.49,257.5) .. (39.75,257.5) .. controls (20.01,257.5) and (4,241.49) .. (4,221.75) -- cycle ;
%Shape: Ellipse [id:dp25204611739445903] 
\draw  [color={rgb, 255:red, 0; green, 0; blue, 0 }  ,draw opacity=1 ][fill={rgb, 255:red, 0; green, 0; blue, 0 }  ,fill opacity=1 ] (145.35,222.09) .. controls (145.35,219.13) and (147.62,216.72) .. (150.42,216.72) .. controls (153.23,216.72) and (155.5,219.13) .. (155.5,222.09) .. controls (155.5,225.05) and (153.23,227.46) .. (150.42,227.46) .. controls (147.62,227.46) and (145.35,225.05) .. (145.35,222.09) -- cycle ;
%Shape: Ellipse [id:dp6300061488093587] 
\draw  [color={rgb, 255:red, 208; green, 2; blue, 27 }  ,draw opacity=1 ][fill={rgb, 255:red, 208; green, 2; blue, 27 }  ,fill opacity=1 ] (248.58,212.72) .. controls (248.58,211.47) and (249.54,210.45) .. (250.73,210.45) .. controls (251.92,210.45) and (252.88,211.47) .. (252.88,212.72) .. controls (252.88,213.98) and (251.92,215) .. (250.73,215) .. controls (249.54,215) and (248.58,213.98) .. (248.58,212.72) -- cycle ;
%Shape: Circle [id:dp33369157839238506] 
\draw   (230,221.75) .. controls (230,202.01) and (246.01,186) .. (265.75,186) .. controls (285.49,186) and (301.5,202.01) .. (301.5,221.75) .. controls (301.5,241.49) and (285.49,257.5) .. (265.75,257.5) .. controls (246.01,257.5) and (230,241.49) .. (230,221.75) -- cycle ;
%Straight Lines [id:da9421711565214101] 
\draw    (145.35,222.09) -- (77.5,221.76) ;
\draw [shift={(75.5,221.75)}, rotate = 0.28] [color={rgb, 255:red, 0; green, 0; blue, 0 }  ][line width=0.75]    (10.93,-3.29) .. controls (6.95,-1.4) and (3.31,-0.3) .. (0,0) .. controls (3.31,0.3) and (6.95,1.4) .. (10.93,3.29)   ;
%Straight Lines [id:da835182481242045] 
\draw    (155.5,222.09) -- (228,221.76) ;
\draw [shift={(230,221.75)}, rotate = 179.74] [color={rgb, 255:red, 0; green, 0; blue, 0 }  ][line width=0.75]    (10.93,-3.29) .. controls (6.95,-1.4) and (3.31,-0.3) .. (0,0) .. controls (3.31,0.3) and (6.95,1.4) .. (10.93,3.29)   ;
%Straight Lines [id:da7688857640536255] 
\draw [line width=1.5]    (150.5,113.5) -- (150.5,160.5) ;
\draw [shift={(150.5,163.5)}, rotate = 270] [color={rgb, 255:red, 0; green, 0; blue, 0 }  ][line width=1.5]    (14.21,-4.28) .. controls (9.04,-1.82) and (4.3,-0.39) .. (0,0) .. controls (4.3,0.39) and (9.04,1.82) .. (14.21,4.28)   ;

% Text Node
\draw (39,30.4) node [anchor=north west][inner sep=0.75pt]  [font=\footnotesize,color={rgb, 255:red, 208; green, 2; blue, 27 }  ,opacity=1 ]  {$1$};
% Text Node
\draw (58,54.4) node [anchor=north west][inner sep=0.75pt]  [font=\footnotesize]  {$2$};
% Text Node
\draw (50.73,80.85) node [anchor=north west][inner sep=0.75pt]  [font=\footnotesize,color={rgb, 255:red, 208; green, 2; blue, 27 }  ,opacity=1 ]  {$3$};
% Text Node
\draw (15,74.4) node [anchor=north west][inner sep=0.75pt]  [font=\footnotesize,color={rgb, 255:red, 208; green, 2; blue, 27 }  ,opacity=1 ]  {$4$};
% Text Node
\draw (13,47.4) node [anchor=north west][inner sep=0.75pt]  [font=\footnotesize]  {$5$};
% Text Node
\draw (267.75,32.4) node [anchor=north west][inner sep=0.75pt]  [font=\footnotesize,color={rgb, 255:red, 208; green, 2; blue, 27 }  ,opacity=1 ]  {$13$};
% Text Node
\draw (283,55.4) node [anchor=north west][inner sep=0.75pt]  [font=\footnotesize,color={rgb, 255:red, 208; green, 2; blue, 27 }  ,opacity=1 ]  {$14$};
% Text Node
\draw (272.4,82) node [anchor=north west][inner sep=0.75pt]  [font=\footnotesize,color={rgb, 255:red, 208; green, 2; blue, 27 }  ,opacity=1 ]  {$15$};
% Text Node
\draw (240,78.4) node [anchor=north west][inner sep=0.75pt]  [font=\footnotesize,color={rgb, 255:red, 208; green, 2; blue, 27 }  ,opacity=1 ]  {$16$};
% Text Node
\draw (235,46.4) node [anchor=north west][inner sep=0.75pt]  [font=\footnotesize]  {$17$};
% Text Node
\draw (31,8.4) node [anchor=north west][inner sep=0.75pt]    {$v_{1}$};
% Text Node
\draw (132,39.4) node [anchor=north west][inner sep=0.75pt]    {$v_{2} ,\ 7$};
% Text Node
\draw (258,8.4) node [anchor=north west][inner sep=0.75pt]    {$v_{3}$};
% Text Node
\draw (58,211.4) node [anchor=north west][inner sep=0.75pt]  [font=\footnotesize,color={rgb, 255:red, 208; green, 2; blue, 27 }  ,opacity=1 ]  {$2$};
% Text Node
\draw (13,204.4) node [anchor=north west][inner sep=0.75pt]  [font=\footnotesize,color={rgb, 255:red, 208; green, 2; blue, 27 }  ,opacity=1 ]  {$5$};
% Text Node
\draw (234,203.4) node [anchor=north west][inner sep=0.75pt]  [font=\footnotesize,color={rgb, 255:red, 208; green, 2; blue, 27 }  ,opacity=1 ]  {$17$};
% Text Node
\draw (31,165.4) node [anchor=north west][inner sep=0.75pt]    {$v_{1}$};
% Text Node
\draw (132,196.4) node [anchor=north west][inner sep=0.75pt]    {$v_{2} ,\ 7$};
% Text Node
\draw (258,165.4) node [anchor=north west][inner sep=0.75pt]    {$v_{3}$};

\end{tikzpicture}
\end{center}
\captionsetup{skip=2pt}
\caption{One example of having $|S(\{v_1\})_1|=3$ and $|S(\{v_3\})_1|=4$ for the orientation $\gamma_{1,3}$. Elements selected by $S$ are colored in red.}
\label{fig:conjecture-example}
\end{figure}

It remains to consider the orientation $\gamma_2$ with unique sink $v_2$. In this case, we must choose $S(\{v_2\})_1 = \omega(v_2)$. Then $\Sink_2'(\gamma_2,S) = \{v_1,v_3\}$, so a unique nonempty subset $B$ of $\{v_1,v_3\}$ is the only $B \subseteq V(P_{5,7,5})$ such that $S_2(B) \neq \varnothing$. For each possible choice of $B$ we have $D_B = \{v_2\}$. We consider each case:
\begin{itemize}
    \item $B = \{v_1\}$. Then following the notation in Definition \ref{def:drop}, since $5 = w(B) \leq w(D_B) = 7$, we have $T = \omega(v_1) = \{1,2,3,4,5\}$, and we must choose $S(B)_2$ to be a three-element subset of $T$, which may be done in $\binom{5}{3} = 10$ ways. Since $|B| = 1$ and $|S(B)_2| = 2$. The sign of all such $S$ is $(-1)^{7-1}(-1)^{3-1} = 1$, so this contributes $10$ to the left-hand side of \eqref{eq:sigma63}.
    \item Similar to the above, if $B = \{v_3\}$, it is easy to verify that we get a contribution of $10$ across all valid $S$.
    \item If $B = \{v_1,v_3\}$, then this time since $10 = w(B) > W(D_B) = 7$, our choice of $T$ is only the seven smallest elements of $\omega(B)$, so $T = \{1,2,3,4,5,13,14\}$. We must select $S_2(B)$ to be a three-element subset of $T$, and there are $\binom{7}{3} = 35$ choices. Each such $S$ has sign $(-1)^{7-1}(-1)^{3-2} = -1$, so this contributes $-35$ to the left-hand side of \eqref{eq:sigma63}.
\end{itemize}

Adding all cases together, Conjecture \ref{conjecture} correctly determines that $$\sigma_{7,3}\lrp{X_{(P_{5,7,5},\, \omega)}} = -25-25+10+10-35 = -65.$$

In addition to this example, we have tested the conjecture on a variety of weighted graphs for many choices of $s$-allowable $\mu$ and $j$. 

\subsection{Conjecture \ref{conjecture} on Unweighted Claw-Free Graphs}

In the case of set-weighted graphs $(G,\omega)$ in which each vertex has weight $1$ (equivalently vertex-labelled graphs, the case most commonly studied in the literature), Conjecture \ref{conjecture} has interesting implications. We refer to these as \emph{unweighted graphs}.

Note from Definition \ref{def:sallow} that for any set-weighted graph $(G,\omega)$, \emph{every} $\mu$ is $s$-allowable if for every connected bipartite induced subgraph of $G$ with bipartition $U \sqcup T$, we have $|w(U)-w(T)| \leq 1$. In the case that $G$ is unweighted, this simplifies to saying that for all such induced subgraphs, $\big||U|-|T|\big| \leq 1$. We will show that this fact plays very nicely with claw-free graphs.

The \emph{claw} is the graph $Y$ with $V(Y) = \{1,2,3,4\}$ and $E(Y) = \{12, 13, 14\}$. A graph $G$ is said to be \emph{claw-free} if it has no induced subgraph isomorphic to the claw.

\begin{lemma}\label{lem:clawfree}

Let $G$ be a claw-free graph, and suppose that $U, T$ are disjoint stable sets of $G$ such that $G[U \sqcup T]$ is a connected bipartite graph. Then $\big||U|-|T|\big| \leq 1$.

\end{lemma}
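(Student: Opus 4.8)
The plan is to prove the contrapositive-style structural fact that a connected bipartite claw-free graph with parts $S$ and $T$ must have nearly balanced parts. The key observation is that claw-freeness severely restricts bipartite graphs: in a bipartite graph, the neighborhood of any vertex is a stable set (it lies entirely in the opposite part), so if a vertex $v \in S$ has three or more neighbors $u_1, u_2, u_3 \in T$, then $\{v, u_1, u_2, u_3\}$ induces a claw (since $u_1, u_2, u_3$ are pairwise non-adjacent, being in the same part $T$). Therefore \emph{every vertex of $G[S \sqcup T]$ has degree at most $2$}. A connected graph with maximum degree at most $2$ is either a path or a cycle.

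\medskip

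First I would establish the degree bound: fix any vertex $v$, note its neighbors all lie in the opposite part and hence form a stable set, and invoke claw-freeness to conclude $\deg(v) \leq 2$. Next I would use the fact that a connected graph with $\Delta \leq 2$ is a path or an (even, since the graph is bipartite) cycle. Finally I would check the balance condition in each case. For an even cycle $C_{2m}$, the two color classes each have size $m$, so $|S| = |T|$ and $||S| - |T|| = 0$. For a path $P_n$, the two color classes have sizes $\lceil n/2 \rceil$ and $\lfloor n/2 \rfloor$, which differ by at most $1$. In both cases $||S| - |T|| \leq 1$, as desired. I should also handle the trivial degenerate case where $G[S \sqcup T]$ is a single vertex (a path on one vertex), where one part is empty and the other is a singleton, again giving difference $1$.

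\medskip

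\textbf{The main obstacle} is essentially bookkeeping rather than any deep difficulty: the heart of the argument is the one-line claw-free degree bound, and the rest is classifying connected graphs of maximum degree $2$. The one subtlety to be careful about is ensuring that the two stable sets $S$ and $T$ genuinely correspond to the canonical bipartition of the connected bipartite graph $G[S \sqcup T]$. Since the graph is connected and bipartite, its bipartition into color classes is unique, and because $S$ and $T$ are each stable and partition the vertex set, they must coincide with these color classes (up to swapping). This justifies identifying $\{|S|, |T|\}$ with the sizes of the two color classes of the path or cycle, completing the proof.
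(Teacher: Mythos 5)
Your proof is correct and rests on the same key observation as the paper's: in a bipartite graph the neighbourhood of any vertex lies in the opposite (stable) part, so claw-freeness forces maximum degree at most $2$. The only cosmetic difference is that the paper applies this to a spanning tree of $G[S \sqcup T]$ (concluding the tree is a path that alternates between $S$ and $T$), whereas you classify $G[S \sqcup T]$ itself as a path or even cycle — a variant the paper itself notes in a footnote — so the two arguments are essentially identical.
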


\begin{proof}

Suppose otherwise, that without loss of generality $|U| > |T|+1$. Since $G[U \sqcup T]$ is connected, it has a spanning tree $P$. No vertex of $P$ has degree at least $3$, since then this vertex and any of three of its neighbours would form an induced claw in $G[U \sqcup T]$. Thus, every vertex of $P$ has degree equal to exactly $1$ or $2$. In this case, it is easy to verify that $P$ is a path (for instance, start at a leaf vertex and traverse the tree)\footnote{In fact, it is straightforward to show from here that $G[U \sqcup T]$ is either $P$ itself or a cycle formed by adding an edge to $P$, but we do not need this here.}. But the path $P$ in the bipartite graph $G[U \sqcup T]$ uses at most one more vertex from $U$ than from $T$, contradicting that $P$ is a spanning tree since $|U| > |T|+1$.
\end{proof}

Then it immediately follows from the above discussion that
\begin{cor}\label{cor:clawfree}
If Conjecture \ref{conjecture} holds, then we may use it to evaluate $\sigma_{\mu,j}(X_G)$ for any claw-free graph $G$ and any integers $\mu \geq 1$ and $j \in \{0,\dots,|V(G)|-\mu\}$.
\end{cor}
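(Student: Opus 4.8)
The plan is to deduce the corollary directly from Lemma \ref{lem:clawfree} together with the discussion immediately preceding it, assuming Conjecture \ref{conjecture}. The key observation is that the corollary asks us to evaluate $\sigma_{\mu,j}(X_G)$ for \emph{every} integer $\mu \geq 1$, whereas Conjecture \ref{conjecture} only applies when $\mu$ is $s$-allowable in $(G,\omega)$. So the entire content of the corollary is the claim that, when $G$ is an unweighted claw-free graph, every positive integer $\mu$ is in fact $s$-allowable; once this is established, Conjecture \ref{conjecture} applies verbatim to give a combinatorial evaluation of $\sigma_{\mu,j}(X_G)$ for all $\mu$ and all permissible $j$.

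First I would reduce $s$-allowability to the stronger sufficient condition flagged in the paragraph after Lemma \ref{lem:clawfree}: for an unweighted graph $G$, if every connected bipartite induced subgraph with bipartition $S \sqcup T$ satisfies $||S|-|T|| \leq 1$, then every $\mu$ is $s$-allowable. This follows by unwinding Definition \ref{def:sallow}. Given disjoint stable sets $S,T$ meeting the two edge-covering bullet conditions there, the graph $G[S \sqcup T]$ is bipartite with every vertex incident to the opposite side. Decomposing $G[S \sqcup T]$ into its connected components $M_1,\dots,M_k$, each $M_i$ is itself a connected bipartite induced subgraph with bipartition $S_i \sqcup T_i$ where $S_i = S \cap V(M_i)$ and $T_i = T \cap V(M_i)$. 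The required inequality on each component then forces $|w(S_i)-w(T_i)| \leq 1$ in the weighted language, i.e.\ $\{w(S_i),w(T_i)\}$ are consecutive integers (or equal), so $\min$ and $\max$ differ by at most one; hence for any integer $\mu_i$ the dichotomy $\mu_i \leq \min\{w(S_i),w(T_i)\}$ or $\mu_i \geq \max\{w(S_i),w(T_i)\}$ is automatic, as there is no integer strictly between the two values. This is exactly the condition Definition \ref{def:sallow} demands for $s$-allowability, for every $\mu$.

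The substantive step is verifying the hypothesis of that sufficient condition, and this is precisely what Lemma \ref{lem:clawfree} provides. Any connected bipartite induced subgraph of a claw-free graph $G$ has bipartition classes $S$ and $T$ which are disjoint stable sets of $G$ with $G[S \sqcup T]$ connected and bipartite, so Lemma \ref{lem:clawfree} gives $||S|-|T|| \leq 1$ directly. Thus every connected bipartite induced subgraph of $G$ satisfies the gap-one condition, and by the previous paragraph every $\mu$ is $s$-allowable. I do not expect a genuine obstacle here: the corollary is essentially a packaging of Lemma \ref{lem:clawfree} and the sufficient-condition remark, and the only care needed is to match the component-wise quantifier structure of Definition \ref{def:sallow} correctly, ensuring that the per-component inequality from Lemma \ref{lem:clawfree} is applied to each $M_i$ rather than to $S \sqcup T$ globally.

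Finally I would conclude by invoking Conjecture \ref{conjecture}: since every positive integer $\mu$ (and in particular every $\mu \leq d = |V(G)|$) is $s$-allowable in the unweighted claw-free graph $G$, Equation \eqref{conjecture-equation} holds and gives $\sigma_{\mu,j}(X_G)$ as a signed count over acyclic orientations $\gamma$ and $\gamma$-admissible generalized $2$-step weight maps $S$ with $\wts(\gamma,S) = (\mu,j)$, for every $\mu \geq 1$ and every $j \in \{0,\dots,|V(G)|-\mu\}$ (with $j=0$ only when $\mu = |V(G)|$). This completes the evaluation claimed in the corollary.
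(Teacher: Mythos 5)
Your proposal is correct and follows essentially the same route as the paper, which derives the corollary immediately from Lemma \ref{lem:clawfree} together with the preceding observation that every $\mu$ is $s$-allowable whenever all connected bipartite induced subgraphs have nearly balanced parts. Your added care in checking the component-wise quantifier structure of Definition \ref{def:sallow} is a faithful unpacking of what the paper leaves implicit.
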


We now give an example of applying Corollary \ref{cor:clawfree} to an unweighted claw-free graph. In particular, we consider the net graph $G$ as shown in Figure \ref{fig:net-graph}.
Note that the net graph is claw-free, and it is not an incomparability graph of a poset.
We consider $\mu=(2)$ and $j=2$.
Note that we cannot apply Theorem \ref{thm:main} since $\mu=(2)$ is not maximal. (The largest stable set of $G$ has size 3.)
Using SageMath, we can compute that 
\[\sigma_{2,2}\lrp{X_G}=\sigma_{2,2}\lrp{6e_{321}-6e_{33}+6e_{411}+12e_{42}+18e_{51}+12e_6}=6.\]

\begin{figure}[hbt]
\begin{center}
\tikzset{every picture/.style={line width=0.75pt}} %set default line width to 0.75pt        
\begin{tikzpicture}[x=0.75pt,y=0.75pt,yscale=-1,xscale=1]
%uncomment if require: \path (0,300); %set diagram left start at 0, and has height of 300

%Shape: Ellipse [id:dp9676500463417568] 
\draw  [color={rgb, 255:red, 0; green, 0; blue, 0 }  ,draw opacity=1 ][fill={rgb, 255:red, 0; green, 0; blue, 0 }  ,fill opacity=1 ] (74.58,13.11) .. controls (74.58,11.09) and (76.13,9.45) .. (78.04,9.45) .. controls (79.95,9.45) and (81.5,11.09) .. (81.5,13.11) .. controls (81.5,15.13) and (79.95,16.77) .. (78.04,16.77) .. controls (76.13,16.77) and (74.58,15.13) .. (74.58,13.11) -- cycle ;
%Shape: Ellipse [id:dp7065701496474341] 
\draw  [color={rgb, 255:red, 0; green, 0; blue, 0 }  ,draw opacity=1 ][fill={rgb, 255:red, 0; green, 0; blue, 0 }  ,fill opacity=1 ] (74.58,54.77) .. controls (74.58,52.75) and (76.13,51.11) .. (78.04,51.11) .. controls (79.95,51.11) and (81.5,52.75) .. (81.5,54.77) .. controls (81.5,56.79) and (79.95,58.43) .. (78.04,58.43) .. controls (76.13,58.43) and (74.58,56.79) .. (74.58,54.77) -- cycle ;
%Shape: Ellipse [id:dp6890120446716401] 
\draw  [color={rgb, 255:red, 0; green, 0; blue, 0 }  ,draw opacity=1 ][fill={rgb, 255:red, 0; green, 0; blue, 0 }  ,fill opacity=1 ] (44.58,93.11) .. controls (44.58,91.09) and (46.13,89.45) .. (48.04,89.45) .. controls (49.95,89.45) and (51.5,91.09) .. (51.5,93.11) .. controls (51.5,95.13) and (49.95,96.77) .. (48.04,96.77) .. controls (46.13,96.77) and (44.58,95.13) .. (44.58,93.11) -- cycle ;
%Shape: Ellipse [id:dp7142972584678298] 
\draw  [color={rgb, 255:red, 0; green, 0; blue, 0 }  ,draw opacity=1 ][fill={rgb, 255:red, 0; green, 0; blue, 0 }  ,fill opacity=1 ] (104.04,93.11) .. controls (104.04,91.09) and (105.59,89.45) .. (107.5,89.45) .. controls (109.41,89.45) and (110.96,91.09) .. (110.96,93.11) .. controls (110.96,95.13) and (109.41,96.77) .. (107.5,96.77) .. controls (105.59,96.77) and (104.04,95.13) .. (104.04,93.11) -- cycle ;
%Shape: Ellipse [id:dp7317795445758046] 
\draw  [color={rgb, 255:red, 0; green, 0; blue, 0 }  ,draw opacity=1 ][fill={rgb, 255:red, 0; green, 0; blue, 0 }  ,fill opacity=1 ] (12.58,123.11) .. controls (12.58,121.09) and (14.13,119.45) .. (16.04,119.45) .. controls (17.95,119.45) and (19.5,121.09) .. (19.5,123.11) .. controls (19.5,125.13) and (17.95,126.77) .. (16.04,126.77) .. controls (14.13,126.77) and (12.58,125.13) .. (12.58,123.11) -- cycle ;
%Shape: Ellipse [id:dp6059361205396943] 
\draw  [color={rgb, 255:red, 0; green, 0; blue, 0 }  ,draw opacity=1 ][fill={rgb, 255:red, 0; green, 0; blue, 0 }  ,fill opacity=1 ] (135.58,123.11) .. controls (135.58,121.09) and (137.13,119.45) .. (139.04,119.45) .. controls (140.95,119.45) and (142.5,121.09) .. (142.5,123.11) .. controls (142.5,125.13) and (140.95,126.77) .. (139.04,126.77) .. controls (137.13,126.77) and (135.58,125.13) .. (135.58,123.11) -- cycle ;
%Straight Lines [id:da01173235629484326] 
\draw    (78.04,16.77) -- (78.04,51.11) ;
%Straight Lines [id:da01693446761754802] 
\draw    (75.5,58) -- (49.5,91) ;
%Straight Lines [id:da9070942324513356] 
\draw    (51.5,93.11) -- (104.04,93.11) ;
%Straight Lines [id:da6575866063165874] 
\draw    (80.5,57) -- (106.5,91) ;
%Straight Lines [id:da2511140108870882] 
\draw    (18.5,121) -- (45.5,96) ;
%Straight Lines [id:da9877346850748887] 
\draw    (109.5,95) -- (137.5,122) ;

% Text Node
\draw (83,6.4) node [anchor=north west][inner sep=0.75pt]  [font=\small]  {$v_{1}$};
% Text Node
\draw (81,44.4) node [anchor=north west][inner sep=0.75pt]  [font=\small]  {$v_{2}$};
% Text Node
\draw (31,78.4) node [anchor=north west][inner sep=0.75pt]  [font=\small]  {$v_{3}$};
% Text Node
\draw (108,79.4) node [anchor=north west][inner sep=0.75pt]  [font=\small]  {$v_{4}$};
% Text Node
\draw (7,129.4) node [anchor=north west][inner sep=0.75pt]  [font=\small]  {$v_{5}$};
% Text Node
\draw (131,130.4) node [anchor=north west][inner sep=0.75pt]  [font=\small]  {$v_{6}$};

\end{tikzpicture}
\end{center}
\captionsetup{skip=2pt}
\caption{The net graph.}
\label{fig:net-graph}
\end{figure}

We then compute the right-hand side of equation \eqref{conjecture-equation}.
In order to obtain a weight sequence that starts with 2, we consider the acyclic orientations of $G$ that can give two sinks.
We first consider the case where $v_1$ and $v_3$ are sinks.
There are exactly two acyclic orientations where $\{v_1,v_3\}$ are sinks.
The first one, which we call $\gamma_1$, is given in Figure \ref{fig:net-orientation1}.
For the first step of a weight map $S$, we must select (i.e. be non-empty on) $\{v_1\}$ and $\{v_3\}$.
Then using Definition \ref{def:drop}, we see that $D=\{v_1,v_3\}$, $\Sink_2'(\gamma_1,S)=\{v_2,v_5\}$, and $G[D\cup \Sink_2'(\gamma_1,S)]$ has one connected component $M_1$.
It follows that we must have $B_1=\{v_2,v_5\}$ and $|S(B_1)_2|=2$. (Recall (3) of Definition \ref{def:drop}.)
Hence, for $\gamma_1$, there is only one 2-step weight map $S$ that gives a weight sequence of $(2, 2)$. We have $\mathrm{sgn}(\gamma,S)=1$, so this case contributes 1 to the sum in the right-hand side of equation \eqref{conjecture-equation}.

\begin{figure}[hbt]
\begin{center}
\tikzset{every picture/.style={line width=0.75pt}} %set default line width to 0.75pt        
\begin{tikzpicture}[x=0.75pt,y=0.75pt,yscale=-1,xscale=1]
%uncomment if require: \path (0,346); %set diagram left start at 0, and has height of 346

%Shape: Ellipse [id:dp38328368673531354] 
\draw  [color={rgb, 255:red, 208; green, 2; blue, 27 }  ,draw opacity=1 ][fill={rgb, 255:red, 208; green, 2; blue, 27 }  ,fill opacity=1 ] (210.58,22.11) .. controls (210.58,20.09) and (212.13,18.45) .. (214.04,18.45) .. controls (215.95,18.45) and (217.5,20.09) .. (217.5,22.11) .. controls (217.5,24.13) and (215.95,25.77) .. (214.04,25.77) .. controls (212.13,25.77) and (210.58,24.13) .. (210.58,22.11) -- cycle ;
%Shape: Ellipse [id:dp4453997897285846] 
\draw  [color={rgb, 255:red, 245; green, 166; blue, 35 }  ,draw opacity=1 ][fill={rgb, 255:red, 245; green, 166; blue, 35 }  ,fill opacity=1 ] (210.58,63.77) .. controls (210.58,61.75) and (212.13,60.11) .. (214.04,60.11) .. controls (215.95,60.11) and (217.5,61.75) .. (217.5,63.77) .. controls (217.5,65.79) and (215.95,67.43) .. (214.04,67.43) .. controls (212.13,67.43) and (210.58,65.79) .. (210.58,63.77) -- cycle ;
%Shape: Ellipse [id:dp4945292523318401] 
\draw  [color={rgb, 255:red, 208; green, 2; blue, 27 }  ,draw opacity=1 ][fill={rgb, 255:red, 208; green, 2; blue, 27 }  ,fill opacity=1 ] (180.58,102.11) .. controls (180.58,100.09) and (182.13,98.45) .. (184.04,98.45) .. controls (185.95,98.45) and (187.5,100.09) .. (187.5,102.11) .. controls (187.5,104.13) and (185.95,105.77) .. (184.04,105.77) .. controls (182.13,105.77) and (180.58,104.13) .. (180.58,102.11) -- cycle ;
%Shape: Ellipse [id:dp21622672562429646] 
\draw  [color={rgb, 255:red, 0; green, 0; blue, 0 }  ,draw opacity=1 ][fill={rgb, 255:red, 0; green, 0; blue, 0 }  ,fill opacity=1 ] (240.04,102.11) .. controls (240.04,100.09) and (241.59,98.45) .. (243.5,98.45) .. controls (245.41,98.45) and (246.96,100.09) .. (246.96,102.11) .. controls (246.96,104.13) and (245.41,105.77) .. (243.5,105.77) .. controls (241.59,105.77) and (240.04,104.13) .. (240.04,102.11) -- cycle ;
%Shape: Ellipse [id:dp37517117727289895] 
\draw  [color={rgb, 255:red, 245; green, 166; blue, 35 }  ,draw opacity=1 ][fill={rgb, 255:red, 245; green, 166; blue, 35 }  ,fill opacity=1 ] (148.58,132.11) .. controls (148.58,130.09) and (150.13,128.45) .. (152.04,128.45) .. controls (153.95,128.45) and (155.5,130.09) .. (155.5,132.11) .. controls (155.5,134.13) and (153.95,135.77) .. (152.04,135.77) .. controls (150.13,135.77) and (148.58,134.13) .. (148.58,132.11) -- cycle ;
%Shape: Ellipse [id:dp6178450849994568] 
\draw  [color={rgb, 255:red, 0; green, 0; blue, 0 }  ,draw opacity=1 ][fill={rgb, 255:red, 0; green, 0; blue, 0 }  ,fill opacity=1 ] (271.58,132.11) .. controls (271.58,130.09) and (273.13,128.45) .. (275.04,128.45) .. controls (276.95,128.45) and (278.5,130.09) .. (278.5,132.11) .. controls (278.5,134.13) and (276.95,135.77) .. (275.04,135.77) .. controls (273.13,135.77) and (271.58,134.13) .. (271.58,132.11) -- cycle ;
%Straight Lines [id:da37648440922636306] 
\draw    (214.04,27.77) -- (214.04,60.11) ;
\draw [shift={(214.04,25.77)}, rotate = 90] [color={rgb, 255:red, 0; green, 0; blue, 0 }  ][line width=0.75]    (10.93,-3.29) .. controls (6.95,-1.4) and (3.31,-0.3) .. (0,0) .. controls (3.31,0.3) and (6.95,1.4) .. (10.93,3.29)   ;
%Straight Lines [id:da05549217174134813] 
\draw    (211.5,67) -- (186.74,98.43) ;
\draw [shift={(185.5,100)}, rotate = 308.23] [color={rgb, 255:red, 0; green, 0; blue, 0 }  ][line width=0.75]    (10.93,-3.29) .. controls (6.95,-1.4) and (3.31,-0.3) .. (0,0) .. controls (3.31,0.3) and (6.95,1.4) .. (10.93,3.29)   ;
%Straight Lines [id:da3079890408482253] 
\draw    (189.5,102.11) -- (240.04,102.11) ;
\draw [shift={(187.5,102.11)}, rotate = 0] [color={rgb, 255:red, 0; green, 0; blue, 0 }  ][line width=0.75]    (10.93,-3.29) .. controls (6.95,-1.4) and (3.31,-0.3) .. (0,0) .. controls (3.31,0.3) and (6.95,1.4) .. (10.93,3.29)   ;
%Straight Lines [id:da9268893360036041] 
\draw    (217.71,67.59) -- (242.5,100) ;
\draw [shift={(216.5,66)}, rotate = 52.59] [color={rgb, 255:red, 0; green, 0; blue, 0 }  ][line width=0.75]    (10.93,-3.29) .. controls (6.95,-1.4) and (3.31,-0.3) .. (0,0) .. controls (3.31,0.3) and (6.95,1.4) .. (10.93,3.29)   ;
%Straight Lines [id:da1948027941308461] 
\draw    (154.5,130) -- (180.03,106.36) ;
\draw [shift={(181.5,105)}, rotate = 137.2] [color={rgb, 255:red, 0; green, 0; blue, 0 }  ][line width=0.75]    (10.93,-3.29) .. controls (6.95,-1.4) and (3.31,-0.3) .. (0,0) .. controls (3.31,0.3) and (6.95,1.4) .. (10.93,3.29)   ;
%Straight Lines [id:da10259264035963778] 
\draw    (246.94,105.39) -- (273.5,131) ;
\draw [shift={(245.5,104)}, rotate = 43.96] [color={rgb, 255:red, 0; green, 0; blue, 0 }  ][line width=0.75]    (10.93,-3.29) .. controls (6.95,-1.4) and (3.31,-0.3) .. (0,0) .. controls (3.31,0.3) and (6.95,1.4) .. (10.93,3.29)   ;
%Shape: Ellipse [id:dp279651838440921] 
\draw  [color={rgb, 255:red, 245; green, 166; blue, 35 }  ,draw opacity=1 ][fill={rgb, 255:red, 245; green, 166; blue, 35 }  ,fill opacity=1 ] (465.58,63.77) .. controls (465.58,61.75) and (467.13,60.11) .. (469.04,60.11) .. controls (470.95,60.11) and (472.5,61.75) .. (472.5,63.77) .. controls (472.5,65.79) and (470.95,67.43) .. (469.04,67.43) .. controls (467.13,67.43) and (465.58,65.79) .. (465.58,63.77) -- cycle ;
%Shape: Ellipse [id:dp3839353457311283] 
\draw  [color={rgb, 255:red, 0; green, 0; blue, 0 }  ,draw opacity=1 ][fill={rgb, 255:red, 0; green, 0; blue, 0 }  ,fill opacity=1 ] (495.04,102.11) .. controls (495.04,100.09) and (496.59,98.45) .. (498.5,98.45) .. controls (500.41,98.45) and (501.96,100.09) .. (501.96,102.11) .. controls (501.96,104.13) and (500.41,105.77) .. (498.5,105.77) .. controls (496.59,105.77) and (495.04,104.13) .. (495.04,102.11) -- cycle ;
%Shape: Ellipse [id:dp039994277734028305] 
\draw  [color={rgb, 255:red, 245; green, 166; blue, 35 }  ,draw opacity=1 ][fill={rgb, 255:red, 245; green, 166; blue, 35 }  ,fill opacity=1 ] (403.58,132.11) .. controls (403.58,130.09) and (405.13,128.45) .. (407.04,128.45) .. controls (408.95,128.45) and (410.5,130.09) .. (410.5,132.11) .. controls (410.5,134.13) and (408.95,135.77) .. (407.04,135.77) .. controls (405.13,135.77) and (403.58,134.13) .. (403.58,132.11) -- cycle ;
%Shape: Ellipse [id:dp7824207098311424] 
\draw  [color={rgb, 255:red, 0; green, 0; blue, 0 }  ,draw opacity=1 ][fill={rgb, 255:red, 0; green, 0; blue, 0 }  ,fill opacity=1 ] (526.58,132.11) .. controls (526.58,130.09) and (528.13,128.45) .. (530.04,128.45) .. controls (531.95,128.45) and (533.5,130.09) .. (533.5,132.11) .. controls (533.5,134.13) and (531.95,135.77) .. (530.04,135.77) .. controls (528.13,135.77) and (526.58,134.13) .. (526.58,132.11) -- cycle ;
%Straight Lines [id:da1504963710736189] 
\draw    (472.71,67.59) -- (497.5,100) ;
\draw [shift={(471.5,66)}, rotate = 52.59] [color={rgb, 255:red, 0; green, 0; blue, 0 }  ][line width=0.75]    (10.93,-3.29) .. controls (6.95,-1.4) and (3.31,-0.3) .. (0,0) .. controls (3.31,0.3) and (6.95,1.4) .. (10.93,3.29)   ;
%Straight Lines [id:da4590622142093872] 
\draw    (501.94,105.39) -- (528.5,131) ;
\draw [shift={(500.5,104)}, rotate = 43.96] [color={rgb, 255:red, 0; green, 0; blue, 0 }  ][line width=0.75]    (10.93,-3.29) .. controls (6.95,-1.4) and (3.31,-0.3) .. (0,0) .. controls (3.31,0.3) and (6.95,1.4) .. (10.93,3.29)   ;
%Straight Lines [id:da5286990225464745] 
\draw [line width=1.5]    (310,91) -- (366.5,91) ;
\draw [shift={(370.5,91)}, rotate = 180] [fill={rgb, 255:red, 0; green, 0; blue, 0 }  ][line width=0.08]  [draw opacity=0] (11.61,-5.58) -- (0,0) -- (11.61,5.58) -- cycle    ;
%Shape: Ellipse [id:dp6093577412330651] 
\draw  [color={rgb, 255:red, 208; green, 2; blue, 27 }  ,draw opacity=1 ][fill={rgb, 255:red, 208; green, 2; blue, 27 }  ,fill opacity=1 ] (3.58,65.11) .. controls (3.58,63.09) and (5.13,61.45) .. (7.04,61.45) .. controls (8.95,61.45) and (10.5,63.09) .. (10.5,65.11) .. controls (10.5,67.13) and (8.95,68.77) .. (7.04,68.77) .. controls (5.13,68.77) and (3.58,67.13) .. (3.58,65.11) -- cycle ;
%Shape: Ellipse [id:dp6549344941400863] 
\draw  [color={rgb, 255:red, 245; green, 166; blue, 35 }  ,draw opacity=1 ][fill={rgb, 255:red, 245; green, 166; blue, 35 }  ,fill opacity=1 ] (3.58,97.11) .. controls (3.58,95.09) and (5.13,93.45) .. (7.04,93.45) .. controls (8.95,93.45) and (10.5,95.09) .. (10.5,97.11) .. controls (10.5,99.13) and (8.95,100.77) .. (7.04,100.77) .. controls (5.13,100.77) and (3.58,99.13) .. (3.58,97.11) -- cycle ;
%Shape: Ellipse [id:dp9854142166685371] 
\draw  [color={rgb, 255:red, 155; green, 155; blue, 155 }  ,draw opacity=1 ] (156.18,62.95) .. controls (180.67,20.75) and (215.74,-4.62) .. (234.51,6.27) .. controls (253.28,17.17) and (248.65,60.21) .. (224.16,102.4) .. controls (199.67,144.59) and (164.6,169.97) .. (145.83,159.07) .. controls (127.06,148.18) and (131.7,105.14) .. (156.18,62.95) -- cycle ;

% Text Node
\draw (219,15.4) node [anchor=north west][inner sep=0.75pt]  [font=\small]  {$v_{1}$};
% Text Node
\draw (217,53.4) node [anchor=north west][inner sep=0.75pt]  [font=\small]  {$v_{2}$};
% Text Node
\draw (167,87.4) node [anchor=north west][inner sep=0.75pt]  [font=\small]  {$v_{3}$};
% Text Node
\draw (244,88.4) node [anchor=north west][inner sep=0.75pt]  [font=\small]  {$v_{4}$};
% Text Node
\draw (143,138.4) node [anchor=north west][inner sep=0.75pt]  [font=\small]  {$v_{5}$};
% Text Node
\draw (267,139.4) node [anchor=north west][inner sep=0.75pt]  [font=\small]  {$v_{6}$};
% Text Node
\draw (474,51.4) node [anchor=north west][inner sep=0.75pt]  [font=\small]  {$v_{2}$};
% Text Node
\draw (501,88.4) node [anchor=north west][inner sep=0.75pt]  [font=\small]  {$v_{4}$};
% Text Node
\draw (533.04,132.85) node [anchor=north west][inner sep=0.75pt]  [font=\small]  {$v_{6}$};
% Text Node
\draw (389,132.4) node [anchor=north west][inner sep=0.75pt]  [font=\small]  {$v_{5}$};
% Text Node
\draw (13,57.4) node [anchor=north west][inner sep=0.75pt]  [color={rgb, 255:red, 208; green, 2; blue, 27 }  ,opacity=1 ]  {$:D$};
% Text Node
\draw (13,87.4) node [anchor=north west][inner sep=0.75pt]  [color={rgb, 255:red, 245; green, 166; blue, 35 }  ,opacity=1 ]  {$:\text{Sink}_{2} '( \gamma _{1} ,S)$};
% Text Node
\draw (162,56.4) node [anchor=north west][inner sep=0.75pt]  [color={rgb, 255:red, 155; green, 155; blue, 155 }  ,opacity=1 ]  {$M_{1}$};

\end{tikzpicture}
\end{center}
\captionsetup{skip=2pt}
\caption{Orientation $\gamma_1$, which gives $v_1$ and $v_3$ as sinks.}
\label{fig:net-orientation1}
\end{figure}

We then consider the second acyclic orientation $\gamma_2$ whose sinks are $v_1$ and $v_3$, and we give an illustration in Figure \ref{fig:net-orientation2}.
Again, to obtain a weight sequence of $(2, 2)$, a weight map $S$ must be non-empty on $\{v_1\}$ and $\{v_3\}$ in the first step.
Then we have $D=\{v_1,v_3\}$, $\Sink_2'(\gamma_2,S)=\{v_4,v_5\}$, and $G[D\cup \Sink_2'(\gamma_2,S)]$ has two connected components $M_1$ and $M_2$.
Now to have 2 in the second term of the weight sequence, by Definition \ref{def:drop}(3), we must have $B_1=\varnothing$ and $B_2=\{v_4,v_5\}$.
Then $D_{B_2}=\{v_3\}$ so $1=w(D_{B_2})<w(B_2)=2$.
But now it is impossible to have $|S(B_2)_2|=2$ by the definition.
Therefore, $\gamma_2$ contribute 0 to the sum in the right-hand side of equation \eqref{conjecture-equation}.
We have finished the case where $\{v_1,v_3\}$ are sinks.

\begin{figure}[hbt]
\begin{center}
\tikzset{every picture/.style={line width=0.75pt}} %set default line width to 0.75pt        
\begin{tikzpicture}[x=0.75pt,y=0.75pt,yscale=-1,xscale=1]
%uncomment if require: \path (0,300); %set diagram left start at 0, and has height of 300

%Shape: Ellipse [id:dp24125563783913329] 
\draw  [color={rgb, 255:red, 208; green, 2; blue, 27 }  ,draw opacity=1 ][fill={rgb, 255:red, 208; green, 2; blue, 27 }  ,fill opacity=1 ] (224.58,30.11) .. controls (224.58,28.09) and (226.13,26.45) .. (228.04,26.45) .. controls (229.95,26.45) and (231.5,28.09) .. (231.5,30.11) .. controls (231.5,32.13) and (229.95,33.77) .. (228.04,33.77) .. controls (226.13,33.77) and (224.58,32.13) .. (224.58,30.11) -- cycle ;
%Shape: Ellipse [id:dp2866829351179425] 
\draw  [color={rgb, 255:red, 0; green, 0; blue, 0 }  ,draw opacity=1 ][fill={rgb, 255:red, 0; green, 0; blue, 0 }  ,fill opacity=1 ] (224.58,71.77) .. controls (224.58,69.75) and (226.13,68.11) .. (228.04,68.11) .. controls (229.95,68.11) and (231.5,69.75) .. (231.5,71.77) .. controls (231.5,73.79) and (229.95,75.43) .. (228.04,75.43) .. controls (226.13,75.43) and (224.58,73.79) .. (224.58,71.77) -- cycle ;
%Shape: Ellipse [id:dp44778180403055656] 
\draw  [color={rgb, 255:red, 208; green, 2; blue, 27 }  ,draw opacity=1 ][fill={rgb, 255:red, 208; green, 2; blue, 27 }  ,fill opacity=1 ] (194.58,110.11) .. controls (194.58,108.09) and (196.13,106.45) .. (198.04,106.45) .. controls (199.95,106.45) and (201.5,108.09) .. (201.5,110.11) .. controls (201.5,112.13) and (199.95,113.77) .. (198.04,113.77) .. controls (196.13,113.77) and (194.58,112.13) .. (194.58,110.11) -- cycle ;
%Shape: Ellipse [id:dp5979879660598697] 
\draw  [color={rgb, 255:red, 245; green, 166; blue, 35 }  ,draw opacity=1 ][fill={rgb, 255:red, 245; green, 166; blue, 35 }  ,fill opacity=1 ] (254.04,110.11) .. controls (254.04,108.09) and (255.59,106.45) .. (257.5,106.45) .. controls (259.41,106.45) and (260.96,108.09) .. (260.96,110.11) .. controls (260.96,112.13) and (259.41,113.77) .. (257.5,113.77) .. controls (255.59,113.77) and (254.04,112.13) .. (254.04,110.11) -- cycle ;
%Shape: Ellipse [id:dp3670607461893798] 
\draw  [color={rgb, 255:red, 245; green, 166; blue, 35 }  ,draw opacity=1 ][fill={rgb, 255:red, 245; green, 166; blue, 35 }  ,fill opacity=1 ] (162.58,140.11) .. controls (162.58,138.09) and (164.13,136.45) .. (166.04,136.45) .. controls (167.95,136.45) and (169.5,138.09) .. (169.5,140.11) .. controls (169.5,142.13) and (167.95,143.77) .. (166.04,143.77) .. controls (164.13,143.77) and (162.58,142.13) .. (162.58,140.11) -- cycle ;
%Shape: Ellipse [id:dp42236896267093793] 
\draw  [color={rgb, 255:red, 0; green, 0; blue, 0 }  ,draw opacity=1 ][fill={rgb, 255:red, 0; green, 0; blue, 0 }  ,fill opacity=1 ] (285.58,140.11) .. controls (285.58,138.09) and (287.13,136.45) .. (289.04,136.45) .. controls (290.95,136.45) and (292.5,138.09) .. (292.5,140.11) .. controls (292.5,142.13) and (290.95,143.77) .. (289.04,143.77) .. controls (287.13,143.77) and (285.58,142.13) .. (285.58,140.11) -- cycle ;
%Straight Lines [id:da49992863003834254] 
\draw    (228.04,35.77) -- (228.04,68.11) ;
\draw [shift={(228.04,33.77)}, rotate = 90] [color={rgb, 255:red, 0; green, 0; blue, 0 }  ][line width=0.75]    (10.93,-3.29) .. controls (6.95,-1.4) and (3.31,-0.3) .. (0,0) .. controls (3.31,0.3) and (6.95,1.4) .. (10.93,3.29)   ;
%Straight Lines [id:da462195682123713] 
\draw    (225.5,75) -- (200.74,106.43) ;
\draw [shift={(199.5,108)}, rotate = 308.23] [color={rgb, 255:red, 0; green, 0; blue, 0 }  ][line width=0.75]    (10.93,-3.29) .. controls (6.95,-1.4) and (3.31,-0.3) .. (0,0) .. controls (3.31,0.3) and (6.95,1.4) .. (10.93,3.29)   ;
%Straight Lines [id:da5683192726364557] 
\draw    (203.5,110.11) -- (254.04,110.11) ;
\draw [shift={(201.5,110.11)}, rotate = 0] [color={rgb, 255:red, 0; green, 0; blue, 0 }  ][line width=0.75]    (10.93,-3.29) .. controls (6.95,-1.4) and (3.31,-0.3) .. (0,0) .. controls (3.31,0.3) and (6.95,1.4) .. (10.93,3.29)   ;
%Straight Lines [id:da7034709841135858] 
\draw    (230.5,74) -- (255.29,106.41) ;
\draw [shift={(256.5,108)}, rotate = 232.59] [color={rgb, 255:red, 0; green, 0; blue, 0 }  ][line width=0.75]    (10.93,-3.29) .. controls (6.95,-1.4) and (3.31,-0.3) .. (0,0) .. controls (3.31,0.3) and (6.95,1.4) .. (10.93,3.29)   ;
%Straight Lines [id:da1765478888298666] 
\draw    (168.5,138) -- (194.03,114.36) ;
\draw [shift={(195.5,113)}, rotate = 137.2] [color={rgb, 255:red, 0; green, 0; blue, 0 }  ][line width=0.75]    (10.93,-3.29) .. controls (6.95,-1.4) and (3.31,-0.3) .. (0,0) .. controls (3.31,0.3) and (6.95,1.4) .. (10.93,3.29)   ;
%Straight Lines [id:da42169916015819653] 
\draw    (260.94,113.39) -- (287.5,139) ;
\draw [shift={(259.5,112)}, rotate = 43.96] [color={rgb, 255:red, 0; green, 0; blue, 0 }  ][line width=0.75]    (10.93,-3.29) .. controls (6.95,-1.4) and (3.31,-0.3) .. (0,0) .. controls (3.31,0.3) and (6.95,1.4) .. (10.93,3.29)   ;
%Shape: Ellipse [id:dp4361601068782537] 
\draw  [color={rgb, 255:red, 0; green, 0; blue, 0 }  ,draw opacity=1 ][fill={rgb, 255:red, 0; green, 0; blue, 0 }  ,fill opacity=1 ] (479.58,71.77) .. controls (479.58,69.75) and (481.13,68.11) .. (483.04,68.11) .. controls (484.95,68.11) and (486.5,69.75) .. (486.5,71.77) .. controls (486.5,73.79) and (484.95,75.43) .. (483.04,75.43) .. controls (481.13,75.43) and (479.58,73.79) .. (479.58,71.77) -- cycle ;
%Shape: Ellipse [id:dp11053646731367683] 
\draw  [color={rgb, 255:red, 245; green, 166; blue, 35 }  ,draw opacity=1 ][fill={rgb, 255:red, 245; green, 166; blue, 35 }  ,fill opacity=1 ] (509.04,110.11) .. controls (509.04,108.09) and (510.59,106.45) .. (512.5,106.45) .. controls (514.41,106.45) and (515.96,108.09) .. (515.96,110.11) .. controls (515.96,112.13) and (514.41,113.77) .. (512.5,113.77) .. controls (510.59,113.77) and (509.04,112.13) .. (509.04,110.11) -- cycle ;
%Shape: Ellipse [id:dp06940369407021274] 
\draw  [color={rgb, 255:red, 245; green, 166; blue, 35 }  ,draw opacity=1 ][fill={rgb, 255:red, 245; green, 166; blue, 35 }  ,fill opacity=1 ] (417.58,140.11) .. controls (417.58,138.09) and (419.13,136.45) .. (421.04,136.45) .. controls (422.95,136.45) and (424.5,138.09) .. (424.5,140.11) .. controls (424.5,142.13) and (422.95,143.77) .. (421.04,143.77) .. controls (419.13,143.77) and (417.58,142.13) .. (417.58,140.11) -- cycle ;
%Shape: Ellipse [id:dp9371620278719912] 
\draw  [color={rgb, 255:red, 0; green, 0; blue, 0 }  ,draw opacity=1 ][fill={rgb, 255:red, 0; green, 0; blue, 0 }  ,fill opacity=1 ] (540.58,140.11) .. controls (540.58,138.09) and (542.13,136.45) .. (544.04,136.45) .. controls (545.95,136.45) and (547.5,138.09) .. (547.5,140.11) .. controls (547.5,142.13) and (545.95,143.77) .. (544.04,143.77) .. controls (542.13,143.77) and (540.58,142.13) .. (540.58,140.11) -- cycle ;
%Straight Lines [id:da5910673632713992] 
\draw    (485.5,74) -- (510.29,106.41) ;
\draw [shift={(511.5,108)}, rotate = 232.59] [color={rgb, 255:red, 0; green, 0; blue, 0 }  ][line width=0.75]    (10.93,-3.29) .. controls (6.95,-1.4) and (3.31,-0.3) .. (0,0) .. controls (3.31,0.3) and (6.95,1.4) .. (10.93,3.29)   ;
%Straight Lines [id:da6447810966811489] 
\draw    (515.94,113.39) -- (542.5,139) ;
\draw [shift={(514.5,112)}, rotate = 43.96] [color={rgb, 255:red, 0; green, 0; blue, 0 }  ][line width=0.75]    (10.93,-3.29) .. controls (6.95,-1.4) and (3.31,-0.3) .. (0,0) .. controls (3.31,0.3) and (6.95,1.4) .. (10.93,3.29)   ;
%Straight Lines [id:da9648328743628547] 
\draw [line width=1.5]    (324,99) -- (380.5,99) ;
\draw [shift={(384.5,99)}, rotate = 180] [fill={rgb, 255:red, 0; green, 0; blue, 0 }  ][line width=0.08]  [draw opacity=0] (11.61,-5.58) -- (0,0) -- (11.61,5.58) -- cycle    ;
%Shape: Ellipse [id:dp6568697671311983] 
\draw  [color={rgb, 255:red, 208; green, 2; blue, 27 }  ,draw opacity=1 ][fill={rgb, 255:red, 208; green, 2; blue, 27 }  ,fill opacity=1 ] (17.58,73.11) .. controls (17.58,71.09) and (19.13,69.45) .. (21.04,69.45) .. controls (22.95,69.45) and (24.5,71.09) .. (24.5,73.11) .. controls (24.5,75.13) and (22.95,76.77) .. (21.04,76.77) .. controls (19.13,76.77) and (17.58,75.13) .. (17.58,73.11) -- cycle ;
%Shape: Ellipse [id:dp05435148777543186] 
\draw  [color={rgb, 255:red, 245; green, 166; blue, 35 }  ,draw opacity=1 ][fill={rgb, 255:red, 245; green, 166; blue, 35 }  ,fill opacity=1 ] (17.58,105.11) .. controls (17.58,103.09) and (19.13,101.45) .. (21.04,101.45) .. controls (22.95,101.45) and (24.5,103.09) .. (24.5,105.11) .. controls (24.5,107.13) and (22.95,108.77) .. (21.04,108.77) .. controls (19.13,108.77) and (17.58,107.13) .. (17.58,105.11) -- cycle ;
%Shape: Ellipse [id:dp643428741591737] 
\draw  [color={rgb, 255:red, 155; green, 155; blue, 155 }  ,draw opacity=1 ] (195.61,89.95) .. controls (233.95,72.93) and (271.07,72.71) .. (278.51,89.47) .. controls (285.95,106.23) and (260.9,133.62) .. (222.56,150.64) .. controls (184.22,167.66) and (147.11,167.88) .. (139.66,151.12) .. controls (132.22,134.36) and (157.27,106.97) .. (195.61,89.95) -- cycle ;
%Rounded Rect [id:dp733874187737471] 
\draw  [color={rgb, 255:red, 155; green, 155; blue, 155 }  ,draw opacity=1 ] (189.58,18.11) .. controls (189.58,13.69) and (193.16,10.11) .. (197.58,10.11) -- (251.58,10.11) .. controls (255.99,10.11) and (259.58,13.69) .. (259.58,18.11) -- (259.58,42.11) .. controls (259.58,46.53) and (255.99,50.11) .. (251.58,50.11) -- (197.58,50.11) .. controls (193.16,50.11) and (189.58,46.53) .. (189.58,42.11) -- cycle ;

% Text Node
\draw (233,23.4) node [anchor=north west][inner sep=0.75pt]  [font=\small]  {$v_{1}$};
% Text Node
\draw (231,61.4) node [anchor=north west][inner sep=0.75pt]  [font=\small]  {$v_{2}$};
% Text Node
\draw (181,95.4) node [anchor=north west][inner sep=0.75pt]  [font=\small]  {$v_{3}$};
% Text Node
\draw (258,96.4) node [anchor=north west][inner sep=0.75pt]  [font=\small]  {$v_{4}$};
% Text Node
\draw (157,146.4) node [anchor=north west][inner sep=0.75pt]  [font=\small]  {$v_{5}$};
% Text Node
\draw (281,147.4) node [anchor=north west][inner sep=0.75pt]  [font=\small]  {$v_{6}$};
% Text Node
\draw (488,59.4) node [anchor=north west][inner sep=0.75pt]  [font=\small]  {$v_{2}$};
% Text Node
\draw (515,96.4) node [anchor=north west][inner sep=0.75pt]  [font=\small]  {$v_{4}$};
% Text Node
\draw (547.04,140.85) node [anchor=north west][inner sep=0.75pt]  [font=\small]  {$v_{6}$};
% Text Node
\draw (403,140.4) node [anchor=north west][inner sep=0.75pt]  [font=\small]  {$v_{5}$};
% Text Node
\draw (27,65.4) node [anchor=north west][inner sep=0.75pt]  [color={rgb, 255:red, 208; green, 2; blue, 27 }  ,opacity=1 ]  {$:D$};
% Text Node
\draw (27,95.4) node [anchor=north west][inner sep=0.75pt]  [color={rgb, 255:red, 245; green, 166; blue, 35 }  ,opacity=1 ]  {$:\text{Sink}_{2} '( \gamma _{2} ,S)$};
% Text Node
\draw (192,14.4) node [anchor=north west][inner sep=0.75pt]  [color={rgb, 255:red, 155; green, 155; blue, 155 }  ,opacity=1 ]  {$M_{1}$};
% Text Node
\draw (207,130.4) node [anchor=north west][inner sep=0.75pt]  [color={rgb, 255:red, 155; green, 155; blue, 155 }  ,opacity=1 ]  {$M_{2}$};

\end{tikzpicture}
\end{center}
\captionsetup{skip=2pt}
\caption{Orientation $\gamma_2$, which also gives $v_1$ and $v_3$ as sinks.}
\label{fig:net-orientation2}
\end{figure}

For the cases where $\{v_1,v_4\}$, $\{v_2,v_5\}$, $\{v_2,v_6\}$, $\{v_3,v_6\}$, or $\{v_4,v_5\}$ are sinks, the analyses are identical by symmetry. So these cases, together with the case where $\{v_1,v_3\}$ are sinks, contribute a total of 6 to the sum in the right-hand side of equation \eqref{conjecture-equation}.

Next, we consider the case where $\{v_1,v_5\}$ are sinks. 
All acyclic orientations that give $\{v_1,v_5\}$ as sinks are illustrated in Figure \ref{fig:net-orientation3}.
However, we observe that for all of these acyclic orientations, there is only one second-level sink, so it is impossible to obtain a weight sequence of $(2,2)$ in these cases.
By symmetry, we also cannot obtain a weight sequence of $(2,2)$ when the sinks are $\{v_1,v_6\}$ or $\{v_5,v_6\}$.

\begin{figure}[hbt!]
\begin{center}

\tikzset{every picture/.style={line width=0.75pt}} %set default line width to 0.75pt        

% [inline block 3: 1 envs, 41491 chars -> data_tex | \begin{tikzpicture}[x=0.75pt,y=0.75pt,yscale=-1,xscale=1] %uncomment if require: \path (0,859); %set diagram left start ...]


\end{center}
\captionsetup{skip=2pt}
\caption{Orientations $\gamma_3,\ldots,\gamma_6$, which give $v_1$ and $v_5$ as sinks.}
\label{fig:net-orientation3}
\end{figure}

One can check that we have now covered all cases where we have two sinks.
Therefore, the sum in the right-hand side of equation \eqref{conjecture-equation} is equal to $6$.
Finally, since we are in the unweighted case, $(-1)^{d-n}=1$, so the right-hand side of equation \eqref{conjecture-equation} is $6$, which coincides with $\sigma_{2,2}(X_G)$.

\subsection{Theoretical Evidence Supporting Conjecture \ref{conjecture}} \label{conjecture-special-cases}

As further evidence supporting the conjecture, we present proofs of two special cases.

\subsubsection{Graphs with No Edges}

\begin{definition}\label{def:samuj}

Let $a$ and $\mu$ be positive integers and let $j$ be a non-negative integer. Let $C_a$ be a directed cycle with $a$ vertices $v_1, \dots, v_a$ and directed edges $v_1v_2, \dots, v_{a-1}v_a, v_av_1$.
We define
\[Z_{a,\mu,j}:=\{W:W\subseteq V(C_a), |W|=\mu, \text{and }C_a-W\text{ has } j\text{ components}\}.\]

\end{definition}

So $Z_{a,\mu,j}$ is the set of ways to color the beads of a labelled $a$-bead necklace either red or blue such that $\mu$ of the beads are red, and the removal of red beads produces $j$ blue strings.
For example, in Figure \ref{fig:graph-no-edge-proof-example}, we see that $|Z_{6,3,3}|=2$.
One can also verify that $|Z_{6,3,1}|=6$, $|Z_{6,3,2}|=12$, and $|Z_{6,3,j}|=0$ for $j\geq 4$.

\begin{figure}[hbt]
\begin{center}
\tikzset{every picture/.style={line width=0.75pt}} %set default line width to 0.75pt        
\begin{tikzpicture}[x=0.75pt,y=0.75pt,yscale=-1,xscale=1]
%uncomment if require: \path (0,581); %set diagram left start at 0, and has height of 581

%Shape: Ellipse [id:dp03776056384157078] 
\draw  [color={rgb, 255:red, 0; green, 0; blue, 0 }  ,draw opacity=1 ][fill={rgb, 255:red, 0; green, 0; blue, 0 }  ,fill opacity=1 ] (18.58,41.11) .. controls (18.58,39.09) and (20.13,37.45) .. (22.04,37.45) .. controls (23.95,37.45) and (25.5,39.09) .. (25.5,41.11) .. controls (25.5,43.13) and (23.95,44.77) .. (22.04,44.77) .. controls (20.13,44.77) and (18.58,43.13) .. (18.58,41.11) -- cycle ;
%Shape: Ellipse [id:dp07479007268238602] 
\draw  [color={rgb, 255:red, 208; green, 2; blue, 27 }  ,draw opacity=1 ][fill={rgb, 255:red, 208; green, 2; blue, 27 }  ,fill opacity=1 ] (48.58,21.11) .. controls (48.58,19.09) and (50.13,17.45) .. (52.04,17.45) .. controls (53.95,17.45) and (55.5,19.09) .. (55.5,21.11) .. controls (55.5,23.13) and (53.95,24.77) .. (52.04,24.77) .. controls (50.13,24.77) and (48.58,23.13) .. (48.58,21.11) -- cycle ;
%Shape: Ellipse [id:dp03168086287305227] 
\draw  [color={rgb, 255:red, 0; green, 0; blue, 0 }  ,draw opacity=1 ][fill={rgb, 255:red, 0; green, 0; blue, 0 }  ,fill opacity=1 ] (78.58,41.11) .. controls (78.58,39.09) and (80.13,37.45) .. (82.04,37.45) .. controls (83.95,37.45) and (85.5,39.09) .. (85.5,41.11) .. controls (85.5,43.13) and (83.95,44.77) .. (82.04,44.77) .. controls (80.13,44.77) and (78.58,43.13) .. (78.58,41.11) -- cycle ;
%Shape: Ellipse [id:dp23788597772293762] 
\draw  [color={rgb, 255:red, 208; green, 2; blue, 27 }  ,draw opacity=1 ][fill={rgb, 255:red, 208; green, 2; blue, 27 }  ,fill opacity=1 ] (18.58,69.11) .. controls (18.58,67.09) and (20.13,65.45) .. (22.04,65.45) .. controls (23.95,65.45) and (25.5,67.09) .. (25.5,69.11) .. controls (25.5,71.13) and (23.95,72.77) .. (22.04,72.77) .. controls (20.13,72.77) and (18.58,71.13) .. (18.58,69.11) -- cycle ;
%Shape: Ellipse [id:dp9834720227149656] 
\draw  [color={rgb, 255:red, 0; green, 0; blue, 0 }  ,draw opacity=1 ][fill={rgb, 255:red, 0; green, 0; blue, 0 }  ,fill opacity=1 ] (49.58,87.11) .. controls (49.58,85.09) and (51.13,83.45) .. (53.04,83.45) .. controls (54.95,83.45) and (56.5,85.09) .. (56.5,87.11) .. controls (56.5,89.13) and (54.95,90.77) .. (53.04,90.77) .. controls (51.13,90.77) and (49.58,89.13) .. (49.58,87.11) -- cycle ;
%Shape: Ellipse [id:dp4896661911487443] 
\draw  [color={rgb, 255:red, 208; green, 2; blue, 27 }  ,draw opacity=1 ][fill={rgb, 255:red, 208; green, 2; blue, 27 }  ,fill opacity=1 ] (78.58,69.11) .. controls (78.58,67.09) and (80.13,65.45) .. (82.04,65.45) .. controls (83.95,65.45) and (85.5,67.09) .. (85.5,69.11) .. controls (85.5,71.13) and (83.95,72.77) .. (82.04,72.77) .. controls (80.13,72.77) and (78.58,71.13) .. (78.58,69.11) -- cycle ;
%Straight Lines [id:da10194233224396942] 
\draw    (54.5,23) -- (78.83,38.91) ;
\draw [shift={(80.5,40)}, rotate = 213.18] [color={rgb, 255:red, 0; green, 0; blue, 0 }  ][line width=0.75]    (10.93,-3.29) .. controls (6.95,-1.4) and (3.31,-0.3) .. (0,0) .. controls (3.31,0.3) and (6.95,1.4) .. (10.93,3.29)   ;
%Straight Lines [id:da6399496703788099] 
\draw    (82.5,45) -- (82.08,63.45) ;
\draw [shift={(82.04,65.45)}, rotate = 271.29] [color={rgb, 255:red, 0; green, 0; blue, 0 }  ][line width=0.75]    (10.93,-3.29) .. controls (6.95,-1.4) and (3.31,-0.3) .. (0,0) .. controls (3.31,0.3) and (6.95,1.4) .. (10.93,3.29)   ;
%Straight Lines [id:da07461757238970512] 
\draw    (79.5,71.5) -- (58.21,84.46) ;
\draw [shift={(56.5,85.5)}, rotate = 328.67] [color={rgb, 255:red, 0; green, 0; blue, 0 }  ][line width=0.75]    (10.93,-3.29) .. controls (6.95,-1.4) and (3.31,-0.3) .. (0,0) .. controls (3.31,0.3) and (6.95,1.4) .. (10.93,3.29)   ;
%Straight Lines [id:da7747723812395595] 
\draw    (50.5,86.5) -- (27.18,71.58) ;
\draw [shift={(25.5,70.5)}, rotate = 32.62] [color={rgb, 255:red, 0; green, 0; blue, 0 }  ][line width=0.75]    (10.93,-3.29) .. controls (6.95,-1.4) and (3.31,-0.3) .. (0,0) .. controls (3.31,0.3) and (6.95,1.4) .. (10.93,3.29)   ;
%Straight Lines [id:da2926034754186715] 
\draw    (22.5,65.5) -- (22.08,46.77) ;
\draw [shift={(22.04,44.77)}, rotate = 88.72] [color={rgb, 255:red, 0; green, 0; blue, 0 }  ][line width=0.75]    (10.93,-3.29) .. controls (6.95,-1.4) and (3.31,-0.3) .. (0,0) .. controls (3.31,0.3) and (6.95,1.4) .. (10.93,3.29)   ;
%Straight Lines [id:da03746601511248149] 
\draw    (24.5,37.5) -- (46.84,22.61) ;
\draw [shift={(48.5,21.5)}, rotate = 146.31] [color={rgb, 255:red, 0; green, 0; blue, 0 }  ][line width=0.75]    (10.93,-3.29) .. controls (6.95,-1.4) and (3.31,-0.3) .. (0,0) .. controls (3.31,0.3) and (6.95,1.4) .. (10.93,3.29)   ;
%Shape: Ellipse [id:dp26088195072121545] 
\draw  [color={rgb, 255:red, 208; green, 2; blue, 27 }  ,draw opacity=1 ][fill={rgb, 255:red, 208; green, 2; blue, 27 }  ,fill opacity=1 ] (159.58,41.11) .. controls (159.58,39.09) and (161.13,37.45) .. (163.04,37.45) .. controls (164.95,37.45) and (166.5,39.09) .. (166.5,41.11) .. controls (166.5,43.13) and (164.95,44.77) .. (163.04,44.77) .. controls (161.13,44.77) and (159.58,43.13) .. (159.58,41.11) -- cycle ;
%Shape: Ellipse [id:dp670598784355523] 
\draw  [color={rgb, 255:red, 0; green, 0; blue, 0 }  ,draw opacity=1 ][fill={rgb, 255:red, 0; green, 0; blue, 0 }  ,fill opacity=1 ] (189.58,21.11) .. controls (189.58,19.09) and (191.13,17.45) .. (193.04,17.45) .. controls (194.95,17.45) and (196.5,19.09) .. (196.5,21.11) .. controls (196.5,23.13) and (194.95,24.77) .. (193.04,24.77) .. controls (191.13,24.77) and (189.58,23.13) .. (189.58,21.11) -- cycle ;
%Shape: Ellipse [id:dp031352267316287596] 
\draw  [color={rgb, 255:red, 208; green, 2; blue, 27 }  ,draw opacity=1 ][fill={rgb, 255:red, 208; green, 2; blue, 27 }  ,fill opacity=1 ] (219.58,41.11) .. controls (219.58,39.09) and (221.13,37.45) .. (223.04,37.45) .. controls (224.95,37.45) and (226.5,39.09) .. (226.5,41.11) .. controls (226.5,43.13) and (224.95,44.77) .. (223.04,44.77) .. controls (221.13,44.77) and (219.58,43.13) .. (219.58,41.11) -- cycle ;
%Shape: Ellipse [id:dp3406350559551281] 
\draw  [color={rgb, 255:red, 0; green, 0; blue, 0 }  ,draw opacity=1 ][fill={rgb, 255:red, 0; green, 0; blue, 0 }  ,fill opacity=1 ] (159.58,69.11) .. controls (159.58,67.09) and (161.13,65.45) .. (163.04,65.45) .. controls (164.95,65.45) and (166.5,67.09) .. (166.5,69.11) .. controls (166.5,71.13) and (164.95,72.77) .. (163.04,72.77) .. controls (161.13,72.77) and (159.58,71.13) .. (159.58,69.11) -- cycle ;
%Shape: Ellipse [id:dp5957452988796217] 
\draw  [color={rgb, 255:red, 208; green, 2; blue, 27 }  ,draw opacity=1 ][fill={rgb, 255:red, 208; green, 2; blue, 27 }  ,fill opacity=1 ] (190.58,87.11) .. controls (190.58,85.09) and (192.13,83.45) .. (194.04,83.45) .. controls (195.95,83.45) and (197.5,85.09) .. (197.5,87.11) .. controls (197.5,89.13) and (195.95,90.77) .. (194.04,90.77) .. controls (192.13,90.77) and (190.58,89.13) .. (190.58,87.11) -- cycle ;
%Shape: Ellipse [id:dp6058941281710246] 
\draw  [color={rgb, 255:red, 0; green, 0; blue, 0 }  ,draw opacity=1 ][fill={rgb, 255:red, 0; green, 0; blue, 0 }  ,fill opacity=1 ] (219.58,69.11) .. controls (219.58,67.09) and (221.13,65.45) .. (223.04,65.45) .. controls (224.95,65.45) and (226.5,67.09) .. (226.5,69.11) .. controls (226.5,71.13) and (224.95,72.77) .. (223.04,72.77) .. controls (221.13,72.77) and (219.58,71.13) .. (219.58,69.11) -- cycle ;
%Straight Lines [id:da4365038183306358] 
\draw    (195.5,23) -- (219.83,38.91) ;
\draw [shift={(221.5,40)}, rotate = 213.18] [color={rgb, 255:red, 0; green, 0; blue, 0 }  ][line width=0.75]    (10.93,-3.29) .. controls (6.95,-1.4) and (3.31,-0.3) .. (0,0) .. controls (3.31,0.3) and (6.95,1.4) .. (10.93,3.29)   ;
%Straight Lines [id:da10837554144987305] 
\draw    (223.5,45) -- (223.08,63.45) ;
\draw [shift={(223.04,65.45)}, rotate = 271.29] [color={rgb, 255:red, 0; green, 0; blue, 0 }  ][line width=0.75]    (10.93,-3.29) .. controls (6.95,-1.4) and (3.31,-0.3) .. (0,0) .. controls (3.31,0.3) and (6.95,1.4) .. (10.93,3.29)   ;
%Straight Lines [id:da9272114752625966] 
\draw    (220.5,71.5) -- (199.21,84.46) ;
\draw [shift={(197.5,85.5)}, rotate = 328.67] [color={rgb, 255:red, 0; green, 0; blue, 0 }  ][line width=0.75]    (10.93,-3.29) .. controls (6.95,-1.4) and (3.31,-0.3) .. (0,0) .. controls (3.31,0.3) and (6.95,1.4) .. (10.93,3.29)   ;
%Straight Lines [id:da7357670416686601] 
\draw    (191.5,86.5) -- (168.18,71.58) ;
\draw [shift={(166.5,70.5)}, rotate = 32.62] [color={rgb, 255:red, 0; green, 0; blue, 0 }  ][line width=0.75]    (10.93,-3.29) .. controls (6.95,-1.4) and (3.31,-0.3) .. (0,0) .. controls (3.31,0.3) and (6.95,1.4) .. (10.93,3.29)   ;
%Straight Lines [id:da547836406726963] 
\draw    (163.5,65.5) -- (163.08,46.77) ;
\draw [shift={(163.04,44.77)}, rotate = 88.72] [color={rgb, 255:red, 0; green, 0; blue, 0 }  ][line width=0.75]    (10.93,-3.29) .. controls (6.95,-1.4) and (3.31,-0.3) .. (0,0) .. controls (3.31,0.3) and (6.95,1.4) .. (10.93,3.29)   ;
%Straight Lines [id:da7110920578379747] 
\draw    (165.5,37.5) -- (187.84,22.61) ;
\draw [shift={(189.5,21.5)}, rotate = 146.31] [color={rgb, 255:red, 0; green, 0; blue, 0 }  ][line width=0.75]    (10.93,-3.29) .. controls (6.95,-1.4) and (3.31,-0.3) .. (0,0) .. controls (3.31,0.3) and (6.95,1.4) .. (10.93,3.29)   ;

% Text Node
\draw (44,5.4) node [anchor=north west][inner sep=0.75pt]  [color={rgb, 255:red, 208; green, 2; blue, 27 }  ,opacity=1 ]  {$v_{1}$};
% Text Node
\draw (85,28.4) node [anchor=north west][inner sep=0.75pt]    {$v_{2}$};
% Text Node
\draw (87,64.4) node [anchor=north west][inner sep=0.75pt]  [color={rgb, 255:red, 208; green, 2; blue, 27 }  ,opacity=1 ]  {$v_{3}$};
% Text Node
\draw (47,94.4) node [anchor=north west][inner sep=0.75pt]    {$v_{4}$};
% Text Node
\draw (3,69.4) node [anchor=north west][inner sep=0.75pt]  [color={rgb, 255:red, 208; green, 2; blue, 27 }  ,opacity=1 ]  {$v_{5}$};
% Text Node
\draw (4,25.4) node [anchor=north west][inner sep=0.75pt]    {$v_{6}$};
% Text Node
\draw (185,5.4) node [anchor=north west][inner sep=0.75pt]    {$v_{1}$};
% Text Node
\draw (225,29.4) node [anchor=north west][inner sep=0.75pt]  [color={rgb, 255:red, 208; green, 2; blue, 27 }  ,opacity=1 ]  {$v_{2}$};
% Text Node
\draw (228,64.4) node [anchor=north west][inner sep=0.75pt]    {$v_{3}$};
% Text Node
\draw (188,94.4) node [anchor=north west][inner sep=0.75pt]  [color={rgb, 255:red, 208; green, 2; blue, 27 }  ,opacity=1 ]  {$v_{4}$};
% Text Node
\draw (144,69.4) node [anchor=north west][inner sep=0.75pt]    {$v_{5}$};
% Text Node
\draw (145,25.4) node [anchor=north west][inner sep=0.75pt]  [color={rgb, 255:red, 208; green, 2; blue, 27 }  ,opacity=1 ]  {$v_{6}$};
% Text Node
\draw (15,114.4) node [anchor=north west][inner sep=0.75pt]    {$Z_{6,3,3} =\{\{v_{1} ,v_{3} ,v_{5}\} ,\{v_{2} ,v_{4} ,v_{6}\}\}$};

\end{tikzpicture}
\end{center}
\captionsetup{skip=2pt}
\caption{Example of $Z_{6,3,3}$.}
\label{fig:graph-no-edge-proof-example}
\end{figure}

\begin{restatable}{lemma}{lemmanoedge}
\label{no-edge-lemma}
Let $a\geq 3$, $\mu\geq 2$, $j\geq 1$ be integers. Then
\begin{equation}\label{eq:samuj}
|Z_{a,\mu,j}|=|Z_{a-1,\mu-1,j}|+\sum_{i=\mu+j-2}^{a-2}|Z_{i,\mu-1,j-1}|.\end{equation}
\end{restatable}

The proof of this lemma is largely computational and is given in Appendix \ref{appendix}.

With this auxiliary lemma, we may prove Conjecture \ref{conjecture} for graphs with no edges.

\begin{theorem}
\label{no-edge-thm}
Let $(G,\omega)$ be a set-weighted graph with $n$ vertices, total weight $d$, and no edges. 

Write $X_{(G,\omega)}=\sum_{\lambda\vdash d}c_{\lambda}e_{\lambda}$.
Let $\mu \leq d$ be an integer (viewed as a partition with a single part), and
fix $j \in \{0, \dots, d-\mu\}$. Then 
\begin{equation}
\label{no-edge-conjecture-equation}
\sigma_{\mu,j}\bp{X_{(G,\omega)}}=(-1)^{d-n}\sum_{\substack{\wts(\gamma,S)=(\mu,j) \\ S\text{ admissible}}}\sgn(\gamma,S),
\end{equation}
summed over all acyclic orientations $\gamma$ of $G$ and all $\gamma$-admissible generalized $2$-step weight maps $S$ of $G$ such that $\wts(\gamma,S)=(\mu,j)$.

In particular, the formula of Conjecture \ref{conjecture} holds for graphs with no edges.

\end{theorem}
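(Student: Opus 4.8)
The plan is to exploit the fact that an edgeless graph has a unique (empty, hence acyclic) orientation, which trivializes the orientation side of \eqref{no-edge-conjecture-equation} and makes both sides factor over the vertices. Since $G$ has no edges, every coloring is proper, so $X_{(G,\omega)} = \prod_{v \in V(G)} p_{w(v)}$. To compute $\sigma_{\mu,j}$ of such a product I would introduce the algebra homomorphism $\Psi\colon \Lambda \to \mathbb{C}[q,t]$ determined on generators by $\Psi(e_1) = q$ and $\Psi(e_n) = qt$ for $n \geq 2$; since the $e_n$ are free polynomial generators this is well defined, and $\Psi(e_\lambda) = q^{\ell(\lambda)} t^{|\{i : \lambda_i \geq 2\}|}$, so that $\sigma_{\mu,j}(f) = [q^\mu t^j]\,\Psi(f)$ for every $f$ (this also captures the $j=0$, $\mu=d$ case, returning $[e_{1^d}]f$). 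As $\Psi$ is multiplicative, $\Psi(X_{(G,\omega)}) = \prod_v \Psi(p_{w(v)})$, and extracting the coefficient of $q^\mu t^j$ gives the convolution
\[
\sigma_{\mu,j}\bp{X_{(G,\omega)}} = \sum_{\substack{\sum_v \mu_v = \mu \\ \sum_v j_v = j}} \prod_{v} \sigma_{\mu_v, j_v}\bp{p_{w(v)}}.
\]

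Next I would analyze the right-hand side combinatorially and show it factors identically. For the unique acyclic orientation $\gamma$ we have $\Sink_1(\gamma) = V(G)$ and, since deleting vertices uncovers no new sinks, $\Sink_2'(\gamma,S) = \varnothing$ for every $S$; thus conditions (1)--(3) of Definition \ref{def:drop} degenerate. An admissible generalized $2$-step map reduces to a choice, for each $v$, of a nonempty $W_v := S(\{v\})_1 \subseteq \omega(v)$: condition (2) forces $S(\{v\})_2$ to be the sink set of $C_v - W_v$ when $W_v \subsetneq \omega(v)$, while condition (4) gives $S(\{v\})_2 = \varnothing$ when $W_v = \omega(v)$ (the components of $G[D]$ being isolated vertices, they impose no constraint). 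The number of sinks of $C_v - W_v$ equals the number of components of that necklace, so the choices at $v$ with $|W_v| = \mu_v$ and $|S(\{v\})_2| = j_v$ are counted by $|S_{w(v),\mu_v,j_v}|$ (the full-removal case $\mu_v = w(v)$, $j_v = 0$ contributing $|S_{w(v),w(v),0}| = 1$). Since $\sgn(\gamma,S) = (-1)^{\sum_v(|W_v| - 1)} = (-1)^{\mu - n}$, the right-hand side equals
\[
(-1)^{d-n}(-1)^{\mu-n}\sum_{\substack{\sum_v\mu_v=\mu \\ \sum_v j_v=j}}\prod_v |S_{w(v),\mu_v,j_v}| = (-1)^{d+\mu}\sum_{\substack{\sum_v\mu_v=\mu \\ \sum_v j_v=j}}\prod_v |S_{w(v),\mu_v,j_v}|.
\]
Comparing the two displays, it suffices to prove the single-vertex identity $\sigma_{\mu,j}(p_d) = (-1)^{d+\mu}|S_{d,\mu,j}|$ for all $d \geq 1$.

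I would prove this identity by induction on $d$, with Lemma \ref{no-edge-lemma} as the combinatorial engine. Applying $\Psi$ to Newton's identity $p_d = \sum_{i=1}^{d-1}(-1)^{i-1} e_i p_{d-i} + (-1)^{d-1} d\, e_d$ and extracting $[q^\mu t^j]$ (for $\mu \geq 2$) yields
\[
\sigma_{\mu,j}(p_d) = \sigma_{\mu-1,j}(p_{d-1}) + \sum_{i=2}^{d-1}(-1)^{i-1}\sigma_{\mu-1,j-1}(p_{d-i}).
\]
Substituting the inductive hypothesis, every sign on the right collapses to the common factor $(-1)^{d+\mu}$ (indeed $(i-1)+(d-i)+(\mu-1) \equiv d+\mu \pmod 2$), and after reindexing $i \mapsto d-i$ and discarding the terms $|S_{k,\mu-1,j-1}|$ with $k < \mu+j-2$, which vanish for size reasons, the bracketed sum is exactly the right-hand side of \eqref{eq:samuj}. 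Hence $\sigma_{\mu,j}(p_d) = (-1)^{d+\mu}|S_{d,\mu,j}|$, with the small cases $\mu = 1$, $j=0$, and $d \leq 2$ checked by hand against the explicit coefficient $[e_\lambda]p_d = (-1)^{d-\ell(\lambda)}\,d\,(\ell(\lambda)-1)!\big/\prod_i n_i(\lambda)!$.

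The main obstacle is the bookkeeping of the combinatorial step: one must verify carefully, directly from Definition \ref{def:drop}, that in the edgeless case an admissible $S$ is genuinely equivalent to an independent necklace choice at each vertex, in particular that the uncovering and weight-drop mechanisms of conditions (1)--(3) degenerate (so that full-removal components of $G[D]$ impose no further condition) and that $\sgn(\gamma,S)$ simplifies to $(-1)^{\mu-n}$. Once this equivalence and the homomorphism $\Psi$ are in place, the algebraic heart of the theorem is precisely the single-vertex recursion, which Lemma \ref{no-edge-lemma} was tailored to supply.
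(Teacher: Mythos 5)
Your proposal is correct and follows essentially the same route as the paper: reduce to the single-vertex identity $\sigma_{\mu,j}(p_a)=(-1)^{a+\mu}|S_{a,\mu,j}|$ via the multiplicativity of $\sigma_{\mu,j}$ over the factorization $X_{(G,\omega)}=\prod_v p_{w(v)}$, prove that identity by induction from Newton's identity together with Lemma \ref{no-edge-lemma}, and match it against the vertex-by-vertex factorization of the admissible-map count for the empty orientation (including the degenerate reading of condition (3) and the sign $(-1)^{\mu-n}$, both as in the paper). The homomorphism $\Psi$ with $\Psi(e_1)=q$, $\Psi(e_n)=qt$ is a clean way to justify the convolution step that the paper asserts directly, but it does not change the substance of the argument.
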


\begin{proof}

We first claim that for every choice of $a, \mu \geq 1$ and $j \geq 0$,
\begin{equation}
  \label{single-vertex}
  \sigma_{\mu,j}(p_a)=(-1)^{a+\mu}|Z_{a,\mu,j}|.
\end{equation}
We proceed by showing that the left- and right-hand sides of this equation satisfy the same base cases and the same recurrence relation. For each $a,\mu,j$ as above, let $T_{a,\mu,j}=(-1)^{a+\mu}|Z_{a,\mu,j}|$, the quantity on the right-hand side of \eqref{single-vertex}.

We will make use of Newton's identity \cite{mac}:
\begin{equation}\label{newton}
    p_a=(-1)^{a-1}ae_a+\sum_{i=1}^{a-1}(-1)^{a-1+i}e_{a-i}p_i.
\end{equation}

First, for the base cases, note that $p_1 = e_1$ and $p_2 = e_{11}-2e_2$, so \[\sigma_{1,0}(p_1)=1=T_{1,1,0},\quad \sigma_{1,0}(p_2)=0=T_{2,1,0}, \quad \sigma_{1,1}(p_2)=-2=T_{2,1,1}, \quad \sigma_{2,0}(p_2) = 1 = T_{2,2,0}\]
and it is easy to check that for any other choice of $a \in \{1,2\}, \mu \geq 1, j \geq 0$ both sides of \eqref{single-vertex} evaluate to $0$.

If $j=0$, then both sides of \eqref{single-vertex} are $0$ except when $\mu=a$, in which case we may verify from \eqref{newton} that $\sigma_{a,0}(p_a) = 1 = T_{a,a,0}$.

If $\mu = 1$, then both sides of \eqref{single-vertex} are $0$ unless $a=1$ and $j=0$, which was checked above, or $j=1$, in which case we may verify from \eqref{newton} that $\sigma_{1,1}(p_a) = (-1)^{a-1}a = T_{a,1,1}$. This establishes all necessary base cases.

For the recursive step, by Lemma \ref{no-edge-lemma}, we see that for $a\geq 3,\mu \geq 2,j \geq 1$ we have 
\begin{align}
  T_{a,\mu,j} &= (-1)^{a+\mu}\lrp{|Z_{a-1,\mu-1,j}|+\sum_{i=\mu+j-2}^{a-2}|Z_{i,\mu-1,j-1}|} \nonumber \\
  &= (-1)^{a+\mu-2}|Z_{a-1,\mu-1,j}|+\sum_{i=\mu+j-2}^{a-2}(-1)^{a-1+i}\cdot(-1)^{i+\mu-1}|Z_{i,\mu-1,j-1}| \nonumber \\
  &= T_{a-1,\mu-1,j}+\sum_{i=\mu+j-2}^{a-2}(-1)^{a-1+i}T_{i,\mu-1,j-1}.
  \label{T-recurrence}
\end{align}

Furthermore, when $a\geq 3$, $\mu\geq 2$, and $j\geq 1$, it follows from applying $\sigma_{\mu,j}$ to both sides of \eqref{newton} that
\begin{equation}
\label{sigma-recurrence}
\sigma_{\mu,j}(p_a)=\sigma_{\mu-1,j}(p_{a-1})+\sum_{i=\mu+j-2}^{a-2}(-1)^{a-1+i}\sigma_{\mu-1,j-1}(p_i).
\end{equation}
where we may determine that $\sigma_{\mu-1,j-1}(p_i) = 0$ for $i < \mu+j-2$ since by definition $\sigma_{\mu-1,j-1}$ will only give nonzero evaluation on $e$-basis terms that are homogeneous with degree at least $2(j-1)+(\mu-1-(j-1)) = (j-1)+(\mu-1) = \mu+j-2$.

Thus, combining \eqref{T-recurrence} and \eqref{sigma-recurrence}, we have shown that both sides of \eqref{single-vertex} have the same recurrence relation. Since they also have the same base cases, both sides are equal for all relevant $a,\mu,j$.

Now, suppose $(G,\omega)$ has vertices of weights $\lambda_1\geq\cdots\geq \lambda_n$.
We first consider the left-hand side of \eqref{no-edge-conjecture-equation}.
Since $G$ has no edges, from the definition of $\sigma$ we obtain 
\[\sigma_{\mu,j}\bp{X_{(G,\omega)}}=\sigma_{\mu,j}(p_{(\lambda_1,\ldots,\lambda_n)})=\sigma_{\mu,j}(p_{\lambda_1}\cdots p_{\lambda_n})=\sum_{\substack{(k_1,\ldots,k_n) \\ k_1+\cdots+k_n=\mu}}\sum_{\substack{(\ell_1,\ldots,\ell_n) \\ \ell_1+\cdots+\ell_n=j}}\sigma_{k_1,\ell_1}(p_{\lambda_1})\cdots \sigma_{k_n,\ell_n}(p_{\lambda_n}),\]
summed over all tuples $(k_1,\ldots,k_n)$ of positive integers such that $k_1+\cdots+k_n=\mu$, and all tuples $(\ell_1,\ldots,\ell_n)$ of non-negative integers with $\ell_1+\cdots+\ell_n=j$.

% Using \eqref{single-vertex}, we obtain
% \[\sigma_{\mu,j}\bp{X_{(G,\omega)}}=
% \sum_{\substack{(k_1,\ldots,k_n) \\ k_1+\cdots+k_n=\mu}}\sum_{\substack{(\ell_1,\ldots,\ell_n) \\ \ell_1+\cdots+\ell_n=j}}
% (-1)^{d+\mu} |Z_{\lambda_1,k_1,\ell_1}|\cdots|Z_{\lambda_n,k_n,\ell_n}|.
% \]
We then consider the right-hand side of \eqref{no-edge-conjecture-equation}.
Since $G$ has no edges, there is only one acyclic orientation, i.e. the empty orientation.
Then by definition, we have 
\begin{align*}
  (-1)^{d-n}\sum_{\substack{\wts(\gamma,S)=(\mu,j) \\ S\text{ admissible}}}\sgn(\gamma,S) 
  &= (-1)^{d-n}\sum_{\substack{(k_1,\ldots,k_n) \\ k_1+\cdots+k_n=\mu}}\sum_{\substack{(\ell_1,\ldots,\ell_n) \\ \ell_1+\cdots+\ell_n=j}}(-1)^{\mu-n}|Z_{\lambda_1,k_1,\ell_1}|\cdots|Z_{\lambda_n,k_n,\ell_n}|\end{align*}
  where the sum runs over the same tuples as in the previous equation. 
  Applying \eqref{single-vertex} and comparing to the previous equation, it is straightforward to verify that this is equal to
  \begin{align*}
  \sigma_{\mu,j}\bp{X_{(G,\omega)}},
\end{align*}
and this finishes the proof.
\end{proof}

\subsubsection{Graphs with Two Vertices Connected by an Edge}

In this section we will continue building upon ideas from the previous section.

\begin{restatable}{lemma}{lemmaoneedge}
\label{lem:one-edge}
Let $a$ and $b$ be positive integers such that $a\leq b$, let $\mu$ be a positive integer such that $\mu \leq a$ or $b \leq \mu \leq a+b$, and let $j \in \{0, \dots, a+b-\mu\}$, where $j = 0$ is only allowed if $\mu = a+b$.
Then 
\begin{equation}
\label{one-edge-lemma}
-|Z_{a+b,\mu,j}|+\sum_{k=1}^{\mu-1}\sum_{\ell=0}^{j}|Z_{a,k,\ell}|\cdot |Z_{b,\mu-k,j-\ell}|=
\begin{cases}
(-1)^j\cdot 2\binom{a}{j} & \text{ if }\mu=a=b  \\
(-1)^j\binom{a}{j}-|Z_{b,\mu,j}| & \text{ if }a<b\text{ and }\mu\in\{a,b\} \\ 
-|Z_{a,\mu,j}|-|Z_{b,\mu,j}| & \text{ if }\mu<a\text{ or }\mu>b 
\end{cases}.
\end{equation}
\end{restatable}

We need this lemma to understand what happens upon applying deletion-contraction to a graph with two vertices and one edge and expanding the result using Theorem \ref{no-edge-thm} on graphs with no edges. As with Lemma \ref{no-edge-lemma}, the proof is given in Appendix \ref{appendix}.

With this auxiliary lemma in hand, we can demonstrate that Conjecture \ref{conjecture} holds on graphs with two vertices and one edge.

\begin{theorem}
\label{one-edge-thm}
Let $(G,\omega)$ be a set-weighted graph such that $V(G)=\{v_1,v_2\}$ and $E(G)=\{v_1v_2\}$, with $w(v_1) = a \leq b = w(v_2)$. Let $\mu$ be a positive integer such that $\mu \leq a$ or $b \leq \mu \leq a+b$. Fix $j \in \{0, \dots, a+b-\mu\}$, where $j=0$ is possible if and only if $\mu = a+b$. Then \begin{equation}
\label{one-edge-theorem-equation}
\sigma_{\mu,j}\bp{X_{(G,\omega)}}=(-1)^{a+b-2}\sum_{\substack{\wts(\gamma,S)=(\mu,j) \\ S\text{ admissible}}}\sgn(\gamma,S),
\end{equation}
summed over pairs of (acyclic) orientations $\gamma$ of $G$ and all $\gamma$-admissible generalized $2$-step weight maps $S$ of $G$ such that $\wts(\gamma,S)=(\mu,j)$.

In particular, Conjecture \ref{conjecture} holds for graphs with two vertices with one edge between them.
\end{theorem}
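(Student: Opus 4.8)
The plan is to prove the theorem by applying deletion-contraction to the single edge $e = v_1 v_2$, thereby reducing everything to the two edgeless graphs $G \bk e$ and $G/e$ (to which Theorem \ref{no-edge-thm} applies), and then to match the resulting expression against a direct enumeration of the admissible pairs using Lemma \ref{lem:one-edge}. First I would dispose of the boundary case $\mu = a+b$, $j = 0$ by direct inspection (both sides vanish, mirroring the treatment in Lemma \ref{lem:one-edge}), so that for the remainder $j \geq 1$. By Lemma \ref{lem:delconset} we have $X_{(G,\omega)} = X_{(G\bk e,\omega)} - X_{(G/e,\omega/e)}$, where $G \bk e$ is the edgeless graph on two vertices of weights $a$ and $b$ (so $X_{(G\bk e,\omega)} = p_a p_b$) and $G/e$ is a single vertex of weight $a+b$ (so $X_{(G/e,\omega/e)} = p_{a+b}$). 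Applying $\sigma_{\mu,j}$, using the product decomposition of $\sigma_{\mu,j}(p_a p_b)$ from the proof of Theorem \ref{no-edge-thm} together with the single-vertex identity $\sigma_{k,\ell}(p_m) = (-1)^{m+k}|S_{m,k,\ell}|$ established there, I obtain
\[
\sigma_{\mu,j}\bp{X_{(G,\omega)}} = (-1)^{a+b+\mu}\left[\sum_{k=1}^{\mu-1}\sum_{\ell=0}^{j}|S_{a,k,\ell}|\cdot|S_{b,\mu-k,j-\ell}| - |S_{a+b,\mu,j}|\right].
\]
The bracketed quantity is exactly the left-hand side of \eqref{one-edge-lemma}, so Lemma \ref{lem:one-edge} evaluates it into the three stated cases.

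It then remains to compute the signed enumeration on the right-hand side of \eqref{one-edge-theorem-equation} directly and check agreement. There are exactly two acyclic orientations: $\gamma_1$ with $v_1 \to v_2$ (so $\Sink_1 = \{v_2\}$, $\Sink_2 = \{v_1\}$) and $\gamma_2$ with $v_2 \to v_1$ (so $\Sink_1 = \{v_1\}$, $\Sink_2 = \{v_2\}$). For $\gamma_1$, admissibility with weight sequence $(\mu,j)$ forces $S(\{v_2\})_1 = W$ with $|W| = \mu$, so $\mu \leq b$. If $\mu < b$ then $D = \varnothing$, the set of newly uncovered sinks is empty, and condition (2) of Definition \ref{def:drop} forces $S(\{v_2\})_2$ to be the sinks of $C_{v_2} - W$; the requirement on $|S_2|$ says $C_{v_2} - W$ has $j$ components, so such $W$ are counted by $|S_{b,\mu,j}|$, each of sign $(-1)^{\mu-1}$. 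If instead $\mu = b$ then $W = \omega(v_2)$, $D = \{v_2\}$ uncovers $B_1 = \{v_1\}$ with $w(B_1) = a \leq b = w(D_{B_1})$ (no weight drop), so $S(\{v_1\})_2$ is any $j$-subset of $\omega(v_1)$, giving $\binom{a}{j}$ maps of sign $(-1)^{\mu+j}$. The orientation $\gamma_2$ is symmetric, except that in the boundary case $\mu = a$ the uncovered set is $B_1 = \{v_2\}$ with $w(B_1) = b \geq a = w(D_{B_1})$, so the weight-drop clause of Definition \ref{def:drop} restricts $S(\{v_2\})_2$ to a $j$-subset of the $a$ smallest elements of $\omega(v_2)$, again giving $\binom{a}{j}$ maps of sign $(-1)^{\mu+j}$; for $\mu < a$ one gets $|S_{a,\mu,j}|$ maps of sign $(-1)^{\mu-1}$. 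If $\mu > b$ (resp. $\mu > a$), no admissible map exists for $\gamma_1$ (resp. $\gamma_2$), since the unique first-level sink has too little weight.

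Assembling these contributions in each of the three regimes ($\mu = a = b$; $a < b$ with $\mu \in \{a,b\}$; and $\mu < a$ or $\mu > b$) and factoring out $(-1)^\mu$, the signed enumeration equals $(-1)^\mu$ times the right-hand side of \eqref{one-edge-lemma} in every case, using $|S_{b,b,j}| = 0$ for $j \geq 1$ to absorb the $\mu = b$ boundary and $|S_{a,\mu,j}| = |S_{b,\mu,j}| = 0$ for $\mu > b \geq a$ to make both sides vanish. Multiplying by $(-1)^{a+b-2} = (-1)^{a+b}$ then reproduces precisely $(-1)^{a+b+\mu}[\text{Lemma \ref{lem:one-edge} RHS}] = \sigma_{\mu,j}(X_{(G,\omega)})$, completing the proof. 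The genuine combinatorial content is entirely contained in the already-established Lemma \ref{lem:one-edge}; the main obstacle within the theorem itself is the careful bookkeeping of the enumeration, and in particular correctly determining when the weight-drop clause of Definition \ref{def:drop} is triggered (the $\mu = a$ boundary for $\gamma_2$, where $w(B_1) > w(D_{B_1})$) versus when it is not (the $\mu = b$ boundary for $\gamma_1$, where $w(B_1) \leq w(D_{B_1})$), since these superficially symmetric boundary cases arise from opposite comparisons of $w(B_1)$ and $w(D_{B_1})$ yet both contribute $\binom{a}{j}$.
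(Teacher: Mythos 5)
Your proposal is correct and follows essentially the same route as the paper's proof: apply deletion-contraction to reduce to the edgeless graphs $G \bk e$ and $G/e$, invoke Theorem \ref{no-edge-thm} to express $\sigma_{\mu,j}\bp{X_{(G,\omega)}}$ as $(-1)^{a+b+\mu}$ times the left-hand side of \eqref{one-edge-lemma}, evaluate via Lemma \ref{lem:one-edge}, and then match against a direct enumeration of admissible pairs over the two orientations (including the weight-drop clause at the $\mu=a$ boundary for the orientation sinking at $v_1$). Your sign bookkeeping and boundary-case analysis agree with the paper's casework, which the paper itself leaves as ``straightforward to verify.''
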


\begin{proof}

Let $e=v_1v_2$. We first evaluate the left-hand side of \eqref{one-edge-theorem-equation}.
Using the deletion-contraction relation (Lemma \ref{lem:delconset}), we have 
\[\sigma_{\mu,j}\bp{X_{(G,\omega)}}=\sigma_{\mu,j}\bp{X_{(G\setminus e,\omega)}}-\sigma_{\mu,j}\bp{X_{(G/e,\omega/e)}}.\]
Since $G\setminus e$ and $G/e$ both have no edges, we can apply Theorem \ref{no-edge-thm} and get 
\begin{align*}
  \sigma_{\mu,j}\bp{X_{(G,\omega)}}
  &=(-1)^{a+b}\sum_{\substack{\wts(G\setminus e,\gamma,S)=(\mu,j) \\ S\text{ admissible}}}\sgn(G\setminus e,\gamma,S)-(-1)^{a+b-1}\sum_{\substack{\wts(G/e,\gamma,S)=(\mu,j) \\ S\text{ admissible}}}\sgn(G/e,\gamma,S) \\
  &= (-1)^{a+b}\cdot(-1)^{\mu-2}\sum_{k=1}^{\mu-1}\sum_{\ell=0}^{j}|Z_{a,k,\ell}|\cdot|Z_{b,\mu-k,j-\ell}|-(-1)^{a+b-1}\cdot(-1)^{\mu-1}|Z_{a+b,\mu,j}| \\
  &= (-1)^{a+b+\mu}\lrp{-|Z_{a+b,\mu,j}|+\sum_{k=1}^{\mu-1}\sum_{\ell=0}^{j}|Z_{a,k,\ell}|\cdot|Z_{b,\mu-k,j-\ell}|}.
\end{align*}
Applying Lemma \ref{lem:one-edge}, we thus have
\[
\sigma_{\mu,j}\bp{X_{(G,\omega)}} = 
\begin{cases}
  (-1)^{a+b+\mu}(-1)^{j}\cdot 2\binom{a}{j} & \text{ if }\mu=a=b  \\
  (-1)^{a+b+\mu}(-1)^{j}\binom{a}{j}-(-1)^{a+b+\mu}|Z_{b,\mu,j}| & \text{ if }a<b\text{ and }\mu\in\{a,b\} \\ 
  -(-1)^{a+b+\mu}|Z_{a,\mu,j}|-(-1)^{a+b+\mu}|Z_{b,\mu,j}| & \text{ if }\mu<a\text{ or }\mu>b 
\end{cases}
\]

On the other hand, expanding the right-hand side of \eqref{one-edge-theorem-equation}, we note that there are two acyclic orientations for $G$, one with $v_1\to v_2$ and the other with $v_2\to v_1$. 
Is straightforward to verify by casework that \eqref{one-edge-theorem-equation} evaluates to
\begin{align*}
&\qquad (-1)^{a+b} \sum_{\substack{\wts(G,\gamma,S)=(\mu,j) \\ S\text{ admissible}}}\sgn(G,\gamma,S) \\
&=\begin{cases}
  (-1)^{a+b}(-1)^{(\mu-1)+(j-1)}\cdot 2\binom{a}{j} & \text{ if }\mu=a=b  \\
  (-1)^{a+b}\bp{(-1)^{(\mu-1)+(j-1)}\binom{a}{j}+(-1)^{\mu-1}|Z_{b,\mu,j}|} & \text{ if }a<b\text{ and }\mu\in\{a,b\} \\ 
  (-1)^{a+b}\bp{(-1)^{\mu-1}|Z_{a,\mu,j}|+(-1)^{\mu-1}|Z_{b,\mu,j}|} & \text{ if }\mu<a\text{ or }\mu>b 
\end{cases}
\end{align*}
Note that for the middle case, we need to apply parts (3) and (4) of Definition \ref{def:drop} and count the $j$ second-level sinks from those of the vertex of lower weight. This concludes the proof.
\end{proof}

\section{Concluding Remarks}

The introduction of Conjecture \ref{conjecture} is something of a break from current trends in the research of $e$-basis expansions of chromatic symmetric functions. As far as the authors are aware, the conjectured weight-drop phenomenon is previously unknown. In particular, in light of the discussion in Section 4.2 (and as noted in the introduction), the authors believe that further work in this direction could lead to a formula in which every integer partition $\mu$ is $s$-allowable in unweighted claw-free graphs, and so could potentially give any individual $e$-basis coefficient for such graphs.

It would be interesting to compare Conjecture \ref{conjecture} with Hikita's interpretation of $e$-basis coefficients in unit interval graphs \cite{hikita} to see if it is possible to extend Hikita's groundbreaking work to either the more general class of claw-free graphs, or to vertex-weighted unit interval graphs.
\section{Acknowledgments}

The authors would like to thank Sophie Spirkl for helpful discussions and for the simple proof of Lemma \ref{lem:clawfree}. We also thank the anonymous referees for their helpful comments.

\bibliographystyle{plain}
\bibliography{bib}

\newpage

\appendix

\section{Proof of Technical Lemmas in Section \ref{conjecture-special-cases}}\label{appendix}

In this appendix we prove Lemmas \ref{no-edge-lemma} and \ref{lem:one-edge}. We first slightly restate a relevant definition and define a new term that will be useful.

\begin{definition}[Equivalent Restatement of Definition \ref{def:samuj} plus definition of \textbf{block}]

Let $a$ and $\mu$ be positive integers and let $j$ be a non-negative integer. Let $C_a$ be a directed cycle with $a$ vertices $v_1, \dots, v_a$ and directed edges $v_1v_2, \dots, v_{a-1}v_a, v_av_1$.
We define
\[Z_{a,\mu,j}:=\{(a,W):W\subseteq V(C_a), |W|=\mu, \text{and }C_a-W\text{ has } j\text{ components}\}.\]

Given $W \subseteq V(C_a)$, each maximal subset $W^* \subseteq W$ such that $C_a|_{W^*}$ is connected is called a \textbf{block} of $W$.

\end{definition}

So as before $Z_{a,\mu,j}$ is the set of ways to color the beads of a labelled $a$-bead necklace either red or blue such that $\mu$ of the beads are red, and the removal of red beads produces $j$ blue strings (so also when $j \neq 0$, there are $j$ blocks in each valid choice of $W \subseteq V(C_a)$). Note the difference with this new definition from that presented previously: the presence of $a$ in $(a,W)$. While formally since $a$ is specified in $Z_{a,\mu,j}$ it does not need to be specified in $(a,W) \in Z_{a,\mu,j}$, in the following proofs we will be modifying $W$ while transitioning between sets with different values of $a$, and so including the cycle size with $W$ will make these arguments easier to follow.

\lemmanoedge*

\begin{proof}[Proof of Lemma \ref{no-edge-lemma}]
Note that if $\mu+j>a$, then both sides of the equation are $0$. For all $a\geq 3$ and $\mu+j\leq a$, define 
\begin{align*}
  Z_{a,\mu,j}(1,1) &:= \{(a,W)\in Z_{a,\mu,j}:v_1\in W, v_a\in W\}, \\
  Z_{a,\mu,j}(1,0) &:= \{(a,W)\in Z_{a,\mu,j}:v_1\in W, v_a\notin W\}, \\
  Z_{a,\mu,j}(0,1) &:= \{(a,W)\in Z_{a,\mu,j}:v_1\notin W, v_a\in W\}, \\
  Z_{a,\mu,j}(0,0) &:= \{(a,W)\in Z_{a,\mu,j}:v_1\notin W, v_a\notin W\}.
\end{align*}

Thus, the arguments in the parenthesis are indicators for whether $v_a$ and $v_1$ are in $W$ respectively.
It is easy to see that $Z_{a,\mu,j}=Z_{a,\mu,j}(1,1)\sqcup Z_{a,\mu,j}(0,1)\sqcup Z_{a,\mu,j}(1,0)\sqcup Z_{a,\mu,j}(0,0)$ as a disjoint union. 

In what follows, we fix $a\geq 3$ and $\mu,j$ such that $\mu+j\leq a$.
We first demonstrate that four auxiliary equations hold by establishing bijections.

\subsection*{Case 1: $Z_{a,\mu,j}(1,1)$}

Let 
\[\varphi_{1,1}:Z_{a,\mu,j}(1,1)\to Z_{a-1,\mu-1,j}(1,0)\sqcup Z_{a-1,\mu-1,j}(1,1)\]
be given by $\varphi_{1,1}(a,W)=(a-1,W\setminus\{v_a\})$. 
Note that by definition, we have $v_a\in W$ so that $|W\setminus\{v_a\}|=\mu-1$. 
Further, we note that $C_{a-1}-(W\setminus\{v_a\})$ has $j$ components for all $(a,W)\in Z_{a,\mu,j}$ since the block of $(a,W)$ containing $v_a$ and $v_1$ is not split. Also note that we have $v_1\in W\setminus\{v_a\}$, so the function's range is correctly given.

We claim that $\varphi_{1,1}$ is a bijection.
To show injectivity, note that if $(a,W_1),(a,W_2)\in Z_{a,\mu,j}$ such that $\varphi_{1,1}(a,W_1)=\varphi_{1,1}(a,W_2)$, then $W_1\setminus\{v_a\}=W_2\setminus\{v_a\}$, which implies that $W_1=W_2$ since $a$ is in both $W_1$ and $W_2$ by definition. For surjectivity, note that for all $(a-1,W)\in Z_{a-1,\mu-1,j}(1,0)\cup Z_{a-1,\mu-1,j}(1,1)$, we may verify that $(a,W\cup\{v_a\})\in Z_{a,\mu,j}(1,1)$ and $\varphi_{1,1}(a,W\cup\{v_a\})=(a-1,W)$. 
Therefore, $\varphi_{1,1}$ is bijective and $|Z_{a,\mu,j}(1,1)|=|Z_{a-1,\mu-1,j}(1,0)|+|Z_{a-1,\mu-1,j}(1,1)|$.

\subsection*{Case 2: $Z_{a,\mu,j}(0,1)$}

For all $(a,W)\in Z_{a,\mu,j}(0,1)$, write $W=\{{v_{i_1}},\ldots,{v_{i_{\mu-1}}},v_a\}$, where 
$1<i_1<\cdots<i_{\mu-1}<a$. Note that if $i_{\mu-1} < a-1$, then between $v_{i_{\mu-1}}$ and $v_a$ lies one component of $C_a-W$, so there are $j-1$ components of $C_a-W$ induced by $\{v_1,\dots,v_{i_{\mu-1}-1}\}-W$. Furthermore, among these $i_{\mu-1}-1$ vertices are at least the remaining $\mu-2$ members of $W$ and $j-1$ other vertices, so when $i_{\mu-1} < a-1$ we have $i_{\mu-1}-1 \geq (\mu-2)+(j-1)$, or
$\mu+j-2\leq i_{\mu-1}$.
Define 
\[\varphi_{0,1}:Z_{a,\mu,j}(0,1)\to Z_{a-1,\mu-1,j}(0,1)\sqcup\ts{\bigsqcup\limits_{i=\mu+j-2}^{a-2}Z_{i,\mu-1,j-1}(0,1)}\]
by $\varphi_{0,1}(a,W)=(i_{\mu-1},W\setminus\{v_a\})$ for all $W\in Z_{a,\mu,j}(0,1)$. As before, clearly $|W \setminus \{v_a\}| = \mu-1$.

Note that if $i_{\mu-1}=a-1$, then since $v_a\in W$ there remain $j$ components in the image. Otherwise, as the component of $C_a-W$ between $v_{i_{\mu-1}}$ and $v_a$ is deleted, the image has $j-1$ components. Thus, the function's range is correctly given.

We claim that $\varphi_{0,1}$ is a bijection. As above, if $W_1$ and $W_2$ are such that $\varphi_{0,1}(a,W_1)=\varphi_{0,1}(a,W_2)$, then $W_1\setminus\{v_a\}=W_2\setminus\{v_a\}$, implying that $W_1=W_2$ and verifying that $\varphi_{0,1}$ is injective.

We then show that $\varphi_{0,1}$ is surjective. First suppose we have $(a-1,W)\in Z_{a-1,\mu-1,j}(0,1)$. Then we may verify that $(a,W\cup\{v_a\})\in Z_{a,\mu,j}(0,1)$ and $\varphi_{0,1}(a,W\cup\{v_a\})=(a-1,W)$ since ${a-1}\in W$.
Now suppose we have some $\mu+j-2\leq i\leq a-2$ and $(i,W)\in Z_{i,\mu-1,j-1}(0,1)$. In this case, we may again check that $(a,W\cup\{v_a\})\in Z_{a,\mu,j}(0,1)$ and $\varphi_{0,1}(a,W\cup\{v_a\})=(i,W)$ since $i$ is the second largest index such that $v_i\in W\cup\{a\}$. Therefore, we conclude that $\varphi_{0,1}$ is a bijection. It follows that 
$|Z_{a,\mu,j}(0,1)|=|Z_{a-1,\mu-1,j}(0,1)|+\sum_{i=\mu+j-2}^{a-2}|Z_{i,\mu-1,j-1}(0,1)|$.

\subsection*{Case 3: $Z_{a,\mu,j}(1,0)$}

For $(a,W)\in Z_{a,\mu,j}(1,0)$, write $W=\{v_1,v_{i_2},\ldots,v_{i_\mu}\}$ where $1 <i_2<\cdots<i_\mu<a$. As before, there is a component of $C_a-W$ consisting of the vertices between $v_{i_{\mu}}$ and $v_1$, so there are $j-1$ components of $C_a-W$ induced by $\{v_1,\dots,v_{i_{\mu}}\}$. Thus, among these $i_{\mu}$ vertices are at least the $\mu$ vertices of $W$ as well as $j-1$ others, so $i_{\mu} \geq \mu+j-1$, and $i_{\mu}-1 \geq \mu+j-2$.
Define 
\[\varphi_{1,0}:Z_{a,\mu,j}(1,0)\to\ts{\bigsqcup\limits_{i=\mu+j-2}^{a-2}\bp{Z_{i,\mu-1,j-1}(1,1)\sqcup Z_{i,\mu-1,j-1}(1,0)}}\]
such that $\varphi_{1,0}(a,W)=(i_\mu-1,W\setminus\{{i_\mu}\})$ for all $(a,W)\in Z_{a,\mu,j}(1,0)$. Clearly $|W\setminus \{v_{i_{\mu}}\}| = \mu-1$. From the above we indeed have $\mu+j-2\leq i_\mu-1\leq a-2$.
Furthermore, since $v_1\in W$, $v_a\notin W$, and $v_{i_{\mu}}\in W$, we know that $C_{i_\mu-1}-(W\setminus\{{v_{i_\mu}}\})$ has $j-1$ components, so the given range is correct.

We show that $\varphi_{1,0}$ is a bijection.
Suppose $W_1$ and $W_2$ are such that $\varphi_{1,0}(a,W_1)=\varphi_{1,0}(a,W_2)$. Write $W_1=\{v_1,v_{i_2},\ldots,{v_{i_\mu}}\}$ and $W_2=\{v_1,v_{j_2},\ldots,v_{j_\mu}\}$. Then $(i_\mu-1,W_1\setminus\{{i_\mu}\})=(j_\mu-1,W_2\setminus\{{j_\mu}\})$. Thus, $i_{\mu}-1 = j_{\mu}-1$ and $W_1 \setminus \{v_{i_{\mu}}\} = W_2 \setminus \{v_{j_{\mu}}\}$, so $W_1=W_2$ and $\varphi_{1,0}$ is injective.

For surjectivity, suppose we choose some $\mu+j-2\leq i\leq a-2$ and some $(i,W)\in Z_{i,\mu-1,j-1}(1,1)\cup Z_{i,\mu-1,j-1}(1,0)$.
Then since $i\leq a-2$, we know ${i+1}\neq a$, so we may verify that $C_a-(W\cup\{{v_{i+1}}\})$ has $j$ components.
It follows that $(a,W\cup\{v_{i+1}\})\in Z_{a,\mu,j}(1,0)$. It is then straightforward to check that $\varphi_{1,0}(a,W\cup\{v_{i+1}\})=(i,W)$. This proves that $\varphi_{1,0}$ is a surjection, and that 
$|Z_{a,\mu,j}(1,0)|=\sum_{i=\mu+j-2}^{a-2}\bp{|Z_{i,\mu-1,j-1}(1,1)|+|Z_{i,\mu-1,j-1}(1,0)|}$.

\subsection*{Case 4: $Z_{a,\mu,j}(0,0)$}

Again, for $(a,W)\in Z_{a,\mu,j}(0,0)$, we write $W=\{v_{i_1},\ldots,v_{i_\mu}\}$ where $1 < i_1<\cdots<i_\mu < a$. As before it is straightforward to verify that $i_{\mu}-1 \geq \mu+j-2$ (in fact the inequality is stronger in this case, but we do not need this).
We then define the map
\[\varphi_{0,0}:Z_{a,\mu,j}(0,0)\to Z_{a-1,\mu-1,j}(0,0)\sqcup\ts{\bigsqcup\limits_{i=\mu+j-2}^{a-2}Z_{i,\mu-1,j-1}(0,0)}\]
such that 
\[\varphi_{0,0}(a,W)=\begin{cases}
  (a-1, W\setminus\{v_{i_\mu}\}) & \text{ if } v_{i_\mu-1}\in W \\
  (i_\mu-1, W\setminus\{v_{i_\mu}\}) & \text{ if } v_{i_\mu-1}\notin W.
\end{cases}\]
Note that if $v_{i_\mu-1}\in W$, then $C_{a-1}-(W\setminus\{v_{i_\mu}\})$ also has $j$ components. 
Furthermore, since $v_a \notin W$, by definition either $i_{\mu} = a-1$ or $v_{a-1} \notin W$, and either way $v_{a-1} \notin W \setminus \{v_{i_{\mu}}\}$, which means that $\varphi_{0,0}(a,W)\in Z_{a-1,\mu-1,j}(0,0)$ when $v_{i_\mu-1}\in W$. 
If $v_{i_\mu-1}\notin W$, then $C_{i_\mu-1}-(W\setminus\{v_{i_\mu}\})$ has $j-1$ components, and thus $\varphi_{0,0}(a,W)\in Z_{i_\mu-1,\mu-1,j-1}(0,0)$, so the range of $\varphi_{0,0}$ is correctly given.

We claim that $\varphi_{0,0}$ is a bijection.
Suppose $W_1$ and $W_2$ are such that $\varphi_{0,0}(a,W_1)=\varphi_{0,0}(a,W_2)$. Write $W_1=\{v_{i_1},\ldots,v_{i_\mu}\}$ where $1<i_1<\cdots<i_\mu<a$, and write $W_2=\{v_{j_1},\ldots,v_{j_\mu}\}$ where $1<j_1<\cdots<j_\mu<a$.
%Observe that since $a\notin W_1$ and $a\notin W_2$, we always have $a-1\neq i_\mu-1$ and $a-1\neq j_\mu-1$. 

If $v_{i_\mu-1}\in W_1$, then $\varphi_{0,0}(a,W_1)=(a-1,W_1\setminus\{v_{i_\mu}\})=\varphi_{0,0}(a,W_2)$. Since $j_\mu-1\neq a-1$, we must have $v_{j_\mu-1}\in W_2$. It follows that $\varphi_{0,0}(a,W_2)=(a-1,W_2\setminus\{v_{j_\mu}\})$, which means that $W_1\setminus\{v_{i_\mu}\}=W_2\setminus\{v_{j_\mu}\}$. 
Furthermore, since $i_{\mu-1} = i_{\mu}-1$ and $j_{\mu-1} = j_{\mu}-1$, we must have $i_\mu=j_\mu$, which shows that $W_1=W_2$.

If $v_{i_\mu-1}\notin W_1$, then $\varphi_{0,0}(a,W_1)=(i_\mu-1,W_1\setminus\{v_{i_\mu}\})=\varphi_{0,0}(a,W_2)$. Since $i_\mu-1\neq a-1$, we must have $v_{j_\mu-1}\notin W_2$. Thus, $(i_\mu-1,W_1\setminus\{v_{i_\mu}\})=(j_\mu-1,W_2\setminus\{v_{j_\mu}\})$, and as above it follows that $i_\mu=j_\mu$ and thus $W_1=W_2$. This shows that $\varphi_{0,0}$ is injective.

We show that $\varphi_{0,0}$ is surjective. First consider some $(a-1,W)\in Z_{a-1,\mu-1,j}(0,0)$. Write $W=\{v_{i_1},\ldots,v_{i_{\mu-1}}\}$ such that $i_1<\cdots<i_{\mu-1}$. Since $i_{\mu-1}\leq a-2$,
it is easy to see that $(a,W\cup\{v_{1+i_{\mu-1}}\})\in Z_{a,\mu,j}(0,0)$, and $\varphi_{0,0}(a,W\cup\{v_{1+i_{\mu-1}}\})=(a-1,W)$. 

Next, suppose we have some $i \in \{\mu+j-2, \dots, a-2\}$ and some $(i,W)\in Z_{i,\mu-1,j-1}(0,0)$.
Since $v_i\notin W$, we know that $C_a-(W\cup\{v_{i+1}\})$ has $j$ components.
Also since $i\leq a-2$, we have $v_a\notin W\cup\{v_{i+1}\}$, and thus $(a,W\cup\{v_{i+1}\})\in Z_{a,\mu,j}(0,0)$, and $\varphi_{0,0}(a,W\cup\{v_{i+1}\})=(i,W)$. Therefore, $\varphi_{0,0}$ is a surjection and thus a bijection. This means that 
$|Z_{a,\mu,j}(0,0)|=|Z_{a-1,\mu-1,j}(0,0)|+\sum_{i=\mu+j-2}^{a-2}|Z_{i,\mu-1,j-1}(0,0)|$.

Finally, combining all four cases together, we have 
\begin{align*}
  |Z_{a,\mu,j}| &= |Z_{a,\mu,j}(1,1)|+|Z_{a,\mu,j}(0,1)|+|Z_{a,\mu,j}(1,0)|+|Z_{a,\mu,j}(0,0)| \\
  &= |Z_{a-1,\mu-1,j}(1,0)|+|Z_{a-1,\mu-1,j}(1,1)| \\
  &\quad +|Z_{a-1,\mu-1,j}(0,1)|+\ts{\sum\limits_{i=\mu+j-2}^{a-2}|Z_{i,\mu-1,j-1}(0,1)|} \\
  &\quad +\ts{\sum\limits_{i=\mu+j-2}^{a-2}\bp{|Z_{i,\mu-1,j-1}(1,1)|+|Z_{i,\mu-1,j-1}(1,0)|}} \\
  &\quad +|Z_{a-1,\mu-1,j}(0,0)|+\ts{\sum\limits_{i=\mu+j-2}^{a-2}|Z_{i,\mu-1,j-1}(0,0)|} \\
  &=|Z_{a-1,\mu-1,j}|+\sum_{i=\mu+j-2}^{a-2}|Z_{i,\mu-1,j-1}|,
\end{align*}
as desired.
\end{proof}

We then prove Lemma \ref{lem:one-edge}.

\lemmaoneedge*

\begin{proof}[Proof of Lemma \ref{lem:one-edge}]

First, it is straightforward to verify that \eqref{one-edge-lemma} holds when $\mu = a+b$ and $j = 0$. Thus, for the remainder of this proof we may assume that $j$ is a positive integer.

We will introduce notation that will be used throughout this proof.

For $a \leq b$ positive integers, and $(i_1,i_2)\in\{0,1\}^2$, define
\[Z_{a,\mu,j}^{a+b}(i_1,i_2):=\{(a+b,W):(a,W)\in Z_{a,\mu,j}(i_1,i_2)\}.\]
and
\[Z_{b,\mu,j}^{a+b}(i_1,i_2):=\{(a+b,W):(b,\{w-a:w \in W\})\in Z_{b,\mu,j}(i_1,i_2)\}.\]

That is, $Z_{a,\mu,j}^{a+b}(i_1,i_2)$ is the set of choices of $W$ along an $a+b$ vertex cycle such that all $w \in W$ are among $\{v_1,\dots,v_a\}$, and that if we broke the cycle between $v_a$ and $v_{a+1}$ and also between $v_{a+b}$ and $v_1$, and reattached $v_1$ to $v_a$, the result would be a valid element of $Z_{a,\mu,j}(i_1,i_2)$. Analogously, $Z_{b,\mu,j}^{a+b}(i_1,i_2)$ is when instead all elements of $W$ are among $v_{a+1}, \dots, v_{a+b}$, and forming the cycle by breaking the same way and attaching $v_{a+1}$ to $v_{a+b}$ (and subtracting $a$ from all vertex labels) we get a valid element of $Z_{b,\mu,j}(i_1,i_2)$.

For $(i_1,i_2,i_3,i_4)\in\{0,1\}^4$, we define $Z_{a+b,\mu,j}(i_1,i_2,i_3,i_4)\subseteq Z_{a+b,\mu,j}$ as subsets $(a+b,W)$ where $i_1$ is an indicator for whether $v_1$ is selected, $i_2$ is an indicator for whether $v_a$ is selected, $i_3$ is an indicator for whether $v_{a+1}$ is selected, and $i_4$ is an indicator for whether $v_{a+b}$ is selected.

For example, we have $Z_{a+b,\mu,j}(0,1,1,0)=\{(a+b,W)\in Z_{a+b,\mu,j}:v_1,v_{a+b}\notin W,v_a,v_{a+1}\in W\}$.

Note that 
\begin{itemize}
    \item If $(i_1,i_2)=(0,0)$ and $(i_3,i_4) \neq (1,1)$, then $Z_{b,\mu,j}^{a+b}\subseteq Z_{a+b,\mu,j}(i_1,i_2,i_3,i_4)$.
    \item If $(i_1,i_2)\neq (0,0)$, then $Z_{b,\mu,j}^{a+b}\cap Z_{a+b,\mu,j}(i_1,i_2,i_3,i_4)=\varnothing$.
    \item If $(i_3,i_4)=(0,0)$ and $(i_1,i_2) \neq (1,1)$, then $Z_{a,\mu,j}^{a+b}\subseteq Z_{a+b,\mu,j}(i_1,i_2,i_3,i_4)$.
    \item If $(i_3,i_4)\neq (0,0)$, then $Z_{a,\mu,j}^{a+b}\cap Z_{a+b,\mu,j}(i_1,i_2,i_3,i_4)=\varnothing$.
\end{itemize} 

We first simplify the left-hand side of \eqref{one-edge-lemma}. 
Let 
\[P=\{(1,1,0,0),(1,0,0,1),(0,1,1,0),(0,0,1,1)\}.\]
We claim that for all $(i_1,i_2,i_3,i_4)\in\{0,1\}^4\setminus P$, there is a bijection between
\[
\bigsqcup_{k=1}^{\mu-1}\bigsqcup_{\ell=0}^{j}Z_{a,k,\ell}(i_1,i_2)\times Z_{b,\mu-k,j-\ell}(i_3,i_4)
\quad\text{and}\quad
Z_{a+b,\mu,j}(i_1,i_2,i_3,i_4)\setminus\bp{Z_{a,\mu,j}^{a+b}(i_1,i_2)\cup Z_{b,\mu,j}^{a+b}(i_3,i_4)}.
\]

Indeed, let $\varphi$ be a function from the left set to the right set given by 
\[\varphi\bp{(a,W_1),(b,W_2)}=\bp{a+b,W_1\cup\{v_{a+i}:i\in W_2\}}.\]
One can check manually case by case that this is a well-defined function from the left set to the right set (in particular, that the number of components induced by $((a,W_1),(b,W_2))$ is correct), and that it is a bijection.

For $(i_1,i_2,i_3,i_4)\in P$, it is straightforward to check that the above map $\phi$ gives a bijection between each of the following pairs:
\[
\bigsqcup_{k=1}^{\mu-1}\bigsqcup_{\ell=0}^{j}Z_{a,k,\ell}(1,1)\times Z_{b,\mu-k,j-\ell}(0,0)
\quad\text{and}\quad
Z_{a+b,\mu,j+1}(1,1,0,0)\setminus Z_{a,\mu,j}^{a+b}(1,1),
\]
\[
\bigsqcup_{k=1}^{\mu-1}\bigsqcup_{\ell=0}^{j}Z_{a,k,\ell}(1,0)\times Z_{b,\mu-k,j-\ell}(0,1)
\quad\text{and}\quad
Z_{a+b,\mu,j-1}(1,0,0,1),
\]
\[
\bigsqcup_{k=1}^{\mu-1}\bigsqcup_{\ell=0}^{j}Z_{a,k,\ell}(0,1)\times Z_{b,\mu-k,j-\ell}(1,0)
\quad\text{and}\quad
Z_{a+b,\mu,j-1}(0,1,1,0),
\]
\[
\bigsqcup_{k=1}^{\mu-1}\bigsqcup_{\ell=0}^{j}Z_{a,k,\ell}(0,0)\times Z_{b,\mu-k,j-\ell}(1,1)
\quad\text{and}\quad
Z_{a+b,\mu,j+1}(0,0,1,1)\setminus Z_{b,\mu,j}^{a+b}(1,1).
\]

Thus,
\begin{align*}
  &\quad \sum_{k=1}^{\mu-1}\sum_{\ell=0}^{j} |Z_{a,k,\ell}|\cdot|Z_{b,\mu-k,j-\ell}|
  = \sum_{k=1}^{\mu-1}\sum_{\ell=0}^{j} \lrp{\ts{\sum\limits_{i_1,i_2}|Z_{a,k,\ell}(i_1,i_2)|}}\cdot\lrp{\ts{\sum\limits_{i_3,i_4}|Z_{b,\mu-k,j-\ell}(i_3,i_4)|}}  \\
  &= \sum_{k=1}^{\mu-1}\sum_{\ell=0}^{j}\sum_{i_1,\ldots,i_4}
  |Z_{a,k,\ell}(i_1,i_2)|\cdot|Z_{b,\mu-k,j-\ell}(i_3,i_4)|  \\
  &= \sum_{i_1,\ldots,i_4}\sum_{k=1}^{\mu-1}\sum_{\ell=0}^{j}|Z_{a,k,\ell}(i_1,i_2)|\cdot|Z_{b,\mu-k,j-\ell}(i_3,i_4)|  \\
  &= \sum_{(i_1,\ldots,i_4)\notin P}\sum_{k=1}^{\mu-1}\sum_{\ell=0}^{j}|Z_{a,k,\ell}(i_1,i_2)|\cdot|Z_{b,\mu-k,j-\ell}(i_3,i_4)|  \\
  &\quad+ \sum_{(i_1,\ldots,i_4)\in P}\sum_{k=1}^{\mu-1}\sum_{\ell=0}^{j}|Z_{a,k,\ell}(i_1,i_2)|\cdot|Z_{b,\mu-k,j-\ell}(i_3,i_4)|  \\
  &= (|Z_{a+b,\mu,j}|-|Z_{a+b,\mu,j}(1,1,0,0)|-|Z_{a+b,\mu,j}(1,0,0,1)|-|Z_{a+b,\mu,j}(0,1,1,0)|-|Z_{a+b,\mu,j}(0,0,1,1)|
  \\
  &\quad-|Z_{a,\mu,j}|-|Z_{b,\mu,j}|+|Z_{a,\mu,j}^{a+b}(1,1)|+|Z_{b,\mu,j}^{a+b}(1,1)|)
  \\
  &\quad+ (|Z_{a+b,\mu,j+1}(1,1,0,0)|+|Z_{a+b,\mu,j-1}(1,0,0,1)|+|Z_{a+b,\mu,j-1}(0,1,1,0)|+|Z_{a+b,\mu,j+1}(0,0,1,1)|
  \\
  &\quad- |Z_{a,\mu,j}^{a+b}(1,1)|-|Z_{b,\mu,j}^{a+b}(1,1)|)
  \\
  &= -|Z_{a,\mu,j}|-|Z_{b,\mu,j}|+|Z_{a+b,\mu,j}|  \\
  &\quad -|Z_{a+b,\mu,j}(1,1,0,0)|-|Z_{a+b,\mu,j}(1,0,0,1)|-|Z_{a+b,\mu,j}(0,1,1,0)|-|Z_{a+b,\mu,j}(0,0,1,1)|  \\
  &\quad +|Z_{a+b,\mu,j+1}(1,1,0,0)|+|Z_{a+b,\mu,j-1}(1,0,0,1)|+|Z_{a+b,\mu,j-1}(0,1,1,0)|+|Z_{a+b,\mu,j+1}(0,0,1,1)|,
\end{align*}
where we use the bijections as described previously. 

Therefore, the left-hand side of \eqref{one-edge-lemma} becomes 
\begin{align}
  \label{lhs}
  &-|Z_{a,\mu,j}|-|Z_{b,\mu,j}| \nonumber \\
  & -|Z_{a+b,\mu,j}(1,1,0,0)|-|Z_{a+b,\mu,j}(1,0,0,1)|-|Z_{a+b,\mu,j}(0,1,1,0)|-|Z_{a+b,\mu,j}(0,0,1,1)| \nonumber \\
  & +|Z_{a+b,\mu,j+1}(1,1,0,0)|+|Z_{a+b,\mu,j-1}(1,0,0,1)|+|Z_{a+b,\mu,j-1}(0,1,1,0)|+|Z_{a+b,\mu,j+1}(0,0,1,1)|
\end{align}

We now define a number of sets that will help to break the proof down into smaller components.

Consider 
\begin{align*}
  A=\{&(a+b,W)\in Z_{a+b,\mu,j-1}(0,1,1,0)\cup Z_{a+b,\mu,j+1}(0,0,1,1): \\
  &\text{for all }i\in\{1,\dots,a\},\text{exactly one of }v_i\text{ and }v_{i+b}\text{ is in }W \},
\end{align*}
\[B=\bp{Z_{a+b,\mu,j-1}(0,1,1,0)\cup Z_{a+b,\mu,j+1}(0,0,1,1)}\setminus A,\]
\begin{align*}
  C=\{&(a+b,W)\in Z_{a+b,\mu,j}(0,1,1,0)\cup Z_{a+b,\mu,j}(0,0,1,1): \\
  &\text{for all }i\in\{1,\dots,a\},\text{exactly one of }v_i\text{ and }v_{i+b}\text{ is in }W \},
\end{align*}
\[D=\bp{Z_{a+b,\mu,j}(0,1,1,0)\cup Z_{a+b,\mu,j}(0,0,1,1)}\setminus C.\]
Similarly, let 
\begin{align*}
  A'=\{&(a+b,W)\in Z_{a+b,\mu,j-1}(1,1,0,0)\cup Z_{a+b,\mu,j+1}(1,0,0,1): \\
  &\text{for all }i\in\{1,\dots,a\},\text{exactly one of }v_i\text{ and }v_{i+b}\text{ is in }W \},
\end{align*}
\[B'=\bp{Z_{a+b,\mu,j-1}(1,1,0,0)\cup Z_{a+b,\mu,j+1}(1,0,0,1)}\setminus A',\]
\begin{align*}
  C'=\{&(a+b,W)\in Z_{a+b,\mu,j}(1,1,0,0)\cup Z_{a+b,\mu,j}(1,0,0,1): \\
  &\text{for all }i\in\{1,\dots,a\},\text{exactly one of }v_i\text{ and }v_{i+b}\text{ is in }W \},
\end{align*}
\[D'=\bp{Z_{a+b,\mu,j}(1,1,0,0)\cup Z_{a+b,\mu,j}(1,0,0,1)}\setminus C'.\]

Note that all of these sets depend on $a,b,\mu,j$, but we suppress explicit mention of this for clarity.

However, we claim that for any choice of $a,b,\mu,j$, we have $|D|=|B|$.
Consider the function $\varphi:D\to B$ defined as follows.
Fix $(a+b,W)\in D$. 
We let $i_0$ be the largest index from $\{1,\ldots,a-1\}$ such that either both $v_{i_0}$ and $v_{i_0+b}$ are in $W$ or both $v_{i_0}$ and $v_{i_0+b}$ are not in $W$, which exists by construction.
Then $\varphi(a+b,W)=(a+b,W')$, where:
\begin{itemize}
  \item For $i_0<i\leq a$, $v_i\in W'$ if and only if $v_{i+b}\in W$, or equivalently, $v_i\in W'$ if and only if $v_i\notin W$;
  \item For $i_0+b<i\leq a+b$, $v_i\in W'$ if and only if $v_{i-b}\in W$, or equivalently, $v_i\in W'$ if and only if $v_i\notin W$;
  \item For all other $i \in \{1,\dots,a+b\}$, $v_i\in W'$ if and only if $v_i\in W$.
\end{itemize}

Now if the function's range is indeed $B$, then it is easy to see that it is a bijection between $D$ and $B$ since it is clearly reversible.

By construction, clearly $|W'|=\mu$, and it is also easy to see that $\varphi(a+b,W)\notin A$ since $(a+b,W)$ was not in $C$.

It thus suffices to prove that $\varphi(a+b,W)\in Z_{a+b,\mu,j-1}(0,1,1,0)\cup Z_{a+b,\mu,j+1}(0,0,1,1)$, meaning that the number of components of $C_{a+b} \setminus W$ and the inclusions of $v_1,v_a,v_{a+1},v_b$ match one of the two possible cases.

Let $\varphi(a+b,W)=(a+b,W')$. In the graph $C_{a+b} \setminus W$, we define the following induced subgraphs:
\begin{itemize}
    \item $G_1$ is the subgraph induced by $\{v_1,\dots,v_{i_0}\} \setminus W$ and has $\ell_1$ components.
    \item $G_a$ is the subgraph induced by $\{v_{i_0+1}, \dots, v_a\} \setminus W$ and has $\ell_a$ components.
    \item $G_{a+1}$ is the subgraph induced by $\{v_{a+1}, \dots, v_{i_0+b}\} \setminus W$ and has $\ell_{a+1}$ components.
    \item $G_{a+b}$ is the subgraph induced by $\{v_{i_0+b+1}, \dots, v_{a+b}\} \setminus W$ and has $\ell_{a+b}$ components.
\end{itemize}

We define induced subgraphs $G_1', G_a', G_{a+1}', G_{a+b}'$ of $C_{a+b} \setminus W'$ analogously, with number of connected components $\ell_1', \ell_a', \ell_{a+1}', \ell_{a+b}'$ respectively. 

Note that according to the construction, each of these graphs is necessarily nonempty, and furthermore $G_1 = G_1', G_{a+1} = G_{a+1}'$, and $V(G_a) \cap V(G_a') = V(G_{a+b}) \cap V(G_{a+b}') = \varnothing$. It follows that $\ell_1 = \ell_1'$, $\ell_{a+1} = \ell_{a+1}'$, and it is simple to verify that
\begin{itemize}
    \item $\ell'_a = \ell_a$ if exactly one of $v_{i_0+1}$ and $v_a$ is in $W$,
    \item $\ell'_a = \ell_a-1$ if $v_{i_0+1}, v_a \notin W$,
    \item $\ell'_a = \ell_a+1$ if $v_{i_0+1}, v_a \in W$,
\end{itemize}
and analogously for $\ell'_{a+b}$.

We first suppose that $(a+b,W)\in Z_{a+b,\mu,j}(0,1,1,0)$.
By construction, $v_1,v_a\notin W'$ and $v_{a+1},v_{a+b}\in W'$.
Now:
\begin{itemize}
  \item If $v_{i_0},v_{i_0+1}\notin W$, then by construction, we have $v_{i_0+b}\notin W$ and $v_{i_0+b+1}\in W$, and it is straightforward to verify that $j=\ell_1+\ell_a+\ell_{a+1}+\ell_{a+b}-2$, since two components are joined between $G_{a+b}$ and $G_1$, and between $G_1$ and $G_a$.
  
  We then have $v_{i_0}\notin W'$, $v_{i_0+1}\in W'$, $v_{i_0+b}\notin W'$, and $v_{i_0+b+1}\notin W'$. Furthermore, using the above observations, it is easy to verify that $\ell_i' = \ell_i$ for $i \in \{1,a,a+1,a+b\}$, and that the only components of $C_{a+b} \setminus W'$ unified across different $G_i$ are between $G'_{a+1}$ and $G'_{a+b}$, so there are $\ell'_1+\ell'_a+\ell'_{a+1}+\ell'_{a+b}-1=j+1$ components.

  \item If $v_{i_0}\notin W$ and $v_{i_0+1}\in W$, then by construction, we have $v_{i_0+b}\notin W$ and $v_{i_0+b+1}\notin W$, and similar to the above argument we may verify that $j=\ell_1+\ell_a+\ell_{a+1}+\ell_{a+b}-2$. 
  
  We then have $v_{i_0}\notin W'$, $v_{i_0+1}\notin W'$, $v_{i_0+b}\notin W'$, $v_{i_0+b+1}\in W'$, and we have $\ell'_a = \ell_a+1$ and $\ell'_{a+b} = \ell_{a+b}-1$. Then we may check that in $C_{a+b} \setminus W'$ there are $\ell'_1+\ell'_a+\ell'_{a+1}+\ell'_{a+b}-1=j+1$ components.

  \item If $v_{i_0}\in W$ and $v_{i_0+1}\notin W$, then $i_0+b,v_{i_0+b+1}\in W$, and $j=\ell_1+\ell_a+\ell_{a+1}+\ell_{a+b}-1$.
  
  We then have $v_{i_0}\in W'$, $v_{i_0+1}\in W'$, $v_{i_0+b}\in W'$, and $v_{i_0+b+1}\notin W'$, and we may check that $\ell'_a = \ell_a$ and $\ell'_{a+b} = \ell_{a+b}$, so in $C_{a+b} \setminus W'$ there are $\ell'_1+\ell'_a+\ell'_{a+1}+\ell'_{a+b}=j+1$ components.
  
  \item If $v_{i_0},v_{i_0+1}\in W$, we then know that $v_{i_0+b}\in W$, $v_{i_0+b+1}\notin W$, and $j=\ell_1+\ell_a+\ell_{a+1}+\ell_{a+b}-1$.
  
  We then have $v_{i_0}\in W'$, $v_{i_0+1}\notin W'$, $v_{i_0+b}\in W'$, and $v_{i_0+b+1}\in W'$. We may check that $\ell'_a = \ell_a+1$ and $\ell'_{a+b} = \ell_{a+b}-1$, so in $C_{a+b} \setminus W'$ there are $\ell'_1+\ell'_a+\ell'_{a+1}+\ell'_{a+b}=j+1$ components.
\end{itemize}
Thus in every case, $(a+b,W')\in Z_{a+b,\mu,j+1}(0,0,1,1)$. An analogous argument shows that if $(a+b,W) \in Z_{a+b,\mu,j}(0,0,1,1)$, then $\varphi(a+b,W) \in Z_{a+b,\mu,j-1}(0,1,1,0)$, completing the proof that $|D| = |B|$; An essentially identical proof shows that also $|D'| = |B'|$ for any $a,b,\mu,j$.

We now prove \eqref{one-edge-lemma} by evaluating the left-hand side of the equation as given in \eqref{lhs}, splitting into cases for different choices of $a,b,\mu,j$. Within each case, any statements about the sets $A,B,C,D,A',B',C',D'$ hold for all choices of $a,b,\mu,j$ considered by that case.

Throughout the proof, given a graph $G$ and $S \subseteq V(G)$, we let $G[S]$ denote the subgraph of $G$ induced by $S$, and $G \bk S$ denote the subgraph of $G$ induced by $V(G) \setminus S$.

\subsection*{Case 1. $j$ is even.}

\subsubsection*{Case 1.1. $\mu=a=b$.}
Since we are assuming $j\geq 1$, we have $|Z_{a,\mu,j}|=|Z_{b,\mu,j}|=0$.
Furthermore, using the map $(2a,W) \rightarrow (2a,[2a]\setminus W)$, we may verify that $|Z_{2a,a,j}(1,1,0,0)|=|Z_{2a,a,j}(0,0,1,1)|$,
$|Z_{2a,a,j}(1,0,0,1)|=|Z_{2a,a,j}(0,1,1,0)|$,
$|Z_{2a,a,j+1}(1,1,0,0)|=|Z_{2a,a,j+1}(0,0,1,1)|$, and
$|Z_{2a,a,j-1}(1,0,0,1)|=|Z_{2a,a,j-1}(0,1,1,0)|$.
Thus, \eqref{lhs} becomes
\[2\bp{-|Z_{2a,a,j}(0,1,1,0)|-|Z_{2a,a,j}(0,0,1,1)|+|Z_{2a,a,j-1}(0,1,1,0)|+|Z_{2a,a,j+1}(0,0,1,1)|}.\]
Using the fact that $j$ is even, it suffices to prove that 
\begin{equation*}
  |Z_{2a,a,j}(0,0,1,1)|+|Z_{2a,a,j}(0,1,1,0)|+\binom{a}{j}=|Z_{2a,a,j-1}(0,1,1,0)|+|Z_{2a,a,j+1}(0,0,1,1)|.
\end{equation*}
In terms of the sets defined above, we equivalently must show that
\begin{equation*}
|C|+|D|+\binom{a}{j} = |A|+|B|.
\end{equation*}
Since $|D| = |B|$, it suffices to show that $|C| = 0$, and $|A| = \binom{a}{j}$.

We first show that $C=\varnothing$. Assume for a contradiction that there exists some $(2a,W)\in C$. Let $H_a = C_{a+b}[\{v_1,\dots,v_a\}]$, and let $H_b = C_{a+b}[\{v_{a+1}, \dots, v_{2a}\}]$. Let $W_a = W \cap V(H_a)$ and let $W_b = W \cap V(H_b)$.
Suppose $H_a \setminus W_a$ has $\ell$ components. 
\begin{itemize}
  \item If $(2a,W)\in Z_{2a,a,j}(0,1,1,0)$, then since $v_1 \notin W$ and $v_a \in W$, there are also $\ell$ components of the subgraph of $H_a[W_a]$.
  Since $(2a,W) \in C$, by symmetry there are $\ell$ components of $H_b \setminus W$.
  Now since $v_1,v_{2a}\notin W$ and $v_a,v_{a+1}\in W$, we have that $j=\ell+\ell-1 = 2\ell-1$, which contradicts that $j$ is even.
  \item If $(2a,W)\in Z_{2a,a,j}(0,0,1,1)$, then since $v_1,v_a \notin W$, there are $\ell-1$ components of $H_a[W_a]$. Since $(2a,W) \in C$, by symmetry there are $\ell-1$ components of $H_b \setminus W$.
  Now since $v_{a+1}\in W$ and $v_{2a}\in W$, we have $j=\ell+(\ell-1)=2\ell-1$, which contradicts that $j$ is even.
\end{itemize}
This proves that $C = \varnothing$.

We then show that $|A|=\binom{a}{j}$.
Consider the function $\psi:A\to\{S\subseteq \{1,\dots,a\}:|S|=j\}$ where $\psi(2a,W)=\{i_1,\ldots,i_j\}$ is defined as follows:
\begin{itemize}
  \item $i_1$ is the largest index such that $v_1,\ldots,v_{i_1}\notin W$ (note that necessarily $i_1 \leq a$).
  \item For $k \in \{2,\dots,j\}$, if $k$ is even, $i_k$ is the largest index such that $v_{i_{k-1}+1},\ldots,v_{i_k}\in W$; if $k$ is odd, $i_k$ is the largest index such that $v_{i_{k-1}+1},\ldots,v_{i_k}\notin W$.
\end{itemize}
% By construction, we have that $|\psi(2a,W)|=j$. 
% Now for $(2a,W)\in A$, we let $\psi(2a,W)=\{i_1,\ldots,i_j\}$ where $1\leq i_1<\cdots<i_j$, and 
As described, $\phi(2a,W)$ simply produces a set of $j$ positive integers; we claim that indeed $i_j\leq a$.
We first suppose $(2a,W)\in Z_{2a,a,j-1}(0,1,1,0)$.
Using the notation above, suppose there are $\ell$ components of $H_a \setminus W_a$. Then since $v_1 \notin W$ and $v_a \in W$, there are also $\ell$ components of $H_a[W_a]$, and so by the symmetry of $A$, there are also $\ell$ components of $H_b \setminus W_b$. Since $v_a, v_{a+1} \in W$ but $v_1, v_{2a} \notin W$, there are $2\ell-1$ components of $C_{a+b} \setminus W$.
This means that $2\ell-1=j-1$, or $\ell=j/2$. Thus there are $j/2$ components in each of $H_a[W_a]$ and $H_a \setminus W_a$, so we may verify that $i_j=a$.

Now suppose $(2a,W)\in Z_{2a,a,j+1}(0,0,1,1)$, and suppose there are $\ell$ components of $H_a \setminus W_a$.
Then as $v_1,v_a \notin W$, there are $\ell-1$ components of $H_a[W_a]$, so by symmetry also $\ell-1$ components of $H_b \setminus W_b$.
Since $v_{a+1},v_{2a} \in W$, there are $2\ell-1$ components of $C_{a+b} \setminus W$, so $2\ell-1=j+1$, or $\ell=j/2+1$, and it follows that $i_j<a$. 
Hence, the range of $\psi$ is correctly given.

Note that when $(2a,W)\in Z_{2a,a,j+1}(0,0,1,1)$, we further know that $v_{i_j+1},\ldots,v_a\notin W$.

We claim that $\psi$ is a bijection. To show injectivity, suppose $(2a,W),(2a,W')\in A$ are such that $\psi(2a,W)=\psi(2a,W')$. 
Then by construction and our observations above, we have that $W\cap\{1,\ldots,a\}=W'\cap\{1,\ldots,a\}$. By the symmetry of $A$, this implies $W=W'$.

To show surjectivity, suppose we are given $S=\{i_1,\ldots,i_j\}$ such that $1\leq i_1<\cdots<i_j\leq a$.
Let 
\begin{align*}
  W = &\,\, \{v_{i_1+1},\ldots,v_{i_2}\}\cup\cdots\cup\{v_{i_{j-1}+1},\ldots,v_{i_j}\} \\
  &\cup\{v_{a+1},\ldots,v_{a+i_1}\}\cup\cdots\cup\{v_{a+i_{j-2}+1},\ldots,v_{a+i_{j-1}}\}\cup\{v_{a+i_j+1},\ldots,v_{2a}\}.
\end{align*}
Then it is easy to check that $(2a,W)\in A$ and $\psi(2a,W)=S$.
This proves that $\psi$ is a bijection and $|A|=\binom{a}{j}$, and this finishes the proof of this case.
% Now we have 
% \begin{align*}
%   &\quad |Z_{2a,a,j}(0,1,1,0)|+|Z_{2a,a,j}(0,0,1,1)| + \binom{a}{j} \\ 
%   &= |C| + |D| + |A| = |A| + |B| \\
%   &= |Z_{2a,a,j-1}(0,1,1,0)|+|Z_{2a,a,j+1}(0,0,1,1)|,
% \end{align*}
% as desired.

\subsubsection*{Case 1.2. $\mu=a<b$.}

Since we assume $j\geq 1$, we have $|Z_{a,a,j}|=0$. It then suffices to prove that 
\begin{align*}
  &\quad  |Z_{a+b,a,j+1}(1,1,0,0)|+|Z_{a+b,a,j-1}(1,0,0,1)|+|Z_{a+b,a,j-1}(0,1,1,0)|+|Z_{a+b,a,j+1}(0,0,1,1)|  \\
  &= |Z_{a+b,a,j}(1,1,0,0)|+|Z_{a+b,a,j}(1,0,0,1)|+|Z_{a+b,a,j}(0,1,1,0)|+|Z_{a+b,a,j}(0,0,1,1)|+\binom{a}{j}
\end{align*}
In terms of the previously described sets, we equivalently need to show that
\begin{equation*}
    |A|+|B|+|A'|+|B'| = |C|+|D|+|C'|+|D'|+\binom{a}{j}.
\end{equation*}
Since $|D| = |B|$ and $|D'| = |B'|$, it is enough to show that $|A| = |C| = |C'| = 0$, and $|A'| = \binom{a}{j}$.

We first claim that $A=\varnothing$. Assume for a contradiction that $A\neq\varnothing$ and let $(a+b,W)\in A$.
Since for all $i\in\{1,\dots,a\}$, exactly one of $v_i,v_{i+b}$ is in $W$ and $|W| = a$, it must be the case that $v_{a+1},\ldots,v_b\notin W$, contradicting that $v_{a+1}\in W$ for all $(a+b,W) \in A$.

We then claim that $C=\varnothing$ and $C'=\varnothing$.
Assume for a contradiction that there exists some $(a+b,W)\in C\cup C'$.
Then as before, we have $v_{a+1},\ldots,v_b\notin W$. We retain the previous notation for $H_a$ and $W_a$, but we now define $H_{b+1} = C_{a+b}[\{v_{b+1},\dots,v_{b+a}\}]$ and $W_{b+1} = W \cap V(H_{b+1})$.
Suppose that there are $\ell$ components of $H_a \setminus W_a$. We proceed in a similar manner as in the previous subcase.
\begin{itemize}
  \item If $(a+b,W)\in Z_{a+b,a,j}(1,1,0,0)$, then we may verify that there are $\ell+1$ components of $H_{b+1} \setminus W_{b+1}$. Furthermore, since $v_1 \in W$, $v_{b+1}\notin W$ by construction, so one component of $H_{b+1} \setminus W_{b+1}$ is joined with $\{v_{a+1}, \dots, v_b\}$ as a component in $C_{a+b} \setminus W$. Thus the number of components of $C_{a+b} \setminus W$ is $j=\ell+\ell+1=2\ell+1$, which is odd, a contradiction.
  \item If $(a+b,W)\in Z_{a+b,a,j}(1,0,0,1)$, then we may verify that there are $\ell$ components of $H_{b+1} \setminus W_{b+1}$. As above, $v_{b+1}\notin W$. Since also $v_a \notin A$, so one component from each of $H_a \setminus W_a$ and $H_{b+1} \setminus W_{b+1}$ are joined via $\{v_{a+1},\dots,v_b\}$, so it follows that $j=\ell+\ell-1=2\ell-1$, which is again a contradiction to $j$ being even.
  \item If $(a+b,W)\in Z_{a+b,a,j}(0,1,1,0)\cup Z_{a+b,a,j}(0,0,1,1)$, then $v_{a+1}\in W$, which is a contradiction.
\end{itemize}
This that $C = C' = \varnothing$.

Finally, we claim that $|A'|=\binom{a}{j}$.
Consider the function $\psi:A'\to\{S\subseteq\{1,\dots,a\}:|S|=j\}$ where $\psi(a+b,W)=\{i_1,\ldots,i_j\}$ is a finite set such that:
\begin{itemize}
  \item $i_1$ is the largest index such that $1,\ldots,i_1\in W$.
  \item For $k \in \{2,\dots,j\}$, if $k$ is even, $i_k$ is the largest index such that $v_{i_{k-1}+1},\ldots,v_{i_k}\notin W$; if $k$ is odd, $i_k$ is the largest index such that $v_{i_{k-1}+1},\ldots,v_{i_k}\in W$.
\end{itemize}
Using an argument exactly analogous to that of Case 1.1 and the fact that $v_{a+1},\ldots,v_b\notin W$
shows that this is a bijection, and finishes the proof of this subcase.
% Now we have 
% \begin{align*}
%   &\quad  |Z_{a+b,a,j+1}(1,1,0,0)|+|Z_{a+b,a,j-1}(1,0,0,1)|+|Z_{a+b,a,j-1}(0,1,1,0)|+|Z_{a+b,a,j+1}(0,0,1,1)| \\
%   &= |A'|+|B'|+|A|+|B| = \binom{a}{j}+|D'|+|D| = \binom{a}{j}+|C'|+|D'|+|C|+|D| \\
%   &= |Z_{a+b,a,j}(1,1,0,0)|+|Z_{a+b,a,j}(1,0,0,1)|+|Z_{a+b,a,j}(0,1,1,0)|+|Z_{a+b,a,j}(0,0,1,1)|+\binom{a}{j},
% \end{align*}
% as desired.

\subsubsection*{Case 1.3. $a<b=\mu$.}

Since by assumption $j\geq 1$, we have $|Z_{a,b,j}|=|Z_{b,b,j}|=0$.
It then suffices to prove that 
\begin{align*}
  &\quad  |Z_{a+b,b,j+1}(1,1,0,0)|+|Z_{a+b,b,j-1}(1,0,0,1)|+|Z_{a+b,b,j-1}(0,1,1,0)|+|Z_{a+b,b,j+1}(0,0,1,1)|  \\
  &= |Z_{a+b,b,j}(1,1,0,0)|+|Z_{a+b,b,j}(1,0,0,1)|+|Z_{a+b,b,j}(0,1,1,0)|+|Z_{a+b,b,j}(0,0,1,1)|+\binom{a}{j}.
\end{align*}
As before, this is equivalent to the claim that
\begin{equation*}
    |A|+|B|+|A'|+|B'| = |C|+|D|+|C'|+|D'|+\binom{a}{j}.
\end{equation*}
Since $|D| = |B|$ and $|D'| = |B'|$, it is enough to show that $|A'| = |C| = |C'| = 0$, and $|A| = \binom{a}{j}$.

We first claim that $A'=\varnothing$.
Assume for a contradiction that $A'\neq \varnothing$ and let $(a+b,W)\in A'$.
Since for all $i\in\{1,\dots,a\}$, exactly one of $v_i,v_{i+b}$ is in $W$, and $|W| = b$, it must be the case that $v_{a+1},\ldots,v_b\in W$, contradicting that $v_{a+1}\notin W$ for all $(a+b,W) \in A'$.

We then claim that $C=\varnothing$ and $C'=\varnothing$. The proof is exactly analogous to the corresponding proof in Case 1.2. We assume there are $\ell$ components of $H_a \setminus W_a$.
\begin{itemize}
  \item If $(a+b,W)\in Z_{a+b,b,j}(1,1,0,0)\cup Z_{a+b,b,j}(1,0,0,1)$, then $v_{a+1}\notin W$ is a contradiction.
  \item If $(a+b,W)\in Z_{a+b,b,j}(0,1,1,0)$, then we may verify that $j=2\ell-1$, which is a contradiction to the fact that $j$ is even.
  \item If $(a+b,W)\in Z_{a+b,b,j}(0,0,1,1)$, then we may again verify that $j=2\ell-1$, which is a contradiction.
\end{itemize}

Finally, we claim that $|A|=\binom{a}{j}$, and we will use a bijection exactly analogous to that of the first two cases.

First note that for all $(a+b,W)\in A$, we must have $v_{a+1},\ldots,v_b\in W$.
Consider the function $\psi:A\to\{S\subseteq\{1,\dots,a\}:|S|=j\}$ where $\psi(2a,W)=\{i_1,\ldots,i_j\}$ is defined as follows:
\begin{itemize}
  \item $i_1$ is the largest index such that $v_1,\ldots,v_{i_1}\notin W$.
  \item For $k \in \{2,\dots,j\}$, if $k$ is even, $i_k$ is the largest index such that $v_{i_{k-1}+1},\ldots,v_{i_k}\in W$; if $k$ is odd, $i_k$ is the largest index such that $v_{i_{k-1}+1},\ldots,v_{i_k}\notin W$.
\end{itemize}
Then using an argument analogous to that presented in Case 1.1 and the fact that $v_{a+1},\ldots,v_b\in W$ for all $(a+b,W)\in A$, we can show that $\psi$ is indeed a bijection, and this finishes the proof of this subcase.
% Finally, we get 
% \begin{align*}
%   &\quad  |Z_{a+b,b,j+1}(1,1,0,0)|+|Z_{a+b,b,j-1}(1,0,0,1)|+|Z_{a+b,b,j-1}(0,1,1,0)|+|Z_{a+b,b,j+1}(0,0,1,1)|  \\
%   &=|A'|+|B'|+|A|+|B|=|D'|+\binom{a}{j}+|D|=|C'|+|D'|+|C|+|D|+\binom{a}{j} \\
%   &= |Z_{a+b,b,j}(1,1,0,0)|+|Z_{a+b,b,j}(1,0,0,1)|+|Z_{a+b,b,j}(0,1,1,0)|+|Z_{a+b,b,j}(0,0,1,1)|+\binom{a}{j},
% \end{align*}
% which finishes this case.

\subsubsection*{Case 1.4. $\mu<a<b$ or $a<b<\mu$.}

In this case, we are required to prove that 
\begin{align*}
  &\quad  |Z_{a+b,\mu,j+1}(1,1,0,0)|+|Z_{a+b,\mu,j-1}(1,0,0,1)|+|Z_{a+b,\mu,j-1}(0,1,1,0)|+|Z_{a+b,\mu,j+1}(0,0,1,1)|  \\
  &= |Z_{a+b,\mu,j}(1,1,0,0)|+|Z_{a+b,\mu,j}(1,0,0,1)|+|Z_{a+b,\mu,j}(0,1,1,0)|+|Z_{a+b,\mu,j}(0,0,1,1)|,
\end{align*}
or equivalently that
\begin{equation*}
    |A|+|B|+|A'|+|B'| = |C|+|D|+|C'|+|D'|.
\end{equation*}
But this follows from $|B| = |D|$, $|B'| = |D'|$, and $A=C=A'=C'=\varnothing$, since if $(a+b,W)\in A\cup C\cup A'\cup C'$, then we must have $a\leq|W|\leq b$, which is a contradiction.

\subsection*{Case 2. $j$ is odd.}

\subsubsection*{Case 2.1. $\mu=a=b$.}

As in Case 1.1, except using that $j$ is odd, it suffices to show that 
\[|Z_{2a,a,j}(0,0,1,1)|+|Z_{2a,a,j}(0,1,1,0)|=|Z_{2a,a,j-1}(0,1,1,0)|+|Z_{2a,a,j+1}(0,0,1,1)|+\binom{a}{j}.\]
Equivalently, we would like to show that
\begin{equation*}
    |C|+|D| = |A|+|B|+\binom{a}{j}.
\end{equation*}
Since $|D| = |B|$, it suffices to show that $|A| = 0$ and $|C| = \binom{a}{j}$.

We first claim that $A=\varnothing$.
Assume for a contradiction that there exists some $(2a,W)\in A$. Using the notation as in Case 1.1, suppose that $H_a \setminus W_a$ has $\ell$ components.
\begin{itemize}
  \item If $(2a,W)\in Z_{2a,a,j-1}(0,1,1,0)$, then as in previous cases we may verify that there are also $\ell$ components of $H_b \setminus W_b$. Since $v_a,v_{a+1} \in W$ and $v_1,v_{2a} \notin W$, it follows that $j-1=\ell+\ell-1 = 2\ell-1$, which contradicts that $j$ is odd.
  \item If $(2a,W)\in Z_{2a,a,j+1}(0,0,1,1)$, then there are $\ell-1$ components of $H_b \setminus W_b$. Since $v_{a+1} \in W$ and $v_{2a} \in W$ it follows that $j+1=\ell+(\ell-1) = 2\ell-1$, which also contradicts that $j$ is odd.
\end{itemize}
This proves that $A=\varnothing$.

We then claim that $|C|=\binom{a}{j}$, and we use the same style of bijection as in Case 1.1. Consider the function $\psi:C\to\{S\subseteq\{1,\dots,a\}:|S|=j\}$ where $\psi(2a,W)=\{i_1,\ldots,i_j\}$ is defined as follows:
\begin{itemize}
  \item $i_1$ is the largest index such that $v_1,\ldots,v_{i_1}\notin W$.
  \item For $k \in \{2,\dots,j\}$, if $k$ is even, $i_k$ is the largest index such that $v_{i_{k-1}+1},\ldots,v_{i_k}\in W$; if $k$ is odd, $i_k$ is the largest index such that $v_{i_{k-1}+1},\ldots,v_{i_k}\notin W$.
\end{itemize}
Using the same argument as in Case 1.1, suppose that $H_a \setminus W_a$ has $\ell$ components.
If $(2a,W)\in Z_{2a,a,j}(0,1,1,0)$, then there are also $\ell$ components in $H_b \setminus W_b$, so $j=\ell+\ell-1$, and $\ell=\frac{j+1}{2}$, from which
it follows that $i_j<a$ since $v_a \in W$. Furthermore, in this case $v_{i_j+1},\ldots,v_a\in W$.

If $(2a,W)\in Z_{2a,a,j}(0,0,1,1)$, then there are $\ell-1$ components of $H_b \setminus W_b$, and it follows that $j=2\ell-1$, or $\ell=\frac{j+1}{2}$, and it is easy then to check that $i_j=a$ since $v_a \notin W$ and $v_{a+1} \in W$. Thus, we have verified that the range of $\psi$ is correct.

% It is easy to see from the definition that $\psi$ is injective.
% For surjectivity, suppose we are given some $S=\{i_1,\ldots,i_j\}$ such that $1\leq i_1<\cdots<i_j\leq a$.
% Let 
% \begin{align*}
%   W=&\,\,\{i_1+1,\ldots,i_2\}\cup\cdots\cup\{i_{j-2}+1,\ldots,i_{j-1}\}\cup\{i_j+1,\ldots,a\} \\
%   &\cup\{a+1,\ldots,i_1+a\}\cup\cdots\cup\{i_{j-1}+1+a,\cdots,i_j+a\}.
% \end{align*}
% One can check that $(2a,W)\in C$ and $\psi(2a,W)=S$. 

As in Case 1.1, it is straightforward to verify the injectivity and surjectivity of $\psi$. Therefore, $\psi$ is a bijection, and $|C|=\binom{a}{j}$, which completes the proof of this subcase.
% Now we have 
% \begin{align*}
%   &\quad |Z_{2a,a,j}(0,0,1,1)| + |Z_{2a,a,j}(0,1,1,0)| \\
%   &= |C| + |D| = |C| + |B| = |C| + |B| + |A|\\
%   &= \binom{a}{j} + |Z_{2a,a,j-1}(0,1,1,0)| + |Z_{2a,a,j+1}(0,0,1,1)|,
% \end{align*}
% as required.

\subsubsection*{Case 2.2. $\mu=a<b$.}

Similar to Case 1.2, it suffices to prove that 
\begin{align*}
  &\quad  |Z_{a+b,a,j+1}(1,1,0,0)|+|Z_{a+b,a,j-1}(1,0,0,1)|+|Z_{a+b,a,j-1}(0,1,1,0)|+|Z_{a+b,a,j+1}(0,0,1,1)|+\binom{a}{j}  \\
  &= |Z_{a+b,a,j}(1,1,0,0)|+|Z_{a+b,a,j}(1,0,0,1)|+|Z_{a+b,a,j}(0,1,1,0)|+|Z_{a+b,a,j}(0,0,1,1)|.
\end{align*}
which is equivalent to proving
\begin{equation*}
    |A|+|B|+|A'|+|B'|+\binom{a}{j} = |C|+|D|+|C'|+|D'|.
\end{equation*}
Since $|D| = |B|$ and $|D'| = |B'|$, it is sufficient to prove that $|A| = |A'| = |C| = 0$ and $|C| = \binom{a}{j}$.

As in Case 1.2, we show that $C=\varnothing$. If on the contrary there exists $(a+b,W)\in C$. Then $v_{a+1},\ldots,v_b\notin W$ since $|W|=a$ and for each $i \in \{1,\dots,a\}$, exactly one of $v_i$ and $v_{i+b}$ is in $W$. But this contradicts the fact that $v_{a+1}\in W$ for each $(a+b,W) \in C$.

We then claim that $A=\varnothing$ and $A'=\varnothing$.
Assume for a contradiction that there exists some $(a+b,W)\in A\cup A'$. Then $v_{a+1},\ldots,v_b\notin W$ since $|W|=a$.
Using notation as in Case 1.2, suppose there are $\ell$ components in $H_a \setminus W_a$.
\begin{itemize}
  \item If $(a+b,W)\in Z_{a+b,a,j-1}(0,1,1,0)\cup Z_{a+b,a,j+1}(0,0,1,1)$, then $v_{a+1}\in W$, a contradiction.
  \item If $(a+b,W)\in Z_{a+b,a,j-1}(1,1,0,0)$, then we may verify that $j-1=2\ell+1$, contradicting that $j$ is odd.
  \item If $(a+b,W)\in Z_{a+b,a,j+1}(1,0,0,1)$, then we may verify that $j+1=2\ell-1$, which again contradicts that $j$ is odd.
\end{itemize}
Thus, $A=\varnothing$ and $A'=\varnothing$.

Finally, we claim that $|C'|=\binom{a}{j}$.
Consider the function $\psi:C'\to\{S\subseteq\{1,\dots,a\}:|S|=j\}$ where $\psi(a+b,W)=\{i_1,\ldots,i_j\}$ defined as follows:
\begin{itemize}
  \item $i_1$ is the largest index such that $v_1,\ldots,v_{i_1}\in W$.
  \item For $k \in \{2,\dots,j\}$, if $k$ is even, $i_k$ is the largest index such that $v_{i_{k-1}+1},\ldots,v_{i_k}\notin W$; if $k$ is odd, $i_k$ is the largest index such that $v_{i_{k-1}+1},\ldots,v_{i_k}\in W$.
\end{itemize}
Using the same reasoning as in previous cases, it is straightforward to verify that $\psi$ is a bijection, completing the proof of this subcase.

\subsubsection*{Case 2.3. $a<b=\mu$.}

Similar to case 1.3, it suffices to prove that 
\begin{align*}
  &\quad  |Z_{a+b,b,j+1}(1,1,0,0)|+|Z_{a+b,b,j-1}(1,0,0,1)|+|Z_{a+b,b,j-1}(0,1,1,0)|+|Z_{a+b,b,j+1}(0,0,1,1)| +\binom{a}{j} \\
  &= |Z_{a+b,b,j}(1,1,0,0)|+|Z_{a+b,b,j}(1,0,0,1)|+|Z_{a+b,b,j}(0,1,1,0)|+|Z_{a+b,b,j}(0,0,1,1)|,
\end{align*}
or equivalently that
\begin{equation*}
    |A|+|B|+|A'|+|B'|+\binom{a}{j} = |C|+|D|+|C'|+|D'|.
\end{equation*}
Since $|D| = |B|$ and $|D'| = |B'|$, it is enough to show that $|A'| = |A| = |C'| = 0$, and $|C| = \binom{a}{j}$.

Note that $C'=\varnothing$, since if on the contrary $(a+b,W)\in C'$, then since exactly one of $v_i$ and $v_{i+b}$ is in $W$ for each $i \in \{1,\dots,a\}$, we must have $v_{a+1},\ldots,v_b\in W$ to have $|W|=b$, contradicting that $v_{a+1}\notin W$ for each $(a+b,W) \in C'$.

We then claim that $A=\varnothing$ and $A'=\varnothing$.
Assuming for a contradiction that there exists some $(a+b,W)\in A\cup A'$, then as above $v_{a+1},\ldots,v_b\in W$.

Using notation as in previous cases, suppose that $H_a \setminus W_a$ has $\ell$ components.
\begin{itemize}
  \item If $(a+b,W)\in Z_{a+b,a,j-1}(0,1,1,0)$, then there are $\ell$ components of $H_{b+1} \setminus W_{b+1}$, and it follows that $j-1=2\ell-1$, which contradicts that $j$ is odd.
  \item If $(a+b,W)\in Z_{a+b,a,j+1}(0,0,1,1)$, then there are $\ell-1$ components of $H_{b+1} \setminus W_{b+1}$. This means $j+1=2\ell-1$, which again contradicts the fact that $j$ is odd.
  \item If $(a+b,W)\in Z_{a+b,a,j-1}(1,1,0,0)\cup Z_{a+b,a,j+1}(1,0,0,1)$, then $v_{a+1}\notin W$, a contradiction.
\end{itemize}
This proves that $A=\varnothing$ and $A'=\varnothing$.

Finally, show that $|C|=\binom{a}{j}$ by considering the function $\psi:C\to\{S\subseteq\{1,\dots,a\}:|S|=j\}$ where $\psi(a+b,W)=\{i_1,\ldots,i_j\}$ defined as follows:
\begin{itemize}
  \item $i_1$ is the largest index such that $v_1,\ldots,v_{i_1}\notin W$.
  \item For $k \in \{2,\dots,j\}$, if $k$ is even, $i_k$ is the largest index such that $v_{i_{k-1}+1},\ldots,v_{i_k}\in W$; if $k$ is odd, $i_k$ is the largest index such that $v_{i_{k-1}+1},\ldots,v_{i_k}\notin W$.
\end{itemize}
Using the same argument as in previous cases, we can show that $\psi$ is a bijection, and this completes the proof of this subcase.

\subsubsection*{Case 2.4. $\mu<a<b$ or $a<b<\mu$.}

The proof is exactly the same as Case 1.4.

This concludes the examination of all possible subcases, so we are done.
\end{proof}

\end{document}